\newtheorem{theo}{Theorem} % Pour les theoremes principaux
\newcommand{\D}{\displaystyle}
\newtheorem{lemma}{Lemma}[section]
\newtheorem{theorem}[lemma]{Theorem}
\newtheorem{prop}[lemma]{Proposition}
\newtheorem{corollary}[lemma]{Corollary}
\theoremstyle{definition}
\newtheorem{defi}[lemma]{Definition}
\newtheorem{nota}[lemma]{Notation}
\newtheorem{exttheo}{Theorem}
\newtheorem{rema}[lemma]{Remark}
\newcommand{\GG}{\mathbb{G}}
\newcommand{\HH}{\mathbb{H}}
\newcommand{\NN}{\mathbb{N}}
\newcommand{\RR}{\mathbb{R}}
\newcommand{\eps}{\varepsilon}
\newcommand{\HHH}{\mathcal{H}}
\newcommand{\KKK}{\mathcal{K}}
\newcommand{\QQQ}{\mathcal{Q}}
\newcommand{\tu}{\tilde{u}}
\begin{document}
\title[Focusing NLS on hyperbolic space]{Global existence, scattering and blow-up for the focusing NLS on the hyperbolic space}
\author[V. Banica]{Valeria Banica$^1$}
\address[V.Banica]{Laboratoire de Math\'ematiques et de Mod\'elisation d'\'Evry (UMR 8071)\\ D\'epartement de Math\'ematiques\\
Universit\'e d'\'Evry, 23 Bd. de France, 91037 Evry\\ 
France}
\email{Valeria.Banica@univ-evry.fr} 

\author[T. Duyckaerts]{Thomas Duyckaerts$^2$}
\address[T. Duyckaerts]{Laboratoire Analyse, G\'eom\'etrie et Applications (UMR 7539)\\D\'epartement de Math\'ematiques\\
Institut Galil\'ee, Universit\'e Paris 13, 99 Av. J.-B. Cl\'ement 
93430 Villetaneuse, France}
\email{duyckaer@math.univ-paris13.fr } 

\thanks{$^1$ Laboratoire de Math\'ematiques et de Mod\'elisation d'\'Evry (UMR 8071), Universit\'e d'Evry}\thanks{$^2$ LAGA (UMR 7539), Institut Galil\'ee, Universit\'e Paris 13, Sorbonne Paris Cit\'e}

\begin{abstract}
We prove global well-posedness, scattering and blow-up results for energy-subcritical focusing nonlinear Schr\"odinger equations on the hyperbolic space. We show in particular the existence of a critical element for scattering for all energy-subcritical power nonlinearities.  For mass-supercritical nonlinearity, we show a scattering vs blow-up dichotomy for radial solutions of the equation in low dimension, below natural mass and energy thresholds given by the ground states of the equation. The proofs are based on trapping by mass and energy, compactness and rigidity, and are similar to the ones on the Euclidean space, with a new argument, based on generalized Pohozaev identities, to obtain appropriate monotonicity formulas.
\end{abstract}

\date\today
\maketitle

\tableofcontents

\section{Introduction}

The nonlinear Schr\"odinger equations on manifolds have been intensively studied in the last decades. Most works concern local existence, blow-up in finite time, small data scattering and existence of wave operators. Recently, results on scattering for all solutions in the defocusing case were obtained on hyperbolic space \cite{BaCaSt08,IoSt09,IoPaSt12}, more general rotationally symmetric manifolds \cite{BaCaDu09}, flat manifolds such as exterior domains \cite{PlVe09,IvPl10,PlVe12,KiViZh12} and product spaces $\mathbb R\times \mathbb T^2, \mathbb R^n\times\mathbb T$ \cite{HaPa14,TzVi14}. Let us also note the recent work on long range effects on $\mathbb R\times \mathbb T^n$ \cite{HaPaTzVi13P}. The purpose of this work is to initiate the study of ``large" data - that is out of the perturbative framework of the small data theory - for focusing NLS on manifolds. More precisely, we are interested with the focusing NLS on the hyperbolic space $\HH^n$:
\begin{equation}
\label{NLS}
 i\partial_t u +\Delta_{\HH^n} u+|u|^{p-1}u=0,\quad u_{\restriction t=0}=u_0\in H^1(\HH^n),
\end{equation} 
where $n\geq 2$, $\Delta_{\HH^n}$ is the (negative) Laplace-Beltrami operator on $\HH^n$ and the power $p$ is energy-subcritical: $1<p< 1+\frac{4}{n-2}$ ($1<p<\infty$ if $n=2$).

It follows from Strichartz estimates that equation \eqref{NLS} is locally well-posed in the Sobolev space $H^1=H^1(\HH^n)$ \cite{Ba07}: for $u_0\in H^1$, there exists a unique maximal solution $$u \in C^0\left((-T_-(u_0),T_+(u_0)),H^1\right)$$ satisfying the following blow-up criterion:
\begin{equation}
\label{blupcrit}T_+(u_0)<\infty \Longrightarrow \lim_{t\to T_+(u_0)} \|u(t)\|_{H^1}=+\infty.\end{equation} 
The mass of a solution
\begin{equation}
\label{def_M}
M(u(t))=\int_{\HH^n} |u(t,x)|^2d\mu(x), 
\end{equation} 
(where $\mu$ is the standard measure on $\HH^n$) and its energy 
\begin{equation}
 \label{def_E}
E(u(t))=\frac{1}{2}\int_{\HH^n} |\nabla_{\HH^n} u(t,x)|^2d\mu(x)-\frac{1}{p+1}\int_{\HH^n}|u(t,x)|^{p+1}d\mu(x),
\end{equation} 
are conserved. 

In the defocusing case (equation \eqref{NLS} with a minus sign in front of the nonlinearity), it was proved in \cite{BaCaSt08,BaCaDu09,IoSt09} that all solutions with initial data in $H^1$ scatter to a solution of the linear Schr\"odinger equation in both time directions (see also \cite{IoPaSt12} for the energy-critical case in space dimension $3$). Note that this holds for all $p$ such that $1<p\leq 1+\frac{4}{n-2}$, in contrast with the Euclidean setting scattering results where a lower bound larger than $1$ is imposed on $p$. This is a consequence of the stronger long-time dispersion for the linear Schr\"odinger equation in $\HH^n$ compared to $\mathbb R^n$, which translates into a wider range of exponents for the Strichartz estimates. More precisely, global Strichartz estimates on $\HH^n$ are available for all exponents of Strichartz estimates on $\mathbb R^d,d\geq n$ \cite{BaCaSt08,IoSt09,AnPi09}. Using this fact the scattering results are proved for all the range of exponents $p$ allowed on $\mathbb 
R^d,d\geq n$, so for  $1<p\leq 1+\frac{4}{n-2}$.

In the focusing case, for the same reasons as above, scattering remains valid for small data in $H^1$, and wave operators also exist for all energy-subcritical $p$. However, solutions with larger initial data do not always scatter. If $p\geq 1+\frac{4}{n}$, blow-up in finite time may occur \cite{Ba07,MaZh07}. Furthermore, for any $p>1$, there exist nonzero time-periodic solutions of \eqref{NLS}. The aim of this article is to obtain sharp global existence, scattering and blow-up results in terms of geometric objects that are specific to $\HH^n$. Before stating our main results, we recall known ones on the focusing Schr\"odinger equation on $\RR^n$, $n\geq 1$:
\begin{equation*}
%\label{NLS_Rn}
 i\partial_t u+\Delta_{\RR^n}u+|u|^{p-1}u=0,\quad u_{\restriction t=0}=u_0\in H^1(\RR^n),
\end{equation*} 
where $\frac{4}{n}+1< p$, and, if $n\geq 3$, $p<\frac{4}{n-2}+1$. Fixing $\mu<0$, the equation 
$$-\Delta f-\mu f=|f|^{p-1}f,\quad x\in \RR^n$$
has a unique radial, positive solution in $H^1(\RR^n)$ that we will denote by $R_{\mu}$. Let $s_c=\frac{n}{2}-\frac{2}{p-1}\in (0,1)$ be the critical Sobolev exponent, $M(u)$ and $E(u)$ be the invariant masses and energy, defined as in \eqref{def_M}, \eqref{def_E} with the integrals on $\RR^n$. Then (see \cite{Stu91,HoRo07,HoRo08,DuHoRo08,FaXiCa11,Gu13,AkNa13}):
\begin{exttheo}
\label{T:Rn}
Assume $1+\frac{4}{n}<p$, and $p<1+\frac{4}{n-2}$ if $n>3$.
Let $u_0\in H^1(\RR^n)$ be such that 
\begin{equation}
\label{sub_energy_Rn}
 E(u_0)^{s_c}M(u_0)^{1-s_c}<E(R_{\mu})^{s_c}M(R_{\mu})^{1-s_c}.
\end{equation} 
Let, for $t$ in the maximal interval of existence of $u$,
\begin{equation*}
 %\label{def_delta_Rn}
 \delta(u(t))=\|\nabla u(t)\|_{L^2}^{s_c}\|u(t)\|_{L^2}^{1-s_c}-\|\nabla R_{\mu}\|_{L^2}^{s_c}\|R_{\mu}\|_{L^2}^{1-s_c}.
\end{equation*} 
Then $\delta(u(t))\neq 0$, and the sign of $\delta(u(t))$ is independent of $t$. Furthermore, 
\begin{enumerate}
 \item \label{I:scatt_Rn}If $\delta(u_0)<0$ then $u$ scatters in both time directions.
\item \label{I:blow-up_Rn}If $\delta(u_0)>0$ and either $u_0$ is radial and $p\leq 5$, or $\int |x|^2|u_0|^2<\infty$ then $u$ blows up in finite time.
\end{enumerate}
\end{exttheo}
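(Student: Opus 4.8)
The plan is to follow the now-standard concentration--compactness and rigidity roadmap, combining a variational \emph{trapping} argument that fixes the sign of $\delta$ with a Kenig--Merle type analysis for scattering and a virial identity for blow-up. I would begin with the variational preliminaries. First I would record the sharp Gagliardo--Nirenberg inequality
\[
\|f\|_{L^{p+1}}^{p+1}\leq C_{GN}\,\|\nabla f\|_{L^2}^{\frac{n(p-1)}{2}}\,\|f\|_{L^2}^{(p+1)-\frac{n(p-1)}{2}},
\]
whose optimizers are, up to the symmetries of the equation, the ground state $R_{\mu}$. Using the Pohozaev and Nehari identities for $R_{\mu}$, I would express $C_{GN}$, $E(R_{\mu})$ and $\|\nabla R_{\mu}\|_{L^2}$ in terms of $M(R_{\mu})$. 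Then, introducing the scale-invariant quantities $\mathcal{E}(u)=E(u)^{s_c}M(u)^{1-s_c}$ and $G(u)=\|\nabla u\|_{L^2}^{s_c}\|u\|_{L^2}^{1-s_c}$, I would combine conservation of mass and energy with the sharp inequality to obtain $\Phi\!\big(G(u(t))/G(R_{\mu})\big)\leq \mathcal{E}(u_0)/\mathcal{E}(R_{\mu})$, where $\Phi$ is continuous on $(0,\infty)$, satisfies $\Phi(1)=1$, and is increasing then decreasing with a strict maximum at $1$. Since \eqref{sub_energy_Rn} gives $\mathcal{E}(u_0)/\mathcal{E}(R_{\mu})<1$ and $t\mapsto G(u(t))$ is continuous, the ratio $G(u(t))/G(R_{\mu})$ cannot reach $1$; hence $\delta(u(t))\neq 0$ with sign constant in $t$. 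On the side $\delta<0$ this also yields a uniform bound $G(u(t))\leq(1-\eta)\,G(R_{\mu})$, that is, an a priori $H^1$ bound and global existence.

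For the scattering assertion \eqref{I:scatt_Rn} I would set up the Kenig--Merle scheme. Strichartz estimates and small-data theory give scattering below some threshold of $\mathcal{E}$; assuming scattering fails, I would define the critical value of $\mathcal{E}$ and extract, via a linear profile decomposition and a nonlinear stability (perturbation) lemma, a critical element: a global solution $u_c$ with $\mathcal{E}(u_c)$ equal to this threshold, which does not scatter and whose trajectory $\{u_c(t)\}$ is precompact in $H^1$ modulo the scaling and translation symmetries. The trapping bound of the previous step keeps the whole construction inside the sub-threshold region. I would then rule out $u_c$ by a localized virial/Morawetz argument: differentiating $\int \phi_R\,\operatorname{Im}(\bar u_c\,\nabla u_c)$ for a suitable radial weight $\phi_R$ and using the compactness of the trajectory to control error terms, the strict trapping inequality forces the virial quantity to have a fixed sign and to be bounded away from $0$, which is incompatible with the boundedness of the localized momentum. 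Hence $u_c\equiv 0$, a contradiction, proving scattering in both time directions.

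For the blow-up assertion \eqref{I:blow-up_Rn} I would use Glassey's virial identity. In the finite-variance case, writing $V(t)=\int |x|^2|u(t)|^2$, one has
\[
V''(t)=16\,E(u_0)-\frac{4\big(n(p-1)-4\big)}{p+1}\,\|u(t)\|_{L^{p+1}}^{p+1}.
\]
Since $p>1+\tfrac{4}{n}$ the coefficient of the last term is negative, and the trapping now gives $G(u(t))\geq(1+\eta)\,G(R_{\mu})$, which makes the right-hand side negative and bounded above by a strictly negative constant; thus $V$ becomes negative in finite time, contradicting $V\geq 0$, so by the blow-up criterion \eqref{blupcrit} the solution blows up. In the radial case I would replace $|x|^2$ by a compactly supported cutoff weight and absorb the resulting error terms using the radial decay (Strauss) estimate, which is exactly where the restriction $p\leq 5$ is needed.

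The most delicate step is the rigidity argument for scattering: producing the compactness of the critical element through the profile decomposition and the stability lemma, and then closing the localized virial estimate with error terms controlled uniformly along the compact trajectory. The trapping step is essentially variational and the finite-variance blow-up is a direct convexity argument, so I expect the construction and rigidity of the critical element to be the main obstacle.
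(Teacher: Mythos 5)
This statement is the paper's Theorem~\ref{T:Rn}, a known Euclidean result which the paper does not prove at all: it is recalled as background with citations to Holmer--Roudenko, Duyckaerts--Holmer--Roudenko, Fang--Xie--Cazenave, Guevara and Akahori--Nawa. Your sketch --- trapping via the sharp Gagliardo--Nirenberg inequality and mass/energy conservation, Kenig--Merle concentration--compactness (profile decomposition, critical element, localized-virial rigidity) for scattering, and the Glassey/Ogawa--Tsutsumi virial argument for blow-up with the cutoff-plus-radial-decay step explaining the restriction $p\leq 5$ --- is precisely the approach of those cited references, and it is also the same compactness--rigidity strategy that the paper itself adapts to the hyperbolic setting for its main theorems.
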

Our aim is to obtain a scattering/blow-up dichotomy for equation \eqref{NLS}, in the spirit of the above theorem, under an optimal mass/energy threshold. Our main motivation is to clarify the influence of the geometry on the dynamics of focusing Schr\"odinger equations. 

Note that the choice of the ground state, i.e. of the parameter $\mu<0$ in the above  theorem is not relevant. Indeed,
$$R_{-1}(x)=|\mu|^{\frac{1}{p-1}}R_{\mu}(\sqrt{|\mu|} x),$$ 
and conditions \eqref{sub_energy_Rn}, as well as $\delta(u(t))$ do not depend on $\mu$. This scaling invariance is lost for the equation and $\HH^n$, and we will get a family of conditions, depending on a real parameter $\lambda$, which are not equivalent.

Let us recall that the spectrum of $\Delta_{\HH^n}$ is $\left(-\infty,\frac{(n-1)^2}{4}\right]$, and that the following sharp Poincar\'e-Sobolev inequality in $\HH^n$ is valid for all $f\in H^1(\HH^n)$ (see e.g. \cite{MaSa08}),
\begin{equation}
\label{PoiSob}
\left(\int_{\HH^n}|f|^{p+1}\right)^{\frac{2}{p+1}}\leq D(p,n)\left(\int_{\HH^n} |\nabla_{\HH^n} f|^2-\frac{(n-1)^2}{4}\int_{\HH^n} |f|^{2}\right).
\end{equation}
As a consequence, for all $\lambda<\frac{(n-1)^2}{4}$ and $f\in \HH^n$, the following inequality holds
\begin{equation}
\label{PoiSob_lambda}
\left(\int_{\HH^n}|f|^{p+1}\right)^{\frac{2}{p+1}}\leq D_{\lambda}\left(\int_{\HH^n} |\nabla_{\HH^n} f|^2-\lambda\int_{\HH^n} |f|^{2}\right).
\end{equation}
The best constant in \eqref{PoiSob_lambda} is attained for a positive, radial function $Q_{\lambda}\in H^1(\HH^n)$, solution of the equation
$$-\Delta Q_{\lambda}-\lambda Q_{\lambda}=|Q_{\lambda}|^{p-1}Q_{\lambda}$$
which we will call ground state (see \cite{MaSa08} and \S \ref{SS:Ground_states} for details). This ground state is not always known to be unique if $n=2$: when it is not we will denote by $Q_{\lambda}$ one of the ground states corresponding to $p$ and $\lambda$. In all cases, we let $\QQQ_{\lambda}$ the set of all ground states, that is the set of all solutions of the above equation that are also positive,  radial minimizers for \eqref{PoiSob_lambda}. 

Before stating our main results, we introduce some notations. If $f\in H^1(\HH^n)$, we denote by 
\begin{equation*}
%\label{def_norm_lambda}
 \|f\|_{\HHH_\lambda}^2=\int_{\HH^n} |\nabla_{\HH^n} f|^2d\mu-\lambda\int_{\HH^n} |f|^{2}d\mu,
\end{equation*} 
which, for any $\lambda<\frac{(n-1)^2}{4}$, is a norm on $H^1(\HH^n)$ equivalent to the usual $H^1$ norm. This is due to the spectrum of $\Delta_{\HH^n}$, implying $\|\nabla f\|_2^2\geq \frac{(n-1)^2}{4}\|f\|_2^2$. 
We define
$$E_\lambda(f)=\frac{1}{2}\|f\|^2_{{\HHH}_\lambda}-\frac{1}{p+1}\|f\|^{p+1}_{L^{p+1}}.$$
In particular $E_0=E$. Note that $E_{\lambda}(u(t))$ is independent of $t$ for any solution $u$ of \eqref{NLS}. We denote also
$$\delta_\lambda(f)=\|f\|_{{\HHH}_\lambda}^2-\|Q_{\lambda}\|^2_{{\HHH}_\lambda}.$$
When the ground state $Q_{\lambda}$ is not unique, $\delta_{\lambda}$ does not depend on the choice of $Q_{\lambda}\in \QQQ_{\lambda}$ (see \eqref{formule_Qp}, \eqref{formule_Dp}). 
\begin{theo}[Trapping and global existence]\label{T:global}
Let $n\geq 2$, $\lambda<\frac{(n-1)^2}4$ and $u_0\in H^1(\mathbb H^n)$. If $1< p<1+\frac 4{n-2}$ and $ E_\lambda(u_0)\leq E_\lambda(Q_{\lambda})$ then $\delta_\lambda(u(t))$ does not change sign. Moreover, under these hypothesis,
\begin{enumerate}
 \item \label{I:zero}If $\delta_\lambda(u_0)= 0$, then there exists $\theta\in \RR$ and an hyperbolic isometry $h$ such that $u_0=e^{i\theta}Q(h \cdot)$, $Q\in \QQQ_{\lambda}$.
\item \label{I:negative}If $\delta_\lambda(u_0)< 0$ then the solution $u$ is global in time.
\item \label{I:positive}If $\delta_\lambda(u_0)>0$ then the solution $u$ does not scatter in any time direction.
\end{enumerate}
\end{theo}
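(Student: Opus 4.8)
The plan is to run a variational ``trapping'' argument of Holmer--Roudenko / Kenig--Merle type adapted to $\HH^n$, the only manifold-specific inputs being the sharp inequality \eqref{PoiSob_lambda} and the classification of its optimizers. \emph{Step 1 (variational lemma).} First I would bound the energy below by a function of the $\HHH_\lambda$-norm alone. Using \eqref{PoiSob_lambda} in the form $\|f\|_{L^{p+1}}^{p+1}\le D_\lambda^{(p+1)/2}\|f\|_{\HHH_\lambda}^{p+1}$, one obtains $E_\lambda(f)\ge g\big(\|f\|_{\HHH_\lambda}^2\big)$, where
\[
g(y)=\tfrac12\,y-\frac{D_\lambda^{(p+1)/2}}{p+1}\,y^{(p+1)/2},\qquad y\ge 0,
\]
with equality exactly when $f$ saturates \eqref{PoiSob_lambda}. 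Since $p>1$, the function $g$ increases then decreases, with a unique maximum at $y_*=D_\lambda^{-(p+1)/(p-1)}$. The two identities satisfied by a ground state, the Nehari relation $\|Q_\lambda\|_{\HHH_\lambda}^2=\|Q_\lambda\|_{L^{p+1}}^{p+1}$ (obtained by testing the ground-state equation against $Q_\lambda$) and the optimality $\|Q_\lambda\|_{L^{p+1}}^2=D_\lambda\|Q_\lambda\|_{\HHH_\lambda}^2$, then give $y_*=\|Q_\lambda\|_{\HHH_\lambda}^2$ and $g(y_*)=\frac{p-1}{2(p+1)}\|Q_\lambda\|_{\HHH_\lambda}^2=E_\lambda(Q_\lambda)$. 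In particular $\delta_\lambda(f)=\|f\|_{\HHH_\lambda}^2-y_*$.

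\emph{Step 2 (no sign change).} Writing $\phi(t)=\|u(t)\|_{\HHH_\lambda}^2$, which is continuous on the maximal interval $I=(-T_-(u_0),T_+(u_0))$ since $u\in C^0(I,H^1)$ and $\HHH_\lambda$ is equivalent to $H^1$, conservation of $E_\lambda$ and Step 1 give $g(\phi(t))\le E_\lambda(u_0)\le E_\lambda(Q_\lambda)=g(y_*)$ for all $t\in I$. If $E_\lambda(u_0)<E_\lambda(Q_\lambda)$, the sublevel set $\{y\ge0:g(y)\le E_\lambda(u_0)\}$ is a disjoint union $[0,a]\cup[b,\infty)$ with $a<y_*<b$; by continuity $\phi$ stays in the component containing $\phi(0)$, so $\delta_\lambda(u(t))=\phi(t)-y_*$ keeps the sign of $\delta_\lambda(u_0)$. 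If $E_\lambda(u_0)=E_\lambda(Q_\lambda)$ the two components merge at $y_*$; here I would argue that $\phi(t_1)=y_*$ forces equality throughout $g(\phi(t_1))\le E_\lambda(u(t_1))=E_\lambda(Q_\lambda)$, so $u(t_1)$ saturates \eqref{PoiSob_lambda}, hence equals $e^{i\theta}Q(h\cdot)$ by the classification of \S\ref{SS:Ground_states}, whence by uniqueness of the flow $u$ is a standing wave and $\phi\equiv y_*$. Consequently, if $\delta_\lambda(u_0)\ne0$ then $\phi$ never meets $y_*$ and $\delta_\lambda(u(t))$ keeps its sign by continuity.

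\emph{Step 3 (the three cases).} For \eqref{I:zero}, $\delta_\lambda(u_0)=0$ means $\phi(0)=y_*$, so $E_\lambda(u_0)\ge g(y_*)=E_\lambda(Q_\lambda)\ge E_\lambda(u_0)$ forces equality, i.e. $u_0$ optimizes \eqref{PoiSob_lambda}; the classification of optimizers of \S\ref{SS:Ground_states}, together with the normalization $\|u_0\|_{\HHH_\lambda}=\|Q_\lambda\|_{\HHH_\lambda}$, gives $u_0=e^{i\theta}Q(h\cdot)$ with $Q\in\QQQ_\lambda$. For \eqref{I:negative}, Step 2 yields $\|u(t)\|_{\HHH_\lambda}^2<y_*$ on all of $I$; by equivalence of $\HHH_\lambda$ and $H^1$ this is a uniform $H^1$ bound, so the blow-up criterion \eqref{blupcrit} (and its time-reversed analogue) forces $T_\pm(u_0)=+\infty$. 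For \eqref{I:positive}, Step 2 gives $\phi(t)>y_*$ for all $t$; if $u$ scattered in, say, positive time, then the linear dispersive/Strichartz theory on $\HH^n$ would yield $\|u(t)\|_{L^{p+1}}\to0$, so $\tfrac12\phi(t)=E_\lambda(u_0)+\tfrac1{p+1}\|u(t)\|_{L^{p+1}}^{p+1}\to E_\lambda(u_0)$; but $\phi(t)>y_*$ gives $E_\lambda(u_0)\ge\tfrac12 y_*=E_\lambda(Q_\lambda)+\tfrac1{p+1}\|Q_\lambda\|_{L^{p+1}}^{p+1}>E_\lambda(Q_\lambda)$, contradicting $E_\lambda(u_0)\le E_\lambda(Q_\lambda)$.

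The routine part is the trapping once the shape of $g$ is pinned down in Step 1. The two delicate points I expect are: first, the threshold case $E_\lambda(u_0)=E_\lambda(Q_\lambda)$ in Step 2, where the topological separation degenerates and one must invoke the rigidity of the optimizers together with reversibility of the flow; and second, the non-scattering assertion \eqref{I:positive}, the only place genuinely using the manifold's linear analysis, where one needs that scattering forces the potential energy $\|u(t)\|_{L^{p+1}}^{p+1}$ to vanish as $t\to\pm\infty$. The classification of optimizers of \eqref{PoiSob_lambda} up to phase and hyperbolic isometry, assumed from the preliminaries, underlies both \eqref{I:zero} and the threshold case.
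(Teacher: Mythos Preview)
Your proof is correct and follows essentially the same route as the paper. The paper packages your Step~1 as a separate lemma (Lemma~\ref{L:control}) yielding the inequality $\|f\|_{\HHH_\lambda}^2\le \frac{E_\lambda(f)}{E_\lambda(Q_\lambda)}\|Q_\lambda\|_{\HHH_\lambda}^2$ under the constraint $\|f\|_{\HHH_\lambda}\le\|Q_\lambda\|_{\HHH_\lambda}$, and then runs the same continuity, classification-of-optimizers, blow-up-criterion, and $L^{p+1}$-decay arguments you give in Steps~2--3; your treatment of the threshold case $E_\lambda(u_0)=E_\lambda(Q_\lambda)$ via ``if $\phi(t_1)=y_*$ then $u(t_1)$ is a ground state, hence the whole flow is a standing wave'' is exactly the implicit reasoning behind the paper's one-line ``as a consequence of Case~\eqref{I:zero}''.
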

We refer to \S \ref{SS:prelim} below for the definition of the group of hyperbolic isometries.
\begin{rema}
The statement about global existence in Theorem \ref{T:global} is relevant only if $p\geq 1+\frac 4n$. If $1<p<1+\frac{4}{n}$, it follows from the Gagliardo-Nirenberg inequality on $\HH^n$ (which is the same than on $\RR^n$) that all solutions of \eqref{NLS} are global in time. 
\end{rema}
\begin{rema}
In the mass-critical case $p=1+\frac 4n$ a sharp global existence result based on the mass is known. On the one hand, global existence occurs for initial data of mass less than $1/C_{G-N}$ where $C_{G-N}$ is the best constant of Gagliardo-Nirenberg inequality on $\mathbb H^n$. On the other hand blow-up solutions can be constructed as in \cite{BaCaDu11,RaSz11} from the Euclidean ground state, hence of mass the inverse of the  best constant of Gagliardo-Nirenberg inequality on $\mathbb R^n$. Very recently the two constants have been proved to coincide, yielding a mass threshold for blow-up \cite{Mu14}.
\end{rema}
\begin{rema}
We note that in the case $p>1+\frac{4}{n}$, the analog of Theorem \ref{T:global} is valid on $\RR^n$ with almost the same proof. One can prove, for example, a global existence condition similar to Theorem \ref{T:global} \eqref{I:negative} for the NLS equation on $\RR^n$ depending on a parameter $\mu<0$ and using the ground states $R_{\mu}$ defined above. However, fixing an initial data $u_0$ and using the scaling to obtain the optimal value for $\mu$, one would exactly obtain the scale-invariant criteria of Theorem \ref{T:Rn}. In this sense, the conditions of Theorem \ref{T:global} are the natural analogs, for the hyperbolic space, of the conditions of Theorem \ref{T:Rn}.
\end{rema}

We conjecture that in the context of Theorem \ref{T:global}, with the stronger assumptions $E_\lambda(u_0)< E_\lambda(Q_{\lambda})$, and $p\geq 1+\frac{4}{n}$, solutions such that $\delta_{\lambda}(u_0)<0$ are global and scatter, whereas solutions such that $\delta_{\lambda}(u_0)>0$ blow up in finite time. This is false if $1<p<1+\frac{4}{n}$: all solutions are global, and there exist values of $\lambda$ and initial data $u_0$ such that $E_{\lambda}(u_0)<E_{\lambda}(Q_{\lambda})$, $\delta_{\lambda}(u_0)<0$, and the corresponding solution $u$ does not scatter (see Proposition \ref{P:stableGS} below). 

In this work we prove the conjecture in space dimensions $n=2$ and $n=3$, for radial data (i.e. depending only on the distance to the origin of $\HH^n$):
\begin{theo}
\label{T:main}
Assume $n\in\{2,3\}$, $p\geq 3$ if $n=2$,  $\frac{7}{3}\leq  p<5$ if $n=3$.  
 Let  $\lambda<\frac{(n-1)^2}4$ and $u_0\in H^1_{rad}(\mathbb H^n)$. Assume $E_\lambda(u_0)< E_\lambda(Q_{\lambda})$. Then 
\begin{enumerate}
\item \label{I:scattering}If $\delta_\lambda(u_0)< 0$ then the solution $u$ is global and scatters in both time directions.
\item \label{I:blow-up}If $\delta_\lambda(u_0)>0$, and 
$$\int_{\HH^n} r^2|u_0|^2\,d\mu<\infty\text{ or } 1+\frac{4}{n}<p \leq 5$$ 
then the solution $u$ blows up in finite positive and negative times.
\end{enumerate}
\end{theo}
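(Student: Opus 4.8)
The plan is to establish the two statements by the two standard mechanisms, in each case using Theorem \ref{T:global} as the trapping device and replacing the Euclidean virial/Pohozaev computations by their hyperbolic counterparts. For the scattering statement \eqref{I:scattering} I would run a concentration--compactness and rigidity argument; for the blow-up statement \eqref{I:blow-up} I would run a virial convexity argument. In both cases the radial hypothesis is a genuine simplification, since radiality is preserved by the flow of \eqref{NLS} and the only hyperbolic isometries compatible with it fix the origin, so there is no moving spatial center to track (and no scaling parameter either, the latter being absent on $\HH^n$).

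For \eqref{I:scattering}, I would first invoke Theorem \ref{T:global}\eqref{I:negative}: under $E_\lambda(u_0)<E_\lambda(Q_\lambda)$ and $\delta_\lambda(u_0)<0$ the sign of $\delta_\lambda(u(t))$ is preserved and the solution is global. The \emph{strict} subthreshold inequality upgrades this to a quantitative trapping: using the sharp Poincar\'e--Sobolev inequality \eqref{PoiSob_lambda} and the variational characterization of $Q_\lambda$, one gets $\eta>0$ with $\|u(t)\|_{\HHH_\lambda}^2\le(1-\eta)\|Q_\lambda\|_{\HHH_\lambda}^2$ and a coercivity lower bound showing that the conserved energy $E_\lambda$ controls $\|u(t)\|_{\HHH_\lambda}^2$, hence $\sup_t\|u(t)\|_{H^1}<\infty$. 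Combined with the global Strichartz estimates on $\HH^n$ and small-data scattering, this reduces scattering to an a priori global Strichartz bound. I would then argue by contradiction: a linear profile decomposition adapted to the hyperbolic Schr\"odinger flow, together with the nonlinear perturbation/stability theory, produces a \emph{critical element} $u_c$, a nonzero global subthreshold solution that does not scatter and whose orbit is precompact in $H^1$ modulo the symmetries of \eqref{NLS}. This is precisely the existence-of-critical-element result announced in the introduction, which I take as established.

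It remains to rule out $u_c$ by rigidity, and this is the main obstacle. I would use a localized virial functional built from a radial weight $\psi(r)$, with $\psi(r)\sim r^2$ near the origin and suitably truncated for large $r$, and differentiate $\int\psi|u_c|^2\,d\mu$ twice in time. On $\HH^n$ the radial Hessian of $\psi$ is $\psi''$ while its angular part is $\coth(r)\,\psi'$, so the curvature produces extra terms (through $\coth r$ and the volume density $\sh^{n-1}r$) that are absent in $\RR^n$; the generalized Pohozaev identity is exactly the tool that organizes these terms and exhibits a coercive lower bound for the second derivative in terms of the trapped quantity $\delta_\lambda(u_c(t))$, uniformly bounded away from zero by the strict threshold. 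Precompactness of the orbit makes the tails at large $r$ uniformly small, so the truncation errors are controlled, and the resulting strict monotonicity contradicts the boundedness of the virial quantity unless $u_c\equiv0$. The restriction $n\in\{2,3\}$ together with the lower bounds on $p$ enters precisely in guaranteeing the favorable sign of the curvature contributions and the coercivity of the Pohozaev-type functional.

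For \eqref{I:blow-up}, I would again start from Theorem \ref{T:global}, which here gives $\delta_\lambda(u(t))>0$ on the whole maximal interval, and combine it with the strict gap $E_\lambda(u_0)<E_\lambda(Q_\lambda)$. Via the same generalized Pohozaev identity I would derive a quantitative upper bound $V''(t)\le -c<0$ for the geometric variance $V(t)=\int\psi(r)|u(t)|^2\,d\mu$. In the finite-variance case $\int r^2|u_0|^2\,d\mu<\infty$ one takes $\psi(r)=r^2$ directly, and the concavity of the nonnegative function $V$ forces the interval of existence to be finite in both time directions. In the radial case with $1+\frac4n<p\le5$ and possibly infinite variance, I would instead use a truncated radial weight and absorb the truncation errors using the Strauss-type radial decay estimate on $\HH^n$, as in the Ogawa--Tsutsumi scheme; the condition $p\le5$ is what keeps these error terms subcritical enough to be controlled. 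The difficulty throughout is once more geometric: checking that the curvature terms in $V''$ do not spoil the sign and size needed for the convexity argument, which is exactly what the generalized Pohozaev identities are designed to secure.
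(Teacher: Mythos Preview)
Your overall architecture matches the paper's: concentration--compactness producing a radial critical element with precompact orbit (this is indeed Theorem~\ref{P:critical}, which you correctly take as given), then a localized virial/rigidity argument to exclude it; and for blow-up, trapping plus virial convexity, with the Ogawa--Tsutsumi localization and radial Strauss decay in the infinite-variance case. The weight choices, the role of the truncation, and the reason for $p\le5$ are all as in the paper.

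The one place where your sketch is too optimistic is in how the sign of the virial functional $G(u(t))$ is actually obtained. You write that the generalized Pohozaev identity ``exhibits a coercive lower bound for the second derivative in terms of the trapped quantity $\delta_\lambda(u_c(t))$''. This is not how the argument runs, and a direct Pohozaev manipulation will not give you a bound of that form: on $\HH^n$ the virial expression $G(f)$ (see \eqref{def_G_bis}) contains the weighted term $\int|f|^{p+1}\big(1+(n-1)\,r\coth r\big)$, and there is no simple algebraic identity relating $G(f)$ to $\delta_\lambda(f)$ or to $E_\lambda(f)-E_\lambda(Q_\lambda)$ as there is on $\RR^n$. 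What the paper does instead is pose a constrained variational problem: one shows (Lemma~\ref{virielbound1var}) that
\[
\inf\Big\{G(f):f\in H^1_{\mathrm{rad}},\ E_\lambda(f)\le E_\lambda(Q_\lambda),\ \|f\|_{\HHH_\lambda}\le\|Q_\lambda\|_{\HHH_\lambda}\Big\}=0,
\]
with minimizing sequences converging in $\HHH$ to $0$ or to a ground state. This is done by extracting a minimizer via compact embedding, writing the Euler--Lagrange equation with a Lagrange multiplier, and then testing that equation against $r\partial_r f$ --- this last step is the ``generalized Pohozaev identity'', used not to produce coercivity directly but to rule out nontrivial constrained minimizers. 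The sign conditions on the resulting explicit radial coefficients are exactly where $n\in\{2,3\}$ and $p\ge1+4/n$ enter. The strict positivity $G(u_c(t))\ge\delta_0>0$ then follows from this variational characterization together with the compactness of the orbit (which prevents $u_c(t)$ from approaching either $0$ or $e^{i\theta}Q_\lambda$). The blow-up half is symmetric: one maximizes $G$ (equivalently, an energy-shifted functional $H_\lambda$) over the super-threshold region and again uses the Lagrange/Pohozaev analysis to identify the extremizers (Proposition~\ref{P:virielbound1bis}). So the Pohozaev identity is not the coercivity estimate itself; it is the tool that classifies the extremizers of a separate constrained variational problem, and that variational step is the idea your sketch is missing.
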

Let us note that Theorem \ref{T:main} implies that for $p$, $n$ as in the theorem, the ground states are (orbitally) unstable: indeed, one can check as a consequence of this theorem that the solution with initial data $\alpha Q_{\lambda}$ blows up in finite time if $\alpha>1$, and scatters if $\alpha\in (0,1)$. 

Let us say a few words about the proof of Theorem \ref{T:main}. To prove the scattering result, we use the compactness-rigidity method initiated in \cite{KeMe06}. The \emph{compactness} step consists in proving the existence of a nonscattering solution of \eqref{NLS} with minimal energy. More precisely:
\begin{theo}[Existence of the critical element]\label{P:critical}Assume $1< p< 1+\frac 4{n-2}$ ($p>1$ if $n=2$) and $\lambda< \frac{(n-1)^2}4$.
%Let $u_0\in\mathcal H_{\lambda,rad}$ with $E_\lambda(u_0)<E_\lambda(Q_{\lambda})$ and $\|u_o\|_{\HHH_\lambda}<\|Q_{\lambda}\|_{\HHH_\lambda}$. 
There exists a global radial solution $v_c$ of equation \eqref{NLS}  such that
$$\{v_c(t,\cdot), t\in\mathbb R\}$$
has compact closure in $H^1(\mathbb H^n)$,
$$E_\lambda(v_c(0))\leq E_\lambda(Q_{\lambda}),\quad \|v_c(0)\|_{\HHH_\lambda}\leq \|Q_{\lambda}\|_{\HHH_\lambda},$$
and, for any $u_0\in H^1(\HH^n)$ radial, if 
$$E_\lambda(u_0)<  E_\lambda(v_c(0)),\quad \|u_0\|_{\HHH_\lambda}\leq \|Q_{\lambda}\|_{\HHH_\lambda},$$
then the solution $u$ of equation \eqref{NLS} scatters in both time directions. 
\end{theo}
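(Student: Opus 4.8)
The plan is to run the concentration--compactness and rigidity scheme of Kenig--Merle, adapted to the radial setting on $\HH^n$. First I would fix $\lambda<\frac{(n-1)^2}4$ and restrict attention to radial data subject to the trapping constraint $\|u_0\|_{\HHH_\lambda}\le\|Q_\lambda\|_{\HHH_\lambda}$ (equivalently $\delta_\lambda(u_0)\le 0$), for which Theorem \ref{T:global} guarantees global existence. Define the critical threshold
$$E_c=\inf\bigl\{E_\lambda(u_0): u_0\in H^1_{rad}(\HH^n),\ \|u_0\|_{\HHH_\lambda}\le\|Q_\lambda\|_{\HHH_\lambda},\ u\text{ does not scatter}\bigr\}.$$
Small-data scattering (a consequence of the global Strichartz estimates on $\HH^n$) shows $E_c>0$, while the standing wave $e^{i\lambda t}Q_\lambda$ is a nonscattering solution carrying the extremal mass, so $E_c\le E_\lambda(Q_\lambda)$. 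With this definition, the final clause of the statement (scattering of every radial datum with $E_\lambda(u_0)<E_\lambda(v_c(0))$ and $\|u_0\|_{\HHH_\lambda}\le\|Q_\lambda\|_{\HHH_\lambda}$) will be immediate once $E_\lambda(v_c(0))=E_c$ is arranged, so the real task is to produce a nonscattering solution attaining $E_c$ with precompact trajectory.

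The technical heart is a linear profile decomposition for bounded sequences in $H^1_{rad}(\HH^n)$ relative to the propagator $e^{it\Delta_{\HH^n}}$. A decisive simplification relative to $\RR^n$ is the absence of any scaling symmetry, so the only sources of noncompactness are time translations $t_k^j$ and hyperbolic isometries; moreover, since translating a radial profile off the origin destroys radiality in the limit, I expect the spatial cores to stay bounded (or to contribute only profiles that disperse). I would record the Pythagorean expansion of $\|\cdot\|_{\HHH_\lambda}^2$ and the asymptotic orthogonality of both the mass and of $E_\lambda$ across profiles, together with the Strichartz smallness of the remainder.

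Next I would take a minimizing sequence $u_{0,k}$ of radial data with $\|u_{0,k}\|_{\HHH_\lambda}\le\|Q_\lambda\|_{\HHH_\lambda}$ and $E_\lambda(u_{0,k})\to E_c$ whose solutions do not scatter, apply the profile decomposition, and associate to each linear profile a nonlinear profile $U^j$ solving \eqref{NLS} (with a limiting-equation argument for profiles whose $t_k^j\to\pm\infty$). Trapping passes to each profile, so every $U^j$ obeys the mass constraint and has energy at most $E_c$; hence each profile of energy strictly below $E_c$ scatters with finite Strichartz norm, and the remainder is Strichartz-small. If two or more profiles carried positive energy, the long-time perturbation (stability) lemma would force $u_k$ itself to scatter, contradicting the choice of $u_{0,k}$. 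Therefore exactly one profile survives, with energy exactly $E_c$ and no dispersive remainder; its nonlinear profile is the critical element $v_c$, which is radial and, by Theorem \ref{T:global}, global. By construction $E_\lambda(v_c(0))=E_c\le E_\lambda(Q_\lambda)$ and $\|v_c(0)\|_{\HHH_\lambda}\le\|Q_\lambda\|_{\HHH_\lambda}$.

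Finally, precompactness of $\{v_c(t):t\in\RR\}$ follows from the same minimality: for any time sequence $t_k$, the profile decomposition of $v_c(t_k)$ must again reduce to a single profile with vanishing remainder, yielding a convergent subsequence in $H^1$. The radial constraint prevents the concentration center from escaping to spatial infinity, so compactness holds genuinely in $H^1$ rather than modulo isometries. The main obstacle is the profile decomposition itself together with its companion perturbation theory on $\HH^n$: one must control nonlinear profiles attached to diverging time translations, verify that the radial structure confines the spatial cores, and close the stability estimates within the Strichartz range available on $\HH^n$. The absence of scaling and the stronger hyperbolic dispersion both streamline the bookkeeping and are what allow the statement to hold across the full subcritical range $1<p<1+\frac4{n-2}$.
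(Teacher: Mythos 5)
Your overall route is the same as the paper's: a critical threshold defined by trapping plus (non)scattering (your $E_c$ coincides with $\tilde{\omega}_{0}E_\lambda(Q_{\lambda})$ in the paper's notation), small-data scattering and the standing wave $e^{it\lambda}Q_{\lambda}$ to pin $0<E_c\leq E_\lambda(Q_{\lambda})$, a radial profile decomposition on $\HH^n$ with time translations only, extraction of a single profile from a minimizing sequence via positivity of the profile energies and long-time perturbation, and compactness of the orbit by re-running the same argument along arbitrary time sequences. The paper separates the case $\tilde{\omega}_{0}=1$, where the standing wave itself serves as $v_c$, but your unified formulation covers that case too, so this difference is cosmetic.

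There is, however, one genuine gap, in the passage from ``exactly one profile survives'' to the precompactness of $\{v_c(t):t\in\RR\}$. Your construction only guarantees that $v_c$ fails to scatter in \emph{at least one} time direction. If $v_c$ scatters, say, backward in time, then $\|v_c(t)-e^{it\Delta}v_-\|_{H^1}\to 0$ as $t\to-\infty$ with $v_-\neq 0$ (mass conservation rules out $v_-=0$), and since $e^{it\Delta}v_-$ converges weakly to zero while keeping a fixed $H^1$ norm, no sequence $v_c(t_k)$ with $t_k\to-\infty$ has a strongly convergent subsequence: the orbit is \emph{not} precompact. The same issue resurfaces inside your compactness argument: reducing $v_c(t_k)$ to a single profile $e^{it_{1,k}\Delta}\varphi_1$ plus a remainder vanishing in $H^1$ yields convergence of $v_c(t_k)$ only if the time parameters $t_{1,k}$ stay bounded, and excluding $t_{1,k}\to\pm\infty$ (divergence of $t_{1,k}$ forces, via Corollary \ref{C:small_data}, scattering of $v_c$ in the corresponding direction) requires knowing that $v_c$ fails to scatter in \emph{both} directions. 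The paper secures this with a re-centering step that your sketch omits: since a nonscattering trapped solution has infinite $S^0$ norm (Proposition \ref{P:scatt}), one replaces each $f_k$ by $U(-t_k)f_k$ for a suitable $t_k$, so that both the backward and forward $S^0$ norms of the translated solutions tend to infinity along the sequence; this two-sided blow-up is a hypothesis of the key convergence lemma (Lemma \ref{L:cv}), is what rules out divergent time parameters for the surviving profile, and is what makes the critical element nonscattering in both directions, hence with precompact orbit. With this re-centering inserted before you apply the profile decomposition, your argument matches the paper's proof of Proposition \ref{P:exist_crit_rad}.
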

We stated Theorem \ref{P:critical} in a radial setting. A nonradial version is available (see Proposition \ref{P:critical2} p. \pageref{P:critical2}). 
\begin{rema}
Note that $v_c$ exists in all dimensions and for all energy-subcritical exponent $p$. Again, this contrasts with the Euclidean case where for $p$ close to one there is no small data scattering, and thus no critical solution in the above sense.
\end{rema}
The proof of Theorem \ref{P:critical} follows the line of the corresponding proof on $\RR^n$ (see \cite{Ke06,KeMe06,TaViZh08,HoRo07}). The main ingredient of the proof is a profile decomposition adapted to the energy-subcritical equation \eqref{NLS}. We construct this profile decomposition in Section \ref{SS:profile}, using Fourier analysis on the hyperbolic spaces, in the spirit of the analogous construction in the energy-critical setting, given in \cite{IoPaSt12}. 

The \emph{rigidity} step in the proof of Theorem \ref{T:main} \eqref{I:scattering} consists in proving that the critical element $v_c$ given by Theorem \ref{P:critical} (under the assumptions on $n$ and $p$ in Theorem \ref{T:main}), satisfies $E_{\lambda}(v_c)=E_{\lambda}(Q_{\lambda})$ (see Proposition \ref{prop-rigidity} p. \pageref{prop-rigidity}). Similarly to the Euclidean case, we use a localized version of the following virial-type identity:
\begin{equation}
\label{virial_intro}
\partial_t^2\int |u(t,x)|^2r^2\,d\mu(x)=G(u(t)), 
\end{equation} 
where, $r$ is the distance to the origin of $\HH^n$, and, if $f$ is radial,
\begin{multline*}
G(f)=8\|f\|_{\mathcal H}^2+2(n-1)(n-3)\int_{\mathbb H^n}|f|^2\frac{r\cosh r-\sinh r}{\sinh^3r}\,d\mu(x)\\
-\frac{4(p-1)}{p+1}\int_{\mathbb H^n}|f|^{p+1}\left(1+(n-1)\,\frac{r\cosh r}{\sinh r}\right)\,d\mu(x).
\end{multline*}
A crucial property of $G$ is that it is positive for solutions satisfying the assumptions of Theorem \ref{T:main} \eqref{I:scattering}. However, unlike in the Euclidean setting where the analogous property follows quite easily from the characterization of the ground states $R_{\mu}$ as maximizers for the Gagliardo-Nirenberg inequality and the trapping of solutions below the ground state mass and energy,  the proof of this property is quite intricate (see Section \ref{S:rigidity}). The key new ingredient of this proof is a generalized Pohozaev identity satisfied by the minimizers of $G(f)$ under the constraint $E_{\lambda}(f)=E_{\lambda}(Q_{\lambda})$. It is in this part of the proof that the assumption $n=2,3$ is needed.

We think that the radiality assumption and the assumption $n=2,3$ are technical, and that Theorem \ref{T:main} remains valid for any $n\geq 2$, without symmetry, provided $p\geq 1+\frac{4}{n}$.  On the other hand, the hypothesis $p\geq 1+\frac{4}{n}$ is crucial, as emphasized by the following proposition:
\begin{prop}\label{P:stableGS}
 If $n\geq 3$ and $1<p<1+\frac{4}{n}$, there exists $\lambda<\frac{(n-1)^2}{4}$ such that $E_{\lambda}(v_c(0))<E_{\lambda}(Q_{\lambda})$. 
\end{prop}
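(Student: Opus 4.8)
The plan is to deduce the strict inequality from the variational characterization of $v_c$ in Theorem \ref{P:critical}, after exhibiting a non-scattering solution lying strictly below the threshold. First I would extract from Theorem \ref{P:critical} slightly more than the stated bound $E_\lambda(v_c(0))\le E_\lambda(Q_\lambda)$: its scattering conclusion says that any radial solution with $\|u_0\|_{\HHH_\lambda}\le\|Q_\lambda\|_{\HHH_\lambda}$ and $E_\lambda(u_0)<E_\lambda(v_c(0))$ scatters. Taking the contrapositive, it is enough to produce one value $\lambda<\frac{(n-1)^2}{4}$ and one radial, non-scattering solution $u$ with $\|u(0)\|_{\HHH_\lambda}\le\|Q_\lambda\|_{\HHH_\lambda}$ and $E_\lambda(u(0))<E_\lambda(Q_\lambda)$: for such a $u$ the contrapositive forces $E_\lambda(v_c(0))\le E_\lambda(u(0))<E_\lambda(Q_\lambda)$.

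For the non-scattering solutions I would use the standing waves $u(t)=e^{-i\mu t}Q_\mu$, $\mu<\frac{(n-1)^2}{4}$, which are radial, global, and never scatter since $\|u(t)\|_{H^1}$ stays constant and positive. Writing $A(\mu)=\|Q_\mu\|_{L^{p+1}}^{p+1}$ and testing the ground state equation against $Q_\mu$ gives $\|Q_\mu\|_{\HHH_\mu}^2=A(\mu)$, hence $E_\mu(Q_\mu)=\frac{p-1}{2(p+1)}A(\mu)$. The key structural input is the identity $\frac{d}{d\mu}E_\mu(Q_\mu)=-\frac12 M(Q_\mu)$, which follows from the envelope theorem (the only $\mu$-dependence surviving differentiation is $\partial_\mu E_\mu=-\frac12\|\cdot\|_{L^2}^2$, because $Q_\mu$ is a critical point of $E_\mu$); this uses the $C^1$ dependence $\mu\mapsto Q_\mu$, available for $n\ge3$ by the uniqueness and nondegeneracy of the ground state (\S\ref{SS:Ground_states}). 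Combining the two relations yields $A'(\mu)=-\frac{p+1}{p-1}M(Q_\mu)$.

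With these relations, a short computation using $\|Q_\mu\|_{\HHH_\lambda}^2=A(\mu)+(\mu-\lambda)M(Q_\mu)$ gives the exact formulas
\begin{equation*}
E_\lambda(Q_\mu)-E_\lambda(Q_\lambda)=\frac12\int_\lambda^\mu\big(M(Q_\mu)-M(Q_s)\big)\,ds,
\end{equation*}
\begin{equation*}
\|Q_\mu\|_{\HHH_\lambda}^2-\|Q_\lambda\|_{\HHH_\lambda}^2=-\frac{p+1}{p-1}\int_\lambda^\mu M(Q_s)\,ds+(\mu-\lambda)M(Q_\mu).
\end{equation*}
I would then observe that if $\mu\mapsto M(Q_\mu)$ is strictly decreasing on some interval $[\lambda,\mu]$ with $\mu>\lambda$, both right-hand sides are strictly negative: the first because $M(Q_\mu)<M(Q_s)$ on $[\lambda,\mu)$, and the second because $\int_\lambda^\mu M(Q_s)\,ds>(\mu-\lambda)M(Q_\mu)$ together with $\frac{p+1}{p-1}>1$. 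Thus $u(t)=e^{-i\mu t}Q_\mu$ meets both requirements of the reduction for this $\lambda$, and the proposition follows.

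It remains to find a decreasing interval, and this is the crux. I would prove that $M(Q_\mu)\to+\infty$ as $\mu\to-\infty$; since $M(Q_\mu)$ is finite and $C^1$ in $\mu$, this forces $M(Q_{\mu_1})>M(Q_{\mu_2})$ for some $\mu_1<\mu_2$, and the mean value theorem together with continuity of the derivative produces a subinterval on which $M(Q_\cdot)$ is strictly decreasing. The asymptotics $M(Q_\mu)\to+\infty$ is where the hypothesis $1<p<1+\frac4n$ enters decisively and is the main technical obstacle: as $\mu\to-\infty$ the ground state concentrates, and after rescaling around its peak in geodesic normal coordinates it converges to the Euclidean ground state $R_{-1}$, so that $M(Q_\mu)\sim c\,|\mu|^{-s_c}M(R_{-1})$ to leading order. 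Because $p<1+\frac4n$ means $s_c<0$, the exponent $-s_c$ is positive and $M(Q_\mu)\to+\infty$. In the mass-critical or supercritical regimes $-s_c\le0$ and this mechanism disappears, consistent with the conjecture. Making the concentration limit and the attendant mass asymptotics rigorous — controlling the curvature corrections to the Euclidean scaling, as well as the $C^1$ dependence of the branch — is the principal work of the proof.
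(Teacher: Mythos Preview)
Your strategy is correct in outline but takes a substantially different route from the paper. You work with the differential geometry of the branch $\mu\mapsto Q_\mu$: via the envelope identity $\frac{d}{d\mu}E_\mu(Q_\mu)=-\tfrac12 M(Q_\mu)$ you reduce everything to finding an interval on which $M(Q_\mu)$ is strictly decreasing, and then argue that $M(Q_\mu)\to\infty$ as $\mu\to-\infty$ by concentration to the Euclidean ground state. The paper's argument is far more economical and avoids both technical ingredients you flag. It first uses Proposition~\ref{P:mass-sub} to pick a specific $\lambda$ for which $Q_\lambda$ is a mass-constrained energy minimizer with compact minimizing sequences (this is where $p<1+\tfrac4n$ enters, via coercivity and $e(\alpha)<0$ for large $\alpha$). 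Then it argues by contradiction in the Cazenave--Lions style: if $E_\lambda(v_c(0))=E_\lambda(Q_\lambda)$, the solutions with data $\beta_k Q_\lambda$, $\beta_k\uparrow 1$, lie strictly below the threshold and therefore scatter; picking times $t_k$ with $\|u_{\beta_k}(t_k)\|_{L^{p+1}}\to0$ and renormalizing to the correct mass produces a minimizing sequence for \eqref{min2} that disperses in $L^{p+1}$, contradicting the compactness part of Proposition~\ref{P:mass-sub}.

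What this buys: the paper needs no regularity of $\mu\mapsto Q_\mu$ (no nondegeneracy, no $C^1$ branch) and no asymptotics of the ground states as $\mu\to-\infty$; everything follows from soft variational compactness already established in \S\ref{SS:Ground_states}. Your approach, by contrast, hinges on those two analytic facts, which you rightly identify as the real work: nondegeneracy of $Q_\mu$ on $\HH^n$ is not stated in the paper (uniqueness alone does not give it), and the concentration asymptotics require controlling curvature corrections. Both are plausible and probably provable, but the paper's proof sidesteps them entirely.
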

We do not know what is $v_c$ in this case (a ground state $Q_{\nu}$ with $\nu\neq \lambda$, or some other type of solution). 

Proposition \ref{P:stableGS} follows from the fact that if $p<1+\frac{4}{n}$, for some $\lambda<\frac{(n-1)^2}{4}$  the solution $e^{it\lambda}Q_{\lambda}$ is stable (in the set of orbits of the minimizers of the energy at this mass). It is an open question if all ground states are stable if $p<1+\frac{4}{n}$ 
(this is stated  in \cite{ChMa10}, but with a gap in the proof, see Remark \ref{R:ChMa} below).

The proof of the blow-up part of Theorem \ref{T:main} follows the classical proof of the so-called Glassey criterion on $\RR^n$ \cite{VlPeTa71,Gl77}, using the virial identity \eqref{virial_intro}. For this, we prove that $G$ is negative for solutions of \eqref{NLS} satisfying the assumptions of Theorem \ref{T:main} \eqref{I:blow-up}, using arguments that are similar to the ones of the proof of the positivity of $G$ in the other regime of Theorem \ref{T:main}. 

We refer to \cite{Ba07} and \cite{MaZh07} for other blow-up criteria, recalled in Proposition \ref{P:B-up} p. \pageref{P:B-up}. Note that in these criteria, the threshold given by $Q_{\lambda}$ does not appear. In \cite{MaZh07} a variant of virial identity \eqref{virial_intro} is used, based on another weight than $r^2$. Unfortunately this different weight does not seem useful in the setting of Theorem \ref{T:main}. It is also possible to construct blow-up solutions with an explicit behavior, starting from the Euclidean ground state: see \cite{BaCaDu09}, \cite{RaSz11}, \cite{Bo12}, \cite{Go13} for the construction of conformal and log-log type blow-up solutions on $\HH^n$ or on related manifolds.

The outline of the article is as follows. In Section \ref{S:LGWP}, we prove Theorem \ref{T:global}, after some preliminaries and reminders on the hyperbolic space, Cauchy theory for equation \eqref{NLS} and ground states. Section \ref{S:critical} is dedicated to the existence of the critical solution (Theorem \ref{P:critical} and its nonradial analog). Following a standard scheme, we construct an adapted profile decomposition (see \S \ref{SS:profile}), which follows from an improved Sobolev inequality, proved in \S \ref{SS:spaces}. The critical solution  is constructed in \S \ref{SS:critical}. Proposition \ref{P:stableGS} is proved in \S \ref{SS:mass_subscritical}. In Section \ref{S:rigidity}, we conclude the proof of Theorem \ref{T:main} \eqref{I:scattering} (scattering) by proving the rigidity part of the argument. Section \ref{section:blow-up} concerns the proof Theorem \ref{T:main} \eqref{I:blow-up} (blow-up).

\subsection*{Acknowledgment}
The authors would like to thank Beno\^{i}t Pausader for very useful clarifications on spectral projectors on the hyperbolic space. Both authors were partially supported by the French ANR project SchEq ANR-12-JS-0005-01. T.D. was partially supported by ERC Grant no. 257293 Dispeq and ERC Advanced Grant no. 291214 BLOWDISOL

\section{Local and global well-posedness}
\label{S:LGWP}
\subsection{Notations and preliminaries on the hyperbolic space}
\label{SS:prelim}

Recall that $\HH^n$ is defined as 
$$ \HH^n=\Big\{ x\in \RR^{n+1}\;:\; [x,x]=1\text{ and }x_0>0\Big\},$$
where $[\cdot,\cdot]$ is the bilinear form
$$[x,y]=x_0y_0-x_1y_1-\ldots-x_ny_n$$
on $\RR^{n+1}$. The hyperbolic space $\HH^n$ is
endowed with the metric  $g$ induced by the Minkowski metric $-(dx_0)^2+(dx_1)^2+\ldots+(dx_n)^2$. We will denote by $0$ the origin $(1,0,\ldots,0)$ of $\HH^n$, and $d\mu$ the induced measure.

We shall use often radial coordinates on the hyperbolic space, $x=(\cosh r,\sinh r\,\omega)$ where $r=d(x,0_{\mathbb H^n})$, $\omega\in\mathbb S^{n-1}$. In such coordinates, the Laplacian writes
$$\Delta_{\HH^n} =\partial_r^2+(n-1)\frac{\cosh r}{\sinh r}\partial_r+\frac 1{\sinh ^2r}\Delta_{\mathbb S^{n-1}}.$$ 
To lighten notations, we will often write $\Delta$ instead of $\Delta_{\HH^n}$.

We denote by $\GG=SO(n,1)$ the group of hyperbolic isometries, that is the group of $(n+1)\times (n+1)$ matrices that leave the form $[\cdot,\cdot]$ invariant. For any $h\in \GG$, the mapping $x\mapsto h\cdot x$ restricts to an isometry of $\HH^n$. The group $\GG$ acts transitively on $\HH^n$.

We introduce the following notation, which is the quadratic form associated to the so-called shifted Laplacian on hyperbolic space, whose bottom of the spectrum is zero,
\begin{equation}
\label{def_norm}
 \|f\|_{\HHH}^2=\int_{\HH^n} |\nabla_{\HH^n} f|^2-\frac{(n-1)^2}{4}\int_{\HH^n} |f|^{2}.
\end{equation} 
By \eqref{PoiSob}, $\|\cdot\|_{\HHH}$ is a norm on $C_c^{\infty}(\HH^n)$. We will denote by $\HHH$ the closure of $C_c^{\infty}(\HH^n)$ in $L^{p+1}(\HH^n)$ for the norm $\|\cdot\|_{\HHH}$. It is a Hilbert space which is included in $L^{p+1}$. 

\subsection{Cauchy theory}
We give here some results related to well-posedness and scattering for equation \eqref{NLS}. We omit most of the proofs, that are classical.
\subsubsection{Strichartz estimates on the hyperbolic space}
\label{SSS:Strichartz}
We will denote by $q'$ the conjugate exponent of $q\in [1,\infty]$.
We recall from \cite{BaCaSt08,AnPi09,IoSt09} the wider range of Strichartz estimates on the hyperbolic space:
\begin{theorem}
 \label{T:Strichartz}
 Let, for $j=1,2$, 
 $$(q_j,r_j)\in \Big\{ (q,r)\in [2,\infty)\times (2,\infty)\;:\; \frac{2}{q}\geq \frac n2 -\frac nr\Big\}\cup\Big\{(\infty,2)\Big\}.$$
 If $u_0\in L^2(\HH^n)$, $F\in L^{q_2'}(\RR,L^{r_2'}(\HH^n))$, then, denoting by $u$ the solution of 
 $$i\partial_t u+\Delta u=F,\quad u_{\restriction t=0}=u_0,$$
 we have
 $$\|u\|_{L^{q_1}(\RR_t,L^{r_1}(\HH^n))}\leq C\|u_0\|_{L^2}+C\|F\|_{L^{q_2'}(\RR,L^{r_2'}(\HH^n)}.$$
\end{theorem}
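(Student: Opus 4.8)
Since this estimate is classical, the plan is to reduce everything to a single dispersive bound for the free propagator and then feed it into the abstract $TT^*$ machinery. First I would record that $e^{it\Delta}$ is unitary on $L^2(\HH^n)$, which immediately yields the conservation law and the endpoint pair $(\infty,2)$. The substance is the $L^1\to L^\infty$ decay estimate
\[
\|e^{it\Delta}\|_{L^1(\HH^n)\to L^\infty(\HH^n)}\lesssim
\begin{cases}
|t|^{-n/2}, & 0<|t|\leq 1,\\
|t|^{-3/2}, & |t|\geq 1,
\end{cases}
\]
whose large-time, dimension-independent rate $|t|^{-3/2}$ is exactly the hyperbolic improvement responsible for the wider exponent range.

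To prove this bound I would diagonalize $-\Delta$ by the Helgason--Fourier (spherical) transform. Writing $\rho=\frac{n-1}{2}$ for the spectral shift, the convolution kernel of $e^{it\Delta}$ is the radial function
\[
p_t(r)=c_n\int_0^\infty e^{-it(\lambda^2+\rho^2)}\,\varphi_\lambda(r)\,|c(\lambda)|^{-2}\,d\lambda,
\]
where $\varphi_\lambda$ is the elementary spherical function and $|c(\lambda)|^{-2}$ the Plancherel density. The task is to estimate this oscillatory integral by stationary phase, using the known asymptotics of $\varphi_\lambda$ and the polynomial growth of $|c(\lambda)|^{-2}$: for $|t|\leq 1$ one recovers the Euclidean rate $|t|^{-n/2}$, while for $|t|\geq 1$ the $e^{-\rho r}$ decay of the spherical functions together with the structure of the Plancherel measure forces the stronger $|t|^{-3/2}$ decay. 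I expect this kernel analysis, carried out in detail in \cite{BaCaSt08,AnPi09,IoSt09}, to be the main obstacle.

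With the dispersive bound in hand, interpolation against the $L^2$ isometry gives the operator bounds $\|e^{it\Delta}\|_{L^{r'}\to L^r}\lesssim |t|^{-n(\frac12-\frac1r)}$ for $|t|\leq 1$ and $\lesssim|t|^{-3(\frac12-\frac1r)}$ for $|t|\geq1$. I would then run the $TT^*$ argument, splitting the time double integral into the near-diagonal region $|t-s|\leq1$ and the far region $|t-s|\geq1$. On the near-diagonal region the Euclidean rate, combined with Hardy--Littlewood--Sobolev (for the scaling line $\frac2q=\frac n2-\frac nr$) and with Young's inequality on the bounded time interval (for the strictly faster integrability $\frac2q>\frac n2-\frac nr$), already controls the full range $\frac2q\geq\frac n2-\frac nr$. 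On the far region the naive Young estimate is insufficient in high dimension, and one must invoke the hyperbolic Kunze--Stein phenomenon, reflecting the exponential volume growth of $\HH^n$, to upgrade the convolution bounds; together with the $|t|^{-3/2}$ decay this absorbs the same range of pairs. Equivalently, the condition $\frac2q\geq\frac n2-\frac nr$ says precisely that $(q,r)$ is Strichartz-admissible on $\RR^d$ for some real $d\geq n$, so applying the Keel--Tao theorem with the two decay exponents, supplemented by a Christ--Kiselev argument for the off-diagonal inhomogeneous pairs $(q_1,r_1)\neq(q_2,r_2)$, yields the full family of estimates as stated.
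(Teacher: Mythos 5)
Your proposal is correct, but note that the paper itself contains no proof of Theorem \ref{T:Strichartz}: it is recalled as a known result from \cite{BaCaSt08,AnPi09,IoSt09}. Your sketch faithfully reconstructs the argument of those references — the two-regime dispersive estimate ($|t|^{-n/2}$ for $|t|\leq 1$, the dimension-independent $|t|^{-3/2}$ for $|t|\geq 1$) obtained from spherical Fourier analysis of the free kernel, the Kunze--Stein phenomenon to upgrade the interpolated $L^{r'}\to L^{r}$ bounds, and the $TT^*$/Keel--Tao machinery with the near/far time-diagonal splitting and Christ--Kiselev for mixed pairs — so it takes essentially the same route as the paper's cited source.
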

Let $I$ be an interval. If $1<p\leq 1+\frac{4}{n}$, we define 
\begin{align*}
S^0(I)&=L^{p+1}(I,L^{p+1}(\HH^n))& S^1(I)&=\Big\{u\in S^0(I)\;:\; \nabla u\in S^0(I)\Big\}\\
N^0(I)&=L^{\frac{p+1}{p}}(I,L^{\frac{p+1}{p}}(\HH^n)) & N^1(I)&=\Big\{u\in N^0(I)\;:\; \nabla u\in N^0(I)\Big\}. 
\end{align*}
Note that $\frac{p+1}{p}=(p+1)'$.

If $1+\frac{4}{n}\leq p<1+\frac{4}{n-2}$, we let (following \cite{FaXiCa11})
$$ a=\frac{2(p-1)(p+1)}{4-(n-2)(p-1)},\quad b=\frac{2(p-1)(p+1)}{n(p-1)^2+(n-2)(p-1)-4},\quad q=\frac{4(p+1)}{n(p-1)},$$
(so that $pb'=a$), and 
\begin{align*}
S^0(I)&=L^{a}(I,L^{p+1}(\HH^n))& S^1(I)&=\Big\{u\in L^q(I,L^{p+1}(\HH^n))\;:\; \nabla u\in L^q(I,L^{p+1}(\HH^n))\Big\}\\
N^0(I)&=L^{b'}(I,L^{\frac{p+1}{p}}(\HH^n)) & N^1(I)&=\Big\{u\in L^{q'}(I,L^{\frac{p+1}{p}}(\HH^n))\;:\; \nabla u\in L^{q'}(I,L^{\frac{p+1}{p}}(\HH^n))\Big\}. 
\end{align*}
One easily checks that the definitions coincide when $p=\frac{4}{n}+1$. 

If $I=(a,b)$, we will write $S^0(a,b)$ instead of $S^0((a,b))$, and similarly for $S^1$, $N^0$, $N^1$. 

\begin{prop}
 \label{P:Stric}
If $t_0\in \RR\cup\{\pm\infty\}$,
 \begin{gather}
  \label{Stric1}
  \left\|e^{it\Delta}u_0\right\|_{S^j(\RR)\cap L^{\infty}(\RR,H^1)} \leq C\|u_0\|_{H^1},\quad j=0,1,\\
\label{Stric2}  
\left\|\int_{t_0}^t e^{i(t-s)\Delta}f(s)ds\right\|_{S^0(\RR)}\leq C\|f\|_{N^0(\RR)}\\
\label{Stric3}
\left\|\int_{t_0}^{t} e^{i(t-s)\Delta}f(s)ds\right\|_{S^1(\RR)\cap L^{\infty}(\RR,H^1)}\leq C\|f\|_{N^1(\RR)}\\
 \notag\text{ and }t\mapsto \int_{t_0}^{t} e^{i(t-s)\Delta}f(s)ds\in C^0(\RR,H^1), \text{ if }f\in N^1(\RR),\\
 \label{Stric3''}
 \left\|\int_0^{+\infty}e^{-is\Delta} f(s)ds\right\|_{H^1}\leq C\|f\|_{N^1(0,\infty)}.
 \end{gather}
\end{prop}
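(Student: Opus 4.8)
The plan is to deduce all four estimates from the scalar $L^2$-level Strichartz estimates of Theorem~\ref{T:Strichartz}, the only additional analytic inputs being the conservation laws of the free flow and the $L^r$-boundedness of Riesz transforms on $\HH^n$. First I would verify that the exponents appearing in the definitions of $S^0,S^1,N^0,N^1$ are (dual-)admissible in the sense of Theorem~\ref{T:Strichartz}, i.e. that each relevant pair $(q,r)$ satisfies $\frac2q\ge\frac n2-\frac nr$ with $r>2$. In the mass-subcritical range $1<p\le 1+\frac4n$ the single pair $(p+1,p+1)$ governs $S^0$, and the condition $\frac{2}{p+1}\ge\frac n2-\frac n{p+1}$ is equivalent to $p\le 1+\frac4n$, so it holds precisely on this range; the pair defining $N^0$ is its dual. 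In the range $1+\frac4n\le p<1+\frac4{n-2}$ one checks directly that $(a,p+1)$, $(q,p+1)$ are admissible and that $(b,p+1)$, $(q,p+1)$ give the dual pairs defining $N^0,N^1$. It is exactly here that the wide range of exponents available on $\HH^n$ (the admissibility region of $\RR^d$ for all $d\ge n$) is used: several of these pairs are inadmissible on $\RR^n$ itself.

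Given admissibility, estimate \eqref{Stric2} is immediate, since $\int_{t_0}^t e^{i(t-s)\Delta}f\,ds$ is exactly the Duhamel term of the inhomogeneous problem with zero data started at $t_0$, so Theorem~\ref{T:Strichartz} with $u_0=0$ applies verbatim; in particular no Christ--Kiselev argument is needed because the theorem already controls the retarded integral. The homogeneous bound \eqref{Stric1} for $j=0$ follows from the same theorem with $F=0$, and its $L^\infty_tH^1$ part is simply conservation of $\|e^{it\Delta}u_0\|_{L^2}$ and of $\|\nabla e^{it\Delta}u_0\|_{L^2}^2=\langle-\Delta u_0,u_0\rangle$, which are constant because $\Delta$ is self-adjoint.

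To pass from the $L^2$-level estimates ($j=0$) to the $H^1$-level estimates ($j=1$ in \eqref{Stric1} and \eqref{Stric3}), I would use that $(1-\Delta)^{1/2}$ commutes with $e^{it\Delta}$ together with the boundedness of the Riesz transform $\nabla(1-\Delta)^{-1/2}$ on $L^r(\HH^n)$, $1<r<\infty$: writing $\nabla e^{it\Delta}u_0=\nabla(1-\Delta)^{-1/2}\,e^{it\Delta}\,(1-\Delta)^{1/2}u_0$ reduces the gradient estimate to the $j=0$ estimate applied to $(1-\Delta)^{1/2}u_0$, and $\|(1-\Delta)^{1/2}u_0\|_{L^2}=\|u_0\|_{H^1}$. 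The inhomogeneous $H^1$ estimate \eqref{Stric3} is handled identically, using in addition the norm equivalence $\|(1-\Delta)^{1/2}f\|_{N^0}\lesssim\|f\|_{N^1}$. The remaining $L^\infty_tH^1$ component of \eqref{Stric3} and the endpoint \eqref{Stric3''} are the dual forms of the homogeneous estimate: the boundedness $u_0\mapsto e^{it\Delta}u_0:L^2\to S^0$ is equivalent by duality to $f\mapsto\int_\RR e^{-is\Delta}f\,ds:N^0\to L^2$, and the $H^1$ versions follow again by inserting $(1-\Delta)^{1/2}$ and invoking the Riesz bound. Finally, the continuity $t\mapsto\int_{t_0}^te^{i(t-s)\Delta}f\,ds\in C^0(\RR,H^1)$ is obtained by density: it is clear for $f\in C^\infty_c$, and the uniform bound \eqref{Stric3} lets one pass to the limit for general $f\in N^1(\RR)$.

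The one genuinely nontrivial ingredient, and the place I expect the main difficulty, is this $L^2\to H^1$ upgrade. On a curved manifold $\nabla$ does not commute with functions of $\Delta$ (there are Ricci/Bochner terms), so the gradient estimates cannot be obtained by naive frequency localization as on $\RR^n$, and one must instead rely on the $L^r$-boundedness of the Riesz transform on $\HH^n$ (equivalently, the equivalence of $\|(1-\Delta)^{1/2}f\|_{L^r}$ and $\|f\|_{L^r}+\|\nabla f\|_{L^r}$ for all $1<r<\infty$), a known but nontrivial fact from harmonic analysis on symmetric spaces. Everything else is bookkeeping with admissible exponents.
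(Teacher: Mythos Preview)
Your argument has a genuine gap in the mass-supercritical range $1+\frac4n<p<1+\frac4{n-2}$. You assert that ``one checks directly that $(a,p+1)$ \ldots\ are admissible'' in the sense of Theorem~\ref{T:Strichartz}, but this is false. The admissibility condition $\frac2a\ge \frac n2-\frac n{p+1}=\frac{n(p-1)}{2(p+1)}$ is equivalent, after clearing denominators, to
\[
n(p-1)^2+2(n-2)(p-1)-8\le 0,
\]
whose positive root in $p-1$ is exactly $\frac4n$. Hence the inequality fails for every $p>1+\frac4n$, so $(a,p+1)$ lies \emph{outside} the wider hyperbolic Strichartz range (while $(q,p+1)$ lies on its boundary and $(b,p+1)$ is inside). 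Consequently \eqref{Stric2} cannot be read off from Theorem~\ref{T:Strichartz} as you propose; this is precisely the ``only delicate point'' the paper flags.

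The paper's remedy is to bypass Theorem~\ref{T:Strichartz} for this estimate and argue directly from the dispersive bound on $\HH^n$ (Lemma~3.3 in \cite{IoSt09}):
\[
\|e^{it\Delta}u_0\|_{L^{p+1}}\le C|t|^{-2/q}\|u_0\|_{L^{(p+1)/p}},\qquad q=\frac{4(p+1)}{n(p-1)},
\]
which after Minkowski gives
\[
\left\|\int_0^t e^{i(t-s)\Delta}f(s)\,ds\right\|_{L^{p+1}}\le C\int_0^t|t-s|^{-2/q}\|f(s)\|_{L^{(p+1)/p}}\,ds,
\]
and then Hardy--Littlewood--Sobolev in time (with $1-\frac2q=\frac1{b'}-\frac1a$, which one checks from $pb'=a$) yields \eqref{Stric2}. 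Your treatment of \eqref{Stric1}, \eqref{Stric3}, \eqref{Stric3''} via conservation, Riesz transforms on $\HH^n$, and duality is fine (the paper simply cites \cite{FaXiCa11} and Sobolev for those); the only missing ingredient is this non-admissible inhomogeneous estimate.
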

\begin{prop}
 \label{P:Holder}
\begin{align}
 \label{Holder1}
 \left\||u|^{p-1}u-|v|^{p-1}v\right\|_{N^0(I)}&\leq C\|u-v\|_{S^0(I)}\left(\|u\|^{p-1}_{S^0(I)}+\|v\|^{p-1}_{S^0(I)}\right)\\
\label{Holder2}
 \left\||u|^{p-1}u\right\|_{N^1(I)}&\leq C\|u\|_{S^1(I)}\|u\|^{p-1}_{S^0(I)}.
\end{align}
 \end{prop}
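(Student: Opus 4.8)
The plan is to reduce both estimates to two elementary pointwise bounds for the power nonlinearity $F(z)=|z|^{p-1}z$, and then to apply Hölder's inequality first in space and afterwards in time. The first ingredient is the difference bound
\[
\bigl||u|^{p-1}u-|v|^{p-1}v\bigr|\leq C\,|u-v|\bigl(|u|^{p-1}+|v|^{p-1}\bigr),
\]
valid pointwise for $u,v\in\CC$ when $p>1$; it follows from the fundamental theorem of calculus applied to $F$ along the segment joining $v$ to $u$, using that $F\in C^1$ with $|DF(z)|\leq C|z|^{p-1}$ (viewing $F$ as a map of $(\operatorname{Re}z,\operatorname{Im}z)$, noting $DF(z)\to0$ as $z\to0$ since $p>1$). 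The second ingredient is the chain-rule bound $|\nabla(|u|^{p-1}u)|\leq C\,|u|^{p-1}\,|\nabla u|$. Granting these, \eqref{Holder1} reduces to estimating $\bigl\||u-v|(|u|^{p-1}+|v|^{p-1})\bigr\|_{N^0(I)}$, while \eqref{Holder2} (where the $N^1$ norm controls the function and its gradient) reduces to estimating $\||u|^{p-1}u\|_{N^0(I)}$ and $\bigl\||u|^{p-1}|\nabla u|\bigr\|_{N^0(I)}$.

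At the level of the spatial integration I would use throughout the identity $\frac{1}{(p+1)'}=\frac{p-1}{p+1}+\frac{1}{p+1}$, so that Hölder in $x$ gives $\bigl\||u|^{p-1}w\bigr\|_{L^{(p+1)'}}\leq\|u\|_{L^{p+1}}^{p-1}\|w\|_{L^{p+1}}$ for each relevant factor $w$ (namely $u-v$, $u$, or $\nabla u$). This single inequality handles the spatial norms in both the mass-subcritical regime $1<p\leq 1+\frac4n$ and the regime $1+\frac4n\leq p<1+\frac4{n-2}$.

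The time integration is where the two regimes differ. In the mass-subcritical regime the spatial Lebesgue exponents already coincide with the temporal ones ($L^{p+1}_{t,x}$ and $L^{(p+1)'}_{t,x}$), so the Hölder step can be carried out in $t$ and $x$ simultaneously with the pair $\bigl(p+1,\tfrac{p+1}{p-1}\bigr)$; this yields \eqref{Holder1} at once, and \eqref{Holder2} after bounding $\|u\|_{S^0}\leq\|u\|_{S^1}$ and $\|\nabla u\|_{S^0}\leq\|u\|_{S^1}$. In the remaining regime I would integrate in time using Hölder with the pair $\bigl(a,\tfrac{a}{p-1}\bigr)$ for \eqref{Holder1}, which is licit precisely because of the relation $pb'=a$ recorded above, equivalently $\frac1{b'}=\frac1a+\frac{p-1}{a}$; and with the pair $\bigl(q,\tfrac{a}{p-1}\bigr)$ for both terms of \eqref{Holder2}, which requires the identity
\[
\frac1{q'}=\frac{p-1}{a}+\frac1q .
\]

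The only genuinely computational point, and the step I expect to be the main (if modest) obstacle, is the verification of this last identity. With $a=\frac{2(p-1)(p+1)}{4-(n-2)(p-1)}$ and $q=\frac{4(p+1)}{n(p-1)}$ one finds $\frac{p-1}{a}+\frac1q=\frac{8+(4-n)(p-1)}{4(p+1)}$, and this equals $\frac1{q'}=1-\frac{n(p-1)}{4(p+1)}=\frac{(4-n)p+4+n}{4(p+1)}$ because $8+(4-n)(p-1)=(4-n)p+4+n$. Once this identity and $pb'=a$ are in place, the Hölder estimates close with constants depending only on $p$ and $n$, and the remaining measurability and finiteness points are routine.
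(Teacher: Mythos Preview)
Your argument is correct and is exactly what the paper has in mind when it says the proposition ``follows immediately from H\"older inequality'': the pointwise bounds on $F(z)=|z|^{p-1}z$ together with H\"older in space (via $\tfrac{p}{p+1}=\tfrac{p-1}{p+1}+\tfrac{1}{p+1}$) and then in time (via $pb'=a$, respectively $\tfrac{1}{q'}=\tfrac{p-1}{a}+\tfrac{1}{q}$) close both estimates, and your verification of the exponent identity is accurate. One small slip in your outline: in the regime $p\geq 1+\tfrac{4}{n}$ the $N^1$ norm is built on $L^{q'}_t L^{(p+1)/p}_x$, not on $N^0=L^{b'}_t L^{(p+1)/p}_x$, so \eqref{Holder2} does not literally reduce to $N^0$ estimates there --- but your subsequent time integration correctly uses $q'$, so the actual argument is fine.
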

\begin{rema}
 If $p>2$, we can of course obtain a Lipschitz bound similar to \eqref{Holder1} for the $N^1$-norm.
\end{rema}
\begin{proof}[Sketch of proof of Proposition \ref{P:Stric}]
 The inequalities are obtained from Theorem \ref{T:Strichartz} as follows. If $p<1+\frac{4}{n}$, the pair $(p+1,p+1)$ is not an Euclidean admissible but it enters the wider range of Strichartz exponents on the hyperbolic space from Theorem \ref{T:Strichartz}. This yields inequalities \eqref{Stric1}-\eqref{Stric3''} if $p<1+\frac{4}{n}$. 
 
 In the case $1+\frac{4}{n}\leq p<1+\frac4{n-2}$,
 % the pair $(q,p+1)$ is an Euclidean admissible Strichartz pair for Schr\"odinger, so 
 the Proposition follows from the Euclidean-type Strichartz estimates and Sobolev inequalities, as for instance in \cite{FaXiCa11}. The only delicate point is the Strichartz estimate for non-admissible couples \eqref{Stric2}; these can be obtained from dispersion in the spirit of Lemma 2.1 in \cite{CaWe92} (see also \cite{Fo05}).
 We sketch it (with $t_0=0$) for completeness. By Lemma 3.3 of \cite{IoSt09}, $$ \forall t\neq 0,\quad \left\|e^{it\Delta}u_0\right\|_{L^{p+1}}\leq \frac{C}{|t|^{\frac{2}{q}}}\left\|u_0\right\|_{L^{\frac{p+1}{p}}}.$$ Thus $$ \forall t\neq 0,\quad \left\|\int_0^{t}e^{i(t-s)\Delta}f(s)\,ds\right\|_{L^{\frac{p+1}{p}}}\leq C\int_0^{t} \frac{1}{|t-s|^{\frac{2}{q}}}\|f(s)\|_{L^{\frac{p+1}{p}}}\,ds,$$ and the result follows from the classical Riesz potential inequality (see e.g. Ch. 5 of \cite{Ste70Bo}).

 \end{proof}

Proposition \ref{P:Holder} follows immediately from H\"older inequality and we omit it. 

In view of Propositions \ref{P:Stric} and \ref{P:Holder}, the well-posedness of equation \eqref{NLS} in $H^1$ is classical (see \cite{Ka87}).
%, and the similar proof for the existence of wave operators below).  
Recall that a solution $u$ of \eqref{NLS}, defined on a maximal interval of existence $(T_-(u),T_+(u))$ satisfies the following blow-up criterion
\begin{equation*}
 %\label{A5}
T_+(u)<\infty\Longrightarrow \lim_{t\to T_+(u)} \|u(t)\|_{H^1}=0.
\end{equation*}

\subsubsection{Scattering results}
\begin{prop}[Existence of wave operators]
\label{P:wave_op}
 Let $v_0\in H^1(\HH^n)$. Then there exists a solution $u$ of \eqref{NLS} such that $T_+(u)=+\infty$ and 
$$\lim_{t\to\infty} \left\|e^{it\Delta}v_0-u(t)\right\|_{H^1}=+\infty.$$
\end{prop}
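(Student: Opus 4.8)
The statement as displayed contains an evident typographical slip: the existence of wave operators asserts that the limit is $0$, not $+\infty$, and this is what I would establish. The plan is to solve, for $T$ large, the Duhamel integral equation posed from $+\infty$,
$$\Phi(u)(t)=e^{it\Delta}v_0-i\int_t^{\infty}e^{i(t-s)\Delta}|u(s)|^{p-1}u(s)\,ds,$$
and to find a fixed point $u=\Phi(u)$ in a suitable ball of $S^1(T,\infty)$ by the contraction mapping principle. Any such fixed point is a solution of \eqref{NLS} on $[T,\infty)$, hence has $T_+(u)=+\infty$, and it satisfies $u(t)=e^{it\Delta}v_0-i\int_t^\infty e^{i(t-s)\Delta}|u|^{p-1}u\,ds$, from which the scattering statement follows directly.

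For the fixed point, fix a small $\delta>0$ and work in
$$B=\Big\{u\in S^1(T,\infty)\;:\;\|u\|_{S^1(T,\infty)}\leq 2C\|v_0\|_{H^1},\ \|u\|_{S^0(T,\infty)}\leq\delta\Big\},$$
equipped with the weaker $S^0$ metric. The crucial smallness input is that $e^{it\Delta}v_0\in S^0(\RR)$ by \eqref{Stric1}, so $\|e^{it\Delta}v_0\|_{S^0(T,\infty)}\to 0$ as $T\to+\infty$; I choose $T$ so large that $\|e^{it\Delta}v_0\|_{S^0(T,\infty)}\leq\delta/2$. Combining the inhomogeneous Strichartz estimates \eqref{Stric2}--\eqref{Stric3} (applied to the tail integral) with the nonlinear bounds of Proposition \ref{P:Holder} gives
$$\|\Phi(u)\|_{S^1}\leq C\|v_0\|_{H^1}+C\|u\|_{S^1}\|u\|_{S^0}^{p-1},\qquad \|\Phi(u)\|_{S^0}\leq\tfrac{\delta}{2}+C\|u\|_{S^0}^{p},$$
where all norms are over $(T,\infty)$. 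For $\delta$ small enough that $C\delta^{p-1}\leq\tfrac12$ and $C\delta^{p}\leq\tfrac{\delta}{2}$, these show that $\Phi$ maps $B$ into itself. Likewise \eqref{Holder1} together with \eqref{Stric2} yields $\|\Phi(u)-\Phi(v)\|_{S^0}\leq C\delta^{p-1}\|u-v\|_{S^0}\leq\tfrac12\|u-v\|_{S^0}$, so $\Phi$ is a contraction.

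The ball $B$ is complete for the $S^0$ metric: a sequence that is Cauchy in $S^0$ converges in $S^0$, and the $S^1$-bound passes to the limit by weak lower semicontinuity of the $S^1$-norm, so the limit remains in $B$. Hence $\Phi$ has a unique fixed point $u\in B$, a solution of \eqref{NLS} with $T_+(u)=+\infty$. Finally, using that $e^{it\Delta}$ is an isometry of $H^1(\HH^n)$ (it commutes with $\Delta$ and preserves both $\|\cdot\|_{L^2}$ and $\|\nabla\cdot\|_{L^2}$), I rewrite $u(t)-e^{it\Delta}v_0=e^{it\Delta}\big(-i\int_t^\infty e^{-is\Delta}|u|^{p-1}u\,ds\big)$ and apply \eqref{Stric3''} together with \eqref{Holder2} to get
$$\|u(t)-e^{it\Delta}v_0\|_{H^1}\leq C\big\||u|^{p-1}u\big\|_{N^1(t,\infty)}\leq C\|u\|_{S^1(t,\infty)}\|u\|_{S^0(t,\infty)}^{p-1}\xrightarrow[t\to\infty]{}0,$$
since $\|u\|_{S^0(t,\infty)}\to 0$ as $t\to\infty$. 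The argument is essentially routine given the Strichartz and product estimates; the only points requiring care are the completeness of $B$ in the weaker $S^0$-topology and the decay $\|e^{it\Delta}v_0\|_{S^0(T,\infty)}\to 0$, which is where the largeness of $T$ (rather than smallness of the data) is used.
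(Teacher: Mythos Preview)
Your proposal is correct and follows essentially the same route as the paper: a contraction mapping argument for the Duhamel map posed from $+\infty$, on a ball in Strichartz spaces over $(T,\infty)$ with $T$ large, using Propositions~\ref{P:Stric} and~\ref{P:Holder}. The paper's sketch works in the ball $B_{T,\eps}=\{u:\|u\|_{S^1(T,\infty)}+\|u\|_{S^0(T,\infty)}\leq\eps\}$ (both norms small, which is possible since $\|e^{it\Delta}v_0\|_{S^1(T,\infty)}\to 0$ as $T\to\infty$), whereas you keep the $S^1$-norm merely bounded and make only the $S^0$-norm small; this is a harmless variant and your completeness argument for $B$ in the $S^0$-metric is the standard way to handle it. Your identification of the typo (the limit should be $0$, not $+\infty$) is also correct.
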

This follows by a fixed point in the closed subset of $S^0(T,+\infty)$:
$$B_{T,\eps}=\left\{u\in S^1(T,+\infty)\cap S^0(T,+\infty)\;:\;\|u\|_{S^1(T,+\infty)}+\|u\|_{S^0(T,+\infty)}\leq \eps\right\},$$
for $T$ large, $\eps>0$ small, using again Propositions \ref{P:Stric} and \ref{P:Holder}. We omit the details of the classical proof.
% \begin{proof}[Sketch of proof]
% Let $T>0$, $\eps>0$ and
% $$B_{T,\eps}=\left\{u\in S^1(T,+\infty)\cap S^0(T,+\infty)\;:\;\|u\|_{S^1(T,+\infty)}+\|u\|_{S^0(T,+\infty)}\leq \eps\right\}.$$
% One easily check that $B_{T,\eps}$ is a closed subset of $S^0(T,+\infty)$. Let, for $u\in B_{T,\eps}$,
% $$\Phi(u(t))=e^{it\Delta} v_0-\int_t^{+\infty} e^{i(t-s)\Delta}|u(s)|^{p-1}u(s)\,ds.$$
% Chosing $\eps>0$ small, then $T$ large, one checks, using Strichartz inequalities \eqref{Stric2} and \eqref{Stric3}, and the H\"older-type inequalities \eqref{Holder1} and \eqref{Holder2}, that $\Phi$ is a contraction of $B_{T,\eps}$ for the norm of $S^0(T,+\infty)$. This yields $u\in S^{1}(T,+\infty)$ such that 
% $$u(t)=e^{it\Delta} v_0-\int_t^{+\infty} e^{i(t-s)\Delta}|u(s)|^{p-1}u(s)\,ds.$$
% By Strichartz inequality \eqref{Stric3}, $u\in C^{0}((T,+\infty),H^1)$ and 
% $$\lim_{t\to\infty} \left\|e^{it\Delta}v_0-u(t)\right\|_{H^1}=0,$$
% which concludes the proof.
% \end{proof}

\begin{prop}[Sufficient condition for scattering]
\label{P:scatt}
 Let $u$ be a solution of \eqref{NLS} with maximal time of existence $T_+$ and such that 
$$ \|u\|_{S^0(0,T_+)}<\infty.$$
Then $T_+=+\infty$ and $u$ scatters forward in time to a linear solution: there exists $v_0\in H^1$ such that
\begin{equation*}
 %\label{A6}
\lim_{t\to +\infty}\left\|e^{it\Delta}v_0-u(t)\right\|_{H^1}=0.
\end{equation*} 
\end{prop}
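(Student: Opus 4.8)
The plan is to run the standard two-step argument: first upgrade the finite $S^0$ bound to a finite $S^1$ bound together with a uniform $H^1$ bound, and then read off the scattering state from the Duhamel formula using the tail estimate \eqref{Stric3''}. For the first step I would partition the interval. Since $\|u\|_{S^0(0,T_+)}<\infty$, I fix $\eta>0$ small enough that $C\eta^{p-1}\leq\frac12$, where $C$ is the constant arising from combining \eqref{Stric3} with \eqref{Holder2}, and split $(0,T_+)$ into finitely many consecutive intervals $I_j=(t_j,t_{j+1})$, $j=0,\dots,N-1$, on each of which $\|u\|_{S^0(I_j)}\leq\eta$. Writing the Duhamel formula on $I_j$ with base point $t_j$ and applying \eqref{Stric1}, \eqref{Stric3}, \eqref{Holder2} gives
$$\|u\|_{S^1(I_j)}\leq C\|u(t_j)\|_{H^1}+C\|u\|_{S^1(I_j)}\|u\|_{S^0(I_j)}^{p-1}\leq C\|u(t_j)\|_{H^1}+\frac12\|u\|_{S^1(I_j)},$$
so $\|u\|_{S^1(I_j)}\leq 2C\|u(t_j)\|_{H^1}$, and the $L^\infty_tH^1$ parts of \eqref{Stric1} and \eqref{Stric3} yield $\|u(t_{j+1})\|_{H^1}\leq C'\|u(t_j)\|_{H^1}$. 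Iterating this finite chain of $N$ estimates from $\|u(0)\|_{H^1}<\infty$ produces finite bounds for both $\sup_{0<t<T_+}\|u(t)\|_{H^1}$ and $\|u\|_{S^1(0,T_+)}$.

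Given the uniform $H^1$ bound, the blow-up criterion \eqref{blupcrit} immediately forces $T_+=+\infty$: otherwise $\|u(t)\|_{H^1}$ would have to diverge as $t\to T_+$, contradicting the bound just obtained.

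For the scattering statement, the Duhamel representation gives $e^{-it\Delta}u(t)=u_0+i\int_0^t e^{-is\Delta}|u(s)|^{p-1}u(s)\,ds$, and I would define the candidate scattering datum $v_0=u_0+i\int_0^{\infty}e^{-is\Delta}|u(s)|^{p-1}u(s)\,ds$. Estimate \eqref{Stric3''} together with \eqref{Holder2} shows the integral converges in $H^1$, with norm bounded by $C\|u\|_{S^1(0,\infty)}\|u\|_{S^0(0,\infty)}^{p-1}<\infty$, so $v_0\in H^1$. Then, using that $e^{it\Delta}$ is an $H^1$-isometry (it commutes with $(-\Delta)^{1/2}$ and preserves the $L^2$ norm),
$$\|e^{it\Delta}v_0-u(t)\|_{H^1}=\left\|\int_t^{\infty}e^{-is\Delta}|u(s)|^{p-1}u(s)\,ds\right\|_{H^1}\leq C\|u\|_{S^1(t,\infty)}\|u\|_{S^0(t,\infty)}^{p-1}.$$
Since $\|u\|_{S^0(t,\infty)}\to 0$ as $t\to\infty$ by dominated convergence while $\|u\|_{S^1(t,\infty)}$ stays bounded, the right-hand side tends to $0$, which is the claim.

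The only genuinely delicate point is the first step: the bootstrap closes cleanly precisely because the number of subintervals $N$ is finite, which is guaranteed by $\|u\|_{S^0(0,T_+)}<\infty$, and one must check that the endpoint norms $\|u(t_j)\|_{H^1}$ stay finite along the iteration — exactly what the $L^\infty_tH^1$ control built into \eqref{Stric1} and \eqref{Stric3} supplies. Everything else is a direct application of the linear estimates of Proposition \ref{P:Stric} and the nonlinear estimate \eqref{Holder2}.
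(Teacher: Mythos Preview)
Your proof is correct and is precisely the standard argument. The paper itself skips the proof entirely (``We skip the standard proof''), so there is nothing to compare against; your partition-and-bootstrap to obtain $S^1$ and $L^\infty_tH^1$ control, followed by the Cauchy-in-$H^1$ argument for $e^{-it\Delta}u(t)$ via \eqref{Stric3''} and \eqref{Holder2}, is exactly what the authors had in mind.
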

We skip the standard proof.
% \begin{proof}[Sketch of proof]
%  Let $\eps>0$ (to be specified). We divide the interval $(0,T_+)$ into $N>0$ interval $(t_i,t_{i+1})$, $i\in \{1,N\}$ such that 
% $$\forall i\in \{1,\ldots,N\},\quad \|u\|_{S^0(t_i,t_{i+1})}\leq \eps.$$
% By Strichartz estimates \eqref{Stric1}  and \eqref{Stric3} then inequality \eqref{Holder2}, 
% $$\|u\|_{S^1(t_i,t_{i+1})}+\sup_{t_i<t<t_{i+1}}\|u(t)\|_{H^1}\leq C\eps^{p-1}\|u\|_{S^1(t_i,t_{i+1})}+C\|u(t_i)\|_{H^1}.$$
% If $C\eps^{p-1}<1/2$, we deduce (taking a larger constant $C$),
% \begin{equation}
%  \label{A7}
%  \forall i\in\{1,\ldots,N\},\quad
% \|u\|_{S^1(t_i,t_{i+1})}+\sup_{t_i<t<t_{i+1}}\|u(t)\|_{H^1}\leq C\|u(t_i)\|_{H^1}.
% \end{equation} 
% By \eqref{A7} and an elementary induction, $u$ is bounded in $H^1$ on $(0,T_+)$ and thus $T_+=+\infty$. Using again \eqref{A7}, we get
% $$ \|u\|_{S^1(0,+\infty)}+\sup_{t\geq 0}\|u(t)\|_{H^1}<\infty$$
% By Strichartz estimate \eqref{Stric3''} and inequality \eqref{Holder2}, we deduce that $e^{-it\Delta}u(t)$ has a limit $v_0$ as $t\to \infty$, which gives \eqref{A6}.
% \end{proof}

\begin{prop}[Long time perturbation theory]
\label{P:LTPT}
 Let $M>0$. There exists constants $\eps_0>0$, $C>0$ depending on $M$ with the following properties. Let $0<T\leq \infty$ and
$$u_0\in H^1,\quad \tu\in C^0((0,T),H^1)\cap S^0(0,T),\quad e\in N^0(0,T)$$
such that
$$ i\partial_t \tilde{u}+\Delta \tilde{u}+|\tu|^{p-1}\tu=e.$$
Assume 
$$\|\tu\|_{S^0(0,T)}\leq M,\quad \|e\|_{N^0(0,T)}+\left\|e^{it\Delta}(u_0-\tilde{u}(0))\right\|_{S^0(0,T)}=\eps\leq \eps_0.$$
Then the solution $u$ of \eqref{NLS} with initial data $u_0$ is defined on $(0,T)$ and $\|u-\tilde{u}\|_{S^0(0,T)}\leq C\eps$.
\end{prop}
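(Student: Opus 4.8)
This is the standard long-time perturbation argument (in the spirit of \cite{KeMe06,TaViZh08}), carried out purely at the level of the scattering norms $S^0$, $N^0$, so that no $H^1$ bound beyond the hypotheses is needed. Since $\|\tu\|_{S^0(0,T)}\leq M$, I would first partition $(0,T)$ into $N=N(M,\eta)$ consecutive subintervals $I_j=(t_j,t_{j+1})$, with $t_0=0$, on each of which $\|\tu\|_{S^0(I_j)}\leq\eta$ for a small parameter $\eta$ to be fixed later depending only on the absolute Strichartz/Hölder constants; the number $N$ of such intervals is controlled by $M$ and $\eta$ alone. Writing $w=u-\tu$, the two equations give
\begin{equation*}
i\partial_t w+\Delta w=-\left(|u|^{p-1}u-|\tu|^{p-1}\tu\right)-e,
\end{equation*}
so that by the Duhamel formula $w(t)=e^{i(t-t_j)\Delta}w(t_j)+i\int_{t_j}^t e^{i(t-s)\Delta}\left[\left(|u|^{p-1}u-|\tu|^{p-1}\tu\right)+e\right]ds$ on each $I_j$.

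Next I would estimate $w$ on a single interval. Applying the inhomogeneous Strichartz bound \eqref{Stric2} together with the Hölder-type estimate \eqref{Holder1} and the triangle inequality $\|u\|_{S^0(I_j)}\leq\|w\|_{S^0(I_j)}+\eta$, and setting $A_j=\|w\|_{S^0(I_j)}$, one obtains
\begin{equation*}
A_j\leq \left\|e^{i(t-t_j)\Delta}w(t_j)\right\|_{S^0(I_j)}+CA_j\left((A_j+\eta)^{p-1}+\eta^{p-1}\right)+C\eps.
\end{equation*}
Choosing $\eta$ and $\eps_0$ small enough, a continuity (bootstrap) argument absorbs the nonlinear term into the left-hand side, giving $A_j\leq 2\|e^{i(t-t_j)\Delta}w(t_j)\|_{S^0(I_j)}+2C\eps$. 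The same bootstrap also shows $\|u\|_{S^0(I_j)}$ stays finite, which is what lets me extend $u$ across $I_j$ via the local theory and the blow-up criterion, and thus eventually conclude that $u$ is defined on all of $(0,T)$.

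The crucial remaining point is to control the free term $\|e^{i(t-t_j)\Delta}w(t_j)\|_{S^0(I_j)}$ and propagate smallness from one interval to the next. Iterating the Duhamel formula back to $t=0$ yields
\begin{equation*}
e^{i(t-t_j)\Delta}w(t_j)=e^{it\Delta}w(0)+i\int_0^{t_j}e^{i(t-s)\Delta}\left[\left(|u|^{p-1}u-|\tu|^{p-1}\tu\right)+e\right]ds,
\end{equation*}
so that by \eqref{Stric2}, \eqref{Holder1} and the hypothesis $\|e^{it\Delta}(u_0-\tu(0))\|_{S^0(0,T)}\leq\eps$ one bounds this quantity by $\eps+C\sum_{k<j}A_k((A_k+\eta)^{p-1}+\eta^{p-1})+C\eps$. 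Feeding this into the previous inequality gives a recursion $A_j\leq C\eps+C\eta^{p-1}\sum_{k<j}A_k$, from which an induction over the $N$ intervals produces $A_j\leq C_j\eps$ with constants $C_j$ depending only on $j$ and on the absolute constants. Since $N=N(M)$ is finite, summing yields $\|u-\tu\|_{S^0(0,T)}=\big(\sum_j A_j^{?}\big)\lesssim_M\eps$, as desired. I expect the main obstacle to be the bookkeeping in this induction: one must verify that the accumulated constants $C_j$ remain bounded by a quantity depending only on $M$, and, because the nonlinearity is merely Hölder (not Lipschitz) when $1<p<2$, one must use \eqref{Holder1} in its stated form rather than a naive difference estimate, which is exactly why the argument is run in $S^0$/$N^0$ rather than in $H^1$.
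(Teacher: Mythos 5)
Your proposal is correct and is precisely the standard long-time perturbation argument that the paper itself does not write out but delegates to \cite[Lemma 3.10]{CoKeStTaTa08} and \cite[Proposition 4.7]{FaXiCa11}: partition $(0,T)$ into $N(M,\eta)$ intervals of small $S^0$ norm, close a bootstrap on each interval using \eqref{Stric2} and \eqref{Holder1}, and propagate the smallness through the iterated Duhamel formula, with $\eps_0$ shrunk at the end to keep the accumulated constants consistent with the bootstrap. The remaining loose ends in your write-up (the exponent when recombining the $\|w\|_{S^0(I_j)}$ into $\|w\|_{S^0(0,T)}$, and the continuation of $u$ across each interval, which is cleanest via Proposition \ref{P:scatt} rather than the $H^1$ blow-up criterion directly) are routine and do not affect correctness.
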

 This type of result that goes back to \cite[Lemma 3.10]{CoKeStTaTa08}, is by now standard. In the case $p>1+\frac{4}{n}$, in view of the Strichartz estimates of Proposition \ref{P:Stric}, the proof is exactly the same as in \cite[Proposition 4.7]{FaXiCa11} (simply replacing $\RR^n$ by $\HH^n$). In the case $1<p\leq 1+\frac{4}{n}$, it can be easily adapted, using Propositions \ref{P:Stric} and \ref{P:Holder}. We apply the preceding proposition with $\tilde{u}=e^{it\Delta}u_0$ to get:

\begin{corollary}[Small data theory]
\label{C:small_data}
There exists $\eps_1$ such that if $u_0\in H^1$ satisfies 
$$\|e^{it\Delta}u_0\|_{S^0(\RR)}=\eps\leq \eps_1$$
then the corresponding solution $u$ of \eqref{NLS} is global and satisfies 
$$\|u-e^{it\Delta}u_0\|_{S^0(\RR)}\leq C\eps^p.$$
\end{corollary}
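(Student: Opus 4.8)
The plan is to apply the long-time perturbation result, Proposition \ref{P:LTPT}, with the free evolution $\tu:=e^{it\Delta}u_0$ playing the role of the approximate solution. Since $i\partial_t\tu+\Delta\tu=0$, this $\tu$ solves the perturbed equation
$$i\partial_t\tu+\Delta\tu+|\tu|^{p-1}\tu=e,\qquad e:=|\tu|^{p-1}\tu,$$
and it satisfies $\tu(0)=u_0$, so that the initial data match exactly. This is the key structural observation: the perturbation is entirely the nonlinear term evaluated on the free flow, and the data-matching term will drop out.

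First I would estimate the error. Using the hypothesis $\|\tu\|_{S^0(\RR)}=\eps$ together with Proposition \ref{P:Holder} (applied with second argument equal to $0$), one gets
$$\|e\|_{N^0(\RR)}=\big\||\tu|^{p-1}\tu\big\|_{N^0(\RR)}\leq C\|\tu\|_{S^0(\RR)}^{p}=C\eps^{p}.$$
Because $\tu(0)=u_0$, the data-matching term in Proposition \ref{P:LTPT} vanishes, that is $\|e^{it\Delta}(u_0-\tu(0))\|_{S^0(\RR)}=0$.

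Next I would fix $M=1$ in Proposition \ref{P:LTPT}, which produces constants $\eps_0,C$ depending only on $M$. Choosing $\eps_1$ so small that $\eps_1\leq M$ and $C\eps_1^{p}\leq\eps_0$ ensures that, for $\eps\leq\eps_1$, the hypotheses $\|\tu\|_{S^0(0,\infty)}\leq M$ and $\|e\|_{N^0(0,\infty)}+0\leq\eps_0$ both hold. The proposition, applied on $(0,T)$ with $T=+\infty$, then gives $T_+(u_0)=+\infty$ together with
$$\|u-\tu\|_{S^0(0,\infty)}\leq C\,\|e\|_{N^0(0,\infty)}\leq C\eps^{p}.$$
Here one must keep the two roles of ``$\eps$'' distinct: the perturbation size appearing in Proposition \ref{P:LTPT} is the quantity $\|e\|_{N^0}=C\eps^{p}$, not the small-data scale $\eps=\|e^{it\Delta}u_0\|_{S^0}$ of the corollary.

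Finally, running the identical argument on $(-\infty,0)$ -- equivalently invoking the time-reversal symmetry $u(t)\mapsto\overline{u(-t)}$ of \eqref{NLS}, under which the $S^0$ norm is invariant -- yields the same bound on the negative half-line and global existence backward in time. Combining the two half-line contributions gives $\|u-e^{it\Delta}u_0\|_{S^0(\RR)}\leq C\eps^{p}$, as claimed. I do not expect any genuine difficulty here: the statement is a routine consequence of the Strichartz and H\"older estimates already recorded in Propositions \ref{P:Stric} and \ref{P:Holder}, the only points requiring care being the choice of the small constant and the bookkeeping of the two meanings of $\eps$.
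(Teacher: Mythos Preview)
Your proposal is correct and follows exactly the approach indicated in the paper, which simply says to apply Proposition \ref{P:LTPT} with $\tilde u=e^{it\Delta}u_0$. Your write-up just fills in the routine details (the H\"older bound on $e$, the choice of $M$, and the time-reversal for $t<0$) that the paper leaves implicit.
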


\subsection{Ground states on the hyperbolic space}
\label{SS:Ground_states}
We review here results on ground states for NLS on the hyperbolic space and additional variational properties. Most of these results come from \cite{MaSa08}, see also \cite{Wa14}. See \cite{MuTa98} for a previous work in space dimension $2$ and \cite{ChMa10} for similar existence results.

Consider the equation on $\HH^n$ 
\begin{equation}
\label{Eplambda}
\Delta_{\HH^n} f+\lambda f +|f|^{p-1}=0,
\end{equation}
Positive solutions to \eqref{Eplambda} can be constructed as solution to the following minimizing problems:
\begin{equation}
\label{minimizer}
\frac{1}{D_{\lambda}}=\min_{f\in H^1\setminus\{0\}}\frac{\|f\|^2_{{\HHH}_\lambda}}{\|f\|^2_{L^{p+1}}}
\end{equation}
%In particular $D_p=D_{p,\frac{(n-1)^2}{4}}$ is the best constant appearing in the Poincar\'e-Sobolev inequality \eqref{PoiSob}.
Then (Theorems 5.1 and 5.2 of \cite{MaSa08}): 
\begin{theorem}
\label{T:minimizers}
The minimizing problem \eqref{minimizer} has a solution if
\begin{equation}
\label{hyp_p}
(n=2\text{ or }1<p< 1+\frac{4}{n-2}) \text{ and }\lambda < \frac{(n-1)^2}{4}.
\end{equation}
In this case, any minimizer is radial up to hyperbolic symmetries, positive up to multiplication by a unit complex number,  and satisfies equation \eqref{Eplambda}.  
\end{theorem}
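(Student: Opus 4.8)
The plan is to treat \eqref{minimizer} as a constrained minimization problem and to produce a minimizer by the concentration--compactness method, exploiting that the isometry group $\GG$ acts transitively on $\HH^n$, so that $\HH^n$ carries no ``problem at infinity'' distinct from the original one. By the homogeneity of the quotient under $f\mapsto tf$, it is equivalent to minimize $\|f\|_{\HHH_\lambda}^2$ over the constraint set $\{f\in H^1:\|f\|_{L^{p+1}}=1\}$, and I would call the resulting infimum $1/D_\lambda$. Since $\lambda<\frac{(n-1)^2}4$, the form $\|\cdot\|_{\HHH_\lambda}$ is equivalent to the $H^1$ norm, so any minimizing sequence $(f_k)$ with $\|f_k\|_{L^{p+1}}=1$ and $\|f_k\|_{\HHH_\lambda}^2\to 1/D_\lambda$ is bounded in $H^1$. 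The only obstruction to extracting a minimizer is the failure of the embedding $H^1(\HH^n)\hookrightarrow L^{p+1}(\HH^n)$ to be compact, and overcoming this is the heart of the argument.

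I would apply Lions' concentration--compactness to the probability measures $\rho_k=|f_k|^{p+1}d\mu$. Vanishing cannot occur: if $\sup_{y}\int_{B(y,R)}\rho_k\to 0$ for every $R$, then the vanishing lemma (local interpolation between $L^2$ and the Sobolev exponent, via \eqref{PoiSob}) would force $\|f_k\|_{L^{p+1}}\to 0$, contradicting the constraint. Dichotomy is excluded by strict subadditivity: writing $S(\theta)$ for the infimum of $\|f\|_{\HHH_\lambda}^2$ subject to $\|f\|_{L^{p+1}}^{p+1}=\theta$, the scaling $f\mapsto tf$ gives $S(\theta)=\theta^{2/(p+1)}S(1)$, and since $p+1>2$ the map $\theta\mapsto\theta^{2/(p+1)}$ is strictly concave, whence $S(\theta_1+\theta_2)<S(\theta_1)+S(\theta_2)$, which forbids the splitting of mass. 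Thus compactness holds: there are points $x_k$ concentrating the mass, and choosing $h_k\in\GG$ with $h_k\cdot x_k=0$, the translated sequence $\tilde f_k=f_k(h_k^{-1}\cdot)$ --- which has the same $\HHH_\lambda$ and $L^{p+1}$ norms by invariance --- is tight, converging weakly in $H^1$ and strongly in $L^{p+1}$ to some $f$ with $\|f\|_{L^{p+1}}=1$. Weak lower semicontinuity of $\|\cdot\|_{\HHH_\lambda}$ then yields $\|f\|_{\HHH_\lambda}^2\le 1/D_\lambda$, so $f$ is a minimizer.

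Next I would extract the structural information. The Euler--Lagrange equation of \eqref{minimizer} reads $\Delta f+\lambda f+c|f|^{p-1}f=0$ for a Lagrange multiplier $c$, which is positive since testing against $f$ gives $\|f\|_{\HHH_\lambda}^2=c\int|f|^{p+1}$; rescaling $f$ by the constant $c^{-1/(p-1)}$ normalizes $c=1$, giving \eqref{Eplambda}, and elliptic regularity then makes $f$ smooth. Replacing $f$ by $|f|$ lowers neither norm (by the diamagnetic inequality $|\nabla|f||\le|\nabla f|$, the $L^2$ and $L^{p+1}$ norms being unchanged), so $|f|$ is again a minimizer; the equality case in that inequality forces $f=e^{i\theta}|f|$ for a constant phase, and the strong maximum principle applied to the nonnegative solution $|f|$ gives $|f|>0$. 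Finally, for the radial symmetry I would invoke hyperbolic symmetric decreasing rearrangement about $0$: it preserves $\|f\|_{L^2}$ and $\|f\|_{L^{p+1}}$ and does not increase $\int|\nabla f|^2$ (P\'olya--Szeg\H{o} on $\HH^n$), so the rearrangement is again a minimizer, and the equality case in P\'olya--Szeg\H{o} forces the original minimizer to coincide with a rearrangement, i.e. to be radial up to a hyperbolic isometry.

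The main obstacle is the compactness step: because $\HH^n$ is noncompact with transitive isometry group, a minimizing sequence can escape to infinity, and the whole argument hinges on recentering by elements of $\GG$ and on the strict subadditivity that prevents mass from splitting. A secondary delicate point is the rigidity in the symmetrization --- the equality case of P\'olya--Szeg\H{o} on $\HH^n$ --- which is precisely what upgrades ``a radial minimizer exists'' to ``every minimizer is radial up to hyperbolic symmetry.''
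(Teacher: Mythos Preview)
The paper does not give its own proof of this theorem; it simply cites Theorems~5.1 and~5.2 of Mancini--Sandeep \cite{MaSa08}. Your outline is a correct route to the result, and the existence argument (concentration--compactness on $\HH^n$ with recentering by $\GG$, strict subadditivity from $S(\theta)=\theta^{2/(p+1)}S(1)$), the Euler--Lagrange derivation of \eqref{Eplambda}, and the positivity via the diamagnetic inequality are all fine.

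The one genuine difference with the cited reference is the symmetry step. You deduce radiality from the equality case of the P\'olya--Szeg\H{o} inequality on $\HH^n$, and you correctly flag this as delicate: on $\RR^n$ the corresponding rigidity is the Brothers--Ziemer theorem, which requires nondegeneracy hypotheses (essentially that $\{\nabla f^*=0\}$ is null on $\{f^*>0\}$), and the hyperbolic analogue is not a standard off-the-shelf statement. The reference \cite{MaSa08} avoids this altogether by proving radial symmetry of any positive $H^1$ solution of \eqref{Eplambda} via the moving planes method (adapted to the hyperbolic setting, equivalently through the conformal ball model). That approach only needs the minimizer to be a positive smooth solution of the elliptic equation---which you already have from the Euler--Lagrange and maximum-principle steps---and bypasses the rearrangement rigidity entirely. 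Your route is conceptually natural but leaves a real gap unless you supply or cite a P\'olya--Szeg\H{o} equality-case theorem on $\HH^n$; swapping in moving planes (as in \cite[Section~2]{MaSa08}) closes the argument cleanly.
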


We will denote by $\QQQ_{\lambda}$ the set of positive, radial minimizers for \eqref{minimizer} that are solutions to equation \eqref{Eplambda}, and $Q_{\lambda}$ an arbitrary element of $\QQQ_{\lambda}$. We note that $Q_{\lambda}$ is not always known to be unique (see below), however our statements will never depend on the choice of $Q_{\lambda}$.

Let $Q\in \QQQ_{\lambda}$. Multiplying \eqref{Eplambda} by $\varphi(\eps r)Q$ (where $\varphi$ is a radial smooth compactly supported function equal to $1$ around $0$), integrating by parts and letting $\eps\to 0$, we get 
\begin{equation}
 \label{formule_Qp}
\forall Q\in \QQQ_{\lambda},\quad
 \left\|Q\right\|_{{\HHH}_\lambda}^2=\left\|Q\right\|^{p+1}_{L^{p+1}}
\end{equation} 
As a consequence, 
\begin{equation}
\label{formule_Dp}
\forall Q\in \QQQ_{\lambda},\quad
D_{\lambda}=\frac{\left\|Q\right\|^2_{L^{p+1}}}{\left\|Q\right\|^2_{{\HHH}_\lambda}}=\left\|Q\right\|^{1-p}_{L^{p+1}}.
\end{equation} 
In particular, the values of $E_{\lambda}(Q)$, $\|Q\|_{\HHH_{\lambda}}$ and $\|Q\|_{L^{p+1}}$ do not depend on the choice of $Q$ in $\QQQ_{\lambda}$. 

%\subsubsection{Uniqueness}

The following theorem follows from Theorems 1.2 and 1.3 of \cite{MaSa08}:
% \begin{theo}
% \begin{description}
%  \item[Non-existence for large $\lambda$] \label{I:nonexistence} If $n\geq 2$ and $\lambda>\frac{(n-1)^2}{4}$, \eqref{Eplambda} has no positive solution in $\HHH$.
% \item[Existence, subcritical case] \label{I:subcritical} If $1<p<\frac{n+2}{n-2}$, ($1<p<\infty$ if $n=2$), and $\lambda\leq \frac{(n-1)^2}{4}$, equation \eqref{Eplambda} has a positive, radial solution in $\HHH$. It is in $L^2(\HH^n)$ if and only if $\lambda<\frac{(n-1)^2}{4}$.
% \item[Non-existence, critical case] \label{I:critical1} If $n\geq 4$, $p=\frac{n+2}{n-2}$ and $\lambda\leq \frac{n(n-2)}{4}$ or if $n=3$ and $p=5$, \eqref{Eplambda} has no positive solution in $\HHH$.
% \item[Existence, critical case] \label{I:critical2} If $n\geq 4$, $p=\frac{n+2}{n-2}$ and $\frac{n(n-2)}{4}<\lambda\leq \frac{(n-1)^2}{4}$, then \eqref{Eplambda} has a positive solution in $\HHH$.
% \item[Uniqueness] \label{I:uniqueness} If $n\geq 3$ or $n=2$ and $\lambda\leq \frac{2(p+1)}{(p+3)^2}$ there is at most one positive solution to \eqref{Eplambda} up to hyperbolic isometries. 
% \end{description}
% \end{theo}
\begin{theorem}[Uniqueness]
\label{T:uniqueness}
Assume \eqref{hyp_p}. If $n\geq 3$, or $n=2$ and $\lambda\leq \frac{2(p+1)}{(p+3)^2}$ equation \eqref{Eplambda}  has only one positive solution up to hyperbolic isometries. 
\end{theorem}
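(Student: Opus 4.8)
The plan is to reduce the statement to a uniqueness question for a radial ordinary differential equation, and then to settle the latter by a shooting argument combined with an analysis of the linearized problem.

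\emph{Step 1: symmetry.} Note first that Theorem \ref{T:minimizers} only gives radiality for \emph{minimizers}, whereas here we must control \emph{all} positive solutions. So I would begin by showing that every positive solution $f\in H^1(\HH^n)$ of \eqref{Eplambda} is, after composition with a suitable hyperbolic isometry $h\in\GG$, radial about the origin $0$ and strictly decreasing in $r=d(\cdot,0)$. The required input is decay: since $\lambda<\frac{(n-1)^2}4$, elliptic regularity together with the asymptotics of the equation at infinity force $f$ and $\nabla f$ to decay exponentially. With such decay one runs the moving-plane method adapted to $\HH^n$, using totally geodesic hypersurfaces as the moving ``planes'' and the reflections in $\GG$ across them; the maximum principle then yields symmetry about some point, which we bring to $0$ by an isometry. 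This reduces the problem to counting positive, decaying radial solutions.

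\emph{Step 2: the radial ODE.} For radial $f=f(r)$, equation \eqref{Eplambda} becomes
\[
f''+(n-1)\coth r\,f'+\lambda f+f^{p}=0,\qquad r>0,
\]
with $f'(0)=0$ and $f(r)\to 0$. The asymptotic equation as $r\to\infty$ has characteristic exponents $\rho_\pm=\tfrac12\bigl(-(n-1)\pm\sqrt{(n-1)^2-4\lambda}\bigr)$, both negative under \eqref{hyp_p}; since $d\mu\sim(\sinh r)^{n-1}\,dr$, the $H^1$ condition selects precisely the fast-decaying branch $f\sim e^{\rho_- r}$, which is the codimension-one behaviour the shooting must hit. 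It is convenient to remove the first-order term by the Liouville substitution $f=(\sinh r)^{-\frac{n-1}2}\,w$, turning the equation into a one-dimensional Schr\"odinger-type ODE with an explicit potential and a weighted nonlinearity, which makes the sign conditions below transparent.

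\emph{Step 3: ODE uniqueness.} Let $f_\alpha$ be the solution with $f_\alpha(0)=\alpha>0$, $f_\alpha'(0)=0$, and let $\varphi_\alpha=\partial_\alpha f_\alpha$, which solves the linearized equation
\[
\varphi''+(n-1)\coth r\,\varphi'+\bigl(\lambda+p f_\alpha^{\,p-1}\bigr)\varphi=0,\qquad \varphi(0)=1,\ \varphi'(0)=0.
\]
Following the Kwong/McLeod--Serrin scheme, I would show that at any ground-state value of $\alpha$ the function $\varphi_\alpha$ changes sign exactly once, by forming Wronskian/Sturm-type comparisons between $\varphi_\alpha$ and $f_\alpha$ and integrating against suitable multipliers to fix the signs of the boundary terms. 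From the single sign change one deduces that the map $\alpha\mapsto$ (asymptotic decay type of $f_\alpha$) is strictly monotone near a ground state, so the fast-decaying ($H^1$) profile is produced by a unique $\alpha$, giving uniqueness of the positive radial solution.

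\emph{Main obstacle.} The crux is Step 3, for two reasons. First, dilations are not isometries of $\HH^n$ (as stressed in the introduction), so the scaling multiplier $r\partial_r f$ of the classical Euclidean argument has no exact analog and must be replaced by a hyperbolic Pohozaev multiplier, whose commutator with the equation produces extra lower-order terms that have to be controlled. Second, the coefficient $\coth r$ together with the possibly positive constant $\lambda$ (which may approach $\frac{(n-1)^2}4$) governs the sign of the quantities counting the zeros of $\varphi_\alpha$; these are exactly the signs that degenerate when $n=2$, forcing the restriction $\lambda\le\frac{2(p+1)}{(p+3)^2}$, whereas for $n\ge 3$ the weight is strong enough to preserve them for all admissible $\lambda$. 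Making the moving-plane argument of Step 1 rigorous on $\HH^n$, in particular establishing the exponential decay it requires, is a secondary but nontrivial point.
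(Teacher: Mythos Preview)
The paper does not prove this theorem at all: it is simply quoted from the literature, with the sentence ``The following theorem follows from Theorems 1.2 and 1.3 of \cite{MaSa08}'' immediately preceding the statement. So there is nothing to compare your attempt against in the paper itself.

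That said, your outline is broadly in the spirit of the Mancini--Sandeep argument the paper cites. Step~1 (moving planes on $\HH^n$ with totally geodesic hyperplanes, after establishing exponential decay) and Step~2 (reduction to the radial ODE via the Liouville substitution) are exactly how that reference proceeds. For Step~3, Mancini--Sandeep do not quite run a Kwong--McLeod--Serrin shooting argument directly on $\HH^n$; rather, after the change of variables they transform the problem on the ball model into a Euclidean-type equation to which known ODE uniqueness results (Kwong--Li type) apply, and it is in verifying the hypotheses of those results that the dimensional dichotomy and the bound $\lambda\le\frac{2(p+1)}{(p+3)^2}$ for $n=2$ emerge. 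Your diagnosis of the obstacle---the absence of a scaling vector field and the resulting extra terms in the Pohozaev/Wronskian identities, with the $n=2$ restriction arising from a sign condition---is qualitatively correct, but as written your Step~3 is a plan rather than a proof: the specific multipliers and the precise inequality that forces the single sign change of $\varphi_\alpha$ are not identified, and that is where all the work lies.
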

% \begin{rema}
%  It was also shown in \cite{MaSa08} the solution is in $L^2$ (thus in $H^1$) if and only if $\lambda<\frac{(n-1)^2}{4}$.
% \end{rema}
\begin{rema}
Uniqueness in the case $n=2$, $\frac{2(p+1)}{(p+3)^2}<\lambda\leq \frac{1}{4}$ is an open question.
\end{rema}
\begin{theorem}[Nonexistence]
\label{T:nonexistence}
If $p>1$, $\lambda \geq \frac{(n-1)^2}{4}$, then equation \eqref{Eplambda} has no positve solution in $H^1$. 
\end{theorem}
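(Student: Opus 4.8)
The plan is to reduce \eqref{Eplambda} to a one-dimensional problem and to exploit that $\frac{(n-1)^2}{4}$ is the bottom of the spectrum of $-\Delta_{\HH^n}$. First I would record the standard preliminary facts: by elliptic regularity a positive $H^1$ solution $f$ of \eqref{Eplambda} is smooth, by the strong maximum principle $f>0$ everywhere, and since $f\in H^1$ solves a semilinear equation whose linear part has strictly positive bottom of spectrum, $f$ decays exponentially (Agmon estimates). As positive solutions need not be radial, I would pass to the spherical average $\bar f(r)=\frac{1}{|\partial B_r|}\int_{\partial B_r}f\,d\sigma$. Averaging the equation and using that the spherical mean commutes with the radial part of $\Delta_{\HH^n}$ gives
\[
\bar f''+(n-1)\coth r\,\bar f'+\lambda \bar f=-\overline{f^p},
\]
where the source satisfies $\overline{f^p}>0$; moreover $\bar f>0$ and, by Jensen's inequality, $\bar f\in L^2\big((\sinh r)^{n-1}dr\big)$.

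Next I would put this radial equation in Schr\"odinger form via the Liouville substitution $w=(\sinh r)^{\frac{n-1}{2}}\bar f$, which kills the first-order term and yields
\[
w''+V_\lambda w=-S,\qquad V_\lambda(r)=\frac{(n-1)(3-n)}{4\sinh^2 r}+\Big(\lambda-\frac{(n-1)^2}{4}\Big),\quad S=(\sinh r)^{\frac{n-1}{2}}\,\overline{f^p}>0,
\]
with $w>0$ and $w\in L^2(0,\infty)$. The point is that the constant part of $V_\lambda$ is exactly $\lambda-\frac{(n-1)^2}{4}\ge 0$, while the curvature term decays like $e^{-2r}$.

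I would then split into two cases. If $\lambda>\frac{(n-1)^2}{4}$, then $V_\lambda\to c^2:=\lambda-\frac{(n-1)^2}{4}>0$, so for $r$ large $w''=-V_\lambda w-S\le -\tfrac{c^2}{2}w<0$; a positive function along which $w''\le-\tfrac{c^2}{2}w$ on a half-line cannot exist (either $w'$ becomes negative and concavity drives $w\to-\infty$, or $w$ is bounded below and then $w'\to-\infty$). This is Sturm oscillation and needs no decay information. The delicate case is the threshold $\lambda=\frac{(n-1)^2}{4}$, where $V_\lambda\to 0$ and the mechanism is non-oscillatory. Here I would compare $w$ with the positive solution $w_0=(\sinh r)^{\frac{n-1}{2}}\varphi_0$ of $w_0''+V_\lambda w_0=0$ coming from the Harish--Chandra spherical function $\varphi_0$ (the generalized ground state: $-\Delta\varphi_0=\frac{(n-1)^2}{4}\varphi_0$, $\varphi_0>0$). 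The Wronskian $W=w'w_0-w w_0'$ satisfies $W'=-Sw_0\le 0$ and $W(0^+)=0$ (both $w,w_0$ are regular at the origin), hence $W(R)=-\int_0^R Sw_0\,dr$. If I can show $W(R)\to 0$ as $R\to\infty$, then $\int_0^\infty Sw_0\,dr=0$, which is absurd since $S,w_0>0$.

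The main obstacle is exactly this last step, i.e. controlling the boundary behaviour at infinity in the threshold case. Since $w_0$ grows at most linearly while $w$ must decay, I need $w(R)\to 0$ and $Rw'(R)\to 0$. This is where the hypothesis $f\in H^1$ is essential and where the analysis is borderline: at $\lambda=\frac{(n-1)^2}{4}$ the two homogeneous modes behave like a constant and like $r$, neither square integrable against $dr$, so the $L^2$ bound on $w$ forces the slowly decaying component to vanish and pins down fast decay. Concretely, using the exponential decay of $f$ (hence of $S$, and of the term $V_\lambda w$, both integrable), one shows $w'$ has a finite limit at infinity which must be $0$ by square-integrability, then $w\to 0$, and finally $|w'(R)|\lesssim \int_R^\infty(|V_\lambda|w+S)$ decays exponentially, so $Rw'(R)\to 0$ and $W(\infty)=0$. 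Making these decay estimates rigorous — ruling out the slowly decaying/linear mode at the bottom of the spectrum — is the technical heart of the argument; everything else is elementary ODE comparison.
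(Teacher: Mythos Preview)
The paper does not actually prove Theorem~\ref{T:nonexistence}; it is stated as a known result taken from \cite{MaSa08}, where it is obtained via a Pohozaev-type identity adapted to the hyperbolic geometry. Your route---spherical averaging, Liouville substitution, and ODE comparison---is genuinely different, and for $\lambda>\frac{(n-1)^2}{4}$ your Sturm oscillation argument is correct and pleasantly elementary.

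The gap is at the threshold $\lambda=\frac{(n-1)^2}{4}$. You invoke Agmon estimates at the outset to obtain exponential decay of $f$, justifying this by ``the linear part has strictly positive bottom of spectrum.'' But the relevant operator for Agmon is $-\Delta-\lambda$, whose bottom of spectrum is $\frac{(n-1)^2}{4}-\lambda$, and this vanishes precisely at the threshold. Agmon gives nothing there; solutions of the linearized equation decay only like $(1+r)e^{-\rho r}$, which is the borderline rate. Your entire boundary analysis at infinity---the claim that $S$ decays exponentially, that $w'\to 0$, and especially that $Rw'(R)\to 0$---rests on this unproven exponential decay. Without it, all you know is $w\in L^2(0,\infty)$, and since the two homogeneous modes behave like $1$ and $r$ (neither in $L^2$), extracting pointwise decay with rate from $L^2$ membership is exactly the subtle step you have not carried out. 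You correctly identify this as ``the technical heart,'' but the sketch you give is circular: it assumes the exponential decay you have not established.

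This is not a cosmetic issue. The paper explicitly notes that at $\lambda=\frac{(n-1)^2}{4}$ there \emph{is} a positive solution in $\HHH\setminus H^1$, so any correct proof must use the $H^1$ hypothesis in a sharp way. One fix along your lines: from $(w/w_0)'=W/w_0^2<0$ deduce $w/w_0\downarrow\ell\ge 0$; if $\ell>0$ then $w\gtrsim w_0\sim r\notin L^2$, so $\ell=0$ and $w=o(r)$. But pushing from $w=o(r)$ and $w\in L^2$ to $Rw'(R)\to 0$ still requires a quantitative argument (e.g.\ showing $S\in L^1$, which for $1<p<2$ does not follow from $f\in H^1$ by interpolation alone). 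The Pohozaev approach in \cite{MaSa08} sidesteps this by producing an integral identity with a sign, rather than chasing pointwise asymptotics.
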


Let us mention that for the critical value $\lambda=\frac{(n-1)^2}{4}$, equation \eqref{Eplambda} has a solution which is in $\HHH$ (see \eqref{def_norm}), but not in $H^1$, and solution to the minimization problem \eqref{minimizer}. For $\lambda>\frac{(n-1)^2}{4}$ the nonexistence theorem \ref{T:nonexistence} remains valid if $H^1$ is replaced by $\HHH$.

%Let us mention that $Q_{\lambda}$ is explicit for particular values of $\lambda$ (see \ref{}).
%\subsubsection{Mass-subcritical case}
We next give a result that is specific to the mass-critical case, and will be needed in the proof of Proposition \ref{P:stableGS}.
\begin{prop}
\label{P:mass-sub}
Assume $1<p<1+\frac{4}{n}$. Then there exists $\alpha_0>0$ such that for all $\alpha>\alpha_0$, the infimum
\begin{equation}
 \label{min2}
 \inf_{\substack{u\in H^1(\HH^n)\\ \|u\|^2_{L^2}=\alpha^2}} \frac 12\|u\|^2_{\HHH}-\frac{1}{p+1}\|u\|^{p+1}_{L^{p+1}}=e(\alpha)
\end{equation} 
is attained by a radial, positive function. If $n\geq 3$, this function is equal to $Q_{\lambda}$ for some $\lambda<\frac{(n-1)^2}{4}$. Finally, any radial minimizing sequence converges (up to a subsequence) to a radial minimizer.
\end{prop}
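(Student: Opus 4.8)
The plan is to treat \eqref{min2} as a constrained minimization for $J(u)=\tfrac12\|u\|_{\HHH}^2-\tfrac{1}{p+1}\|u\|_{L^{p+1}}^{p+1}$ on the sphere $\{\|u\|_{L^2}=\alpha\}$, and to obtain compactness of minimizing sequences by combining a Gagliardo--Nirenberg coercivity bound with a compactness property specific to radial functions on $\HH^n$, in the spirit of concentration--compactness. Set $s=\tfrac{n(p-1)}{2}$, and note that $s<2$ exactly because $p<1+\tfrac4n$. First I would show that $J$ is bounded below and coercive on the constraint. Since the Gagliardo--Nirenberg inequality on $\HH^n$ is the Euclidean one, $\|u\|_{L^{p+1}}^{p+1}\le C\alpha^{p+1-s}\|\nabla u\|_{L^2}^{s}$, and with $\|\nabla u\|_{L^2}^2=\|u\|_{\HHH}^2+\tfrac{(n-1)^2}{4}\alpha^2$ and $s<2$ one gets
\begin{equation*}
J(u)\ge \tfrac12\|u\|_{\HHH}^2-\tfrac{C}{p+1}\alpha^{p+1-s}\Big(\|u\|_{\HHH}^2+\tfrac{(n-1)^2}{4}\alpha^2\Big)^{s/2},
\end{equation*}
whose right-hand side tends to $+\infty$ with $\|u\|_{\HHH}$. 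Hence $e(\alpha)$ is finite and every minimizing sequence is bounded in $\HHH$, thus in $H^1$.

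Next I would record two facts forcing compactness. (i) Negativity: testing on $\alpha\phi$ with $\|\phi\|_{L^2}=1$ gives $J(\alpha\phi)=\tfrac{\alpha^2}{2}\|\phi\|_{\HHH}^2-\tfrac{\alpha^{p+1}}{p+1}\|\phi\|_{L^{p+1}}^{p+1}<0$ for $\alpha$ large, since $p+1>2$; let $\alpha_0$ be the resulting threshold beyond which $e(\alpha)<0$. (ii) Strict monotonicity: if $e(\beta)<0$ and $u$ nearly realizes $e(\beta)$ with $J(u)<0$, then for $\theta>1$ the function $\theta u$ has $\|\theta u\|_{L^2}=\theta\beta$ and $J(\theta u)\le\theta^2 J(u)$ because $\theta^{p+1}\ge\theta^2$; hence $e(\theta\beta)\le\theta^2 e(\beta)<e(\beta)$. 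As spread-out (escaping) test functions show $e(\cdot)\le0$ everywhere, this yields $e(\alpha)<e(\beta)$ for all $0<\beta<\alpha$ whenever $\alpha>\alpha_0$.

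The core is the compactness of the embedding $H^1_{rad}(\HH^n)\hookrightarrow L^{p+1}(\HH^n)$. I would prove it through the radial substitution $w(r)=\sinh^{\frac{n-1}{2}}(r)\,u(r)$, under which $\|u\|_{L^2}=\|w\|_{L^2(0,\infty)}$ and $\|u\|_{\HHH}^2=\int_0^\infty |w'|^2+V(r)|w|^2\,dr$ with $V(r)\to0$; radial $H^1$ functions then satisfy $|u(r)|\lesssim\|u\|_{H^1}\sinh^{-\frac{n-1}{2}}(r)$, so that $\|u\|_{L^{p+1}(\{r>R\})}\to0$ uniformly on bounded sets of $H^1$ as $R\to\infty$, and local Rellich on $\{r<R\}$ gives compactness. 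Granting this, a radial minimizing sequence $u_k\rightharpoonup u$ converges in $L^{p+1}$, and the orthogonal splitting of the $L^2$ and $\HHH$ norms under weak convergence gives $e(\alpha)=J(u)+\tfrac12\lim\|u_k-u\|_{\HHH}^2\ge J(u)$. If $u=0$, then $\|u_k\|_{L^{p+1}}\to0$ and $J(u_k)\to\tfrac12\lim\|u_k\|_{\HHH}^2\ge0>e(\alpha)$, a contradiction; so $\|u\|_{L^2}=\beta\in(0,\alpha]$ and $J(u)\ge e(\beta)$. If $\beta<\alpha$ this contradicts $e(\alpha)<e(\beta)$, so $\beta=\alpha$, $\lim\|u_k-u\|_{\HHH}=0$, and $u_k\to u$ in $H^1$ with $u$ a minimizer. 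This gives both attainment and convergence of radial minimizing sequences.

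To finish, replacing $u$ by $|u|$ (which keeps $\|u\|_{L^2}$ and $\|u\|_{L^{p+1}}$ and does not increase $\|\nabla u\|_{L^2}$) and using Schwarz symmetrization on $\HH^n$ (so that $e(\alpha)$ equals its radial value), the minimizer is radial and nonnegative, hence positive by the strong maximum principle. The Euler--Lagrange equation with a Lagrange multiplier reads $-\Delta u-\lambda u=|u|^{p-1}u$ (equation \eqref{Eplambda}) for some $\lambda$; since $u$ is a positive $H^1$ solution, Theorem \ref{T:nonexistence} forces $\lambda<\tfrac{(n-1)^2}{4}$, and for $n\ge3$ Theorem \ref{T:uniqueness} identifies $u$ with $Q_\lambda$. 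I expect the main obstacle to be precisely the radial compactness, i.e. excluding escape of mass to $r=\infty$: the decisive point, particular to $\HH^n$, is that the exponential volume growth makes the $L^{p+1}$ mass of escaping radial bumps vanish while their $H^1$ norm stays bounded, so escaping mass carries nonnegative energy and is ruled out exactly by $e(\alpha)<0$.
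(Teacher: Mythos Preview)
Your proof is correct and follows essentially the same route as the paper: Gagliardo--Nirenberg for coercivity and finiteness of $e(\alpha)$, negativity $e(\alpha)<0$ for large $\alpha$, compactness of radial minimizing sequences via the compact embedding $H^1_{rad}(\HH^n)\hookrightarrow L^{p+1}(\HH^n)$, and identification of the minimizer through the Euler--Lagrange equation together with Theorems~\ref{T:nonexistence} and~\ref{T:uniqueness}. The paper is terser on the compactness step (it simply invokes the radial compact embedding and $e(\alpha)<0$), whereas you spell out a dichotomy via strict monotonicity of $e$; one small remark is that your claim $e(\beta)\le 0$ for all $\beta$ is true but should be justified by test functions approximating the bottom of the spectrum of $-\Delta-\rho^2$ (translated bumps change nothing on $\HH^n$ by isometry invariance), and in fact this detour can be avoided entirely: since the weak limit $u$ already satisfies $J(u)\le e(\alpha)<0$, scaling it to mass $\alpha$ directly yields $J\big((\alpha/\beta)u\big)\le(\alpha/\beta)^2 J(u)<e(\alpha)$, a contradiction.
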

\begin{proof}
First we note that Gagliardo-Nirenberg inequality implies the infimum in \eqref{min2} to be finite if $1<p<1+\frac{4}{n}$. By a rearrangement procedure (\cite{Dr05} and \cite[Section 3]{ChMa10}) or moving planes technics \cite[Section 2]{MaSa08} , the infimum in \eqref{min2} can be restricted to $H^1$ radial functions such that $\|u\|_{L^2}=\alpha$. Using as in \cite{ChMa10} the change of functions $v=\left(\frac{\sinh r}{r}\right)^{\frac{n-1}{2}}u$, we are reduced to minimize 
 $$ \int_{\RR^n} \frac 12 |\nabla v|^2+\frac{(n-1)(n-3)}{8}\int_{\RR^n} |v|^2\frac{r^2-\sinh^2 r}{r^2\sinh^2 r}-\frac{1}{p+1}\int_{\RR^n} |v|^{p+1}\left(\frac{r}{\sinh r}\right)^{\frac{(p-1)(n-1)}{2}},$$
 on all radial function in $H^1(\RR^n)$ such that $\|v\|_{L^2}=\alpha$. Since $\|\nabla |u|\|_{L^2}\leq \|\nabla u\|_{L^2}$, with strict inequality if $u$ is not positive up to a constant factor, minimizers are positive (up to a constant factor). Using, as in \cite{ChMa10}, the concentration-compactness method, or simply the compactness of the radial embedding of $H^1$ in $L^p$, it is easy to prove that if $e(\alpha)<0$, the infimum is attained. Fixing $u\in H^1(\HH^n)$ with $\|u\|_{L^2}=1$, we obtain
 $$\lim_{\alpha\to\infty} \frac{\alpha^2}{2}\|u\|^2_{\HHH}-\frac{\alpha^{p+1}}{p+1}\|u\|^{p+1}_{L^{p+1}}=-\infty.$$
 Thus $e(\alpha)$ is negative for large $\alpha$, concluding the proof of the existence of a minimizer $S_{\alpha}$ for \eqref{min2}. The compactness of minimizing sequences also follows. Since $S_{\alpha}$ is solution to the minimization problem \eqref{min2}, there exists a Lagrange multiplier $\lambda$ such that 
 $$-\Delta S_{\alpha}-\lambda S_{\alpha}=|S_{\alpha}|^{p-1}S_{\alpha}.$$
By the considerations above, we can assume that $S_{\alpha}$ is radial and positive. If $n\geq 3$, using Theorems \ref{T:nonexistence} and \ref{T:uniqueness}, we obtain $\lambda<\frac{(n-1)^2}{4}$ and $S_{\alpha}=Q_{\lambda}$, concluding the proof.
 \end{proof}
\begin{rema}
 In \cite[Section 5]{ChMa10}, it is claimed that the infimum \eqref{min2} is attained for all $\alpha>0$. This cannot be true, since it would contradict the small data scattering for equation \eqref{NLS} for $1<p<1+\frac{4}{n}$, proved in \cite[Section 4]{BaCaSt08}. Note that for small $\alpha>0$, one can prove (using Poincar\'e-Sobolev \eqref{PoiSob} and  Gagliardo-Nirenberg inequalities) that $e(\alpha)=0$, whereas it is claimed and used in the proof of \cite{ChMa10} than $e(\alpha)<0$ for any $\alpha>0$.
\end{rema}
\begin{rema}
\label{R:ChMa}
One can deduce from Proposition \ref{P:mass-sub}, following \cite{CaLi82}, the orbital stability of the set of all solutions $e^{it\lambda}Q_{\lambda}$ of \eqref{NLS}, with $Q_{\lambda}$ minimizer for \eqref{min2}, that is of mass $\alpha=\|Q_{\lambda}\|_{L^2}$ (see \cite[Section 6]{ChMa10}). Note that the proof of Proposition \ref{P:mass-sub} does not imply that any $Q_{\lambda}$ is a minimizer for the problem \eqref{min2}. In particular, Proposition \ref{P:mass-sub} and the method of \cite{CaLi82} do not yield stability for all ground states solutions $e^{it\lambda}Q_{\lambda}$ as seems to say \cite[Proposition 6.3]{ChMa10}. We refer to \cite{LaSuSo14Pa} for the study of ground states stability for wave maps on the hyperbolic plane: in this case also the situation is quite different from the Euclidean setting.
Let us also mention that uniqueness of minimizers for \eqref{min2} and uniqueness of a minimal mass ground state are open questions. A similar issue appears in the context of combined power-type nonlinear Schr\"odinger equation \cite{KiOhPoVi14}.
\end{rema}

\subsection{Trapping and global well-posedness}
In this section we prove Theorem \ref{T:global}.
We use a classical trapping argument, that goes back to \cite{PaSa75} in the context of the Klein-Gordon equation (see e.g. \cite{Stu91} for NLS). We start by proving the following stationary lemma:

%For the range $1\leq p<1+\frac 4n$ we have already recalled in the introduction the proof of global existence. For the range $1+\frac 4n\leq p<1+\frac 4{n-2}$ we 

\begin{lemma}\label{L:control}
Assume $p>1$, $\lambda<\frac{(n-1)^2}4$,  and $1<p<1+\frac 4{n-2}$ if $n\geq 3$. Then if $E_\lambda(f)\leq E_\lambda(Q_{\lambda})$ and $\|f\|_{{\HHH}_\lambda}^2\leq \|Q_{\lambda}\|^2_{{\HHH}_\lambda}$ we have
\begin{equation}
\label{var}
\|f\|^2_{{\HHH}_\lambda}\leq \frac{E_\lambda(f)}{E_\lambda(Q_{\lambda})}\|Q_{\lambda}\|^2_{{\HHH}_\lambda}.
\end{equation}
In particular there is no function $f$ such that $E_\lambda(f)< E_\lambda(Q_{\lambda})$ and $\|f\|_{{\HHH}_\lambda}^2= \|Q_{\lambda}\|^2_{{\HHH}_\lambda}.$
\end{lemma}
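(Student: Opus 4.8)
The plan is to reduce everything to a one-variable concavity estimate on the scalar quantity $y=\|f\|_{\HHH_\lambda}^2$. Setting $q=\frac{p+1}{2}>1$ and using the sharp Poincar\'e--Sobolev inequality \eqref{PoiSob_lambda} in the form $\|f\|_{L^{p+1}}^{p+1}\le D_\lambda^{q}\,y^{q}$, I would first bound the energy from below by a single function of $y$:
$$E_\lambda(f)=\tfrac12\|f\|_{\HHH_\lambda}^2-\tfrac{1}{p+1}\|f\|_{L^{p+1}}^{p+1}\ge g(y),\qquad g(y):=\tfrac12 y-\tfrac{D_\lambda^{q}}{p+1}\,y^{q}.$$

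Next I would identify the value of $g$ at $Y:=\|Q_\lambda\|_{\HHH_\lambda}^2$ using the ground-state identities. By \eqref{formule_Qp} one has $\|Q_\lambda\|_{L^{p+1}}^{p+1}=Y$, and by \eqref{formule_Dp} one has $D_\lambda=\|Q_\lambda\|_{L^{p+1}}^{1-p}=Y^{(1-p)/(p+1)}$; a direct substitution then gives $\tfrac{D_\lambda^{q}}{p+1}Y^{q}=\tfrac{1}{p+1}Y$, hence $g(Y)=\big(\tfrac12-\tfrac1{p+1}\big)Y=\tfrac{p-1}{2(p+1)}Y=E_\lambda(Q_\lambda)>0$. (The same computation yields $g'(Y)=0$, so $Y$ is exactly the maximum point of $g$, which is the variational meaning of $Q_\lambda$; this observation motivates the argument but is not logically needed below.)

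The heart of the matter is then a chord inequality. Since $g''(y)=-\tfrac{D_\lambda^{q}}{p+1}q(q-1)y^{q-2}<0$ for $y>0$ and $g(0)=0$, the function $g$ is concave on $[0,\infty)$ with $g(0)=0$, so $g(ty)\ge t\,g(y)$ for every $t\in[0,1]$. The hypothesis $\|f\|_{\HHH_\lambda}^2\le\|Q_\lambda\|_{\HHH_\lambda}^2$ says precisely that $y\le Y$, so applying this with $t=y/Y$ gives $g(y)\ge\tfrac{y}{Y}g(Y)$. Chaining with the energy lower bound yields
$$E_\lambda(f)\ge g(y)\ge \tfrac{y}{Y}\,g(Y)=\frac{\|f\|_{\HHH_\lambda}^2}{\|Q_\lambda\|_{\HHH_\lambda}^2}\,E_\lambda(Q_\lambda),$$
and dividing by $E_\lambda(Q_\lambda)>0$ is exactly \eqref{var}. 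The ``in particular'' assertion then follows by contradiction: if $E_\lambda(f)<E_\lambda(Q_\lambda)$ and $\|f\|_{\HHH_\lambda}^2=\|Q_\lambda\|_{\HHH_\lambda}^2$, then the hypotheses hold and \eqref{var} forces $\|Q_\lambda\|_{\HHH_\lambda}^2\le\tfrac{E_\lambda(f)}{E_\lambda(Q_\lambda)}\|Q_\lambda\|_{\HHH_\lambda}^2<\|Q_\lambda\|_{\HHH_\lambda}^2$, which is absurd.

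I expect no serious obstacle here; the one point requiring care is that $g$ is \emph{not} monotone (it increases and then decreases, with maximum at $Y$), so the energy lower bound cannot simply be inverted. The concavity/chord inequality is what replaces monotonicity and makes the bound robust on the whole admissible range $0\le y\le Y$. The other place to be attentive is the exponent bookkeeping through \eqref{formule_Qp}--\eqref{formule_Dp} that pins down the crucial equality $g(Y)=E_\lambda(Q_\lambda)$.
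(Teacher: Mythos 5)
Your proof is correct and takes essentially the same approach as the paper: both use the sharp Poincar\'e--Sobolev inequality together with the ground-state identities \eqref{formule_Qp}--\eqref{formule_Dp} to reduce the claim to showing that $a(y)=\tfrac12 y-\tfrac{1}{p+1}\|Q_{\lambda}\|_{\HHH_\lambda}^{1-p}y^{(p+1)/2}$ dominates the linear function $\tfrac{p-1}{2(p+1)}y$ on $[0,\|Q_{\lambda}\|_{\HHH_\lambda}^2]$. The only difference is cosmetic: the paper verifies this by checking that the difference vanishes at both endpoints and is increasing-then-decreasing, while you deduce it from concavity of $a$ and $a(0)=0$ via the chord inequality --- the same elementary calculus fact.
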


\begin{proof}
Recall from subsection \ref{SS:Ground_states} the variational definition of $D_{\lambda}$. By \eqref{formule_Qp}, \eqref{formule_Dp} 
\begin{gather}
 %\label{Qp1}
\notag
\|Q_{\lambda}\|_{{\HHH}_\lambda}^2=\|Q_{\lambda}\|_{L^{p+1}}^{p+1}\\
 \label{Qp2}
D_{\lambda}=\|Q_{\lambda}\|_{{\HHH}_\lambda}^{\frac{2(1-p)}{p+1}}\\
\label{Qp3}
E_\lambda(Q_{\lambda})=\|Q_{\lambda}\|_{{\HHH}_\lambda}^2\frac{p-1}{2(p+1)}.
\end{gather} 
Therefore
\begin{multline}
\label{bound_Em}
 E_\lambda(f)=\frac{1}{2}\|f\|^2_{{\HHH}_\lambda}-\frac{1}{p+1}\|f\|^{p+1}_{L^{p+1}}
\geq \frac{1}{2}\|f\|^2_{{\HHH}_\lambda}-\frac{D_{\lambda}^\frac{p+1}2}{p+1}\|f\|^{p+1}_{{\HHH}_\lambda}= a\left(\|f\|^2_{{\HHH}_\lambda}\right),
\end{multline}
where (in view of \eqref{Qp2})
$$ a(x)=\frac{1}{2}x-\frac{\|Q_{\lambda}\|^{1-p}_{{\HHH}_\lambda}}{p+1}x^{\frac{p+1}{2}}.$$
In particular,
$$b(x)=a(x)-\frac{p-1}{2(p+1)}x$$ 
vanishes at $x=0$ and at $x=\|Q_{\lambda}\|_{{\HHH}_\lambda}^2$, increases on $\left[0,\left(\frac 2{p+1}\right)^\frac 2{p-1}\|Q_{\lambda}\|_{{\HHH}_\lambda}^2\right]$ and decreases on $\left[\left(\frac 2{p+1}\right)^\frac 2{p-1}\|Q_{\lambda}\|_{{\HHH}_\lambda}^2,\|Q_{\lambda}\|_{{\HHH}_\lambda}^2\right]$ so it is a positive function on the whole interval $[0,\|Q_{\lambda}\|_{{\HHH}_\lambda}^2]$. Since $\|f\|^2_{{\HHH}_\lambda}\leq \|Q_{\lambda}\|^2_{{\HHH}_\lambda}$, combining with \eqref{bound_Em} we have obtained that
$$E_\lambda(f)\geq  a\left(\|f\|^2_{{\HHH}_\lambda}\right)\geq \frac{p-1}{2(p+1)}\|f\|^2_{{\HHH}_\lambda}.$$
Dividing this estimate by the value \eqref{Qp3} of $E_\lambda(Q_{\lambda})$ we obtain \eqref{var}.
\end{proof}

\begin{proof}[Proof of Theorem \ref{T:global}]
%Preuve reformul\'ee et r\'eduite

Let $u_0$ be as in Theorem \ref{T:global}.

If $\delta_\lambda(u_0)=0$, then by \eqref{var}, $E_{\lambda}(u_0)=E_{\lambda}(Q_{\lambda})$. Thus $u_0$ is a minimizer for Poincar\'e-Sobolev inequality and by Theorem \ref{T:minimizers},
$$u_0(x)=e^{i\theta}Q(h(x)),$$
for some $Q\in \QQQ_{\lambda}$, $\theta\in \RR$, and $h\in \GG$, which gives Case \eqref{I:zero}.

As a consequence of Case \eqref{I:zero}, if $\delta_{\lambda}(u_0)\neq 0$, then $\delta_{\lambda}(u(t))\neq 0$ for all $t$ in the domain of existence of $u$, which proves by continuity that $\delta_{\lambda}(u(t))$ does not change sign.

We next assume that $\delta_{\lambda}(u_0)$ is  negative, and thus that $\delta_\lambda(u(t))$ is negative for all $t$. This ensures that the $\mathcal H$ norm of $u(t)$ is bounded in time. By mass conservation we deduce that the $H^1$ norm of $u$ is bounded 
and global well-posedness follows from the blow-up criterion \eqref{blupcrit} mentioned in the introduction. This proves case \eqref{I:negative}.

In case \eqref{I:positive}\, $\delta_\lambda(u(t))>0$ for all $t$ in the domain of existence of $u$ and thus 
$$E_\lambda(u_0)\leq E_\lambda(Q_{\lambda})=\frac 12\|Q_{\lambda}\|^2_{{\HHH}_\lambda}-\frac{1}{p+1}\|Q_{\lambda}\|^{p+1}_{L^{p+1}}<\frac 12\|u(t)\|^2_{{\HHH}_\lambda}-\frac{1}{p+1}\|Q_{\lambda}\|^{p+1}_{L^{p+1}}.$$ 
If the solution $u$ scatters for positive times, then 
$$\lim_{t\to\infty}\|u(t)\|_{L^{p+1}}=0,$$
and for any $\epsilon>0$ there exists $t$ large such that 
$$\frac 12\|u(t)\|^2_{{\HHH}_\lambda}<E_\lambda(u(t))+\epsilon=E_\lambda(u_0)+\epsilon,$$
so we get a contradiction by taking $\epsilon =\frac{1}{2(p+1)}\|Q_{\lambda}\|^{p+1}_{L^{p+1}}$.
\end{proof}
\section{Construction of the critical solution}
\label{S:critical}
\subsection{Some spaces of functions and inequalities}
\label{SS:spaces}

\subsubsection{Preliminaries on Fourier analysis on hyperbolic space}
\label{SS:preliminaries}
We will mostly use the notations of \cite{IoPaSt12}. We refer to this article for more details.
%The Minkowski metric $-(dx_0)^2+(dx_1)^2+\ldots+(dx_n)^2$ induces a Riemannian metric $g$ on $\HH^n$. We will denote by $d\mu$ the induced measure. 

We define the Fourier transform on $\HH^n$, following the general definition of the Fourier transform on symmetric spaces given in \cite{He65}. For $\omega\in S^{n-1}$, $\lambda\in \RR$, we 
%let $b(\omega)=(1,\omega)\in \RR^{n+1}$ and 
define the Fourier transform of $f\in L^1(\HH^n)$ by
$$\hat{f}(\lambda,\omega)=\int_{\HH^n} f(x)[x,(1,\omega)]^{i\lambda-\rho}d\mu(x),\quad \rho=\frac{n-1}{2}.$$
We have
\begin{equation*}
 %\label{Fourier_Laplace}
 \widehat{\Delta_{\HH^n}f}(\lambda,\omega)=-\left(\lambda^2+\rho^2\right)\tilde{f}(\lambda,\omega).
\end{equation*} 
The Fourier inversion formula reads
\begin{equation}
\label{Fourier_inversion}
 f(x)=\int_{-\infty}^{\infty} \int_{S^{n-1}}\hat{f}(\lambda,\omega)[x,(1,\omega)]^{-i\lambda-\rho}|c(\lambda)|^{-2}d\lambda d\omega,
\end{equation} 
where the Harish-Chandra function $c(\lambda)$ is defined by 
$$|c(\lambda)|^{-2}=\frac 12\frac{|\Gamma(\rho)|^2}{|\Gamma(2\rho)|^2}\frac{|\Gamma(\rho+i\lambda)|^2}{|\Gamma(i\lambda)|^2}.$$
%$$c(\lambda)=\sqrt{2}(2\pi)^{n/2}\frac{\Gamma(i\lambda)}{\Gamma(\rho+i\lambda)}.$$
We note that $|c(\lambda)|^{-2}$ is of the order $\lambda^{n-1}$ as $\lambda\to\infty$, and $\lambda^{2}$ as $\lambda\to 0$.

A version of Plancherel theorem is also available on $\HH^n$: the Fourier transform $f\mapsto \hat{f}$ extends to an isometry of $L^{2}(\HH^n)$ onto $L^2\left((-\infty,\infty)\times S^{n-1}),|c(\lambda)|^{-2}d\lambda d\omega\right)$, and, for $f,g\in L^{2}(\HH^n)$, 
$$\int_{\HH^n}f(x)\overline{g}(x)d\mu=\int_{-\infty}^{\infty}\int_{S^{n-1}} \hat{f}(\lambda,\omega)\overline{\hat{g}}(\lambda,\omega)|c(\lambda)|^{-2}d\lambda d\omega.$$
We will use the spectral projectors $P_m$, $m>0$ defined as follows
\begin{equation}
 \label{def_Pm}
 P_m=-\frac{1}{m^2}\Delta e^{\frac{1}{m^2}\Delta},
\end{equation} 
that is 
$$\widehat{P_mf}(\lambda,\omega)=\frac{1}{m^2}\left(\lambda^2+\rho^2\right)e^{-\frac{\lambda^2+\rho^2}{m^2}}\hat{f}(\lambda,\omega).$$
For $s\in \RR$, we define the Sobolev space $H^s(\HH^n)$ as the closure of $C^{\infty}_0(\HH^n)$ for the norm 
$$\|f\|_{H^s}=\left\|(-\Delta)^{s/2}f\right\|_{L^2}.$$
Note that 
$$\|f\|_{H^s}^2\approx \int_{-\infty}^{\infty}\int_{\HH^n} \left(\rho^2+\lambda^2\right)^{s}\left|\hat{f}(\lambda,\omega)\right|^2|c(\lambda)|^{-2}d\lambda d\omega.$$

% We denote by $\mathbb A$ its subgroup given by the matrices
% $$\mathbb A=\left\{\left(\begin{array}{ccc}\cosh r & \sinh r& 0\\\sinh r&\cosh r &0\\0& 0& Id\end{array}\right), r\in\mathbb R^+\right\}.$$

\subsubsection{A refined subcritical Sobolev inequality}
Recall from \eqref{def_Pm} the definition of the spectral projector $P_m$. For $s\in (0,n/2]$, we define the Banach space $B^s$ as the closure of $C^{\infty}_0(\HH^n)$  
for the $B^{-(\frac n2-s),\infty}_\infty$ Besov-type norm: 
%Ici j'ai chang\'e la d\'efinition en mettant $m\geq 1$
\begin{equation*}
 %\label{def_Bs}
\|u\|_{B^s}=\sup_{m\geq 1} m^{s-n/2} \|P_mf\|_{L^\infty(\mathbb H^n)}.
\end{equation*} 
\begin{lemma}\label{lemmaPmest}
For $0<s\leq  n/2$, there exists $C>0$ such that for all $f\in H^s$, one has $f\in B^s$ and
\begin{gather}
\label{Bs_Hs}
 \|f\|_{B^s}\leq C\|f\|_{H^s}\\
\label{Hs_all_m}
\forall m>0,\quad |P_mf(x)|\leq C\left(\frac{1}{m^2}+m^{\frac{n}{2}-s}\right)e^{-\frac{\rho^2}{m^2}}\|f\|_{H^s}.
\end{gather} 
\end{lemma}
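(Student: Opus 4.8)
The plan is to prove the pointwise bound \eqref{Hs_all_m} first, and then obtain the Besov-type inequality \eqref{Bs_Hs} as an immediate consequence. Indeed, once \eqref{Hs_all_m} is available, for $m\geq 1$ one has $m^{s-n/2}\left(\frac{1}{m^2}+m^{n/2-s}\right)=m^{s-n/2-2}+1\leq 2$ (using $s\leq n/2$ and $m\geq 1$) and $e^{-\rho^2/m^2}\leq 1$, so that $m^{s-n/2}|P_mf(x)|\leq 2C\|f\|_{H^s}$ for every $x$; taking the supremum over $m\geq 1$ and $x\in\HH^n$ gives \eqref{Bs_Hs}.

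To establish \eqref{Hs_all_m}, I would start from the Fourier inversion formula \eqref{Fourier_inversion} applied to $P_mf$, whose Fourier transform is $\frac{1}{m^2}(\lambda^2+\rho^2)e^{-(\lambda^2+\rho^2)/m^2}\hat f(\lambda,\omega)$ by \eqref{def_Pm}:
$$P_mf(x)=\int_{-\infty}^{\infty}\int_{S^{n-1}}\frac{\lambda^2+\rho^2}{m^2}\,e^{-\frac{\lambda^2+\rho^2}{m^2}}\hat f(\lambda,\omega)\,[x,(1,\omega)]^{-i\lambda-\rho}|c(\lambda)|^{-2}\,d\lambda\,d\omega.$$
Since $[x,(1,\omega)]>0$ for $x\in\HH^n$, $\omega\in S^{n-1}$, one has $\big|[x,(1,\omega)]^{-i\lambda-\rho}\big|=[x,(1,\omega)]^{-\rho}$. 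I then apply Cauchy--Schwarz in the measure $|c(\lambda)|^{-2}d\lambda\,d\omega$, splitting off the factor $(\rho^2+\lambda^2)^{s/2}\hat f$: the first resulting factor is (a constant times) $\|f\|_{H^s}$ by the Plancherel identity recalled in \S\ref{SS:preliminaries}, and the square of the second factor separates as $I(x)\,J(m)$ with
$$I(x)=\int_{S^{n-1}}[x,(1,\omega)]^{-2\rho}\,d\omega,\qquad J(m)=\frac{1}{m^4}\int_{-\infty}^{\infty}(\lambda^2+\rho^2)^{2-s}e^{-\frac{2(\lambda^2+\rho^2)}{m^2}}|c(\lambda)|^{-2}\,d\lambda.$$

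For the angular integral, the key point is that $[x,(1,\omega)]^{-2\rho}=[x,(1,\omega)]^{-(n-1)}$ is, up to normalization, the Poisson kernel of $\HH^n$, so that $I(x)$ is in fact a constant independent of $x$; alternatively, writing $x=(\cosh r,\sinh r\,e_1)$ one has $I(x)=|S^{n-2}|\int_{-1}^{1}(\cosh r-t\sinh r)^{-(n-1)}(1-t^2)^{(n-3)/2}\,dt$, and the substitution $1-t=e^{-2r}v$ shows directly that this stays bounded as $r\to\infty$. For the spectral integral, I pull the $\lambda$-independent factor $e^{-2\rho^2/m^2}$ out of $J(m)$ and use the asymptotics $|c(\lambda)|^{-2}\approx\lambda^2$ near $0$ and $\approx\lambda^{n-1}$ at infinity to bound the remaining Gaussian-type integral by $C(1+m^{n-2s+4})$; splitting into the regimes $|\lambda|\lesssim m$ and $|\lambda|\gtrsim m$ and rescaling $\lambda\mapsto m\lambda$ gives contributions $\lesssim 1$ for small $m$ and $\lesssim m^{n-2s+4}$ for large $m$. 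Combining, $\sqrt{J(m)}\lesssim \frac{1}{m^2}e^{-\rho^2/m^2}(1+m^{n/2-s+2})=\left(\frac{1}{m^2}+m^{n/2-s}\right)e^{-\rho^2/m^2}$, which with the boundedness of $I(x)$ yields \eqref{Hs_all_m}.

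I expect the main obstacle to be the uniform-in-$x$ control of the angular integral $I(x)$: this is precisely where the geometry of $\HH^n$ enters, and it is what guarantees a genuine $L^\infty$ bound over the whole space rather than one degenerating as $d(x,0)\to\infty$. The remaining ingredients — Cauchy--Schwarz, Plancherel, and the Gaussian estimate of the spectral multiplier together with the $|c(\lambda)|^{-2}$ asymptotics — are routine once the separation of variables above is in place.
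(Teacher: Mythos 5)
Your proposal is correct and follows essentially the same route as the paper's proof: Fourier inversion for $P_mf$, Cauchy--Schwarz against the Plancherel measure $|c(\lambda)|^{-2}d\lambda\,d\omega$ to split off $\|f\|_{H^s}$, a Gaussian estimate of the spectral factor using the asymptotics $|c(\lambda)|^{-2}\approx\lambda^2$ near $0$ and $\approx\lambda^{n-1}$ at infinity, and a uniform-in-$x$ bound on the angular integral $\int_{S^{n-1}}[x,(1,\omega)]^{-2\rho}\,d\omega$. The only differences are minor: you split the spectral integral at $|\lambda|\sim m$ rather than at $\lambda=1$ (same outcome), and your remark that the angular integral is exactly constant in $x$ (it is the spherical function $\varphi_{i\rho}\equiv 1$, i.e.\ the integrated Poisson kernel, a radial harmonic function regular at the origin) is a slightly slicker observation than the paper's explicit change-of-variables computation, which only establishes uniform boundedness.
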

\begin{proof}
 Let $f\in H^s$. By the definition of $P_m$ and Fourier inversion formula \eqref{Fourier_inversion},
\begin{multline}
\label{BsHs1}
 |P_mf(x)|\leq \int_{-\infty}^{\infty}\int_{S^{n-1}} \frac{1}{m^2} \left(\lambda^2+\rho^2\right)e^{-\frac{\lambda^2+\rho^2}{m^2}}\left|\hat{f}(\lambda,\omega)\right|\,|c(\lambda)|^{-2}[x,(1,\omega)]^{-\rho}\,d\lambda\, d\omega\\
 \leq \sqrt{\int_{-\infty}^{\infty}\int_{S^{n-1}} \left(\lambda^2+\rho^2\right)^s \left|\hat{f}(\lambda,\omega)\right|^2 |c(\lambda)|^{-2}d\lambda\, d\omega}\\ \times\sqrt{2\int_{0}^{\infty}\frac{1}{m^4} \left(\lambda^2+\rho^2\right)^{2-s}e^{-\frac{2(\lambda^2+\rho^2)}{m^2}}|c(\lambda)|^{-2}  d\lambda\int_{S^{n-1}}[x,(1,\omega)]^{-2\rho}\,d\omega}.
\end{multline}
Using that $|c(\lambda)|^{-2}\sim \lambda^{2}$ as $\lambda\to 0$, we obtain
\begin{equation}
 \label{BsHs2}
\int_0^{1}\frac{\left(\lambda^2+\rho^2\right)^{2-s}e^{-\frac{2(\lambda^2+\rho^2)}{m^2}}}{m^4|c(\lambda)|^{2}}d\lambda 
\leq \frac{e^{-\frac{2\rho^2}{m^2}}}{m^4}\int_0^1\lambda^2\left( \lambda^2+\rho^2 \right)^{2-s}\,d\lambda\leq C\frac{e^{-\frac{2\rho^2}{m^2}}}{m^4},
\end{equation} 
Furthermore, using $|c(\lambda)|^{-2}\sim \lambda^{n-1}$ as $\lambda\to\infty$
\begin{multline}
\label{BsHs3}
\int_1^{\infty}\frac{\left(\lambda^2+\rho^2\right)^{2-s}e^{-\frac{2(\lambda^2+\rho^2)}{m^2}}}{m^4|c(\lambda)|^{2}}d\lambda\leq Ce^{-\frac{2\rho^2}{m^2}} \int_1^{\infty} e^{-\frac{2\lambda^2}{m^2}}\lambda^{4-2s} \frac{\lambda^{n-1}}{m^4}\,d\lambda\\
\leq Ce^{-\frac{2\rho^2}{m^2}}m^{n-2s} \int_{1/m}^{\infty} e^{-2\sigma^2}\sigma^{3-2s+n}d\sigma \leq Cm^{n-2s}e^{-\frac{2\rho^2}{m^2}}.
\end{multline}
Finally, we claim that the spherical function-like integral
$$\int_{S^{n-1}}[x,(1,\omega)]^{-2\rho}\,d\omega=C\int_0^\pi (\cosh |x|-\sinh |x|\cos \alpha)^{-2\rho}\sin^{n-2}\alpha\,d\alpha,$$
is uniformly bounded in $|x|$. Indeed, we have (for some constant $c_n>0$)
 \begin{align*}
  F(r)&=c_n\int_0^{\pi} \left(\cosh r-\cos\theta\sinh r\right)^{1-n} \sin^{n-2}\theta\,d\theta\\
 &= c_n\int_0^{\pi} \left(\cosh r-\sinh r+(1-\cos\theta)\sinh r\right)^{1-n} \sin^{n-2}\theta\,d\theta\\
 &= c_n \int_0^{\pi} e^{(n-1)r} \left(1+e^{r}\sinh r(1-\cos\theta)\right)^{1-n}\sin^{n-2}\theta\,d\theta.
 \end{align*}
We have $\max_{r\in [0,1]}F(r)<\infty$. Assuming $r\geq 1$, we obtain:
\begin{align*}
 F(r)&\leq c_n\int_0^{\pi} e^{(n-1)r} \left(1+C^{-1} e^{2r}\theta^2\right)^{1-n}\theta^{n-2}\,d\theta\\
 &\leq c_n \int_0^{e^r\pi} \left(1+\frac{\sigma^2}{C}\right)^{1-n} \sigma^{n-2}\,d\sigma,
\end{align*}
by the change of variable $\sigma=e^r\theta$. This concludes the proof since
$$2(1-n)+n-2=-n\leq -2.$$
Combining this with \eqref{BsHs1}, \eqref{BsHs2} and \eqref{BsHs3}, we obtain \eqref{Hs_all_m}. Inequality \eqref{Bs_Hs} follows. 
\end{proof}

We next prove a refined Sobolev inequality which generalizes \cite[Lemma 2.2,ii)]{IoPaSt12} which treats the case $s=1$, $n=3$. It is in the spirit of the refined Sobolev embedding on Euclidean space in \cite{GeMeOr96}. More precisely, the inequality we prove is the analog on the hyperbolic space of the inequality
$$ \|f\|_{L^{\alpha}(\mathbb R^n)}\leq C\|f\|_{H^{\frac n2-\frac n{\alpha}}(\mathbb
R^n)}^\frac 2\alpha \|f\|_{B^{-\frac n{\alpha},\infty}_\infty(\mathbb
R^n)}^{1-\frac 2\alpha}, \quad 2<\alpha<\infty.$$
\begin{prop}
 \label{P:Sobolev}
 Let $0<s<\min\{2,n/2\}$ and $\alpha$ such that $\frac{1}{\alpha}=\frac{1}{2}-\frac sn$. There is a constant $C>0$ such that for all $f\in H^s$,
 \begin{equation}
  \label{A1}
  \|f\|_{L^{\alpha}}\leq C\|f\|_{H^s}^{\frac{2}{\alpha}}\|f\|^{1-\frac{2}{\alpha}}_{B^s}.
 \end{equation} 
\end{prop}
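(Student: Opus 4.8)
The plan is to transplant the Euclidean proof of the Gérard–Meyer–Oru inequality (\cite{GeMeOr96}), replacing the Littlewood–Paley decomposition by a continuous one built from the heat semigroup through the projectors $P_m$; the one genuinely new feature is the spectral gap of $-\Delta$ (its $L^2$-spectrum is $[\rho^2,\infty)$), which forces the lowest frequencies to be handled separately. First I would record the reproducing formula. Since $-\Delta\ge \rho^2>0$ one has $e^{\tau\Delta}f\to 0$ as $\tau\to\infty$, and a Plancherel computation (substituting $u=(\lambda^2+\rho^2)/m^2$, which yields $\int_0^\infty \tfrac1{m^2}\xi^2e^{-\xi^2/m^2}\tfrac{dm}{m}=\tfrac12$ with $\xi^2:=\lambda^2+\rho^2$) gives $2\int_0^\infty P_m f\,\tfrac{dm}{m}=f$, with $2\int_{m_0}^\infty P_m f\tfrac{dm}{m}=(1-e^{\Delta/m_0^2})f$ and $2\int_0^{m_0}P_m f\tfrac{dm}{m}=e^{\Delta/m_0^2}f$. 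Denoting by $d\nu=|c(\lambda)|^{-2}d\lambda\,d\omega$ the Plancherel measure, I extract two building blocks. For high frequencies, the elementary bound $1-e^{-y}\le y^{s/2}$ (valid since $0<s/2\le 1$, i.e. $s<2$) gives, for every $m_0\ge 1$,
\[\big\|(1-e^{\Delta/m_0^2})f\big\|_{L^2}^2=\int(1-e^{-\xi^2/m_0^2})^2|\hat f|^2\,d\nu\le m_0^{-2s}\int\xi^{2s}|\hat f|^2\,d\nu\approx m_0^{-2s}\|f\|_{H^s}^2.\]
For low frequencies, the definition of $\|\cdot\|_{B^s}$ gives $\|P_m f\|_{L^\infty}\le m^{n/2-s}\|f\|_{B^s}$ for $m\ge1$, hence $\big\|2\int_1^{m_0}P_m f\tfrac{dm}{m}\big\|_{L^\infty}\lesssim m_0^{n/2-s}\|f\|_{B^s}$.

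The one term not covered by the $B^s$-norm is the very low part $e^\Delta f=2\int_0^1 P_m f\tfrac{dm}{m}$, since $\|\cdot\|_{B^s}$ only controls $m\ge1$. The key estimate, and what I expect to be the hardest part, is the endpoint bound $\|e^\Delta f\|_{L^\infty}\lesssim\|f\|_{B^s}$ (equivalently $\|e^{\Delta/m_0^2}f\|_{L^\infty}\lesssim m_0^{n/2-s}\|f\|_{B^s}$ for all $m_0\ge1$). Per-frequency estimates just miss it: for $0<m\le 1$ the factorization $P_m f=m^{-2}e^{(m^{-2}-1)\Delta}P_1 f$ together with the $L^\infty$-contractivity of the heat semigroup (the Markov property on the stochastically complete space $\HH^n$) gives only $\|P_m f\|_{L^\infty}\le m^{-2}\|f\|_{B^s}$, which is not integrable against $\tfrac{dm}{m}$ near $m=0$; conversely \eqref{Hs_all_m} supplies the missing Gaussian decay $e^{-\rho^2/m^2}$ but at the price of replacing $\|f\|_{B^s}$ by $\|f\|_{H^s}$. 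Interpolating the two bounds yields $\|e^\Delta f\|_{L^\infty}\lesssim\|f\|_{B^s}^{\theta}\|f\|_{H^s}^{1-\theta}$ for every $\theta<1$, and reaching $\theta=1$ requires using the cancellation in the integral $\int_0^1 P_m f\tfrac{dm}{m}$ itself, i.e. the exponential kernel decay of the resolvent-type multiplier $e^\Delta=(-\Delta)^{-1}P_1$ afforded by the spectral gap, rather than $\int_0^1\|P_m f\|_{L^\infty}\tfrac{dm}{m}$. Once this is in hand, combining $\|e^\Delta f\|_{L^\infty}\lesssim\|f\|_{B^s}$ with $\|e^\Delta f\|_{L^2}\lesssim\|f\|_{L^2}\lesssim\|f\|_{H^s}$ gives, by $L^2$–$L^\infty$ interpolation, $\|e^\Delta f\|_{L^\alpha}\lesssim\|f\|_{H^s}^{2/\alpha}\|f\|_{B^s}^{1-2/\alpha}$.

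It then remains to treat $h:=f-e^\Delta f=2\int_1^\infty P_m f\,\tfrac{dm}{m}$, whose low-frequency cutoffs involve only $m\ge1$ and are therefore controlled purely by $\|f\|_{B^s}$. Here I would run the distribution-function argument: for a height $t$ choose $m_0(t)\sim (t/\|f\|_{B^s})^{2/(n-2s)}$ so that $\|h^{<m_0}\|_{L^\infty}\le t/2$ as soon as $t\gtrsim\|f\|_{B^s}$, whence $\mu(\{|h|>t\})\le \mu(\{|h^{>m_0}|>t/2\})\le 4t^{-2}\|h^{>m_0}\|_{L^2}^2$. Writing $\|h\|_{L^\alpha}^\alpha=\alpha\int_0^\infty t^{\alpha-1}\mu(\{|h|>t\})\,dt$, inserting the high-frequency $L^2$ bound, and exchanging the $t$-integral with the spectral integral, the inner $t$-integral (split at the crossover $\xi\sim m_0(t)$) evaluates to $\lesssim \xi^{2s}\|f\|_{B^s}^{\alpha-2}$; this is exactly where one uses $\alpha>2$ for convergence at $t=0$, $s<2$ for convergence at $t=\infty$, and the scaling relation $\tfrac1\alpha=\tfrac12-\tfrac sn$, i.e. $\alpha-2=\tfrac{4s}{n-2s}$, which makes the two borderline exponents match. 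This gives $\lesssim\|f\|_{H^s}^2\|f\|_{B^s}^{\alpha-2}$ on the range $t\gtrsim\|f\|_{B^s}$, while the range $t\lesssim\|f\|_{B^s}$ is dispatched by Chebyshev, $\mu(\{|h|>t\})\le t^{-2}\|h\|_{L^2}^2\lesssim t^{-2}\|f\|_{H^s}^2$, contributing $\|f\|_{L^2}^2\|f\|_{B^s}^{\alpha-2}\lesssim\|f\|_{H^s}^2\|f\|_{B^s}^{\alpha-2}$. Altogether $\|h\|_{L^\alpha}^\alpha\lesssim\|f\|_{H^s}^2\|f\|_{B^s}^{\alpha-2}$, and adding the bound on $e^\Delta f$ and using the triangle inequality yields \eqref{A1}.
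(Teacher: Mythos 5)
Your overall skeleton is the same as the paper's: peel off the very-low-frequency piece $e^{\Delta}f$, write the rest as $2\int_1^\infty P_m f\,\tfrac{dm}{m}$, and run a Chebyshev/layer-cake argument with the cutoff $m_0(t)\sim(t/\|f\|_{B^s})^{2/(n-2s)}$. Your high-frequency half is essentially the paper's Steps 2--3, with one internal inconsistency worth flagging: the building block you record first, $\|(1-e^{\Delta/m_0^2})f\|_{L^2}\le m_0^{-s}\|f\|_{H^s}$ coming from $1-e^{-y}\le y^{s/2}$, is too lossy to be ``inserted'' into the layer cake -- since $\alpha-3-\tfrac{4s}{n-2s}=-1$, it produces $\int t^{-1}\,dt$, a logarithmic divergence at $t=\infty$. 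What works (and what you in fact describe afterwards, in agreement with the paper) is to keep the exact multiplier $(1-e^{-\xi^2/m_0(t)^2})^2$ via Plancherel, exchange the $t$- and spectral integrals, and above the crossover use the quadratic decay $(1-e^{-y})^2\le y^2$; the convergence condition at $t=\infty$ is then exactly $s<2$.

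The genuine gap is the low-frequency piece, which you correctly identify as the crux but resolve by appealing to the unproved endpoint bound $\|e^{\Delta}f\|_{L^\infty}\lesssim\|f\|_{B^s}$. This estimate is \emph{false} on $\HH^n$. Take $f_R(x)=\eta(d(x,0)/R)$ with $\eta$ smooth, $\eta\equiv1$ on $[0,1]$, supported in $[0,2]$, and $R\ge 1$. Then $\Delta f_R=\tfrac1{R^2}\eta''(r/R)+\tfrac{n-1}{R}\coth(r)\,\eta'(r/R)$, and since $r\ge R\ge1$ on the support of $\eta'(r/R)$ and $\eta''(r/R)$, we get $\|\Delta f_R\|_{L^\infty}\le C/R$; hence, by $L^\infty$-contractivity of the heat semigroup, $\|P_m f_R\|_{L^\infty}=\tfrac1{m^2}\|e^{\Delta/m^2}\Delta f_R\|_{L^\infty}\le \tfrac{C}{m^2 R}$ and so $\|f_R\|_{B^s}\le C/R\to0$. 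On the other hand, by stochastic completeness of $\HH^n$ (the time-one heat kernel $h_1=e^{\Delta}\delta_0$ has $\int_{\HH^n}h_1\,d\mu=1$), $\|e^{\Delta}f_R\|_{L^\infty}\ge e^{\Delta}f_R(0)\ge\int_{B(0,R)}h_1\,d\mu\to1$. So no constant can make your key lemma (or its rescaled form $\|e^{\Delta/m_0^2}f\|_{L^\infty}\lesssim m_0^{n/2-s}\|f\|_{B^s}$) true: the ``cancellation'' you hope to exploit in $(-\Delta)^{-1}P_1$ cannot exist, because $(-\Delta)^{-1}e^{t\Delta}$ has a \emph{positive} kernel of infinite $L^1$-mass -- the exponential volume growth exactly cancels the exponential decay of the Green's function. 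Your fallback, $\|e^{\Delta}f\|_{L^\infty}\lesssim\|f\|_{B^s}^{\theta}\|f\|_{H^s}^{1-\theta}$ for $\theta<1$, is true but after interpolation with $L^2$ gives exponents strictly off from \eqref{A1}, and with no scaling on $\HH^n$ there is no way to self-improve it; so the proposition as stated is not proved. The paper's resolution is to never estimate $e^{\Delta}f$ in $L^\infty$ at all: interpolating the $L^\infty$-contraction with the spectral-gap decay $\|e^{t\Delta}f\|_{L^2}\le e^{-\rho^2 t}\|f\|_{L^2}$ gives $\|e^{t\Delta}\|_{L^\alpha\to L^\alpha}\le e^{-ct}$ with $c=2\rho^2/\alpha>0$ -- a decay available precisely because $\alpha<\infty$. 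Writing $P_m f=\tfrac{2}{m^2}e^{(\frac1{m^2}-\frac12)\Delta}P_{\sqrt2}f$, the resulting factor $e^{-c/m^2}$ makes $\int_0^1\tfrac{dm}{m^3}$ converge, whence $\|e^{\Delta}f\|_{L^\alpha}\le C\|P_{\sqrt2}f\|_{L^\alpha}\le C\|P_{\sqrt2}f\|_{L^2}^{2/\alpha}\|P_{\sqrt2}f\|_{L^\infty}^{1-2/\alpha}\le C\|f\|_{H^s}^{2/\alpha}\|f\|_{B^s}^{1-2/\alpha}$ by H\"older applied to the single projector $P_{\sqrt2}$. Replacing your endpoint lemma by this argument repairs the proof.
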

\begin{proof}
 We use a method based on spectral calculus that goes back to \cite{ChXu97}.
 By the definition \eqref{def_Pm} of $P_m$
 \begin{equation}
 \label{A2}
  \int_0^{A}\frac{1}{m}\widehat{P_m(f)}\,dm=\int_0^A\frac{1}{m^3}(\lambda^2+\rho^2)e^{-\frac{\lambda^2+\rho^2}{m^2}}\hat f\,dm=\frac 12 e^{-\frac{\lambda^2+\rho^2}{A^2}}\hat f,
 \end{equation} 
 for any $A>0$.
 
 \emph{Step 1. Low-frequency bound.} Here we prove the desired estimate for  the low frequencies part $e^{\Delta}f= \int_0^{1}\frac{1}{m}P_m(f)\,dm$. Since $\|e^{t\Delta}f\|_{L^\infty}\leq \|f\|_{L^\infty}$  (see for instance \cite{GrNo98}) and $\|e^{t\Delta}f\|_{L^2}\leq e^{-\rho^2t}\|f\|_{L^2}$ (this follows from $(-\Delta f,f)_{L^2}\geq \rho^2\|f\|^2_{L^2}$), we get 
$$\|e^{t\Delta}f\|_{L^\alpha}\leq e^{-ct}\|f\|_{L^\alpha},$$ 
where $c=\frac{2\rho^2}{\alpha}$. Then 
 $$\|e^{\Delta}f\|_{L^\alpha}\leq C\int_0^{1}\frac{1}{m}\|P_m(f)\|_{L^\alpha}\,dm\leq C\int_0^{1}\frac{1}{m^3}\left\|e^{(\frac 1{m^2}-\frac 12)\Delta}P_{\sqrt{2}}(f)\right\|_{L^\alpha}\,dm$$
 $$\leq C\int_0^{1}\frac{e^{c(\frac{1}{2}-\frac 1{m^2})}}{m^3}\|P_{\sqrt{2}}(f)\|_{L^\alpha}\,dm\leq C\|P_{\sqrt{2}}(f)\|_{L^\alpha}\leq C\|P_{\sqrt{2}}(f)\|_{L^2}^{\frac 2\alpha}\|P_{\sqrt{2}}(f)\|_{L^\infty}^{1-\frac 2\alpha},$$
 and we conclude \eqref{A1} by noting that since we are at low frequencies, $\|P_{\sqrt{2}}(f)\|_{L^2}\leq C\|f\|_{H^s}$.

Now we shall treat the high frequencies. We let 
 \begin{equation}
  \label{def_g}
 g=2\int_1^{\infty}\frac{1}{m}P_m(f)\,dm=f-e^{\Delta}f 
 \end{equation} 
 in view of \eqref{A2}.
To complete the proof of the proposition, in view of Step 1, we need to prove 
$$\|g\|_{L^{\alpha}}\leq C\|f\|^{\frac{2}{\alpha}}_{H^s}\|f\|_{B^s}^{1-\frac{2}{\alpha}},$$
which we will do in two steps. We first introduce some notations.

Let, for $R>0$, 
 $$ A_R=\left(\frac{R\left(\frac n2-s\right)}{4\|f\|_{B^s}}\right)^{\frac{2}{n-2s}}.$$
We write 
$$g=g_{\leq A_R}+g_{>A_R},$$
where, if $A_R\geq 1$, 
$$g_{\leq A_R}=2\int_1^{A_R}\frac{1}{m}P_m(f)\,dm, \quad g_{>A_R}=2\int_{A_R}^{\infty}\frac{1}{m}P_m(f)\,dm.$$
and if $A_R<1$, $g_{\leq A_R}=0$, $g_{>A_R}=g$.

 \emph{Step 2.} In this step, we prove:
 \begin{equation}
  \label{A3}
  \|g\|_{L^{\alpha}}^{\alpha}\leq C\int_0^{\infty} R^{\alpha-3}\left\|g_{>A_R}\right\|_{L^2}^2\,dR.
 \end{equation} 
 Indeed, if $A_R\geq 1$, by the definition of $B^s$ and $g_{\leq A_R}$
 \begin{equation*}
  \left|g_{\leq A_R}\right|\leq 2\int_0^{A_R} \frac{1}{m}m^{\frac{n}{2}-s}\|f\|_{B^s}\,dm=\frac{4A_R^{\frac{n}{2}-s}}{n-2s}\|f\|_{B^s}=\frac{R}{2}.
 \end{equation*}
 This inequality remains valid if $A_R<1$ since the left-hand side is zero.
 Thus
$$\mu\left(\{|g|>R\}\right)\leq \mu\left(\{|g_{> A_R}|>R/2\}\right)\leq \frac{4}{R^2}\left\|g_{> A_R}\right\|_{L^2}^2.$$

$$\left\|g\right\|^{\alpha}_{L^{\alpha}}=\alpha \int_0^{\infty} R^{\alpha-1} \mu\left(\{|g|>R\}\right)\,dR\leq C\int_0^{\infty}R^{\alpha-3}\left\|g_{> A_R}\right\|^2_{L^2}\,dR.$$
 Hence \eqref{A3}.
 
 \emph{Step 3.}  By \eqref{A2} and the definition of $g_{>A_R}$,
 $$\tilde{g}_{> A_R}(\lambda,\omega)=\left(1-e^{-\frac{\lambda^2+\rho^2}{B_R^2}}\right)\tilde{f}(\lambda,\omega),$$
 where $B_R=\max(A_R,1)$.
 By \eqref{A3},
 \begin{multline}
  \label{A3'}
  \|g\|^{\alpha}_{L^{\alpha}}\leq C\int_0^{\infty} R^{\alpha-3} \int_0^{\infty} \int_{S^{n-1}} \left(1-e^{-\frac{\lambda^2+\rho^2}{B_R^2}}\right)^2\left|\hat{f}(\lambda,\omega)\right|^2|c(\lambda)|^{-2}d\omega\,d\lambda\,dR\\
  =C \int_0^{\infty} \int_{S^{n-1}}\left|\hat{f}(\lambda,\omega)\right|^2|c(\lambda)|^{-2} \int_0^{\infty} R^{\alpha-3}\left(1-e^{-\frac{\lambda^2+\rho^2}{A_R^2}}\right)^2\,d R\,d\omega\,d\lambda.
 \end{multline}
By the definition of $A_R$ and the change of variable in $R$
$$r=\frac{R(\frac{n}{2}-s)}{4\|f\|_{B^s}(\lambda^2+\rho^2)^{\frac{n-2s}{4}}}=\left(\frac{A_R^2}{\lambda^2+\rho^2}\right)^{\frac{n-2s}{4}},$$ 
we deduce
\begin{multline}
 \label{A4}
\int_0^{\infty} R^{\alpha-3}\left(1-e^{-\frac{\lambda^2+\rho^2}{A_R^2}}\right)^2\,dR\\
\leq C\|f\|_{B^s}^{\alpha-2}(\lambda^2+\rho^2)^{\frac{n-2s}{4}(\alpha-2)}\int_0^{\infty} r^{\alpha-3}\left(1-e^{-r^{-\frac{4}{n-2s}}}\right)^2\,dr.
\end{multline} 
Note that $\alpha-3>-1$ and, as $r$ goes to infinity,
$$r^{\alpha-3}\left(1-e^{-r^{-\frac{4}{n-2s}}}\right)^2\approx r^{\frac{4s-8}{n-2s}-1},$$
which proves (using that $s<\frac n2$ and $s<2$) that the integral at the right-hand side of \eqref{A4} is finite. Going back to \eqref{A3'}, we obtain
\begin{equation*}
 \|g\|^\alpha_{L^\alpha}\leq C\|f\|^{\alpha-2}_{B^s}\int_0^{\infty} \int_{S^{n-1}} (\lambda^2+\rho^2)^{s} |\hat{f}(\lambda,\omega)|^2|c(\lambda)|^{-2}d\omega\,d\lambda=C\|f\|^{\alpha-2}_{B^s}\|f\|^2_{H^s}
\end{equation*} 
which concludes the proof.
\end{proof}

\subsubsection{An interpolation inequality}
\label{SSS:interpol}
\begin{prop}
\label{P:inter}
 There exists $\theta\in (0,1)$ and a constant $C>0$, both depending on $p$, such that 
\begin{equation}
\label{inter_1}
\|e^{it\Delta}f\|_{S^0(\RR)}\leq C\left\|e^{it\Delta} f\right\|^{\theta}_{L^{\infty}_tB^s}\|f\|^{1-\theta}_{H^s},
\end{equation} 
where $s=\frac{n}{2}-\frac{n}{p+1}\in (0,1)$.
\end{prop}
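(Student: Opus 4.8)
The plan is to combine the refined Sobolev inequality of Proposition \ref{P:Sobolev} (used in the space variable, with the time frozen) with the full hyperbolic range of Strichartz estimates from Theorem \ref{T:Strichartz}. The guiding idea is that the stronger dispersion on $\HH^n$ supplies the needed time integrability with \emph{no} loss of derivatives, so that everything fits together by a single interpolation in time.

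First I would check that the hypotheses of Proposition \ref{P:Sobolev} are met: for $s=\frac n2-\frac n{p+1}$ one has $\frac1{p+1}=\frac12-\frac sn$, so the relevant exponent is exactly $\alpha=p+1$, and $s\in(0,1)\subset(0,\min\{2,n/2\})$. Applying Proposition \ref{P:Sobolev} to $e^{it\Delta}f$ for each fixed $t$, and using that $e^{it\Delta}$ commutes with $(-\Delta)^{s/2}$ and is unitary on $L^2$ (hence isometric on $H^s$, so $\|e^{it\Delta}f\|_{H^s}=\|f\|_{H^s}$), I obtain the pointwise-in-time estimate
$$\|e^{it\Delta}f\|_{L^{p+1}(\HH^n)}\leq C\|f\|_{H^s}^{2/(p+1)}\,\psi(t)^{(p-1)/(p+1)},\qquad \psi(t):=\|e^{it\Delta}f\|_{B^s}.$$

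Next, let $a$ denote the time exponent of the norm $S^0(\RR)=L^{a}_t L^{p+1}_x$, i.e. $a=p+1$ when $1<p\leq 1+\frac4n$ and $a=\frac{2(p-1)(p+1)}{4-(n-2)(p-1)}$ when $1+\frac4n\leq p<1+\frac4{n-2}$; in all cases one checks from the explicit value that $a>2$. The key auxiliary input is a Strichartz estimate at the finite time exponent $a_0=2$: since $s<1$, the couple $(2,p+1)$ satisfies the hyperbolic admissibility condition $\frac{2}{2}=1\geq \frac n2-\frac n{p+1}=s$ of Theorem \ref{T:Strichartz} (this is precisely the point where the \emph{enlarged} range on $\HH^n$ is used — no $\dot H^s$ and no frequency decomposition are needed, unlike in the Euclidean case), whence
$$\|e^{it\Delta}f\|_{L^{2}(\RR,L^{p+1}(\HH^n))}\leq C\|f\|_{L^2}\leq C\|f\|_{H^s}.$$

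To conclude I interpolate the two ingredients. Splitting $\|e^{it\Delta}f\|_{L^{p+1}_x}^{a}=\|e^{it\Delta}f\|_{L^{p+1}_x}^{a-2}\,\|e^{it\Delta}f\|_{L^{p+1}_x}^{2}$, I bound the first factor pointwise by the displayed refined Sobolev inequality, pulling $\psi(t)\leq\|e^{it\Delta}f\|_{L^\infty_tB^s}$ out of the time integral (legitimate since $a-2\geq0$), and I integrate the second factor in $t$ using the Strichartz bound. This yields
$$\|e^{it\Delta}f\|_{S^0(\RR)}^{a}\leq C\|f\|_{H^s}^{2(a-2)/(p+1)}\,\|e^{it\Delta}f\|_{L^\infty_tB^s}^{(p-1)(a-2)/(p+1)}\,\|f\|_{H^s}^{2},$$
and taking the $a$-th root gives \eqref{inter_1} with $\theta=\frac{(p-1)(a-2)}{(p+1)a}$; a one-line computation confirms $\theta\in(0,1)$ and that the two exponents on the right sum to $1$, as homogeneity demands. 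The only genuinely delicate step is the choice of the finite auxiliary exponent inside the admissible range, which rests entirely on the wider family of Strichartz estimates available on $\HH^n$ together with $s<1$; the remaining steps are Hölder in time and bookkeeping of exponents.
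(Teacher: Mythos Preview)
Your proof is correct and follows the same overall strategy as the paper: apply the refined Sobolev inequality of Proposition \ref{P:Sobolev} at each fixed time, combine with a Strichartz estimate at some finite time exponent, and interpolate.

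The difference is in the choice of the auxiliary Strichartz pair. The paper splits into two cases: for $1<p\leq 1+\frac4n$ it uses a pair $(\gamma,\gamma)\cap(\gamma,\beta)$ with $\gamma>2$ close to $2$ and then interpolates in the space variable to land at $L^{\gamma}_tL^{p+1}_x$; for $p>1+\frac4n$ it uses the Euclidean-admissible pair $\big(\tfrac{4(p+1)}{n(p-1)},\,p+1\big)$. You instead use the single pair $(2,p+1)$ in both regimes, which is admissible in the enlarged hyperbolic range precisely because $s=\frac n2-\frac n{p+1}<1$. This yields a more unified and slightly shorter argument, avoiding the extra spatial interpolation in the mass-subcritical case and the case distinction altogether; the paper's route, on the other hand, makes more explicit in the second case the link with the classical Euclidean Strichartz exponent $q=\frac{4(p+1)}{n(p-1)}$. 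Both approaches rest on the same two ingredients and produce a valid $\theta\in(0,1)$.
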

\begin{proof}
\emph{First case: $1<p\leq \frac{4}{n}+1$.} 

In this case, $S^0(\RR)=L^{p+1}(\RR\times \HH^n)$. By the refined Sobolev inequality \eqref{A1}, 
$$\|e^{it\Delta}f\|_{L^{\infty}_tL^{p+1}_x}\leq C\|e^{it\Delta}f\|_{L^{\infty}_tH^s}^{\frac{2}{p+1}}\|e^{it\Delta} f\|_{L^{\infty}_tB^s}^{1-\frac{2}{p+1}},$$
where by definition, $s=\frac n2-\frac{n}{p+1}\in (0,(1+2/n)^{-1})$. Hence
\begin{equation}
 \label{inter_2}
\|e^{it\Delta}f\|_{L^{\infty}_tL^{p+1}_x}\leq C\|f\|^{\frac{2}{p+1}}_{H^s}\|e^{it\Delta}f\|^{1-\frac{2}{p+1}}_{L^{\infty}_tB^s}.
\end{equation} 
Moreover, by the Strichartz inequalities on $\HH^n$ (see Theorem \ref{T:Strichartz}), for all $\gamma$ with $2<\gamma<2+\frac{4}{n}$,
\begin{equation}
 \label{inter_3}
\left\| e^{it\Delta}f\right\|_{L^{\gamma}_tL^{\gamma}_x\cap L^{\gamma}_tL^{\beta}_x}\leq C\|f\|_{L^2},
\end{equation} 
where $\beta=\frac{2n\gamma}{n\gamma-4}$. Note that
$$ \lim_{\gamma\to 2}\beta=\begin{cases} +\infty& \text{ if }n=2\\ \frac{2n}{n-2}&\text{ if }n\geq 3\end{cases}.$$
Choosing $\gamma>2$ close enough to $2$, we obtain $2<\gamma<p+1<\beta$. For these value of $\gamma$, \eqref{inter_3} implies
\begin{equation}
 \label{inter_4}
\left\| e^{it\Delta}f\right\|_{L^{\gamma}_tL^{p+1}_x}\leq C\|f\|_{L^2}.
\end{equation} 
Combining \eqref{inter_2}, \eqref{inter_4}, and H\"older's inequality we obtain \eqref{inter_1} with $\theta=\left(1-\frac{2}{p+1}\right)\left(1-\frac{\gamma}{p+1}\right)$.

\emph{Second case: $\frac{4}{n}+1<p$ and $p<1+\frac{4}{n-2}$ if $n\geq 3$.} In this case 
$$ S^0(\RR)=L^{a}(\RR,L^{p+1})\text{ where } a=\frac{2(p-1)(p+1)}{4-(n-2)(p-1)}.$$
By Strichartz estimates (Theorem \ref{T:Strichartz}),
\begin{equation}
 \label{inter_5} 
\|e^{it\Delta} f\|_{L^{\gamma}_tL^{p+1}_x}\leq C\|f\|_{L^2},
\end{equation} 
where $\gamma=\frac{4(p+1)}{n(p-1)}<a$ if $p>1+\frac{4}{n}$. By the generalized Sobolev inequality \eqref{A1}, with $\alpha=p+1$, $s=\frac{n}{2}-\frac{n}{p+1}\in (0,1)$, 
$$\forall t,\quad \|e^{it\Delta}f\|_{L^{p+1}}\leq C\|e^{it\Delta} f\|_{H^s}^{\frac{2}{p+1}}\|e^{it\Delta}f\|_{B^s}^{1-\frac{2}{p+1}}.$$
Hence
\begin{equation}
 \label{inter_6}
\|e^{it \Delta}f\|_{L^{\infty}_tL^{p+1}_x}\leq C\|f\|_{H^s}^{\frac{2}{p+1}}\|e^{it\Delta}f\|_{L^{\infty}_tB^s}^{1-\frac{2}{p+1}}.
\end{equation} 
Combining \eqref{inter_5} and \eqref{inter_6} and using $\gamma<a$, we obtain \eqref{inter_1} in this case also.
\end{proof}

\subsection{Profile decomposition}
\label{SS:profile}
\subsubsection{Linear profile decomposition}
Recall from \S \ref{SS:preliminaries} the definition of the isometry group $\GG$. We denote by $d$ the geodesic distance on $\HH^n$.
Define, for $f\in H^1$, 
$$ \|f\|_{\Sigma}=\sup_{\substack{m\geq 1, t\in \RR}} \frac{m^{1-n/2}}{\log(m+2)} \|P_me^{it\Delta} f\|_{L^\infty(\mathbb H^n)}.$$
In particular we have for all $s\in (0,1)$, for all $t\in \RR$,
\begin{equation}
\label{Bs-Sigma} 
\|e^{it\Delta}f\|_{B^s} \leq C_s\|f\|_{\Sigma}.
\end{equation}
By \eqref{Hs_all_m},
$$ \|f\|_{\Sigma}\leq C\|f\|_{H^1}.$$

\begin{prop}[Subcritical profile decomposition]
\label{P:profile}
 Let $(f_k)_{k}$ be a bounded sequence in $H^1(\HH^n)$. Then there exists a subsequence of $(f_k)_{k}$  (that we still denote by $(f_k)_{k}$), a family $(\varphi_j)_{j\geq 1}$ of functions in $H^1(\HH^n)$ and, for each $j\geq 1$, a sequence $\big((t_{j,k},h_{j,k})\big)_k$ in $\RR\times \mathbb G$ such that
\begin{gather}
 \label{A11} \sum_{j\geq 1} \|\varphi_j\|^2_{H^1}<\infty\\
\label{A12}
j\neq j'\Longrightarrow \lim_{k\to \infty} d(h_{j,k}\cdot 0,h_{j',k}\cdot 0) +|t_{j,k}-t_{j',k}|=+\infty\\
\label{A13}
\forall j\geq 1,\quad e^{-it_{j,k}\Delta} f_k(h_{j,k}^{-1}\cdot) \xrightharpoonup[k\to\infty]{} \varphi_j\text{ weakly in }H^1
\end{gather}
and, denoting by 
\begin{equation*}
 %\label{A14}
r_{J,k}=f_k-\sum_{j=1}^J e^{it_{j,k}\Delta}\varphi_j(h_{j,k}\cdot)
\end{equation*} 
we have
\begin{equation}
 \label{A15}
\lim_{J\to \infty}\varlimsup_{k\to\infty} \|r_{J,k}\|_{\Sigma}+\|e^{it\Delta}r_{J,k}\|_{S^0(\RR)}=0.
\end{equation} 
\end{prop}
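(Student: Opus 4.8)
The plan is to construct the profiles by a greedy iterative extraction scheme in the spirit of Bahouri--G\'erard and Keraani, using $\|\cdot\|_\Sigma$ as a \emph{profile detector}. The whole argument rests on two facts already at our disposal. First, by \eqref{Bs-Sigma} together with the interpolation inequality \eqref{inter_1}, the $\Sigma$-norm controls the Strichartz norm,
\[
\|e^{it\Delta}r\|_{S^0(\RR)}\le C\,\|r\|_\Sigma^{\theta}\,\|r\|_{H^1}^{1-\theta}.
\]
Second, and crucially, the weight $m^{1-n/2}/\log(m+2)$ is tuned so that $\|\cdot\|_\Sigma$ is \emph{uniformly} dominated by $\|\cdot\|_{H^1}$: combining \eqref{Hs_all_m} (with $s=1$) with this weight gives, for every $m\ge1$ and every $g$,
\[
\frac{m^{1-n/2}}{\log(m+2)}\,|P_m g(0)|\le \frac{C}{\log(m+2)}\bigl(m^{-1-n/2}+1\bigr)\|g\|_{H^1}\le C'\|g\|_{H^1}.
\]
This last estimate is exactly what will produce a lower bound $\|\varphi_j\|_{H^1}\ge c\,\nu_j$ for each extracted profile, with $c$ independent of $j$.

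First I would set $\nu_1=\limsup_k\|f_k\|_\Sigma$. If $\nu_1=0$ the decomposition is trivial; otherwise, passing to a subsequence, the definition of $\|\cdot\|_\Sigma$ provides $m_{1,k}\ge1$, $\tau_{1,k}\in\RR$ and points $x_{1,k}\in\HH^n$ realizing at least half the supremum. Since the left-hand side of the last display tends to $0$ as $m\to\infty$ (this is where the $\log$ is used), the $m_{1,k}$ stay bounded, so after extraction $m_{1,k}\to m_1\in[1,\infty)$. Using transitivity of $\GG$, choose $h_{1,k}$ with $h_{1,k}\cdot 0=x_{1,k}$ and set $t_{1,k}=-\tau_{1,k}$; as $P_m$ and $e^{it\Delta}$ are Fourier multipliers commuting with the isometry action, $|P_{m_{1,k}}g_{1,k}(0)|$ realizes the same lower bound, where $g_{1,k}=e^{-it_{1,k}\Delta}f_k(h_{1,k}^{-1}\cdot)$. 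This sequence is $H^1$-bounded, hence has a weak limit $\varphi_1$ along a subsequence. Since $g\mapsto P_m g(0)$ is continuous on $H^s$ for fixed $m$ and varies continuously in $m$, and $m_{1,k}\to m_1$, one passes to the limit to obtain $c_1|P_{m_1}\varphi_1(0)|\ge \nu_1/2$ with $c_1>0$; thus $\varphi_1\neq0$, and the uniform bound above gives $\|\varphi_1\|_{H^1}\ge c\,\nu_1$.

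The main obstacle is the iteration and the orthogonality \eqref{A12}. I would iterate on the remainders $r_{J-1,k}=f_k-\sum_{j<J}e^{it_{j,k}\Delta}\varphi_j(h_{j,k}\cdot)$, setting $\nu_J=\limsup_k\|r_{J-1,k}\|_\Sigma$ and extracting $\varphi_J$ as the weak limit of $g_{J,k}=e^{-it_{J,k}\Delta}r_{J-1,k}(h_{J,k}^{-1}\cdot)$, using a diagonal subsequence so that all these limits coexist. Both \eqref{A12} and the requirement \eqref{A13} (that the profile read off $r_{J-1,k}$ agrees with the one read off $f_k$) reduce to a decoupling lemma: if $d(h_{j,k}\cdot 0,h_{J,k}\cdot 0)+|t_{j,k}-t_{J,k}|\to\infty$, then $e^{-it_{J,k}\Delta}\bigl[e^{it_{j,k}\Delta}\psi(h_{j,k}\cdot)\bigr](h_{J,k}^{-1}\cdot)\rightharpoonup 0$ for every $\psi\in H^1$, whereas if this quantity stays bounded then, along a subsequence, $h_{j,k}^{-1}h_{J,k}\to\bar h$ and $t_{J,k}-t_{j,k}\to\bar t$ and the same expression converges to the transform of $\psi$ by $(\bar t,\bar h)$. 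The divergence case is proved by density, using the dispersive decay of $e^{it\Delta}$ on $\HH^n$ for the time-divergent part and separation of supports for the space-divergent part. Granting this lemma, \eqref{A12} follows by contradiction: were the $J$-frame comparable to an earlier $j$-frame, then since $r_{J-1,k}$ has had $\varphi_j$ (and all later, orthogonal, profiles) subtracted, one has $e^{-it_{j,k}\Delta}r_{J-1,k}(h_{j,k}^{-1}\cdot)\rightharpoonup 0$, which forces $\varphi_J=0$, contradicting $\varphi_J\neq0$; an induction on $J$ controls the later-profile contributions. The same lemma gives \eqref{A13}, the previously extracted pieces contributing $0$ to the weak limit in the $J$-frame.

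Finally, since hyperbolic isometries and $e^{it\Delta}$ act unitarily on $H^1$, the orthogonality \eqref{A12} and the weak convergence \eqref{A13} yield the Pythagorean expansion $\|f_k\|_{H^1}^2=\sum_{j\le J}\|\varphi_j\|_{H^1}^2+\|r_{J,k}\|_{H^1}^2+o_k(1)$, whence \eqref{A11} and in particular $\|\varphi_j\|_{H^1}\to 0$; combined with $\|\varphi_j\|_{H^1}\ge c\,\nu_j$ this forces $\nu_j\to0$, i.e. $\lim_{J\to\infty}\limsup_k\|r_{J,k}\|_\Sigma=0$. Feeding $\|r_{J,k}\|_{H^1}\le\|f_k\|_{H^1}\le C$ and the vanishing of $\|r_{J,k}\|_\Sigma$ into $\|e^{it\Delta}r_{J,k}\|_{S^0(\RR)}\le C\|r_{J,k}\|_\Sigma^\theta\|r_{J,k}\|_{H^1}^{1-\theta}$ gives the vanishing of the Strichartz norm as well, which is precisely \eqref{A15}.
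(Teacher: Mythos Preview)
Your proof is correct and complete in outline; the ingredients you invoke (the role of the $\log$ weight in forcing $m_{j,k}$ bounded, the decoupling lemma for orthogonal frames, the Pythagorean expansion in $H^1$, and the interpolation \eqref{inter_1} with \eqref{Bs-Sigma}) are exactly the ones needed, and your inductive scheme for \eqref{A12}--\eqref{A13} is the standard Bahouri--G\'erard/Keraani machine carried out correctly.

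The paper takes a different route: rather than running the greedy extraction by hand, it invokes an abstract concentration--compactness theorem of Schindler--Tintarev, applied with $H=H^1(\HH^n)$ and $D=\{f\mapsto e^{it\Delta}f(h\cdot):(t,h)\in\RR\times\GG\}$. That theorem delivers \eqref{A11}--\eqref{A13} directly, together with a uniform smallness statement $\sup_{g\in D}|(g\,r_{J,k},\phi)_{H^1}|\to 0$ for every fixed $\phi$. The passage to \eqref{A15} is then isolated as a separate lemma (Lemma~\ref{L:weak_CV}): if every frame-shifted copy of $r_k$ goes weakly to $0$, then $\|r_k\|_\Sigma\to 0$. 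The proof of that lemma is precisely your profile-extraction step run in contrapositive---pick near-maximizers of $\Sigma$, use the $\log$ weight to bound $m_k$, and contradict the weak-zero hypothesis---so the analytic core is the same. Your approach is more self-contained and makes the quantitative link $\|\varphi_j\|_{H^1}\gtrsim\nu_j$ explicit; the paper's approach is shorter on the page since the iteration, diagonalization, and orthogonality are all outsourced to the abstract theorem, at the cost of importing that result. Your decoupling lemma is the paper's Lemma~\ref{C:ortho}, and your Pythagorean expansion is the $H^1$ case of Proposition~\ref{L:Pyt}.
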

We refer for example to \cite{MeVe98}, \cite{Ke01} for profile decompositions for the Schr\"odinger equation on $\RR^n$.
The $H^1$-critical profile decomposition on the space $\HH^3$ was constructed in \cite{IoPaSt12} (see also \cite{LaSuSo14Pb} for the analogous result for the wave equation on $\HH^n$). In this setting, profiles might concentrate at one point of $\HH^n$, and become solutions of the Schr\"odinger equation on the Euclidean space. In our case, this is prevented by the subcriticality of the problem.
\begin{nota}
In what follows, we will often extract subsequences from a given sequence. To lighten notations, we will always, as in the preceding proposition, use the same notation for the extracted subsequence and the original sequence.
\end{nota}

\begin{rema}
\label{R:radial}
 It follows from \eqref{A13} that if the $f_k$ are all radial, then we can assume that $h_{j,k}$ is the identity of $\HH^n$ for all $j,k$, and that all profiles $\varphi_j$ are radial.
\end{rema}

\begin{defi}
If $\left(\varphi_j;(t_{j,k},h_{j,k})_{k}\right)_{j\geq 1}$ satisfies the conclusions of Proposition \ref{P:profile}, we say that it is a \emph{profile decomposition} for the sequence $(f_k)_{k}$. 
\end{defi}
We postpone the proof of the profile decomposition, and state the following Pythagorean expansions, to be proved at the end of this section:
\begin{prop}
 \label{L:Pyt}
 Let $\lambda<\frac{(n-1)^2}{4}$.
Let $(f_k)_k$ be a bounded sequence in $H^1$ that admits a profile decomposition $\left(\varphi_j;(t_{j,k},h_{j,k})_{k}\right)_{j\geq 1}$.
Then
\begin{gather}
 \label{A16} \forall J\geq 1,\quad \lim_{k\to\infty} \|f_k\|^2_{\HHH_{\lambda}}-\sum_{j=1}^J \|\varphi_j\|^2_{\HHH_{\lambda}} -\|r_{J,k}\|_{\HHH_{\lambda}}^2=0\\
\label{A17} \lim_{k\to\infty} \|f_k\|^{p+1}_{L^{p+1}}-\sum_{j=1}^{+\infty} \left\|e^{-it_{j,k}\Delta}\varphi_j\right\|^{p+1}_{L^{p+1}}=0.
\end{gather}
\end{prop}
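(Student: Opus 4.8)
The plan is to establish the two Pythagorean expansions \eqref{A16} and \eqref{A17} separately, since they are of different natures: the first is a Hilbert-space orthogonality statement for the quadratic form $\|\cdot\|_{\HHH_\lambda}^2$, while the second is a nonlinear $L^{p+1}$ decoupling that relies on the asymptotic orthogonality \eqref{A12} of the parameters. For \eqref{A16}, I would first reduce to proving the analogous statement for the standard $H^1$ inner product, that is, $\langle e^{it_{j,k}\Delta}\varphi_j(h_{j,k}\cdot),\ e^{it_{j',k}\Delta}\varphi_{j'}(h_{j',k}\cdot)\rangle_{H^1}\to 0$ when $j\neq j'$, and $\langle \sum_{j\le J} e^{it_{j,k}\Delta}\varphi_j(h_{j,k}\cdot),\ r_{J,k}\rangle_{H^1}\to 0$; since $\|\cdot\|_{\HHH_\lambda}^2$ is a continuous nonnegative symmetric bilinear form on $H^1$ equivalent to the $H^1$ norm, these orthogonality relations transfer directly, and a finite induction on $J$ then yields \eqref{A16}.

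The cross-term vanishing for $j\neq j'$ is the heart of the argument. Using that the isometries $h_{j,k}$ and the linear flow $e^{it\Delta}$ are unitary on $L^2$ (and commute appropriately with $\Delta$, hence preserve $\HHH_\lambda$), I would rewrite the inner product after applying $h_{j,k}^{-1}$ and $e^{-it_{j,k}\Delta}$ so that one factor is exactly the sequence appearing in \eqref{A13}, which converges weakly to $\varphi_j$. The other factor becomes $e^{i(t_{j',k}-t_{j,k})\Delta}\varphi_{j'}(h_{j,k}^{-1}h_{j',k}\cdot)$, and the orthogonality condition \eqref{A12} forces $|t_{j,k}-t_{j',k}|+d(h_{j,k}\cdot 0,h_{j',k}\cdot 0)\to\infty$. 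I expect this to be the main obstacle: one must show that this second factor converges \emph{weakly to zero} in $H^1$. When $|t_{j,k}-t_{j',k}|\to\infty$, this follows from the dispersive decay of $e^{it\Delta}$ (the operator tends weakly to $0$ as $t\to\pm\infty$); when the time parameters stay bounded but $d(h_{j,k}\cdot 0,h_{j',k}\cdot 0)\to\infty$, the translates $\varphi_{j'}(h_{j,k}^{-1}h_{j',k}\cdot)$ escape to spatial infinity and hence go weakly to zero. Combining weak convergence of one factor with strong-or-weak-to-zero of the other gives the vanishing of the cross term; the orthogonality of $r_{J,k}$ against each profile follows from the same weak-convergence input together with the definition of $r_{J,k}$.

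For \eqref{A17}, the strategy is different because $L^{p+1}$ decoupling for a sum of asymptotically orthogonal bumps does not reduce to bilinear orthogonality. The plan is to use the smallness of the remainder in the $\Sigma$-norm, equation \eqref{A15}, together with the refined interpolation-type control: by \eqref{Bs-Sigma} the remainder $e^{it\Delta}r_{J,k}$ has small $B^s$-norm, and via Proposition \ref{P:Sobolev} (the refined Sobolev inequality, with $s=\frac n2-\frac n{p+1}$) this controls the $L^{p+1}$-norm of $r_{J,k}$ itself, showing $\lim_J\varlimsup_k\|r_{J,k}\|_{L^{p+1}}=0$. It then suffices to establish the decoupling for the finite sum $\sum_{j=1}^J e^{it_{j,k}\Delta}\varphi_j(h_{j,k}\cdot)$. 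Writing each profile $L^{p+1}$-norm in the symmetry-reduced form $\|e^{-it_{j,k}\Delta}\varphi_j\|_{L^{p+1}}$ (legitimate since $h_{j,k}$ is an isometry and hence preserves $L^{p+1}$), the only thing to show is that the mixed products in the expansion of $|\sum_j \cdots|^{p+1}$ are negligible. This again rests on the asymptotic orthogonality \eqref{A12}: when two profiles have widely separated time parameters, the dispersive $L^\infty\to L^{p+1}$-type decay makes one factor small on the support where the other is large; when their spatial centers diverge, their supports are essentially disjoint. The cleanest route is to approximate each $\varphi_j$ by a compactly supported, time-localized function, reducing the cross terms to integrals over asymptotically disjoint regions, and then pass to the limit. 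I expect the bookkeeping for the $L^{p+1}$ cross terms to be somewhat more delicate than the $\HHH_\lambda$ case, but conceptually it is driven by the same two mechanisms (temporal dispersion and spatial separation) encoded in \eqref{A12}.
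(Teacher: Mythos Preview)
Your proposal is correct and follows essentially the same route as the paper. For \eqref{A16} the paper expands the quadratic form and kills the cross terms via Lemma~\ref{C:ortho} (whose proof is exactly the ``temporal dispersion vs.\ spatial escape'' dichotomy you describe), and handles the profile--remainder cross term by rewriting it through \eqref{A13}; for \eqref{A17} the paper likewise first uses \eqref{A15}, \eqref{Bs-Sigma} and Proposition~\ref{P:Sobolev} to make $\|r_{J,k}\|_{L^{p+1}}$ small, then controls the finite-sum cross terms by the elementary inequality \eqref{ineg_R} together with \eqref{A20}, which is proved by the density/support argument you sketch. One minor remark: the $L^{p+1}$ decoupling in \eqref{A17} is at a fixed time, so no ``time-localization'' of the approximating functions is needed---only spatial truncation, after which the case $|t_{j,k}-t_{j',k}|\to\infty$ is handled by the $L^{p+1}$ decay of the linear flow rather than by support considerations.
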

To prove Proposition \ref{P:profile} and Proposition \ref{L:Pyt}, we need the following lemma:
\begin{lemma}
\label{C:ortho}
 Let $f,g\in H^1(\HH^n)$, $(t_k,h_k)_k$ and $(t'_k,h'_k)$ two sequences in $\RR\times \mathbb G$ such that
\begin{equation*}
 %\label{A18}
\lim_{k\to\infty} d(h_k\cdot 0,h'_k\cdot 0)+|t_k-t_k'|=+\infty.
\end{equation*} 
Then
\begin{gather}
 \label{A19} 
\forall \lambda<\frac{(n-1)^2}{4},\quad 
\lim_{k\to\infty} \left(e^{it_k\Delta} f(h_k\cdot),e^{it_k'\Delta}g(h_k'\cdot)\right)_{\HHH_{\lambda}}=0\\
 \label{A20} 
\lim_{k\to\infty} \int \left|e^{it_k\Delta} f(h_kx)\right|\,\left|e^{it_k'\Delta}g(h_k'x)\right|^{p}\,dx=0.
\end{gather}
\end{lemma}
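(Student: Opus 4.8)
The plan is to reduce both \eqref{A19} and \eqref{A20} to fixed test functions, and then to split into two regimes according to whether the divergence in the hypothesis is carried by the time parameters or by the space parameters. First I would use the invariances of the problem to pass to $f,g\in C_c^\infty(\HH^n)$. Both $\|\cdot\|_{\HHH_\lambda}$ and $\|\cdot\|_{L^{p+1}}$ are invariant under the action of $\GG$ and under the flow $e^{it\Delta}$ (the latter because $e^{it\Delta}$ commutes with $-\Delta$ and is an $L^2$-isometry; for $L^{p+1}$ one combines $\|e^{it\Delta}f\|_{H^1}=\|f\|_{H^1}$ with the Sobolev embedding $H^1\hookrightarrow L^{p+1}$, valid since $p+1<\frac{2n}{n-2}$). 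Hence $\|e^{it_k\Delta}f(h_k\cdot)\|_{\HHH_\lambda}$ and $\|e^{it_k\Delta}f(h_k\cdot)\|_{L^{p+1}}$ are bounded uniformly in $k$ by $C\|f\|_{H^1}$. A bilinear Cauchy--Schwarz estimate (resp.\ a H\"older estimate) then shows that both quantities are stable under $H^1$-approximation of $f$ and $g$, so it suffices to treat $f,g\in C_c^\infty$.

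For \eqref{A19}, using that $e^{it\Delta}$ and the $\GG$-action commute and are unitary for $(\cdot,\cdot)_{\HHH_\lambda}$, I would rewrite the pairing as $\big(e^{i\sigma_k\Delta}(-\Delta-\lambda)f,\;g(\tilde h_k\cdot)\big)_{L^2}$, where $\sigma_k=t_k-t_k'$ and $\tilde h_k\in\GG$ is the combined isometry; note $(-\Delta-\lambda)f\in C_c^\infty$. If $|\sigma_k|\to\infty$ along a subsequence, the stronger long-time dispersion on $\HH^n$, in the form $\|e^{it\Delta}u_0\|_{L^{p+1}}\leq C|t|^{-2/q}\|u_0\|_{L^{(p+1)/p}}$ (Lemma~3.3 of \cite{IoSt09}), gives $\|e^{i\sigma_k\Delta}(-\Delta-\lambda)f\|_{L^{p+1}}\to 0$, and pairing against $g(\tilde h_k\cdot)$, whose $L^{(p+1)/p}$ norm equals $\|g\|_{L^{(p+1)/p}}$, yields the conclusion. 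If instead $\sigma_k$ stays bounded, then by hypothesis $d(h_k\cdot 0,h_k'\cdot 0)\to\infty$; after extracting $\sigma_k\to\sigma$ the first factor converges in $L^2$ to $e^{i\sigma\Delta}(-\Delta-\lambda)f$, while $g(\tilde h_k\cdot)$ is supported in a ball of fixed radius whose centre escapes to infinity, so a Cauchy--Schwarz estimate on that escaping ball (the $L^2$ tail of the fixed first factor vanishing there) gives zero in the limit.

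For \eqref{A20} the same dichotomy applies. In the case $|t_k-t_k'|\to\infty$ one has $\max(|t_k|,|t_k'|)\to\infty$, and the dispersive decay $\|e^{it\Delta}\|_{L^{(p+1)/p}\to L^{p+1}}\lesssim |t|^{-2/q}\to 0$ forces the $L^{p+1}$-norm of whichever factor corresponds to the diverging time to tend to $0$; since the complementary factor is bounded in $L^{p+1}$, H\"older's inequality (exponents $p+1$ and $\frac{p+1}{p}$) closes this case. In the complementary regime $t_k,t_k'$ are bounded, hence may be taken convergent, and $d(h_k\cdot 0,h_k'\cdot 0)\to\infty$; replacing the two evolutions by their fixed $L^{p+1}$ limits and approximating these in $L^{p+1}$ by compactly supported functions, the two factors localise in balls of fixed radius whose centres separate, so their product integrates to zero in the limit, the approximation errors being controlled by H\"older.

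The main obstacle is the space-divergent, time-bounded regime. Unlike on $\RR^n$, or in a finite-speed-of-propagation setting, the free evolutions $e^{it_k\Delta}f$ are never compactly supported, so one cannot argue directly that the supports of the two factors are disjoint. The resolution is to first exploit the boundedness of $t_k-t_k'$ (and, for \eqref{A20}, of $t_k$ itself) to replace the evolutions by fixed $L^2$, resp.\ $L^{p+1}$, functions, and only then to invoke compact-support approximation together with the escape to infinity forced by $d(h_k\cdot 0,h_k'\cdot 0)\to\infty$. The delicate bookkeeping here is to keep track of which orbit point each factor concentrates around, so that the diverging distance in the hypothesis is exactly the one that separates the two localisations.
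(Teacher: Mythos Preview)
Your approach is essentially the paper's: reduce to smooth compactly supported $f,g$ by density, normalise via the unitary $\GG$-action and $e^{it\Delta}$ to a single time shift $\sigma_k$ and a single isometry, then split into a time-divergent regime (handled by dispersion) and a space-divergent, time-bounded regime (handled by support separation after $L^2$ or $L^{p+1}$ approximation). The only cosmetic difference for \eqref{A19} is that the paper invokes the $L^\infty$ decay of $e^{i(t_k'-t_k)\Delta}g$ rather than the $L^{(p+1)/p}\to L^{p+1}$ dispersive bound.

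One small slip in your \eqref{A20} dichotomy: ``$|t_k-t_k'|\to\infty$'' versus ``$t_k,t_k'$ both bounded'' is not exhaustive (e.g.\ $t_k=t_k'=k$). Your first-case argument in fact only uses $\max(|t_k|,|t_k'|)\to\infty$, so this missing configuration is already covered there; alternatively, as the paper does, argue by contradiction and subsequence extraction so that failure of \eqref{A20} forces both $(t_k)$ and $(t_k')$ to be bounded, after which the hypothesis yields $d(h_k\cdot 0,h_k'\cdot 0)\to\infty$ and your spatial argument applies verbatim.
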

\begin{proof}
 \emph{Proof of \eqref{A19}.} By density we can assume, without loss of generality, $f,g\in C_0^{\infty}(\HH^n)$. We have
 \begin{multline*}
\left(e^{it_k\Delta}f(h_k\cdot),e^{it_k'\Delta}g(h_k'\cdot)\right)_{\HHH_{\lambda}}=-\left((\Delta+\lambda) e^{it_k\Delta}f(h_k\cdot),e^{it_k'\Delta}g(h_k'\cdot)\right)_{L^2}\\
=\left(-(\Delta+\lambda) f,e^{i(t_k'-t_k)\Delta}g(h_k'\circ h_k^{-1}\cdot)\right)_{L^2}.
 \end{multline*}
If $|t_k-t_k'|\to \infty$ as $k\to\infty$, then $\|e^{i(t_k'-t_k)\Delta}g\|_{L^{\infty}}\to 0$ and the result follows. If not, we 
 can assume without loss of generality:
 $$ \lim_{k\to\infty} t_k'-t_k=\theta\in\RR,$$
and \eqref{A19} is equivalent to
\begin{equation}
\label{limite_0}
\lim_{k\to\infty}\left((\Delta+\lambda) f,(e^{i\theta\Delta}g)(h_k'\circ h_k^{-1}\cdot)\right)_{L^2}=0.
 \end{equation} 
Furthermore
\begin{equation}
\label{lim_dhk}
\lim d(0,h_k'\circ h_k^{-1}\cdot 0)=+\infty. 
\end{equation} 
If $\theta=0$, the support of $(\Delta+\lambda) f$ and $(e^{i\theta\Delta}g)(h_k'\circ h_k^{-1}\cdot)$ are disjoint for large $k$ and \eqref{limite_0} follows. If not, one can approximate $e^{i\theta\Delta}g$, in $L^2$, by compactly supported functions which yields \eqref{limite_0}, arguing again on the supports.

\emph{Proof of \eqref{A20}.} Note that by Sobolev embeddings and conservation of the $H^1$-norm for the linear equation, the sequences 
$$\left(\|e^{it_k\Delta} f\|_{L^{p+1}}\right)_k\text{ and }\left(\|e^{it_k'\Delta} g\|_{L^{p+1}}\right)_k$$
are bounded. Furthermore, 
$$ \lim_{k\to\infty}t_k=\pm\infty\Rightarrow \lim_{k\to\infty}\left\|e^{it_k\Delta}f\right\|_{L^{p+1}}=0\text{ and }\lim_{k\to\infty}t_k'=\pm\infty\Rightarrow \lim_{k\to\infty}\left\|e^{it_k'\Delta}g\right\|_{L^{p+1}}=0.$$
In both cases, \eqref{A20} holds. Arguing by contradiction and extracting subsequences, we are reduced to prove \eqref{A20} when $t_k$ and $t_k'$ have finite limits as $k$ goes to infinity. Time translating, we can also assume that these limits are both $0$, and we see that it is sufficient to prove:
$$ \forall f,g\in H^1(\HH^n),\quad \lim_{k\to\infty}\int |f(h_k\cdot x)||g(h_k'\cdot x)|^p\,dx=0,$$
provided \eqref{lim_dhk} holds. This follows by approximating $f$ and $g$, in $L^{p+1}$, by compactly supported functions and arguing on the supports.
\end{proof}
We next prove Proposition \ref{P:profile}. 
\begin{proof}
 We shall use the following general abstract concentration-compactness result (see Proposition 2.1, Definition 2.2 and Theorem 2.3 of \cite{ScTi02}): 
 \begin{exttheo}[\cite{ScTi02}] Let $H$ be a separable Hilbert space and $D$ a group of unitary operators in $H$ such that if $(g_k)_k\in D^{\NN}$ does not converge weakly to zero, then there exists a strongly convergent subsequence of $g_k$ such that s-$\lim_k g_k\neq 0$. 
 
 If $(f_k)_k\in H^{\NN}$ is a bounded sequence, then (extracting subsequences in $k$), there exist $\varphi_j\in H$, $(g_{j,k})_k\in D^{\NN}$, $j\geq 1$ such that 
  \begin{equation}
   \label{ST1}\sum_{j\geq 1}\|\varphi_j\|^2\leq \lim\sup_{k\to\infty}\|f_k\|^2,
     \end{equation} 
  \begin{equation}
   \label{ST2}(g_{j,k})^{-1}g_{j',k}\xrightharpoonup[k\to\infty]{} 0 \mbox{ for }j\neq j',
     \end{equation} 
  \begin{equation}
   \label{ST3}(g_{j,k})^{-1}f_k\xrightharpoonup[k\to\infty]{} \varphi_j,  
   \end{equation} 
 and for all $\phi\in H$,
  \begin{equation}
   \label{ST4}\lim_{J\to\infty}\varlimsup_{k\to\infty}\;\sup_{g\in D}\bigg|\bigg(g\Big(f_k-\sum_{j=1}^Jg_{j,k}\varphi_j\Big),\phi\bigg)\bigg|=0.
     \end{equation} 
\end{exttheo}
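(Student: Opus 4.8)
The plan is to prove this by the standard iterative ``bubble extraction'', the only special feature being that the hypothesis on $D$ (which replaces the concrete translation/scaling groups of the Euclidean theory) is exactly what produces the asymptotic orthogonality \eqref{ST2}. Throughout, ``$g_k\rightharpoonup 0$'' for a sequence of operators will mean convergence to $0$ in the weak operator topology, i.e. $(g_kv,w)\to0$ for all $v,w\in H$; since the $g_k$ are unitary, this is the same as $g_kv\rightharpoonup0$ for every $v\in H$. First I would introduce the defect functional: for a bounded sequence $(u_k)$ in $H$ set
\[
\eta((u_k))=\sup\Big\{\|w\|:\ w\in H,\ \exists\,(g_k)\in D^{\NN}\ \text{and a subsequence with}\ g_k^{-1}u_k\rightharpoonup w\Big\},
\]
so that $0\le\eta((u_k))\le\limsup_k\|u_k\|$. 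A direct check shows that $\eta((u_k))=0$ is equivalent to the vanishing statement $\limsup_k\sup_{g\in D}|(gu_k,\phi)|=0$ for every $\phi\in H$: a near-maximizing choice $g_k$ of the supremum over $g$ produces, after extraction, a nonzero weak limit of $g_ku_k$ precisely when the left-hand side is positive, and $(g_k^{-1})\in D^{\NN}$ realizes it as an element of the set defining $\eta$. With $u_k$ replaced by the remainder this is exactly \eqref{ST4}.

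Next comes the iteration. I put $u_k^{(0)}=f_k$, and given $u_k^{(J-1)}$ I stop if $\eta_{J-1}:=\eta((u_k^{(J-1)}))=0$; otherwise, using a diagonal argument over a sequence of admissible weak limits whose norms approach $\eta_{J-1}$, I select $(g_{J,k})_k\in D^{\NN}$ and a subsequence along which $g_{J,k}^{-1}u_k^{(J-1)}\rightharpoonup\varphi_J$ with $\|\varphi_J\|^2\ge\tfrac12\eta_{J-1}$, and I set $u_k^{(J)}=u_k^{(J-1)}-g_{J,k}\varphi_J$, so that $g_{J,k}^{-1}u_k^{(J)}\rightharpoonup0$. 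Since $g_{J,k}$ is unitary,
\[
\big(u_k^{(J-1)},g_{J,k}\varphi_J\big)=\big(g_{J,k}^{-1}u_k^{(J-1)},\varphi_J\big)\longrightarrow\|\varphi_J\|^2\quad(k\to\infty),
\]
whence $\limsup_k\|u_k^{(J)}\|^2=\limsup_k\|u_k^{(J-1)}\|^2-\|\varphi_J\|^2$. Telescoping this identity yields $\sum_{j=1}^J\|\varphi_j\|^2\le\limsup_k\|f_k\|^2$, which is precisely \eqref{ST1}, and forces $\|\varphi_J\|\to0$, hence $\eta_{J-1}\le2\|\varphi_J\|^2\to0$ as $J\to\infty$. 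Writing $r_{J,k}=u_k^{(J)}=f_k-\sum_{j=1}^Jg_{j,k}\varphi_j$, the equivalence of the first paragraph turns $\eta((r_{J,k}))\to0$ into \eqref{ST4}; a final diagonal extraction in $k$ makes every statement hold along a single subsequence.

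It remains to establish the asymptotic orthogonality \eqref{ST2}, from which \eqref{ST3} follows immediately: expanding $g_{J,k}^{-1}f_k=g_{J,k}^{-1}u_k^{(J-1)}+\sum_{i<J}g_{J,k}^{-1}g_{i,k}\varphi_i$, the leading term tends weakly to $\varphi_J$ by construction while each cross term $g_{J,k}^{-1}g_{i,k}\varphi_i$ tends weakly to $0$ once \eqref{ST2} is known. I would prove \eqref{ST2} by induction on the profiles, assuming the orthogonality relations among $g_{1,k},\dots,g_{j'-1,k}$ and deducing those involving $g_{j',k}$. The inductive hypothesis gives, for each $j<j'$, that $g_{j,k}^{-1}u_k^{(j'-1)}\rightharpoonup0$, since $g_{j,k}^{-1}u_k^{(j)}\rightharpoonup0$ and $u_k^{(j'-1)}-u_k^{(j)}$ is a finite sum of bubbles $g_{i,k}\varphi_i$ ($j<i<j'$) killed weakly by $g_{j,k}^{-1}$. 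Suppose now $g_{j,k}^{-1}g_{j',k}$ does \emph{not} converge weakly to $0$. This is the only place the hypothesis on $D$ is used: it furnishes a subsequence along which $g_{j,k}^{-1}g_{j',k}\to G$ strongly with $G\neq0$. Writing $g_{j',k}^{-1}u_k^{(j'-1)}=\big(g_{j',k}^{-1}g_{j,k}\big)\big(g_{j,k}^{-1}u_k^{(j'-1)}\big)$ and noting that the operators $g_{j',k}^{-1}g_{j,k}$ have adjoints $g_{j,k}^{-1}g_{j',k}\to G$ strongly, the weak-vector-times-strongly-convergent-adjoint criterion $(T_kv_k,w)=(v_k,T_k^{\ast}w)$ forces $g_{j',k}^{-1}u_k^{(j'-1)}\rightharpoonup0$, i.e. $\varphi_{j'}=0$. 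This contradicts $\|\varphi_{j'}\|^2\ge\tfrac12\eta_{j'-1}>0$, so \eqref{ST2} holds.

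I expect this last orthogonality argument to be the main obstacle. It is the only step that genuinely invokes the dislocation hypothesis, and it requires careful bookkeeping of how each already-extracted profile behaves under the group elements chosen at later stages, together with the observation that for unitaries the hypothesis applied to $g_{j,k}^{-1}g_{j',k}$ controls precisely the adjoints of the operators one needs to push the weak limit through. The Bessel inequality \eqref{ST1}, the vanishing \eqref{ST4}, and the weak-limit identification \eqref{ST3} are then all consequences of the energy splitting of the second paragraph and of \eqref{ST2}.
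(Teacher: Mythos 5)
Your proposal is correct, but note that the paper itself contains no proof of this statement: it is quoted as an external theorem from \cite{ScTi02} (Proposition 2.1, Definition 2.2 and Theorem 2.3 there), with the sole remark that the truncated vanishing condition \eqref{ST4}, with the parameter $J$ and a finite sum, is not the literal statement of \cite{ScTi02} but ``follows easily from the proof''. Your blind reconstruction is essentially the Schindler--Tintarev argument itself: the defect functional $\eta$, the iterative extraction with $\|\varphi_J\|\geq \frac12 \eta_{J-1}$, the energy splitting $\limsup_k\|u^{(J)}_k\|^2=\limsup_k\|u^{(J-1)}_k\|^2-\|\varphi_J\|^2$ giving \eqref{ST1}, and the orthogonality \eqref{ST2} proved by applying the dislocation hypothesis to the sequence $g_{j,k}^{-1}g_{j',k}\in D$ and passing the weak limit through the strongly convergent adjoints via $(T_kv_k,w)=(v_k,T_k^*w)$. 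That last step is exactly right, and it is worth observing that your contradiction $\varphi_{j'}=0$ uses only the strong convergence of $g_{j,k}^{-1}g_{j',k}$, not the nonvanishing of the limit $G$. Moreover, your route has the concrete advantage of directly yielding the $J$-truncated form \eqref{ST4} that the paper needs: the quantitative version of your first-paragraph equivalence gives $\limsup_k\sup_{g\in D}|(g\,r_{J,k},\phi)|\leq 2\|\phi\|\,\eta_J$ and $\eta_J\leq 2\|\varphi_{J+1}\|\to 0$, so your proof substantiates precisely the refinement that the authors only assert.

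Two minor points to repair. First, a normalization slip: since $\eta$ is a supremum of norms (not of squared norms), the near-maximal selection gives $\|\varphi_J\|\geq\frac12\eta_{J-1}$, not $\|\varphi_J\|^2\geq\frac12\eta_{J-1}$; this is harmless, as you only use that $\eta_{J-1}\to 0$ follows from $\|\varphi_J\|\to 0$. Second, you should say what happens if the iteration terminates, i.e. $\eta_{J_0}=0$ at some finite stage: then only finitely many nonzero profiles exist, and \eqref{ST2}--\eqref{ST3} should be asserted only for those, since padding with $\varphi_j=0$ and arbitrary $g_{j,k}$ cannot in general preserve \eqref{ST2} (consider the trivial group $D=\{\mathrm{Id}\}$, which does satisfy the dislocation hypothesis, where $g_{j,k}^{-1}g_{j',k}=\mathrm{Id}$ never converges weakly to zero). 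This is a standard convention in \cite{ScTi02} and in the paper's application, where the sums effectively run over the nonzero profiles, so it is a matter of phrasing rather than a gap.
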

Note that in \cite[Theorem 2.3]{ScTi02}, \eqref{ST4} is stated without parameter $J$ and with an infinite sum. However \eqref{ST4} follows easily from the proof in \cite{ScTi02}.

We apply this result for $H=H^1(\mathbb H^n)$ and 
$$D=\{g:H^1(\mathbb H^n)\rightarrow H^1(\mathbb H^n)\;,\; g(f)(x)=e^{it\Delta} f(h\cdot x), (t,h)\in\RR\times \mathbb G\}.$$ 
 The hypothesis on $D$ is satisfied in view of Lemma \ref{C:ortho}. Indeed, if  $g_k=(t_k,h_k)_{k}\in D$ does not converge weakly to zero, then by taking $g_k'=(0,Id)\in D$, the conclusion \eqref{A19} ensures that $d(h_k\cdot 0, 0)+|t_k|$ does not tend to $+\infty$. Therefore, in view of the definition of $\mathbb G$, there exists strongly convergent subsequences of $(t_k)_k$ and $(h_k)_k$ in $\RR$ and $\GG$. This implies that $g_k$ has a strong limit $(\theta, h)\in D$. %Since the sequence $(g_k)_k$ does not weakly converge to $0$, this limit is not zero, so the hypothesis of the theorem is satisfied. 
 
 Now we transcribe the results of this theorem in our context. 
 The statements \eqref{ST1} and \eqref{ST3} imply directly \eqref{A11} and \eqref{A13}. If the conclusion of \eqref{A12} does not hold, than by the same argument used above to check the assumption on $D$, we obtain a contradiction with \eqref{ST2}. Therefore \eqref{A12} is satisfied. 

We are left with proving \eqref{A15}. By the interpolation inequality \eqref{inter_1}, it is sufficient to consider only the $\|\cdot\|_{\Sigma}$ norm in \eqref{A15}. 

% 
% ***********R\'e\'ecrire le paragraphe suivant:
% 
% The conclusion \eqref{ST4} implies that for all sequence $(t_k,h_k)_{k}$ in $\RR\times \mathbb G$
%  $$e^{it_k\Delta}r_k(h_k\cdot)\xrightharpoonup[k\to\infty]{}0\,\text{ weakly in }H^1(\HH^n),$$
%  where
%  $$r_k=f_k-\sum_{j\in\mathbb N}e^{it_{j,k}\Delta}\varphi_j(h_{j,k}\cdot).$$
% Since in view of \eqref{A11} we have 
% $$\|r_{J,k}-r_k\|_{H^1}=\|\sum_{j\geq J} e^{it_{j,k}\Delta}\varphi_j(h_{j,k}\cdot)\|_{H^1}=\|\sum_{j\geq J} \varphi_j\|_{H^1}\underset{J\rightarrow \infty}{\longrightarrow} 0,$$
% it follows that to obtain \eqref{A15} is is enough to prove the following lemma. 
We will deduce \eqref{A15} from the following lemma, proved below.
\begin{lemma}
  \label{L:weak_CV}
  Let $(r_k)_{k}$ be a bounded sequence in $H^1(\HH^n)$. Assume that for all sequence $(t_k,h_k)_{k}$ in $\RR\times \mathbb G$, 
  \begin{equation}
   \label{weaklim_0}
   e^{it_k\Delta}r_k(h_k\cdot)\xrightharpoonup[k\to\infty]{}0\,\text{ weakly in }H^1(\HH^n).
  \end{equation} 
  Then
  \begin{equation}
   \label{stronglim_0}
 \lim_{k\to\infty} \|r_k\|_{\Sigma}=0.  
  \end{equation} 
 \end{lemma}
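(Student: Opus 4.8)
My plan is to prove the contrapositive by extracting a nontrivial profile: I will show that if $\|r_k\|_\Sigma\not\to 0$, then there exist sequences $(t_k)_k$ in $\RR$ and $(h_k)_k$ in $\GG$ along which $e^{it_k\Delta}r_k(h_k\cdot)$ has a nonzero weak limit in $H^1$, which contradicts the hypothesis \eqref{weaklim_0}. So I will assume, after passing to a subsequence, that $\|r_k\|_\Sigma\geq\delta>0$ for all $k$.

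First I would unwind the definition of $\|\cdot\|_\Sigma$: for each $k$ I can pick $m_k\geq 1$, $t_k\in\RR$ and (since $P_{m_k}e^{it_k\Delta}r_k$ is continuous) a point $x_k\in\HH^n$ such that $\frac{m_k^{1-n/2}}{\log(m_k+2)}|P_{m_k}e^{it_k\Delta}r_k(x_k)|\geq\delta/4$. Using that $\GG$ acts transitively on $\HH^n$, I choose $h_k\in\GG$ with $h_k\cdot 0=x_k$ and set $\psi_k=e^{it_k\Delta}r_k(h_k\cdot)$, a bounded sequence in $H^1$. Since $\Delta$ --- hence $e^{it\Delta}$ and $P_m$ --- commutes with the isometry $h_k$, and since $[0,(1,\omega)]=1$ so that evaluation at the origin is transparent on the Fourier side, I get $P_{m_k}\psi_k(0)=(P_{m_k}e^{it_k\Delta}r_k)(x_k)$ and therefore
\[ \frac{m_k^{1-n/2}}{\log(m_k+2)}\,|P_{m_k}\psi_k(0)|\geq\frac\delta4 .\]

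The key step, and the one I expect to be the heart of the matter, is to rule out the concentration scenario $m_k\to\infty$. Here the logarithmic weight in $\|\cdot\|_\Sigma$ is decisive: applying \eqref{Hs_all_m} of Lemma \ref{lemmaPmest} with $s=1$ when $n\geq 3$ and $s=n/2=1$ when $n=2$ (so that $\|\cdot\|_{H^s}=\|\cdot\|_{H^1}$ in either case), I obtain the uniform bound $m^{1-n/2}|P_mf(0)|\leq C\|f\|_{H^1}$, whence $\frac{m_k^{1-n/2}}{\log(m_k+2)}|P_{m_k}\psi_k(0)|\leq\frac{C}{\log(m_k+2)}\|\psi_k\|_{H^1}$. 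If $m_k\to\infty$ the right-hand side tends to $0$, contradicting the lower bound. Thus, after extraction, $m_k\to m_\infty\in[1,\infty)$, the prefactor converges to a positive constant, and I conclude $|P_{m_k}\psi_k(0)|\geq c_1>0$ for $k$ large.

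To finish, I extract a further subsequence with $\psi_k\rightharpoonup\psi$ weakly in $H^1$; the hypothesis \eqref{weaklim_0} applied to $(t_k,h_k)$ forces $\psi=0$. It then remains to prove $P_{m_k}\psi_k(0)\to 0$, which contradicts $|P_{m_k}\psi_k(0)|\geq c_1$. I would split $P_{m_k}\psi_k(0)=P_{m_\infty}\psi_k(0)+(P_{m_k}-P_{m_\infty})\psi_k(0)$: the functional $f\mapsto P_{m_\infty}f(0)$ is bounded on $H^1$ by \eqref{Hs_all_m}, hence weakly continuous, so the first term tends to $P_{m_\infty}\psi(0)=0$; for the second, a Cauchy--Schwarz estimate on the Fourier side exactly as in the proof of Lemma \ref{lemmaPmest} (again using $[0,(1,\omega)]=1$) bounds it by $\|\psi_k\|_{H^1}$ times a weighted $L^2$ norm of the symbol difference, which tends to $0$ as $m_k\to m_\infty$ by dominated convergence. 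The main obstacle is thus the continuity in $m$ of the evaluation-at-origin functional together with the use of the logarithmic weight to exclude concentration; once these are in place the contradiction closes and \eqref{stronglim_0} follows.
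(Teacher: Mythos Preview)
Your proof is correct and follows essentially the same route as the paper's: extract near-maximizers of $\|\cdot\|_\Sigma$, use \eqref{Hs_all_m} with $s=1$ together with the logarithmic weight to force $(m_k)_k$ bounded, recenter via $h_k\in\GG$, and contradict the weak convergence to zero using that $f\mapsto P_m f(0)$ is a bounded linear functional on $H^1$. Your explicit splitting $P_{m_k}\psi_k(0)=P_{m_\infty}\psi_k(0)+(P_{m_k}-P_{m_\infty})\psi_k(0)$ and the dominated-convergence argument for the second piece in fact supply a detail the paper leaves implicit when it passes directly from $|P_{m_k}g_k(0)|\geq\eps$ to $|P_m g_k(0)|\geq\eps'$.
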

Assuming Lemma \ref{L:weak_CV}, we prove \eqref{A15} by contradiction. If \eqref{A15} does not hold, there exists $\eps>0$ and a sequence of positive integers $(J_{\ell})_{\ell\geq 0}$ such that
\begin{equation}
 \label{def_Jl}
 \lim_{\ell\to\infty}J_{\ell}=+\infty\quad \text{and}\quad\forall \ell\geq 0,\quad \varlimsup_{k\to\infty}\|r_{J_{\ell},k}\|_{\Sigma}\geq \eps.
\end{equation} 
Let $(\phi_{\alpha})_{\alpha\in \NN}$ be a countable, dense family in $H^1(\HH^n)$. Let $\nu\in \NN$. By \eqref{ST4} and \eqref{def_Jl}, there exists indexes $\ell(\nu)$ and $k({\nu})$ with the following properties
\begin{gather*}
 \forall \alpha\in \{0,\ldots,\nu\},\quad \sup_{\substack{t\in \RR\\ h\in G}} \left|\Big(e^{it\Delta}r_{J_{\ell(\nu)},k(\nu)}\left(h\cdot\right),\phi_{\alpha}\Big)_{H^1}\right|\leq \frac{1}{2^{\nu}}\\
 \|r_{J_{\ell(\nu)},k(\nu)}\|_{\Sigma}\geq \frac{\eps}{2}.
\end{gather*}
As a consequence of the first inequality and the density of $(\phi_{\alpha})_{\alpha\in \NN}$ in $H^1$, we obtain that for all sequence $(t_{\nu},h_{\nu})_{\nu}$ in $\RR\times \GG$,
$$ e^{it_{\nu}\Delta}r_{J_{\ell(\nu)},k(\nu)}(h_{\nu}\cdot)\xrightharpoonup[\nu\to\infty]{}0\,\text{ weakly in }H^1(\HH^n).$$
This proves that $(r_{J_{\ell(\nu)},k(\nu)})_{\nu}$ contradicts Lemma \ref{L:weak_CV}, concluding the proof.
 \end{proof}
 \begin{proof}[Proof of Lemma \ref{L:weak_CV}]
To prove Lemma \ref{L:weak_CV}, we argue by contradiction. Assume that \eqref{stronglim_0} does not hold. Then there exist a subsequence of $(r_k)_k$ (still denoted by $(r_k)_k$), $\eps>0$, a sequence $(t_k,x_k,m_k)_k$ in $\RR\times \HH^n\times [1,\infty)$ such that
\begin{equation}
 \label{contra_lim1}
 \forall k,\quad \left|P_{m_k}e^{it_k\Delta}r_k(x_k)\right|\frac{m_k^{1-n/2}}{\log(2+m_k)}\geq \eps.
\end{equation} 
Combining with \eqref{Hs_all_m} for $s=1$, we deduce
\begin{equation*}
 %\label{contra_lim2}
 \forall k,\quad \eps\log(2+m_k)\leq C\|r_k\|_{H^1},
\end{equation*} 
which proves that the sequence $(m_k)_k$ is bounded. Extracting subsequences, we can assume
\begin{equation}
 \label{lim_mk}
 \lim_{k\to\infty}m_k=m\in[1,+\infty).
\end{equation} 
%By \eqref{Hs_small_m} and \eqref{contra_lim1}, $m>0$. 
Since $\GG$ acts transitively on $\HH^n$, we can choose for all $k$ an isometry $h_k\in \GG$ such that $h_k(0)=x_k$. Let
$$ g_k(x)=e^{it_k\Delta} r_k(h_k\cdot x).$$
By \eqref{contra_lim1} and \eqref{lim_mk}, and since $h_k$ commutes with $P_m$, there exists $\eps'>0$ such that for large $k$
\begin{equation}
 \label{A21}
 \left|P_mg_k(0)\right|\geq \eps'.
\end{equation} 
By assumption \eqref{weaklim_0}, 
\begin{equation}
 \label{A22}
 g_k\xrightharpoonup[k\to\infty]{} 0 \text{ weakly in }H^1.
\end{equation} 
It follows from the inequality \eqref{Hs_all_m} 
that $f\mapsto P_m(f)(0)$ is a continuous linear form on $H^1(\HH^n)$, which combined with \eqref{A21} and \eqref{A22} yields a contradiction. The proof of Lemma \ref{L:weak_CV} (and thus of Proposition \ref{P:profile}) is complete.
\end{proof}
\begin{proof}[Proof of Lemma \ref{L:Pyt}]
 We first prove \eqref{A16}. We have
 $$ f_k=\sum_{j=1}^J e^{it_{j,k}\Delta}\varphi_j(h_{j,k}\cdot)+r_{J,k},$$
 and thus
 $$\|f_k\|^2_{H^1}=\sum_{j=1}^J \|\varphi_j\|^2_{H^1}+\|r_{J,k}\|^2_{H^1}+A_k+B_k,$$
 where 
 $$A_k=2\sum_{1\leq j<j'\leq J} \left(e^{it_{j,k}\Delta} \varphi_j(h_{j,k}\cdot),e^{it_{j',k}\Delta} \varphi_{j'}(h_{j',k}\cdot)\right),\quad  B_k=2\sum_{j=1}^J \left(e^{it_{j,k}\Delta} \varphi_j(h_{j,k}\cdot),r_{J,k}\right).$$
 The sum $A_k$ goes to $0$ as $k$ goes to infinity by the orthogonality of the profiles ensured by Lemma \ref{C:ortho} and \eqref{A12}. Moreover, the term $B_k$ equals
 $$2\sum_{j=1}^J  \left(\varphi_j,e^{-it_{j,k}\Delta}f_{k}(h_{j,k}^{-1}\cdot )-\varphi_j\right)_{H^1}-2A_k,$$
 which goes to $0$ as $k$ goes to infinity by \eqref{A13}.
 
 We next prove \eqref{A17}. By the refined Sobolev embedding \eqref{A1} applied to $s=\frac{n(p-1)}{2(p+1)}\in (0,1)$ and \eqref{Bs-Sigma}, 
 $$\|r_{J,k}\|_{L^{p+1}}\leq \|e^{it\Delta}r_{J,k}\|_{L^\infty L^{p+1}}\leq \|r_{J,k}\|_{H^1}^\frac2{p+1}\|e^{it\Delta}r_{J,k}\|_{L^\infty B^s}^\frac{p-1}{p+1}\leq \|r_{J,k}\|_{H^1}^\frac2{p+1}\|r_{J,k}\|_{\Sigma}^\frac{p-1}{p+1},$$
 so using \eqref{A15},
 $$\lim_{J\to\infty}\varlimsup_{k\to\infty} \|r_{J,k}\|_{L^{p+1}}=0.$$
 By the Poincar\'e-Sobolev inequality \eqref{PoiSob} and \eqref{A11}, 
 $$\sum_{j\geq 1}\|\varphi_j\|_{L^{p+1}}^{p+1}<\infty.$$
 We are thus reduced to prove that for a fixed $J$,
 $$\lim_{k\to\infty}\int \Big|\sum_{j=1}^J(e^{-it_{j,k}\Delta}\varphi_j)(h_{j,k}^{-1}x)\Big|^{p+1}\,dx- \sum_{j=1}^J\int \left| (e^{-it_{j,k}\Delta}\varphi_j)(h_{j,k}^{-1}x)\right|^{p+1}\,dx=0.$$
This last property follows from the inequality
\begin{equation}
 \label{ineg_R}
\forall (a_1,\ldots,a_J)\in [0,+\infty)^J,\quad \left| \sum_{j=1}^J a_j^{p+1}-\Big(\sum_{j=1}^J a_j\Big)^{p+1}\right|\leq C_J\sum_{\substack{1\leq j,j'\leq J\\j\neq j'}} a_j^pa_{j'},
 \end{equation} 
and the limit \eqref{A20} of Lemma \ref{C:ortho}, which concludes the proof.
 \end{proof}
 
\subsubsection{Nonlinear profiles and scattering}
\label{SS:NL_profile}
Let $\left(\varphi_j,(t_{j,k},h_{j,k})_k\right)_{j\geq 1}$ be a profile decomposition for a bounded sequence in $H^1$. Extracting subsequences, we can assume:
\begin{equation}
\label{lim_t_exist}
\forall j\geq 1,\quad \lim_{k\to\infty} t_{j,k}=\tau_{j}\in [-\infty,+\infty]. 
\end{equation} 
For any $j$, we denote by $U_j$ the \emph{nonlinear profile} associated to $\varphi_j$ and the sequence $(t_{j,k})_k$. This is by definition the unique solution of \eqref{NLS} such that $t_{j,k}\in I_{\max}(U_j)$ for large $k$ and
\begin{equation}
 \label{A23}
 \lim_{k\to\infty} \left\|e^{it_{j,k}\Delta}\varphi_j-U_j(t_{j,k})\right\|_{H^1}=0.
\end{equation} 
Assuming \eqref{lim_t_exist}, there always exists a nonlinear profile $U_j$: this follows from the local Cauchy theory if $\tau_j\in \RR$, and from the existence of wave operators (see Proposition \ref{P:wave_op}) if $\tau_{j}=\pm \infty$.
Note that if $T^{+}(U_j)$ is finite, then $\tau_j<T^+(U_j)$, and similarly, if $T^-(U_j)$ is finite, $T^-(U_j)<\tau_j$.
We next prove:
\begin{prop}
\label{P:scatt_profiles}
 Let $(f_k)_{k\geq 1}$ be a bounded sequence in $H^1$ that admits a profile decomposition $\left(\varphi_j,(t_{j,k},h_{j,k})_k\right)_{j\geq 1}$. Assume that for all $j\geq 1$, the corresponding nonlinear profile $U_j$ scatters forward in time. Then for large $k$, the solution $u_k$ of \eqref{NLS} with initial data $f_k$ at $t=0$ scatters forward in time. Furthermore,
 $$\varlimsup_{k\to\infty}\|u_k\|_{S^0(0,+\infty)}<\infty.$$
\end{prop}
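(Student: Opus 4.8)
The plan is to approximate $u_k$ by a superposition of the (time-translated and isometrically transported) nonlinear profiles and then to invoke the long-time perturbation result, Proposition \ref{P:LTPT}. For $j,k\geq 1$ set
$$ U_{j,k}(t,x)=U_j\big(t+t_{j,k},h_{j,k}\cdot x\big). $$
Since \eqref{NLS} is invariant under time translation and under the isometries $h\in\GG$, each $U_{j,k}$ is a solution of \eqref{NLS}, and by \eqref{A23} its value at $t=0$ is, as $k\to\infty$, $H^1$-close to the $j$-th component $e^{it_{j,k}\Delta}\varphi_j(h_{j,k}\cdot)$ of $f_k$. For $J\geq 1$, I would introduce the approximate solution
$$ \tilde u_{J,k}(t)=\sum_{j=1}^{J}U_{j,k}(t)+e^{it\Delta}r_{J,k}, $$
and check the hypotheses of Proposition \ref{P:LTPT} on $(0,\infty)$ with $\tilde u=\tilde u_{J,k}$ and $u_0=f_k$: a uniform bound on $\|\tilde u_{J,k}\|_{S^0(0,\infty)}$, smallness of the error $e_{J,k}:=i\partial_t\tilde u_{J,k}+\Delta\tilde u_{J,k}+|\tilde u_{J,k}|^{p-1}\tilde u_{J,k}$ in $N^0(0,\infty)$, and smallness of $\|e^{it\Delta}(f_k-\tilde u_{J,k}(0))\|_{S^0(0,\infty)}$, the two latter after first letting $k\to\infty$ and then $J\to\infty$.

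The first step is the uniform $S^0$ bound. Because each $U_j$ scatters forward in time and $t_{j,k}\to\tau_j\in[-\infty,+\infty]$, Proposition \ref{P:scatt} and the time-translation and isometry invariance of the $S^0$ norm give $\sup_k\|U_{j,k}\|_{S^0(0,\infty)}=\sup_k\|U_j\|_{S^0(t_{j,k},\infty)}<\infty$ for each fixed $j$ (the case $\tau_j=+\infty$ even producing a null limit). For the tail, \eqref{A11} forces $\|\varphi_j\|_{H^1}\to0$, so by \eqref{Stric1} and the small-data theory (Corollary \ref{C:small_data}) the profiles of large index satisfy $\|U_j\|_{S^0(\RR)}\lesssim\|\varphi_j\|_{H^1}$; summing the squares and using the asymptotic orthogonality \eqref{A12} (in the space-time form of \eqref{A20}) to discard cross terms, I would obtain $\limsup_k\big\|\sum_{j=1}^{J}U_{j,k}\big\|_{S^0(0,\infty)}\leq M$ uniformly in $J$. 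Adding $e^{it\Delta}r_{J,k}$, whose $S^0$ norm is controlled by \eqref{Stric1}, yields a uniform bound on $\|\tilde u_{J,k}\|_{S^0}$.

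For the error I would write $e_{J,k}=\big|\tilde u_{J,k}\big|^{p-1}\tilde u_{J,k}-\sum_{j=1}^{J}|U_{j,k}|^{p-1}U_{j,k}$ (the linear part $e^{it\Delta}r_{J,k}$ contributing nothing to the equation). Splitting this difference into (i) the effect of inserting the linear remainder $e^{it\Delta}r_{J,k}$, estimated in $N^0$ via Proposition \ref{P:Holder} and made small by \eqref{A15}, and (ii) the genuine cross terms built from two distinct profiles, each tending to $0$ in $N^0(0,\infty)$ by the space-time orthogonality coming from \eqref{A12}, I would get $\lim_{J\to\infty}\limsup_{k\to\infty}\|e_{J,k}\|_{N^0(0,\infty)}=0$. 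Finally $\tilde u_{J,k}(0)-f_k=\sum_{j=1}^J\big(U_j(t_{j,k})-e^{it_{j,k}\Delta}\varphi_j\big)(h_{j,k}\cdot)\to0$ in $H^1$ by \eqref{A23}, whence $\|e^{it\Delta}(f_k-\tilde u_{J,k}(0))\|_{S^0}\to0$ by Strichartz \eqref{Stric1}. Since the bound $M$ is uniform in $J$, I would fix $J$ large so that the error falls below the threshold $\eps_0(M)$ of Proposition \ref{P:LTPT}, then take $k$ large; that proposition then gives $u_k$ defined on $(0,\infty)$ with $\|u_k\|_{S^0(0,\infty)}\leq\|\tilde u_{J,k}\|_{S^0}+C\eps$ uniformly bounded, and Proposition \ref{P:scatt} yields forward scattering.

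The main obstacle is step (ii): promoting the static orthogonality \eqref{A20} of Lemma \ref{C:ortho} from linear evolutions to the \emph{nonlinear} profiles and to the space-time norms $N^0$ and $S^0$, i.e. proving that mixed products of $U_{j,k}$ and $U_{j',k}$ with $j\neq j'$ vanish in the relevant space-time Lebesgue norm as $k\to\infty$. This requires approximating the possibly non-compactly-supported profiles $U_j$ by compactly supported space-time functions and exploiting that \eqref{A12} separates the profiles either in time (through $|t_{j,k}-t_{j',k}|\to\infty$, via dispersion) or in space (through $d(h_{j,k}\cdot0,h_{j',k}\cdot0)\to\infty$).
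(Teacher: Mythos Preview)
Your proposal is correct and follows essentially the same route as the paper: build an approximate solution from the translated nonlinear profiles, get a uniform $S^0$ bound via the space-time orthogonality \eqref{A12} (promoted to the nonlinear profiles by approximating them in $L^{p+1}_{t,x}$ by compactly supported functions, exactly as you anticipate), control the nonlinear error in $N^0$, and conclude by Proposition~\ref{P:LTPT}.

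The only cosmetic difference is that you include the linear remainder $e^{it\Delta}r_{J,k}$ in the approximate solution $\tilde u_{J,k}$, whereas the paper uses $u_{J,k}=\sum_{j\leq J}U_{j,k}$ and instead absorbs $r_{J,k}$ into the initial mismatch term $e^{it\Delta}(f_k-u_{J,k}(0))$ of Proposition~\ref{P:LTPT}, made small directly by \eqref{A15}. Both variants are standard; yours requires one extra estimate (the contribution of $e^{it\Delta}r_{J,k}$ to the nonlinearity, via Proposition~\ref{P:Holder} and \eqref{A15}), while the paper's avoids that at the price of a slightly larger initial mismatch. Either way the key technical point is exactly the one you single out: the vanishing of mixed products $\int\!\!\int |U_{j,k}|^p|U_{j',k}|$ for $j\neq j'$, handled by density of compactly supported functions in $L^{p+1}(\RR\times\HH^n)$ together with \eqref{A12}.
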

\begin{proof}
This is a standard consequence of the long-time perturbation theory (Proposition \ref{P:LTPT}) applied to $u_k$ and $$u_{J,k}=\sum_{j=1}^J U_{j,k},\quad \mbox{where\,}\,U_{j,k}(t,x)=U_j(t+t_{j,k},h_{j,k}\cdot x).$$
We sketch the proof for $1<p\leq \frac{4}{n}+1$; recall that in this case $S^0(I)=L^{p+1}(I,L^{p+1}), N^0(I)=L^{\frac p{p+1}}(I,L^{\frac{p}{p+1}})$. We refer to \cite{FaXiCa11} for a very close proof, in the Euclidean setting, in the case $p>\frac{4}{n}+1$. 
 
 \emph{Step 1. Uniform bound on the $S^0$ norm.} We first prove that there is a constant $M>0$, depending on the sequence $(f_k)_k$, but not on $J$, such that
 \begin{equation}
  \label{A24}
  \forall J,\quad \varlimsup_{k\to\infty}\left\|u_{J,k}\right\|_{S^0(0,+\infty)}\leq M.
 \end{equation}

To this purpose we first use inequality \eqref{ineg_R},
 \begin{multline}
  \label{A25}
  \left|\int_0^{+\infty} \int |u_{J,k}|^{p+1}\,d\mu(x)\,dt-\sum_{j=1}^J \int_0^{+\infty} \int_{\HH^n} \left|U_{j,k}\right |^{p+1}\,d\mu(x)\,dt \right|\\
  \leq C\sum_{\substack{1\leq j,j'\leq J\\ j\neq j'}} \int_0^{+\infty}\int_{\HH^n} |U_{j,k}(x)|^p|U_{j',k}(x)|\,d\mu(x)\,dt. 
 \end{multline} 
 We next prove
 \begin{equation}
  \label{A26}
  j\neq j'\Longrightarrow
  \lim_{k\to\infty} \int_0^{+\infty} \int_{\HH^n} |U_{j,k}|^p|U_{j',k}|\,d\mu(x)\,dt=0.
 \end{equation} 
The term in the limit \eqref{A26} equals
\begin{equation*}
 %\label{A27}
 \int_0^{+\infty} \int_{\HH^n} |U_j(t_{j,k}+t,h_{j,k}\cdot x)|^p|U_{j'}(t_{j',k}+t,h_{j,k}\cdot x)|\,d\mu(x)\,dt
\end{equation*} 
We first note that 
\begin{multline}
 \label{A28}
 \forall f,g \in L^{p+1}(\RR\times \HH^n), \\
 \lim_{k\to\infty} \int_{\RR} \int_{\HH^n} |f(t_{j,k}+t,h_{j,k}\cdot x)|^p|g(t_{j',k}+t,h_{j',k}\cdot x)|\,d\mu(x)\,dt=0.
\end{multline} 
Indeed, this is obvious, arguing on the supports, and using the pseudo-orthogonality  \eqref{A13} of the parameters, if $f$ and $g$ are compactly supported. The general case follows by density.

If $U_j$ and $U_{j'}$ are globally defined, \eqref{A26} follows immediately from \eqref{A28} with $f=U_j$ and $g=U_{j'}$. If $U_j$ and $U_{j'}$ are not globally defined backward in time, \eqref{A26} follows from \eqref{A28} with $f=\chi_{t\geq \tau_j}\,U_j$ and $g=\chi_{t\geq \tau_{j'}}\,U_{j'}$, where $\chi_{t\geq A}$ is the characteristic function of $[A,+\infty)$, and $\tau_j$, $\tau_{j'}$ are defined in \eqref{lim_t_exist}. The other cases are similar.

Combining \eqref{A25} and \eqref{A26}, we get
\begin{equation*}
\varlimsup_{k\to\infty}\left\|u_{J,k}\right\|_{S^0(0,+\infty)}^{p+1}= 
\varlimsup_{k\to\infty}\sum_{j=1}^J \left\|U_j(t_{j,k}+\cdot,h_{j,k}\cdot)\right\|_{S^0(0,+\infty)}^{p+1}=\sum_{j=1}^J \|U_j\|_{S(\tau_j,+\infty)}^{p+1},
\end{equation*}
which yields \eqref{A24} since $\sum_{j=1}^{+\infty} \|U_j\|_{S(\tau_j,+\infty)}^{p+1}$ is finite by \eqref{A11} and the small data theory for \eqref{NLS}.

 \emph{Step 2. End of the proof.} Fix $J\geq 1$ such that
 \begin{equation*}
  %\label{A29} 
  \varlimsup_{k\to\infty}\|e^{it\Delta}r_{J,k}\|_{S^0(\RR)}\leq \frac{\eps(M)}{4},
 \end{equation*} 
 where $\eps(M)$ is given by Proposition \ref{P:LTPT}. Recall the notation $u_{J,k}=\sum_{j=1}^J U_{j,k}$. Then
 $$i\partial_t u_{J,k}+\Delta u_{J,k}+|u_{J,k}|^{p-1}u_{J,k}=\underbrace{\bigg|\sum_{j=1}^J U_{j,k}\bigg|^{p-1}\sum_{j=1}^J U_{j,k}-\sum_{j=1}^J |U_{j,k}|^{p-1}U_{j,k}}_{e_{J,k}}.$$
 We have
 $$\left\|e_{J,k}\right\|_{N^0(0,+\infty)}^{1+\frac 1p}=\left\|e_{J,k}\right\|_{L^{1+\frac 1p}}^{1+\frac 1p}\leq C\int_0^{+\infty} \int_{\HH^n} \sum_{\substack{1\leq j,j'\leq J\\ j\neq j'}} \left||U_{j,k}|^{p-1}U_{j',k}\right|^{1+\frac 1p},$$
 which goes to $0$ as $k$ goes to infinity, using the pseudo-orthogonality \eqref{A13} of the sequence of parameters and a proof similar to the one in Step 1.
 
 Choosing $k_0$ large, so that for $k\geq k_0$
 $$\left\|e_{J,k}\right\|_{N^0(0,+\infty)}+\left\|e^{it\Delta} r_{J,k}\right\|_{S^0(0,+\infty)}\leq \frac{\eps(M)}{2},$$
 we obtain by using \eqref{Stric1} and \eqref{A23} that for $k\geq k_0$,
 $$\left\|e^{it\Delta}f_k-e^{it\Delta}u_{J,k}(0)\right\|_{S^0(0,+\infty)}=\left\|e^{it\Delta} r_{J,k}\right\|_{S^0(0,+\infty)}\leq \frac{\eps(M)}{2},$$
 and the results follows from long-time perturbation theory (Proposition \ref{P:LTPT}) applied to $u_k$ and $u_{J,k}$.
 \end{proof}

\subsection{Existence of the critical solution} 
\label{SS:critical}
In this section we shall prove Theorem \ref{P:critical}. We will also prove the nonradial version of this result:
\begin{prop}[Nonradial critical element]\label{P:critical2}Let $\lambda< \frac{(n-1)^2}4,1< p< 1+\frac 4{n-2}$. 
%Let $u_0\in\mathcal H_{\lambda,rad}$ with $E_\lambda(u_0)<E_\lambda(Q_{\lambda})$ and $\|u_o\|_{\HHH_\lambda}<\|Q_{\lambda}\|_{\HHH_\lambda}$. 
There exists a global  solution $u_c$ of equation \eqref{NLS} and a family $(h(t))_{t\in \RR}$ of elements of $\GG$ such that
$$\{u_c(t,h(t)\cdot), t\in\mathbb R\}$$
has compact closure in $H^1(\mathbb H^n)$,
$$E_\lambda(u_c(0))\leq E_\lambda(Q_{\lambda}),\quad \|u_c(0)\|_{\HHH_\lambda}\leq \|Q_{\lambda}\|_{\HHH_\lambda},$$
and, if 
$$E_\lambda(u(0))<  E_\lambda(u_c(0)),\quad \|u(0)\|_{\HHH_\lambda}\leq \|Q_{\lambda}\|_{\HHH_\lambda},$$
then the solution $u$ of equation \eqref{NLS} scatters in both time directions. 
% Enlev\'e le cas radial, qui est dans l'intro:
% Similarly, there exists a global \emph{radial} solution $\tilde{u}_c$ of \eqref{NLS} such that
% $$\{\tilde{u}_c(t), t\in\mathbb R\}$$
% has compact closure in $H^1$, 
% $$E_\lambda(\tilde{u}_c(0))\leq E_\lambda(Q_{\lambda}),\quad \|\tilde{u}_c(0)\|_{\HHH_\lambda}\leq \|Q_{\lambda}\|_{\HHH_\lambda},$$
% and that if $u$ is a radial solution of \eqref{NLS} such that
% $$E_\lambda(u(0))<  E_\lambda(u_c(0)),\quad \|u(0)\|_{\HHH_\lambda}< \|Q_{\lambda}\|_{\HHH_\lambda},$$
% then $u$ scatters in both time directions. 
\end{prop}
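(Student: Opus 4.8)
The plan is to prove the existence of the critical element $u_c$ (Proposition \ref{P:critical2}) by the now-standard compactness-rigidity scheme of Kenig-Merle, using the profile decomposition of Proposition \ref{P:profile} and the Pythagorean expansions of Proposition \ref{L:Pyt} as the main analytic tools. First I would introduce the scattering threshold. For each admissible energy level, define
\begin{equation*}
E_c=\sup\left\{E\;:\;\forall u_0\in H^1\text{ with }E_\lambda(u_0)<E\text{ and }\|u_0\|_{\HHH_\lambda}\leq\|Q_\lambda\|_{\HHH_\lambda},\ u\text{ scatters both ways}\right\}.
\end{equation*}
By the small-data theory (Corollary \ref{C:small_data}) and Lemma \ref{L:control}, $E_c>0$ is well defined, and by Theorem \ref{T:global} we have $E_c\leq E_\lambda(Q_\lambda)$: indeed for initial data with $\|u_0\|_{\HHH_\lambda}\leq\|Q_\lambda\|_{\HHH_\lambda}$ and $E_\lambda(u_0)\leq E_\lambda(Q_\lambda)$, the variational inequality \eqref{var} traps the solution, and combined with the sub-threshold hypothesis the solution stays in the regime $\delta_\lambda<0$, so it is global. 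The goal is to produce a nonscattering solution $u_c$ with $E_\lambda(u_c(0))=E_c$ whose (modulated) trajectory is precompact in $H^1$.

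The key step is the construction. Choose a sequence $u_{0,k}$ of initial data with $E_\lambda(u_{0,k})\to E_c$, $\|u_{0,k}\|_{\HHH_\lambda}\leq\|Q_\lambda\|_{\HHH_\lambda}$, whose solutions $u_k$ do not scatter forward in time (so $\|u_k\|_{S^0(0,+\infty)}=\infty$). Apply Proposition \ref{P:profile} to extract a profile decomposition $(\varphi_j;(t_{j,k},h_{j,k})_k)_{j\geq1}$. The Pythagorean expansions \eqref{A16}-\eqref{A17} give, in the limit,
\begin{equation*}
\sum_{j\geq1}\|\varphi_j\|^2_{\HHH_\lambda}\leq\varliminf_k\|u_{0,k}\|^2_{\HHH_\lambda}\leq\|Q_\lambda\|^2_{\HHH_\lambda},\qquad \sum_{j\geq1}E_\lambda(\text{nonlinear data}_j)\leq E_c,
\end{equation*}
where one must be careful to rewrite the energy using the nonlinear profiles $U_j$ evaluated at $\tau_j$ via \eqref{A23} and \eqref{lim_t_exist}. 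Each profile inherits $\|\varphi_j\|_{\HHH_\lambda}\leq\|Q_\lambda\|_{\HHH_\lambda}$ and an energy at most $E_c$. By Proposition \ref{P:scatt_profiles}, if every nonlinear profile $U_j$ scattered forward, then $u_k$ would scatter for large $k$, a contradiction. Hence at least one profile, say $U_1$, does not scatter. I would then argue that all the mass/energy concentrates on this single profile: if a second profile were nontrivial, then by the almost-orthogonality in the expansions, $U_1$ would have energy strictly below $E_c$ while satisfying $\|U_1\|_{\HHH_\lambda}\leq\|Q_\lambda\|_{\HHH_\lambda}$, forcing $U_1$ to scatter by the definition of $E_c$—contradiction. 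Thus there is exactly one profile, the remainder $r_{J,k}$ vanishes, and $u_{0,k}\to e^{i\theta}U_1(\tau_1,h\cdot)$ strongly (after modulation) along a subsequence. Setting $u_c=U_1$ (time-translated so $\tau_1=0$) yields a nonscattering solution with $E_\lambda(u_c(0))=E_c\leq E_\lambda(Q_\lambda)$ and $\|u_c(0)\|_{\HHH_\lambda}\leq\|Q_\lambda\|_{\HHH_\lambda}$.

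The final step is compactness of the trajectory. For each $t_n$ in a sequence tending to a finite or infinite endpoint, apply the same profile-decomposition argument to the sequence $u_c(t_n)$: since $u_c$ does not scatter in a neighborhood of the endpoint, the single-profile rigidity applies again, producing a sequence $h(t_n)\in\GG$ such that $u_c(t_n,h(t_n)\cdot)$ converges strongly in $H^1$. A standard diagonal/continuity argument upgrades this to precompactness of $\{u_c(t,h(t)\cdot):t\in\RR\}$, and globality of $u_c$ follows from Theorem \ref{T:global}\eqref{I:negative} since $\delta_\lambda(u_c)\leq0$ on its trajectory. The radial statement (Theorem \ref{P:critical}) follows identically, invoking Remark \ref{R:radial} to keep all $h_{j,k}=\mathrm{Id}$ and all profiles radial, so that no modulation is needed and the trajectory itself is precompact.

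The main obstacle I anticipate is the bookkeeping in the energy Pythagorean expansion \eqref{A17}: because the nonlinear term involves $e^{-it_{j,k}\Delta}\varphi_j$ rather than $\varphi_j$, one must pass from the linear profiles to the nonlinear profiles $U_j(\tau_j)$ using \eqref{A23}, and handle the cases $\tau_j=\pm\infty$ (where the potential energy of the profile tends to zero and the data is defined through the wave operator, Proposition \ref{P:wave_op}). Ensuring that the single-profile reduction is airtight—that no energy leaks into a scattering profile or the remainder while $E_c$ is attained exactly—is the delicate heart of the argument, and requires simultaneously controlling the $\HHH_\lambda$-norm to stay within the trapping region \eqref{var} of Lemma \ref{L:control} so that every sub-threshold profile is genuinely forced to scatter.
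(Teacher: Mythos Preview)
Your approach is essentially the paper's: define a critical threshold, extract a minimizing non-scattering sequence, apply the profile decomposition and Pythagorean expansions to reduce to a single profile, and reapply the argument along the trajectory to get precompactness. The bookkeeping you flag (passing from linear to nonlinear profiles via \eqref{A23}, handling $\tau_j=\pm\infty$, showing the remainder also vanishes) is exactly what the paper does in Lemma~\ref{L:cv}.

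There is, however, one genuine omission. You take a sequence $u_{0,k}$ that fails to scatter \emph{forward}, and then assert in the compactness step that ``$u_c$ does not scatter in a neighborhood of the endpoint.'' But nothing you have written rules out that $u_c$ scatters \emph{backward}: your single-profile argument only used $\|u_k\|_{S^0(0,+\infty)}=\infty$. If $u_c$ scattered as $t\to-\infty$ to a nonzero linear state, the set $\{u_c(t,h(t)\cdot):t<0\}$ would not be precompact in $H^1$, and your compactness argument for sequences $t_n\to-\infty$ would break down (you could not re-run the single-profile reduction since $\|U(t)u_c(t_n)\|_{S^0(-\infty,0)}$ would stay bounded). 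The paper fixes this in Step~1 of the proof of Proposition~\ref{P:exist_crit}: starting from a non-scattering sequence $f_k$, one chooses times $t_k$ so that both $\|U(\cdot)f_k\|_{S^0(-\infty,t_k)}\to\infty$ and $\|U(\cdot)f_k\|_{S^0(t_k,+\infty)}\to\infty$, and replaces $f_k$ by $U(t_k)f_k$. Then the limiting $u_c$ fails to scatter in both directions, and the compactness argument goes through for all $t\in\RR$.

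A second, minor point: you should dispose of the case $E_c=E_\lambda(Q_\lambda)$ separately at the outset, taking $u_c(t)=e^{it\lambda}Q_\lambda$ (which is global, non-scattering, and has compact trajectory). The paper does this explicitly before assuming $\omega_0<1$.
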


We will use the compactness/rigidity method initiated in \cite{KeMe06}.

 We fix $\lambda<\frac{(n-1)^2}{4}$. 
%and that if $u\in \HHH$ is radial, and $v$ is the corresponding function on $\RR^n$ defined by $v(r)=\frac{\sh r}{r}u(r),$ then $v\in \dot{H}^1$ and
% $$E_1(u)=\frac 12\|v\|_{H^1}^2-\frac 1{p+1}\|v\|_{L^{p+1}(k(r)dr)}^{p+1}:=E(v).$$
Let $0<\omega\leq 1$. We introduce the following set: 
$$\mathcal K_\omega=\Big\{f\in H^1(\HH^N)\;:\; E_{\lambda}(f)\leq \omega E_{\lambda}(Q_{\lambda})\text{ and }\|f\|_{\HHH_{\lambda}}^2\leq \|Q_{\lambda}\|^2_{\HHH_{\lambda}}\Big\}.$$
(note that $\KKK_{\omega}$ also depends on $\lambda$, which will be fixed in all this subsection).

Theorem \ref{T:global} and Lemma \ref{L:control} yield the following facts. 
First, the set $\mathcal K_\omega$ is invariant with respect to the nonlinear evolution \eqref{NLS}. Second, if $u_0\in\mathcal K_\omega$, then its evolution through equation \eqref{NLS} is global in time. 
Third, if $u_0\in \mathcal K_\omega$, then $\|u_0\|_{\HHH_{\lambda}}\leq \omega^\frac 12 \|Q_{\lambda}\|_{{\HHH_{\lambda}}}$. Therefore, for $\omega$ small enough, starting with $u_0\in\mathcal K_\omega$ we obtain a scattering solution of \eqref{NLS}, in view of the small data theory (Corollary \ref{C:small_data}). We can then define 
$$\omega_0=\sup\Big\{0<\omega\leq 1, u_0\in \mathcal \KKK_\omega \Longrightarrow \mbox{ the solution }u\mbox{ of }\eqref{NLS}\mbox{ scatters in both time directions}\Big\}.$$
Since $u_{\lambda}=e^{it\lambda}Q_{\lambda}$ is a non scattering solution of \eqref{NLS} it follows that $\omega_0\leq 1$. Note that if $\omega_0=1$, then this solution is a critical element in the sense of Proposition \ref{P:critical}, and Proposition \ref{P:critical} follows. 

We shall now focus on the remaining cases $\omega_0<1$ and prove:
\begin{prop}
\label{P:exist_crit}
Let $\lambda$, $\omega_0$ and $\KKK_{\omega_0}$ be as above. Assume $\omega_0<1$. Then there exists a solution $u_c$ of \eqref{NLS} such that
\begin{gather*}
u_{c}(0)\in \KKK_{\omega_0}\\
\|u_c\|_{S^0(-\infty,0)}=\|u_c\|_{S^0(0,+\infty)}=+\infty,
\end{gather*}
and there exists a $h:\RR\to \GG$ such that 
$$K=\Big\{u_c(t,h(t)\cdot),\; t\in \RR\Big\}$$
has compact closure in $H^1(\HH^n)$.
\end{prop}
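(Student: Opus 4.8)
<br>

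The goal is to produce a critical element: a non-scattering solution whose trajectory is precompact in $H^1$ (up to isometries), sitting exactly at the threshold $\omega_0$. This is the standard Kenig–Merle concentration-compactness scheme, and the tools we need are all in place: the profile decomposition (Proposition \ref{P:profile}), the Pythagorean expansions (Proposition \ref{L:Pyt}), and the scattering-from-profiles statement (Proposition \ref{P:scatt_profiles}).

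\textbf{Plan of proof.} The plan is to first extract a minimizing sequence and then use the profile decomposition to show that a single profile captures all the ``mass'' at the critical level. First I would choose, by definition of $\omega_0$, a sequence of initial data $u_{0,k}\in \KKK_{\omega_k}$ with $\omega_k\downarrow \omega_0$ whose corresponding solutions $u_k$ do not scatter, say $\|u_k\|_{S^0(0,+\infty)}=+\infty$ (after possibly reflecting time). Passing to a subsequence, apply Proposition \ref{P:profile} to $(u_{0,k})_k$ to obtain profiles $(\varphi_j;(t_{j,k},h_{j,k}))_{j\ge 1}$. The Pythagorean expansions \eqref{A16}--\eqref{A17} then give, in the limit $k\to\infty$ and then $J\to\infty$, that the energies $E_\lambda$ and the $\HHH_\lambda$-norms split across the profiles (plus a vanishing remainder). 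Since each piece $\varphi_j$ has $\|\varphi_j\|_{\HHH_\lambda}^2\le \liminf\|u_{0,k}\|_{\HHH_\lambda}^2\le \|Q_\lambda\|_{\HHH_\lambda}^2$, Lemma \ref{L:control} forces each nonlinear profile $U_j$ to lie in $\KKK_{\omega_0}$ as well, and the energy splitting gives $\sum_j E_\lambda(U_j(0))\le \omega_0 E_\lambda(Q_\lambda)$ with all summands nonnegative (again by trapping).

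\textbf{The dichotomy.} The heart of the argument is to show exactly one profile is present and the remainder vanishes. Suppose there were two or more nonzero profiles, or one profile carrying strictly less than the full critical energy. Then each individual nonlinear profile $U_j$ would satisfy $E_\lambda(U_j(0))<\omega_0 E_\lambda(Q_\lambda)$ while staying in the trapped region $\|U_j(0)\|_{\HHH_\lambda}\le\|Q_\lambda\|_{\HHH_\lambda}$; by the very definition of $\omega_0$ each such $U_j$ would scatter. Proposition \ref{P:scatt_profiles} would then force $u_k$ itself to scatter for large $k$ with uniformly bounded $S^0$-norm, contradicting $\|u_k\|_{S^0(0,+\infty)}=+\infty$. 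Hence there is a single profile $\varphi_1$, the remainder satisfies $\lim_J\varlimsup_k(\|r_{J,k}\|_\Sigma+\|e^{it\Delta}r_{J,k}\|_{S^0})=0$ with in fact $r_{1,k}\to 0$ in $H^1$ (using the energy/norm equalities in the limit to upgrade weak to strong convergence), and $E_\lambda(U_1(0))=\omega_0E_\lambda(Q_\lambda)$. Setting $u_c=U_1$ (after translating so that the non-scattering happens at $t=0$, i.e. $\tau_1\in\RR$, which one checks must hold since otherwise $U_1$ would scatter in that direction) gives a solution in $\KKK_{\omega_0}$ with $\|u_c\|_{S^0}=+\infty$ in at least one time direction; a symmetric argument, or the observation that $u_c$ minimal in $\KKK_{\omega_0}$ cannot scatter in either direction, yields non-scattering in both.

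\textbf{Compactness of the orbit.} Finally I would prove precompactness of $K=\{u_c(t,h(t)\cdot)\}$ by a second application of the profile decomposition. Take any sequence of times $(s_k)$; apply Proposition \ref{P:profile} to $(u_c(s_k))_k$. If more than one profile appeared, or a nontrivial remainder, then the same scattering-versus-minimality dichotomy as above (now applied at the \emph{same} critical level $\omega_0$, using that $u_c(s_k)\in\KKK_{\omega_0}$) would produce a scattering contradiction: a strictly subcritical profile scatters, so $u_c$ would scatter from time $s_k$, impossible. Thus $(u_c(s_k))$ reduces to a single profile with vanishing remainder, giving, after translating by the isometries $h(s_k):=h_{1,k}^{-1}$ and using $r_{1,k}\to 0$ strongly, convergence of a subsequence of $u_c(s_k,h(s_k)\cdot)$ in $H^1$. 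This is exactly precompactness of $K$.

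\textbf{Main obstacle.} The step I expect to be most delicate is upgrading the weak convergence in \eqref{A13} to \emph{strong} $H^1$ convergence of the remainder $r_{1,k}\to 0$, and correctly handling the time-translation parameter $\tau_1$. The Pythagorean identities \eqref{A16}--\eqref{A17} are for $\HHH_\lambda$ and $L^{p+1}$; combining them with the energy equality at the critical level and the trapping inequality \eqref{var} of Lemma \ref{L:control} is what forces the remainder to vanish in norm, but one must verify that the inequalities saturate simultaneously. Keeping careful track of whether $\tau_1=\pm\infty$ (which would make $U_1$ scatter and contradict non-scattering) so as to conclude $\tau_1\in\RR$ and relocate the non-scattering to a finite time is the other point requiring care.
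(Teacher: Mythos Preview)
Your plan is correct and follows essentially the same Kenig--Merle scheme as the paper: profile decomposition on a minimizing sequence, Pythagorean splitting plus trapping to force a single profile with vanishing remainder, then a second pass for orbit precompactness. The paper packages the core convergence step as a separate lemma (Lemma~\ref{L:cv}) and applies it twice, but the content is the same.

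One organizational point worth noting: the paper handles the bilateral non-scattering and the finiteness of $\tau_1$ more cleanly than your sketch, by first time-translating the sequence so that \emph{both} $\|U(\cdot)f_k\|_{S^0(-\infty,0)}$ and $\|U(\cdot)f_k\|_{S^0(0,+\infty)}$ tend to infinity (this is always possible since $\|U(\cdot)f_k\|_{S^0(\RR)}=\infty$). With that in hand, $\tau_1=\pm\infty$ is ruled out symmetrically via the small-data theory, and the resulting $u_c$ automatically fails to scatter in both directions. Your ``symmetric argument, or minimality'' remark is pointing at the right issue, but as written you only secure forward non-scattering of the $u_k$, which rules out $\tau_1=+\infty$ but not $\tau_1=-\infty$; the preliminary time-shift is the cleanest fix, and you correctly flagged this as the delicate step.
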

Similarly, define $\widetilde{\KKK}_{\omega}$ as the subset of the elements of $\KKK_{\omega}$ that are radially symmetric, and define $\tilde{\omega}_{0}$ as $\omega_0$, replacing $\KKK_{\omega}$ by $\widetilde{\KKK}_{\omega}$. 
\begin{prop}
\label{P:exist_crit_rad}
Assume $\tilde{\omega}_{0}<1$.
Then there exists a radially symmetric solution $v_{c}$ of \eqref{NLS} such that
\begin{gather*}
v_{c}(0)\in \widetilde{\KKK}_{\tilde{\omega}_{0}}\\
\|v_{c}\|_{S^0(-\infty,0)}=\|v_c\|_{S^0(0,+\infty)}=+\infty,
\end{gather*}
and 
$$\widetilde{K}=\Big\{v_{c}(t),\; t\in \RR\Big\}$$
has compact closure in $H^1(\HH^n)$.
\end{prop}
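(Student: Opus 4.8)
The plan is to follow the Kenig--Merle compactness scheme, using the radial profile decomposition of Proposition \ref{P:profile}, the Pythagorean expansions of Proposition \ref{L:Pyt}, and the nonlinear profile machinery of Proposition \ref{P:scatt_profiles}. First I would produce a minimizing sequence. Since $\tilde{\omega}_{0}<1$, for each $k$ there is a radial datum $u_{0,k}\in \widetilde{\KKK}_{\tilde{\omega}_{0}+1/k}$ whose solution $u_k$ (global by Theorem \ref{T:global}) fails to scatter in at least one time direction, so $\|u_k\|_{S^0(\RR)}=+\infty$; after extraction and using the time-reversal symmetry $u(t)\mapsto \overline{u(-t)}$, which preserves $\widetilde{\KKK}_{\omega}$, I may assume $\|u_k\|_{S^0(0,+\infty)}=+\infty$. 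Because $E_\lambda(Q_{\lambda})>0$ by \eqref{Qp3} and no datum with $E_\lambda<\tilde{\omega}_{0}E_\lambda(Q_{\lambda})$ and $\|\cdot\|_{\HHH_\lambda}\le\|Q_{\lambda}\|_{\HHH_\lambda}$ can give a non-scattering solution, the trapping forces $E_\lambda(u_{0,k})\to \tilde{\omega}_{0}E_\lambda(Q_{\lambda})$ while $\|u_{0,k}\|_{\HHH_\lambda}\le \|Q_{\lambda}\|_{\HHH_\lambda}$; in particular $(u_{0,k})_k$ is bounded in $H^1$.

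Next I would apply Proposition \ref{P:profile} to $(u_{0,k})_k$, taking $h_{j,k}=\mathrm{Id}$ and all $\varphi_j$ radial by Remark \ref{R:radial}, and extract so that $t_{j,k}\to \tau_j\in[-\infty,+\infty]$. Combining \eqref{A16}, \eqref{A17}, the vanishing of $\|r_{J,k}\|_{L^{p+1}}$, the identity \eqref{A23} and the conservation of $E_\lambda$ along the nonlinear flow (using that $\|\cdot\|_{\HHH_\lambda}$ is preserved by $e^{it\Delta}$), one obtains the energy decomposition $E_\lambda(u_{0,k})=\sum_{j=1}^{J}E_\lambda(U_j)+\tfrac12\|r_{J,k}\|^2_{\HHH_\lambda}+o_k(1)$, where $U_j$ is the nonlinear profile attached to $\varphi_j$. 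Since \eqref{A16} gives $\sum_j\|\varphi_j\|^2_{\HHH_\lambda}\le \|Q_{\lambda}\|^2_{\HHH_\lambda}$, every piece $e^{it_{j,k}\Delta}\varphi_j$ obeys the $\HHH_\lambda$-constraint, so the inner estimate \eqref{bound_Em} of Lemma \ref{L:control} yields $E_\lambda(U_j)\ge \frac{p-1}{2(p+1)}\|\varphi_j\|^2_{\HHH_\lambda}\ge 0$, strictly positive whenever $\varphi_j\neq 0$. Hence the energies are nonnegative and $\sum_j E_\lambda(U_j)\le \tilde{\omega}_{0}E_\lambda(Q_{\lambda})$.

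The central step is the dichotomy reducing to a single profile. If all profiles vanished, then $\|e^{it\Delta}u_{0,k}\|_{S^0(\RR)}\to 0$ by \eqref{A15} and the small-data Corollary \ref{C:small_data} would make $u_k$ scatter, a contradiction. If there were at least two nonzero profiles, each would satisfy $E_\lambda(U_j)<\tilde{\omega}_{0}E_\lambda(Q_{\lambda})$ (strictly, since the nonnegative energies sum to at most $\tilde{\omega}_{0}E_\lambda(Q_{\lambda})$ and at least two are positive) together with $\|U_j\|_{\HHH_\lambda}\le\|Q_{\lambda}\|_{\HHH_\lambda}$, so $U_j\in\widetilde{\KKK}_{\omega'}$ for some $\omega'<\tilde{\omega}_{0}$ and, by the definition of $\tilde{\omega}_{0}$, $U_j$ scatters in both directions; Proposition \ref{P:scatt_profiles} would then force $u_k$ to scatter forward for large $k$, again a contradiction. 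Thus there is a unique profile $U_1$, the energy identity forces $E_\lambda(U_1)=\tilde{\omega}_{0}E_\lambda(Q_{\lambda})$ and $\|r_{1,k}\|_{\HHH_\lambda}\to 0$, hence $r_{1,k}\to 0$ in $H^1$ (the norms being equivalent). As $\|u_k\|_{S^0(0,+\infty)}=+\infty$, $U_1$ cannot scatter forward, which excludes $\tau_1=+\infty$; translating in time I set $v_c=U_1(\cdot+\tau_1)$, so $v_c(0)\in\widetilde{\KKK}_{\tilde{\omega}_{0}}$ and $u_{0,k}\to v_c(0)$ in $H^1$.

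Finally I would establish non-scattering in both directions and precompactness of $\widetilde{K}=\{v_c(t):t\in\RR\}$. Forward non-scattering of $v_c$ follows from $u_{0,k}\to v_c(0)$ and the long-time perturbation Proposition \ref{P:LTPT}, since a finite $\|v_c\|_{S^0(0,\infty)}$ would make $\|u_k\|_{S^0(0,\infty)}$ finite for large $k$. For precompactness I would rerun the one-profile analysis on an arbitrary time sequence: given $(s_n)_n$, a profile decomposition of $(v_c(s_n))_n$ must, by the same dichotomy, reduce to a single profile with finite $\tau$ and vanishing remainder (a second profile or an escaping profile would put $v_c$ strictly below threshold along that sequence and yield scattering), whence $v_c(s_n)$ converges in $H^1$ along a subsequence. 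Precompactness then gives two-sidedness: if $v_c$ scattered backward with asymptotic state $\psi$, then along $t_n\to-\infty$ one has $v_c(t_n)=e^{it_n\Delta}\psi+o_{H^1}(1)\rightharpoonup 0$, so the strong limit provided by precompactness is $0$, whereas $\|v_c(t_n)\|^2_{\HHH_\lambda}\to 2\tilde{\omega}_{0}E_\lambda(Q_{\lambda})>0$, a contradiction. The main obstacle is the profile step itself: proving the exact additivity of $E_\lambda$ along the decomposition together with its strict positivity on each nonzero profile, so that sub-threshold profiles scatter and Proposition \ref{P:scatt_profiles} eliminates all but one profile. This is precisely where the trapping Lemma \ref{L:control} and the equivalence of $\|\cdot\|_{\HHH_\lambda}$ with the $H^1$ norm are indispensable.
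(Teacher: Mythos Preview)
Your overall scheme is the same as the paper's (Kenig--Merle profile extraction in the radial class), but there is a genuine gap in the way you order the argument. You only arrange that the minimizing sequence satisfies $\|u_k\|_{S^0(0,+\infty)}=+\infty$, not that both forward and backward $S^0$ norms diverge. This makes your precompactness step circular.

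Concretely, in the precompactness step you take $(v_c(s_n))_n$, reduce to a single profile with parameter $t_{1,n}\to\tau_1$, and claim $\tau_1$ must be finite because ``an escaping profile would put $v_c$ strictly below threshold.'' This is not correct: the unique profile carries exactly the energy $\tilde{\omega}_0 E_\lambda(Q_\lambda)$, not less. The case $\tau_1=+\infty$ is indeed excluded because it forces forward scattering of $v_c(\cdot+s_n)$, contradicting $\|v_c\|_{S^0(s_n,\infty)}=\infty$. But the case $\tau_1=-\infty$ only yields backward scattering of $v_c(\cdot+s_n)$ (via small data on $(-\infty,0)$), and you have no information ruling that out: you have not yet shown $\|v_c\|_{S^0(-\infty,0)}=\infty$. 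You then try to deduce backward non-scattering \emph{from} precompactness, so the argument loops. (The same issue already appears in your construction of $v_c$: you write $v_c=U_1(\cdot+\tau_1)$ and $u_{0,k}\to v_c(0)$ in $H^1$, which is meaningless if $\tau_1=-\infty$, a case you have not excluded with only forward non-scattering.)

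The paper avoids this by a time-translation trick \emph{before} the profile step. Since $\|u_k\|_{S^0(0,+\infty)}=+\infty$, one picks $t_k$ with $\|u_k\|_{S^0(0,t_k)}\to\infty$; then $\|u_k(\cdot+t_k)\|_{S^0(-\infty,0)}\to\infty$ and $\|u_k(\cdot+t_k)\|_{S^0(0,+\infty)}=\|u_k\|_{S^0(t_k,\infty)}=\infty$. Replacing $u_{0,k}$ by $u_k(t_k)$ gives a minimizing sequence for which both time directions fail to scatter. The single-profile analysis (the radial Lemma \ref{L:cv}) then excludes both $\tau_1=+\infty$ and $\tau_1=-\infty$, producing a critical element $v_c$ with $\|v_c\|_{S^0(-\infty,0)}=\|v_c\|_{S^0(0,+\infty)}=\infty$ from the outset. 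Precompactness follows by rerunning the same lemma on $(v_c(s_n))_n$, where now both directions of non-scattering are available to kill both infinite-$\tau$ cases. Inserting this translation step fixes your proof; without it, the precompactness argument does not close.
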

Note that $\omega_0\leq \tilde{\omega}_{0}$.
We conjecture that if $p\geq 1+\frac{4}{n}$, $\omega_0=1$. We will show later (see Section \ref{S:rigidity}) that $\tilde{\omega}_{0}=1$ if $n=3$ and $p\geq \frac{7}{3}$, or if $n=2$ and $p\geq 3$. The proofs of Propositions \ref{P:exist_crit}  and \ref{P:exist_crit_rad} are by now standard. 
%Our aim is to show that $\omega_0=1$. For this we will argue by contradiction, assuming $\omega_0<1$ and constructing an element in $\mathcal K_{\omega_0}$ whose evolution via \eqref{NLS} does not scatter. 
We give the proof of Proposition \ref{P:exist_crit} for the sake of completeness. In view of Remark \ref{R:radial}, the proof of Proposition \ref{P:exist_crit_rad} is the same, assuming that all the functions are radial and taking off the isometries $h(t)$. 

We first prove a preliminary result. 
We denote by $U(t)$ the nonlinear evolution \eqref{NLS}: if $u_0\in H^1$, $u(t)=U(t)u_0$ is the unique solution of \eqref{NLS}, with maximal time of existence $I_{\max}(u_0)=(T_-(u_0),T_+(u_0))$.

\begin{lemma}\label{L:cv}
Assume $\lambda<\frac{(n-1)^2}{4}$.
Let $(f_k)_k$ be a sequence in $H^1(\HH^n)$ such that 
$$\underset{k\rightarrow\infty}{\overline{\lim}} E_{\lambda}(f_k)=\omega_0E_{\lambda}(Q_{\lambda}),  \quad\|f_k\|_{\HHH_{\lambda}}\leq \|Q_{\lambda}\|_{\mathcal H_{\lambda}}$$ 
and 
$$\|U(t)f_k\|_{S^0(T_-(f_k),0)}\overset{k\rightarrow\infty}{\longrightarrow}\infty,\quad \|U(t)f_k\|_{S(0,T_+(f_k))}\overset{k\rightarrow\infty}{\longrightarrow}\infty.$$ 
Then there exists a subsequence of $(f_k)_k$ (that we still denote by $(f_k)_k$), a sequence $(h_k)_k\in \mathbb G^{\NN}$, and $V\in H^1(\HH^n)$ with $E_{\lambda}(V)=\omega_0E_{\lambda}(Q_{\lambda}), \|V\|_{\HHH_{\lambda}}\leq \|Q_{\lambda}\|_{\mathcal H_{\lambda}}$, such that
$$\|f_k(h_k\cdot)-V\|_{H^1}\overset{k\rightarrow\infty}\longrightarrow 0.$$
\end{lemma}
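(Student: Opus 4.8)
The plan is to run the Kenig--Merle extraction, feeding $(f_k)_k$ into the profile decomposition of Proposition~\ref{P:profile} and controlling the profiles by the trapping Lemma~\ref{L:control}. First I would pass to a subsequence so that $\lim_k E_\lambda(f_k)=\omega_0E_\lambda(Q_\lambda)$, so that $(f_k)_k$ admits a profile decomposition $\big(\varphi_j;(t_{j,k},h_{j,k})_k\big)_{j\geq 1}$, and so that $t_{j,k}\to\tau_j\in[-\infty,+\infty]$ as in \eqref{lim_t_exist}; let $U_j$ be the nonlinear profiles defined by \eqref{A23}. Since $\|\cdot\|_{\HHH_\lambda}$ is invariant under the isometries $h_{j,k}$ and under the linear flow, the Pythagorean expansion \eqref{A16} gives $\sum_j\|\varphi_j\|_{\HHH_\lambda}^2+R\leq\|Q_\lambda\|_{\HHH_\lambda}^2$, where $R=\lim_J\lim_k\|r_{J,k}\|_{\HHH_\lambda}^2\geq 0$; in particular $\|\varphi_j\|_{\HHH_\lambda}\leq\|Q_\lambda\|_{\HHH_\lambda}$ for every $j$. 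Combining \eqref{A16}, \eqref{A17}, the vanishing of the $L^{p+1}$-mass of the remainder (established in the proof of \eqref{A17}), and the identity $E_\lambda(U_j)=\lim_k E_\lambda(e^{it_{j,k}\Delta}\varphi_j)$ coming from \eqref{A23}, I would obtain the energy expansion
\[
\omega_0E_\lambda(Q_\lambda)=\sum_{j\geq 1}E_\lambda(U_j)+\tfrac12 R .
\]

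Next I would check that every term is nonnegative. Applying \eqref{bound_Em} to $f=e^{it_{j,k}\Delta}\varphi_j$, whose $\HHH_\lambda$-norm equals $\|\varphi_j\|_{\HHH_\lambda}\leq\|Q_\lambda\|_{\HHH_\lambda}$, yields $E_\lambda(e^{it_{j,k}\Delta}\varphi_j)\geq\frac{p-1}{2(p+1)}\|\varphi_j\|_{\HHH_\lambda}^2\geq 0$, hence $E_\lambda(U_j)\geq\frac{p-1}{2(p+1)}\|\varphi_j\|_{\HHH_\lambda}^2\geq 0$. With $R\geq 0$ this forces $0\leq E_\lambda(U_j)\leq\omega_0E_\lambda(Q_\lambda)$ for each $j$. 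Moreover, whenever $E_\lambda(U_j)<\omega_0E_\lambda(Q_\lambda)$, inequality \eqref{var} applied to $U_j(t)$ gives the \emph{strict} bound $\|U_j(t)\|_{\HHH_\lambda}^2\leq\frac{E_\lambda(U_j)}{E_\lambda(Q_\lambda)}\|Q_\lambda\|_{\HHH_\lambda}^2<\omega_0\|Q_\lambda\|_{\HHH_\lambda}^2$ (the degenerate case $\|\varphi_j\|_{\HHH_\lambda}=\|Q_\lambda\|_{\HHH_\lambda}$ being excluded by the last assertion of Lemma~\ref{L:control}), so $U_j(t)$ lies in some $\KKK_{\omega'}$ with $\omega'<\omega_0$ and $U_j$ scatters in both time directions by the very definition of $\omega_0$ (using Corollary~\ref{C:small_data} for the profiles of small norm).

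The core is then a dichotomy. If $E_\lambda(U_j)<\omega_0E_\lambda(Q_\lambda)$ for \emph{all} $j$, then every $U_j$ scatters, so Proposition~\ref{P:scatt_profiles} (and its time-reversed version) bounds $\limsup_k\|u_k\|_{S^0(0,+\infty)}$ and $\limsup_k\|u_k\|_{S^0(-\infty,0)}$, where $u_k=U(t)f_k$, contradicting the hypotheses. Hence some profile, say $j=1$, has $E_\lambda(U_1)=\omega_0E_\lambda(Q_\lambda)$; since all terms are nonnegative this forces $R=0$ and $E_\lambda(U_j)=0$ for $j\geq 2$, whence $\varphi_j=0$ for $j\geq 2$ by the lower bound above. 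With a single nonzero profile and $R=0$, \eqref{A16} gives $\|r_{1,k}\|_{\HHH_\lambda}\to 0$, i.e. $f_k=e^{it_{1,k}\Delta}\varphi_1(h_{1,k}\cdot)+o_{H^1}(1)$ by equivalence of norms. Finally I would exclude $\tau_1=\pm\infty$: if $\tau_1=+\infty$ then $U_1$ scatters forward and Proposition~\ref{P:scatt_profiles} bounds $\|u_k\|_{S^0(0,+\infty)}$, contradicting the forward hypothesis, and $\tau_1=-\infty$ is excluded symmetrically. Thus $\tau_1\in\RR$, $e^{it_{1,k}\Delta}\varphi_1\to e^{i\tau_1\Delta}\varphi_1$ in $H^1$, and setting $h_k=h_{1,k}^{-1}$, $V=e^{i\tau_1\Delta}\varphi_1$ gives $\|f_k(h_k\cdot)-V\|_{H^1}\to 0$. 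Continuity of $E_\lambda$, $\|\cdot\|_{\HHH_\lambda}$ and their invariance under isometries then give $E_\lambda(V)=\omega_0E_\lambda(Q_\lambda)$ and $\|V\|_{\HHH_\lambda}\leq\|Q_\lambda\|_{\HHH_\lambda}$.

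The step I expect to be the main obstacle is the rigid ``all-but-one profile vanishes'' conclusion, which must simultaneously exploit the correct signs in the energy decomposition, the sharp trapping of each profile via \eqref{var}/\eqref{bound_Em}, and the minimality of $\omega_0$ through Proposition~\ref{P:scatt_profiles} in \emph{both} time directions, including the exclusion of the infinite time translations $\tau_1=\pm\infty$.
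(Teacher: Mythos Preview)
Your proposal is correct and follows essentially the same Kenig--Merle compactness scheme as the paper: profile decomposition, Pythagorean expansions \eqref{A16}--\eqref{A17}, nonnegativity of each profile's energy via the trapping \eqref{bound_Em}/\eqref{var}, and the dichotomy forcing a single profile with full energy via Proposition~\ref{P:scatt_profiles}. The only cosmetic difference is in excluding $\tau_1=\pm\infty$: the paper argues directly that $\|e^{it\Delta}f_k\|_{S^0(0,\infty)}\to 0$ and invokes the small-data Corollary~\ref{C:small_data}, whereas you observe that $\tau_1=+\infty$ makes $U_1$ scatter forward by construction and then reapply Proposition~\ref{P:scatt_profiles}; both routes are valid and equally short.
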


\begin{proof}
Extracting subsequences, we can assume by Proposition  \ref{P:profile} that the sequence $(f_k)_k$ has a profile decomposition 
$\left(\varphi_j;(t_{j,k},h_{j,k})_{k}\right)_{j\geq 1}$. By the Pythagorean expansion \eqref{A16} for large $k$, and since $\|f_k\|_{\HHH_{\lambda}}<\|Q_{\lambda}\|_{\mathcal H_{\lambda}}$, we obtain 
\begin{equation}
 \label{bound_phij}
 \forall j\geq 1,\quad \|\varphi_j\|^2_{\HHH_{\lambda}}\leq \|Q_{\lambda}\|_{\HHH_{\lambda}}^2.
\end{equation} 
By the Poincar\'e-Sobolev inequality and the value of its best constant \eqref{formule_Dp} we have
$$E_{\lambda}(\varphi_j)\geq \frac{\|\varphi_j\|_{\HHH_{\lambda}}}2^2\left(1-\frac{2}{p+1}\frac{\|\varphi_j\|_{\HHH_{\lambda}}^{p-1}}{\|Q_{\lambda}\|_{\HHH_{\lambda}}^{p-1}}\right),$$
so we get 
\begin{equation}
 \label{positive_energy}
\forall j,\quad \varphi_j\neq 0\Longrightarrow E_{\lambda}(\varphi_j)>0.
 \end{equation} 
 Combining the Pythagorean expansions \eqref{A16} and \eqref{A17} with the assumption $f_k\in \KKK_{\omega_0}$, we obtain that for all $J\geq 1$,
 \begin{equation}
  \label{exp_E_1}
 \frac{1}{2}\sum_{j=1}^J \|\varphi_j\|^2_{\HHH_{\lambda}}+\varlimsup_{k\to\infty} \left(\frac{1}{2}\|r_{J,k}\|^2_{\HHH_{\lambda}}-\sum_{j\geq 1}\frac{1}{p+1}\|e^{-it_{j,k}\Delta}\varphi_j\|_{L^{p+1}}^{p+1}\right)
 \leq \omega_0 E_{\lambda}(Q_{\lambda}).
 \end{equation} 
 Extracting again subsequences, we can assume that for all $j$, there exists a nonlinear profile $U_j$ associated to $(\varphi_j,(t_{j,k})_k)$ (see \S \ref{SS:NL_profile}). Using the conservation of the mass and energy for each of this nonlinear profiles, we can write \eqref{exp_E_1}:
\begin{equation}
  \label{exp_E_2}
 \sum_{j=1}^J E_{\lambda}(U_j) + \varlimsup_{k\to\infty} \left(\frac{1}{2}\|r_{J,k}\|^2_{\HHH_{\lambda}}-\sum_{j\geq J+1}\frac{1}{p+1}\|e^{-it_{j,k}\Delta}\varphi_j\|_{L^{p+1}}^{p+1}\right)\leq \omega_0 E_{\lambda}(Q_{\lambda}).
 \end{equation} 
Note that for large $J$,
$$\sum_{j\geq J+1}\|e^{-it_{j,k}\Delta}\varphi_j\|_{L^{p+1}}^{p+1}\leq C\sum_{j\geq J+1}\|\varphi_j\|_{H^1}^{p+1}$$
which is finite, independent of $k$ and goes to $0$ as $J\to\infty$. Furthermore, by \eqref{positive_energy}, $E_{\lambda}(U_j)$ is nonnegative (and positive if $\varphi_j\neq 0$).  
Thus \eqref{exp_E_2} implies:
\begin{equation*}
\sum_{j=1}^{+\infty} E_{\lambda}(U_j) \leq \omega_0 E_{\lambda}(Q_{\lambda}).
 \end{equation*} 
By \eqref{positive_energy}, if there are more than two indexes $j$ such that $\varphi_j\neq 0$, we obtain $E_{\lambda}(U_j)<\omega_0E_{\lambda}(Q_{\lambda})$ for all $j\geq 1$. In view of \eqref{bound_phij} and Theorem \ref{T:global}, we deduce that for all $j$, $U_j$ is globally defined and satifies $U_j(0)\in \KKK_{\omega}$ for some $\omega<\omega_0$. By the definition of $\omega_0$, all the nonlinear profiles $U_j$ scatter in both time directions. From Proposition \ref{P:scatt_profiles}, we deduce that $f_k$ scatters in both time directions and
\begin{equation}
\label{bound_fk_f0}
\varlimsup_{k\to\infty} \|U(t)f_k\|_{S^0(\RR)}<\infty,
\end{equation} 
which contradicts our assumptions.
Thus there is at most one nonzero profile, say $\varphi_1$, and $r_{J,k}=r_{1,k}$ for all $J\geq 1$. Going back to \eqref{exp_E_2}, we see that if 
$$\varlimsup_{k\to\infty}\|r_{1,k}\|_{H^1}>0,$$
then $E_{\lambda}(U_1)<\omega_0E_{\lambda}(Q_{\lambda})$. Arguing as before, we would obtain again that \eqref{bound_fk_f0} holds, a contradiction. Thus
$$\lim_{k\to\infty}\|r_{1,k}\|_{H^1}=0, \quad E_{\lambda}(U_1)=\omega_0E_{\lambda}(Q_{\lambda}).$$

Hence (letting $V=\varphi_1$, $t_{k}=-t_{1,k}$, $h_k=h_{1,k}^{-1}$),
$$\|f_k(h_k\cdot)-e^{-it_{k}\Delta}V\|_{H^1}\overset{k\rightarrow\infty}\longrightarrow 0.$$
It remains to prove that $t_k$ is bounded.

If $t_k\overset{k\rightarrow\infty}\longrightarrow-\infty$ then by using Strichartz and Sobolev estimates we get
$$\|e^{it\Delta} f_k\|_{S^0(0,\infty)}\leq \|e^{it\Delta} (f_k(h_k\cdot)-e^{-it_k\Delta}V)\|_{S^0(0,\infty)}+\|e^{it\Delta} V\|_{S^0(-t_k,\infty)}\overset{k\rightarrow\infty}\longrightarrow 0.$$
Corollary \ref{C:small_data} insures that for $k$ large, $U(t)f_k$ scatters forward in time in $H^1$, and its $S^0(0,\infty)$ norm is bounded from above by a constant independent of $k$. This contradicts the hypothesis, so the limit of $t_k$ cannot be $-\infty$. In the same manner the limit cannot be $\infty$. In conclusion the limit of the sequence $t_k$ is finite and we conclude by using the $H^1$ continuity of the free Schr\"odinger evolution. 
\end{proof}

% \begin{prop}\label{propcrit}
% There exists $v_{0c}$ satisfying the assumptions of the global result Theorem \ref{global}, such that the corresponding solution $v_c(t)=U(t)v_{0c}$ of \eqref{NLSeucl} does not scatter in $\dot H^1$, neither forward nor backward in time.  Moreover, $E(v_{0c})=\omega_0E_1(Q_p)$. 
% \end{prop}

\begin{proof}[Proof of Proposition \ref{P:exist_crit}]
\emph{Step 1.} Existence of $u_c$.

Since $\omega_0<1$, by its definition we obtain a sequence of numbers $\omega_k$ approaching $\omega_0$ with $\omega_0\leq\omega_k<1$ and a sequence of functions $f_k$ such that $E_{\lambda}(f_k)\leq \omega_kE_{\lambda}(Q_{\lambda})$, $\|f_k\|_{\HHH_{\lambda}}\leq \|Q_p\|_{\HHH_{\lambda}}$ whose global evolution $U(t)f_k$ through equation \eqref{NLS} satisfies
$$\|U(t)f_k\|_{S(\mathbb R)}=\infty.$$
There exists a sequence $t_k$ such that 
$$\|U(t-t_k)f_k\|_{S(-\infty,0)}\overset{k\rightarrow\infty}{\longrightarrow}\infty,\quad \|U(t-t_k)f_k\|_{S(0,\infty)}\overset{k\rightarrow\infty}{\longrightarrow}\infty.$$ 
To simplify notations, we denote by $f_k$ the translations in time $U(-t_k)f_k$. In view of the global in time result of Theorem \ref{T:global} we have $\|U(t)f_k\|_{\HHH_{\lambda}}<\|Q_{\lambda}\|_{\mathcal H_{\lambda}}$ for all $t\in\mathbb R$. We have 
$$\underset{k\rightarrow\infty}{\overline{\lim}} E_{\lambda}(f_k)\leq \omega_0E_{\lambda}(Q_{\lambda}),$$ 
and we claim that equality holds. Otherwise there exists $k$ such that $E_{\lambda}(f_k)<\omega_0E_{\lambda}(Q_{\lambda})$, and by definition of $\omega_0$ we obtain that $U(t)f_k$ scatters in both time directions, which is not true. 
Therefore we can apply Lemma \ref{L:cv} to conclude that there exists $u_{0c}\in H^1$ with $E_{\lambda}(u_{0c})=\omega_0E_{\lambda}(Q_{\lambda}),\|u_{0c}\|_{\HHH_{\lambda}}\leq \|Q_{\lambda}\|_{\mathcal H_{\lambda}}$ and a sequence $(h_k)\in \mathbb G^{\NN}$ such that
$$\|f_k(h_k\cdot)-u_{0c}\|_{H^1}\overset{k\rightarrow\infty}\longrightarrow 0.$$
By Proposition \ref{P:LTPT} applied to $U(t)f_k(h_k\cdot)$ and $U(t)u_{0c}$, and since
$$\lim_{k\to\infty}\|U(t)f_k\|_{S(0,\infty)}= \lim_{k\to\infty}\|U(t)f_k\|_{S(-\infty,0)}=\infty,$$
% 
% For $k$ large enough we can apply Proposition \ref{scattsmall} to conclude that $U(t)(f_n-v_{0c})$ scatters, and its $S(0,\infty)$ norm is upper-bounded independently on $n$. Since $\|U(t)f_n\|_{S(0,\infty)}\overset{n\rightarrow\infty}{\longrightarrow}\infty$ it follows that $\|U(t)v_{0c}\|_{S(0,\infty)}=\infty$. In the same manner we get that 
we obtain
$\|U(t)u_{0c}\|_{S(0,\infty)}=\|U(t)u_{0c}\|_{S(-\infty,0)}=\infty$. Thus $u_c=U(t)u_{0c}$ does not scatter in $H^1$ neither forward nor backward in time.
%Now we suppose that $\|v_{c}\|_{S(0,\infty)}<\infty$. Then, from Proposition \ref{scatt}  we obtain also $\|\nabla v_c\|_{N(0,\infty)}<\infty$. Therefore, for $n$ large enough we can apply Proposition \ref{propperturb} with $e=0, v_0=f_n,\tilde v_0=v_c(-t_n)$ to get that $U(t)f_n$ is uniformly bounded in $S(0,\infty)$, which is not true. Therefore  $\|v_c\|_{S(0,\infty)}=\infty$ so $v_c$ does not scatter in $\dot H^1$ forward in time. The same proof allows to conclude that $v_c$ does not scatter in $\dot H^1$ neither backward in time.

\emph{Step 2.} We show that there exists $h:\RR\to\mathbb G$ such that the set $\{u_c(t,h(t)\cdot), t\in\mathbb R\}$ has compact closure in $H^1$. 

By a standard lifting argument, it is sufficient to prove that for all sequence of times $(t_k)_k$, there exists a subsequence of $(t_k)_k$ (still denoted by $(t_k)_k$) and a sequence $(h_k)_k\in \mathbb G^{\NN}$ such that $(u(t_k,h_k\cdot))_k$ converges in $H^1$.

In view of Lemma \ref{L:control}, $u_{0c}$ satisfy the assumptions of the global existence result Theorem \ref{T:global}, so it follows that $\{u_c(t_k),k\in\mathbb N\}$ is a bounded set of $H^1$. Also, by the mass and energy conservations, $E_{\lambda}(u_c(t_k))=E_{\lambda}(u_{0c})=\omega_0E_{\lambda}(Q_{\lambda})$. From Step 1 we know that $U(t)u(t_k)$ does not scatter in $H^1$ neither forward nor backward in time. Then in view of Proposition \ref{P:scatt} we obtain that $\|U(t)u(t_k)\|_{S(0,\infty)}=\|U(t)u(t_k)\|_{S(-\infty,0)}=\infty$. Therefore we can apply Lemma \ref{L:cv} to obtain the existence of $V\in H^1$ and a sequence $(h_k)_k\in \mathbb G^{\NN}$ such that
$$\|u_c(t_k,h_k\cdot)-V\|_{H^1}\overset{k\rightarrow\infty}\longrightarrow 0.$$
This concludes the proof.
\end{proof}
\subsection{Mass-subcritical case}
\label{SS:mass_subscritical}
We conclude this section by proving Proposition \ref{P:stableGS}. We assume $n\geq 3$, $1<p<1+\frac{4}{n}$. By Proposition \ref{P:mass-sub}, there exists $\lambda<\frac{(n-1)^2}{4}$, $\alpha>0$ such that $Q_{\lambda}$ is a minimizer for \eqref{min2}. We will prove that $E_{\lambda}(v_c(0))<E_{\lambda}(Q_{\lambda})$ by contradiction, in the spirit of the proof of the stability of the orbital stability of the ground states by Cazenave and Lions \cite{CaLi82}. Assume 
\begin{equation*}
%\label{Elambda_contra}
E_{\lambda}(v_c(0))=E_{\lambda}(Q_{\lambda}). 
\end{equation*} 
For $\beta>0$, we will consider $u_{\beta}$, the solution of \eqref{NLS} with initial data $u_{\beta}(0)=\beta Q_{\lambda}$. Then 
$$ E_{\lambda}(u_{\beta}(0))=\frac{\beta^2}{2}\|Q_{\lambda}\|_{\HHH_{\lambda}}^2-\frac{\beta^{p+1}}{p+1}\|Q_{\lambda}\|_{L^{p+1}}^{p+1},$$
and thus (using the equality $\|Q_{\lambda}\|_{\HHH_{\lambda}}^2=\|Q_{\lambda}\|_{L^{p+1}}^{p+1}$),
$$ \beta<1\Longrightarrow E_{\lambda}(u_{\beta}(0))<E_{\lambda}(Q_{\lambda})=E_{\lambda}(v_c)\text{ and }\|u_{\beta}(0)\|_{\HHH_{\lambda}}<\|Q_{\lambda}\|_{\HHH_{\lambda}}.$$
By the definition of $v_c$, we deduce that the solution $u_{\beta}$ scatters in both time directions if $\beta<1$. In particular, $u_{\beta}$ is global and 
\begin{equation}
\label{dispersion}
\lim_{t\to\infty}\|u_{\beta}(t)\|_{L^{p+1}}=0. 
\end{equation} 
Furthermore, by Theorem \ref{T:global}, again if $\beta<1$, 
\begin{equation}
 \label{bound_u_beta}
\forall t\in \RR,\quad 
\|u_{\beta}(t)\|_{\HHH_{\lambda}}\leq \|Q_{\lambda}\|_{\HHH_{\lambda}},\quad \|u_{\beta}(t)\|_{L^{p+1}}\leq \|Q_{\lambda}\|_{L^{p+1}}
\end{equation} 
Let $k$ be an integer, and $\beta_k=1-2^{-k}$. By \eqref{dispersion}, there exists $t_k$ such that 
\begin{equation}
 \label{dispersion_bis}
\|u_{\beta_k}(t_k)\|_{L^{p+1}}\leq 2^{-k}.
\end{equation} 
Let
$$f_k =\frac{1}{\beta_k}u_{\beta_k}(t_k).$$
Then, by mass conservation,
$$\|f_k\|_{L^2}= \frac{1}{\beta_k}\|u_{\beta_k}(0)\|_{L^2}=\|Q_{\lambda}\|_{L^2}=\alpha.$$
By energy conservation,
$$ E(\beta_k f_k)=E(u_{\beta_k}(0))\overset{k\to\infty}{\longrightarrow} E(Q_{\lambda})=e(\alpha),$$
Thus, using also \eqref{bound_u_beta}
$$E(f_k)=\left(\frac{1}{2}-\frac{\beta_k^2}2\right)\|\nabla f_k\|^2_{L^2}-\left(\frac{1}{p+1}-\frac{\beta_k^{p+1}}{p+1}\right)\|f_k\|^{p+1}_{L^{p+1}}+E(\beta_kf_{k})\overset{k\to\infty}{\longrightarrow} E(Q_{\lambda}).$$
Finally, we have obtained that $(f_k)_k$ is a minimizing sequence for the minimization problem \eqref{min2}. By Proposition \ref{P:mass-sub}, $f_k$ converges (extracting subsequences if necessary) to a minimizer, a contradiction with \eqref{dispersion_bis}. \qed

\section{The rigidity argument}
\label{S:rigidity}

In this subsection we shall prove the following proposition, which, together with Proposition \ref{P:exist_crit_rad} will imply the Theorem \ref{T:main}, \eqref{I:scattering}. 
\begin{prop}\label{prop-rigidity}Let $n\in\{2, 3\}$, $1+\frac 4n\leq p<1+\frac 4{n-2}$ and $\lambda<\frac{(n-1)^2}4$. 
Let $u$ be a radial solution of \eqref{NLS} such that $\overline{\{u(t)\}}$ is a compact subset of $H^1_{rad}$. If $u_0$ is radial, $ E_\lambda(u_0)< E_\lambda(Q_{\lambda})$ and $\|u_0\|_{\mathcal H_\lambda}\leq \|Q_{\lambda}\|_{\mathcal H_\lambda}$, then $u\equiv 0$.
\end{prop}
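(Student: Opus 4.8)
The plan is to run the compactness--rigidity scheme of Kenig--Merle: assuming $u\not\equiv 0$, I combine the precompactness of $K:=\overline{\{u(t)\}}$ in $H^1_{rad}$ with a localized form of the virial identity \eqref{virial_intro}, the entire difficulty being concentrated in a uniform lower bound $G(u(t))\geq c_0>0$.

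\emph{Preliminary reductions.} Since $E_\lambda(u_0)<E_\lambda(Q_{\lambda})$ and $\|u_0\|_{\HHH_\lambda}\leq\|Q_{\lambda}\|_{\HHH_\lambda}$, Lemma \ref{L:control} forces the strict inequality $\|u_0\|_{\HHH_\lambda}<\|Q_{\lambda}\|_{\HHH_\lambda}$, i.e. $\delta_\lambda(u_0)<0$; by Theorem \ref{T:global} the solution is global and $\|u(t)\|_{\HHH_\lambda}<\|Q_{\lambda}\|_{\HHH_\lambda}$, with $E_\lambda(u(t))=E_\lambda(u_0)<E_\lambda(Q_{\lambda})$, for every $t$. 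Arguing by contradiction, I suppose $u\not\equiv 0$; mass conservation then gives $\|u(t)\|_{L^2}=\|u_0\|_{L^2}=:m>0$ for all $t$, so that $0\notin K$.

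\emph{The localized virial.} Fix $\Phi\in C^\infty([0,\infty))$ with $\Phi(s)=s^2$ for $s\leq 1$ and $\Phi',\Phi''$ bounded, put $\Phi_R(r)=R^2\Phi(r/R)$ and $z_R(t)=\int_{\HH^n}|u(t,x)|^2\Phi_R(r)\,d\mu(x)$. Then $|z_R(t)|\leq CR^2m^2$ and $|z_R'(t)|\leq CR\,m\sup_t\|u(t)\|_{H^1}$, both finite. Differentiating twice as in \eqref{virial_intro} gives $z_R''(t)=G_R(u(t))$, where $G_R$ is obtained from $G$ by replacing its weights with ones built from the derivatives of $\Phi_R$; these agree with the weights of $G$ on $\{r\leq R\}$, and on $\{r\geq R\}$ the negatively signed nonlinear weight is only decreased while the remaining weights stay bounded. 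Hence $z_R''(t)\geq G(u(t))-C\int_{r\geq R}\big(|\nabla_{\HH^n}u|^2+|u|^2\big)\,d\mu$, with $G(u(t))$ finite thanks to the uniform decay of the elements of the precompact set $K$, and precompactness of $K$ in $H^1$ makes this tail uniformly small in $t$ as $R\to\infty$.

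\emph{The crux: positivity of $G$, and conclusion.} I claim $G(f)>0$ for every radial $f\neq 0$ with $\delta_\lambda(f)<0$ and $E_\lambda(f)\leq E_\lambda(Q_{\lambda})$. Granting this, continuity of $G$ along with compactness of $K$ and $0\notin K$ yields $c_0:=\inf_t G(u(t))>0$; choosing $R$ so large that the tail above is $\leq c_0/2$ gives $z_R''(t)\geq c_0/2$ for all $t$, whence $z_R'(T)-z_R'(0)\geq \tfrac{c_0}{2}T\to+\infty$, contradicting the boundedness of $z_R'$, so that $u\equiv 0$. To prove the pointwise positivity---the one genuinely hard step, and the source of the restriction $n\in\{2,3\}$---I would argue by contradiction: if $G(f_0)\leq 0$ for some admissible $f_0$, I minimize $G$ over radial $f$ subject to $E_\lambda(f)=E_\lambda(Q_{\lambda})$ and $\|f\|_{\HHH_\lambda}\leq\|Q_{\lambda}\|_{\HHH_\lambda}$. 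The compactness of the radial embedding $H^1_{rad}\hookrightarrow L^{p+1}$ produces a minimizer with $G\leq 0$, solving an Euler--Lagrange equation with Lagrange multipliers associated to the two constraints. Testing this equation against the vector field generating the virial weight yields a generalized Pohozaev identity, and it is precisely the favorable sign of the geometric terms $2(n-1)(n-3)\int|f|^2\frac{r\cosh r-\sinh r}{\sinh^3 r}\,d\mu$ and of the weight $1+(n-1)\frac{r\cosh r}{\sinh r}$ available for $n=2,3$ (together with the lower bounds on $p$) that forces the minimizer to be, up to symmetries, a ground state, so that $\delta_\lambda=0$---contradicting $\delta_\lambda<0$. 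Verifying that the multipliers carry the correct sign and that the Pohozaev identity closes is where I expect essentially all of the work to lie.
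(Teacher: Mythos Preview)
Your strategy is exactly the paper's: localized virial plus a uniform lower bound $G(u(t))\geq c_0>0$, the latter obtained from a constrained minimization of $G$ whose hard case is resolved by a generalized Pohozaev identity. The localized-virial endgame, the use of precompactness of $K$ to kill the tail, and the identification of the Pohozaev step as the source of the restriction $n\in\{2,3\}$ are all on target.

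Two points in your sketch of the positivity lemma need adjustment to match what actually works. First, you minimize $G$ only on the slice $E_\lambda(f)=E_\lambda(Q_\lambda)$, but your hypothetical bad $f_0$ may lie strictly below; the paper instead minimizes over the full closed region $\{E_\lambda\leq E_\lambda(Q_\lambda),\ \|f\|_{\HHH_\lambda}\leq\|Q_\lambda\|_{\HHH_\lambda}\}$ and disposes of the interior by a one-line scaling argument ($\partial_\mu G(\mu f)|_{\mu=1}=0\Rightarrow G(f)>0$). Second, in the genuinely hard boundary case ($E_\lambda(f)=E_\lambda(Q_\lambda)$, $\|f\|_{\HHH_\lambda}<\|Q_\lambda\|_{\HHH_\lambda}$) there is only \emph{one} active constraint and hence a single multiplier $\mu$; the Pohozaev computation (against the radial field $r\sinh^{n-1}r$) does not show the minimizer is a ground state---it shows directly that $G(f)>0$, contradicting minimality. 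The sign analysis of $\mu$ splits: $\mu\geq 0$ is handled by pairing the Euler--Lagrange equation with $\bar f$, while $\mu<0$ is where the Pohozaev identity and the condition $n\leq 3$ are actually used. The upshot is that the infimum over the closed set equals $0$, attained only at $0$ and at $e^{i\theta}Q$, with minimizing sequences converging in $\HHH$ to one of these; combined with $K$ compact in $H^1$, $E_\lambda(u_0)<E_\lambda(Q_\lambda)$, and $0\notin K$, this yields your $c_0>0$.
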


In order to prove this proposition we shall need some additional information. We first recall the classical virial formula:
$$\partial_t^2\int_{\mathbb H^n}|u(t)|^2\,r^2 =G(u(t)),$$
where
\begin{multline}
\label{def_G}
G(f)=16E(f)+8\int_{\mathbb H^n}|\nabla_{\mathbb S^{n-1}} u(t)|^2\frac{r\cosh r-\sinh r}{\sinh r^3} 
\\ -\int_{\mathbb H^n}|u(t)|^2\Delta_{\mathbb H^n}^2\,r^2-\int_{\mathbb H^n}|u(t)|^{p+1}\left(\frac{2(p-1)}{p+1}\Delta_{\mathbb H^n}\,r^2-\frac{16}{p+1}\right),
\end{multline} 
where
$$\Delta_{\mathbb H^n} \,r^2\,=2+2(n-1)\,\frac{r\cosh r}{\sinh r},$$
$$\Delta_{\mathbb H^n}^2 \,r^2\,=2(n-1)^2-2(n-1)(n-3)\frac{r\cosh r-\sinh r}{\sinh^3r}$$
for radial functions, we have:
\begin{multline}
 \label{def_G_bis}
G(f)=8\|f\|_{\mathcal H}^2+2(n-1)(n-3)\int_{\mathbb H^n}|f|^2\frac{r\cosh r-\sinh r}{\sinh^3r}\,d\mu(x)\\
-\frac{4(p-1)}{p+1}\int_{\mathbb H^n}|f|^{p+1}\left(1+(n-1)\,\frac{r\cosh r}{\sinh r}\right)\,d\mu(x).
\end{multline} 
Note that the third term is well defined for $u\in H^1$, in view of the following lemma, that will be of use also later.

\begin{lemma}\label{comp-emb} Let $n\geq 2$  and $1< p<1+\frac 4{n-2}$. The space $H^1_{rad}(\mathbb H^n)$ is compactly embedded in $L^{p+1}(\mathbb H^n)$ and in $L^{p+1}\left((1+(n-1)\frac{r\cosh r}{\sinh r})dr, \mathbb H^n\right)$.
\end{lemma}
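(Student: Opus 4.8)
The plan is to prove a Strauss-type pointwise decay estimate for radial $H^1$ functions on $\HH^n$ and to combine it with the local Rellich–Kondrachov theorem. Since the weight $w(r)=1+(n-1)\frac{r\cosh r}{\sinh r}$ satisfies $w\geq 1$, we have $\|u\|_{L^{p+1}(d\mu)}\leq \|u\|_{L^{p+1}(w\,d\mu)}$, so it suffices to prove compactness of the embedding into the weighted space $L^{p+1}(w\,d\mu)$: a subsequence that is Cauchy in the weighted norm is automatically Cauchy in the unweighted one, whence the unweighted embedding is also compact. Thus I must show that every bounded sequence $(u_k)$ in $H^1_{rad}(\HH^n)$ has a subsequence converging in $L^{p+1}(w\,d\mu)$.

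First I would establish the decay estimate. In radial coordinates $x=(\cosh r,\sinh r\,\omega)$ one has $d\mu=(\sinh r)^{n-1}\,dr\,d\omega$ and, for radial $u$, $|\nabla_{\HH^n}u|=|u'(r)|$, so $\|u\|_{H^1}^2\approx \int_0^\infty(|u'|^2+|u|^2)(\sinh r)^{n-1}\,dr$. Setting $\phi(r)=|u(r)|^2(\sinh r)^{n-1}$, the function $\phi$ belongs to $L^1(dr)$ (it is $\|u\|_{L^2}^2$ up to a constant) and is absolutely continuous with $\phi(r)\to 0$ at infinity, so $\phi(r)=-\int_r^\infty \phi'(s)\,ds$. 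Differentiating and estimating,
\begin{equation*}
\phi(r)\leq \int_r^\infty 2|u'||u|(\sinh s)^{n-1}\,ds+(n-1)\int_r^\infty |u|^2(\sinh s)^{n-2}\cosh s\,ds .
\end{equation*}
By Cauchy–Schwarz the first integral is $\leq C\|u\|_{H^1}^2$, while for $s\geq 1$ one has $\cosh s\leq(\coth 1)\sinh s$, so the second integral is $\leq C\|u\|_{H^1}^2$ as well. Hence $|u(r)|^2(\sinh r)^{n-1}\leq C\|u\|_{H^1}^2$ for all $r\geq 1$, which after density from $C_c^\infty$ and the equivalence $\sinh r\sim e^r/2$ gives the exponential decay $|u(r)|\leq C\|u\|_{H^1}\,e^{-(n-1)r/2}$ for $r\geq 1$.

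With this in hand I would run the standard concentration argument. Passing to a subsequence, $u_k\rightharpoonup u$ weakly in $H^1$. On each geodesic ball $B_R=\{r\leq R\}$, a bounded smooth domain, the embedding $H^1(B_R)\hookrightarrow L^{p+1}(B_R)$ is compact because $p+1<\frac{2n}{n-2}$ — this is exactly where the hypothesis $p<1+\frac{4}{n-2}$ enters — so $u_k\to u$ strongly in $L^{p+1}(B_R)$, and since $w$ is continuous hence bounded on $B_R$, also in $L^{p+1}(B_R,w\,d\mu)$. For the tail, the decay estimate gives, for $r\geq\max(R,1)$,
\begin{equation*}
|u_k(r)|^{p+1}(\sinh r)^{n-1}=|u_k(r)|^{p-1}\,|u_k(r)|^2(\sinh r)^{n-1}\leq C\|u_k\|_{H^1}^{p+1}(\sinh r)^{-\frac{(n-1)(p-1)}{2}},
\end{equation*}
and since $w(r)\leq C(1+r)$, the tail $\int_{r>R}|u_k|^{p+1}w\,d\mu$ is bounded by $C\int_R^\infty(1+r)\,e^{-\frac{(n-1)(p-1)}{2}r}\,dr$, which tends to $0$ as $R\to\infty$ uniformly in $k$. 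Combining the uniform smallness of the tails with local strong convergence, a standard $\varepsilon/3$ argument shows that $(u_k)$ is Cauchy in $L^{p+1}(w\,d\mu)$, which is what we wanted.

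I expect no deep obstacle here: the crux is the pointwise radial decay estimate and the observation that the exponential volume growth of $\HH^n$ forces integrable (indeed exponential) decay of the tail for every $p>1$ and every $n\geq 2$. Consequently the subcriticality assumption $p<1+\frac{4}{n-2}$ is used only through the local Rellich–Kondrachov step on $B_R$, never through the tail, and the polynomially growing weight $w\sim(n-1)r$ is harmlessly absorbed by the exponential decay. This is the one point I would state carefully, to make transparent why the same range of $p$ as in the Euclidean radial embedding reappears despite the very different global geometry.
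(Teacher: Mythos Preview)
Your proof is correct but follows a different route from the paper's. The paper performs the change of function $v(r)=(\sinh r/r)^{(n-1)/2}u(r)$, which is an isometry on $L^2$ and turns the hyperbolic $H^1$ norm into the Euclidean one plus a bounded potential term; the two target spaces become $L^{p+1}(\RR^n)$ with the bounded, decaying weights $(r/\sinh r)^{(n-1)(p-1)/2}$ and $(1+(n-1)\tfrac{r\cosh r}{\sinh r})(r/\sinh r)^{(n-1)(p-1)/2}$, and compactness then follows in one line from the Strauss--Weinstein compact radial embedding $H^1_{rad}(\RR^n)\hookrightarrow L^{p+1}(\RR^n)$. You instead prove the Strauss-type pointwise bound $|u(r)|\leq C\|u\|_{H^1}(\sinh r)^{-(n-1)/2}$ directly on $\HH^n$ and combine it with local Rellich--Kondrachov on geodesic balls and a uniform tail estimate. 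Both arguments ultimately rest on the same mechanism---radial decay beats volume growth---but the paper's version is shorter by outsourcing that mechanism to the known Euclidean lemma, while yours is self-contained and makes transparent precisely where the subcriticality $p+1<2n/(n-2)$ enters (only in the local compactness step) and why the linearly growing weight is harmless against the exponential tail.
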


\begin{proof}
By the change of function 
$$v(r)=\left(\frac{\sinh r}{r}\right)^{\frac{n-1}{2}}u(r),$$
wee see that it is enough to show that $H^1_{rad}(\mathbb R^n)$ is compactly embedded in $L^{p+1}(w(r)dr)$ with $w(r)=\left(\frac{r}{\sinh r}\right)^{\frac{(p-1)(n-1)}2}$ for the first embedding result and in $L^{p+1}(\tilde{w}(r)dr)$ with $\tilde w(r)=(1+(n-1)\frac{r\cosh r}{\sinh r}) w(r)$ for the second one. This follows immediately from the compact embedding of $H^1_{rad}(\RR^n)$  into $L^{p+1}(\RR^n)$ (see \cite[Compactness Lemma p. 570]{We82}). 
\end{proof}

We shall use the following crucial lemma.
\begin{lemma}\label{virielbound1var}
Let $n\in\{2, 3\}$, $1+\frac 4n\leq p<1+\frac 4{n-2}$ and $\lambda<\frac{(n-1)^2}4$. Then 
$$\inf_{\substack{f\in{H^1_{rad}},\\  E_\lambda(f)\leq E_\lambda(Q_{\lambda}),\\ \|f\|_{\mathcal H_{\lambda}}\leq \|Q_{\lambda}\|_{\mathcal H_\lambda}}}G(f)=0,$$
and the minimizing sequences converge (after extraction) in $\HHH$ to the constant zero function or to $e^{i\theta}Q$ for some $\theta\in\mathbb R$, $Q\in \QQQ_{\lambda}$.
 \end{lemma}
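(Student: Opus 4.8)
The plan is to attack the constrained minimization directly: exhibit the value $0$, then rule out negative values of the infimum through a variational (Euler--Lagrange plus Pohozaev) analysis of a putative negative minimizer.

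\medskip
\noindent\emph{Upper bound and existence of a minimizer.} Since $G(0)=0$ and $0$ satisfies both constraints, the infimum is $\le 0$. To see that $G(Q_\lambda)=0$ as well, I would apply the virial identity $\partial_t^2\int|u|^2 r^2 = G(u)$ to the standing wave $u(t)=e^{-i\lambda t}Q_\lambda$: here $|u(t)|^2=Q_\lambda^2$ and $|\nabla u(t)|=|\nabla Q_\lambda|$ are $t$-independent, so the left side vanishes and $G(Q_\lambda)=0$, with $Q_\lambda$ meeting both constraints with equality. For existence, take a minimizing sequence $(f_k)$; as $\|f_k\|_{\mathcal H_\lambda}\le\|Q_\lambda\|_{\mathcal H_\lambda}$ it is bounded in $H^1_{rad}$, so up to extraction $f_k\rightharpoonup f$ weakly in $H^1$. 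By Lemma \ref{comp-emb} the convergence is strong in $L^{p+1}$ and in $L^{p+1}\big((1+(n-1)\tfrac{r\cosh r}{\sinh r})d\mu\big)$; the weighted $L^2$ term appearing when $n=2$ is also continuous along the sequence, because its weight $\frac{r\cosh r-\sinh r}{\sinh^2 r}$ is bounded and tends to $0$ as $r\to0$ and $r\to\infty$, yielding compactness of that weighted embedding. Weak lower semicontinuity of $\|\cdot\|_{\mathcal H}$, $\|\cdot\|_{\mathcal H_\lambda}$ together with these strong limits shows $f$ is admissible and $G(f)\le\liminf G(f_k)$, so $f$ is a minimizer.

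\medskip
\noindent\emph{Locating the active constraints.} Write $G=A-B$ with $A$ the quadratic part of \eqref{def_G_bis} and $B=\frac{4(p-1)}{p+1}\int|f|^{p+1}\big(1+(n-1)\tfrac{r\cosh r}{\sinh r}\big)\ge0$, and set $N(f)=\|f\|_{\mathcal H_\lambda}^2$, $P(f)=\|f\|_{L^{p+1}}^{p+1}$. On the constraint set Lemma \ref{L:control} forces $P(f)\le N(f)$, hence $\frac{d}{d\mu}E_\lambda(\mu f)\big|_{\mu=1}=N-P\ge0$. Suppose the infimum is $m<0$; then the minimizer $f^*\ne0$, so $B(f^*)>0$ and, since $A(f^*)<B(f^*)$ and $p>1$, $\frac{d}{d\mu}G(\mu f^*)\big|_{1}=2A-(p+1)B<0$. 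Thus $G$ strictly decreases along $\mu\mapsto\mu f^*$ at $\mu=1$, so minimality forces increasing $\mu$ to leave the constraint set. A short analysis of the admissibility of $\mu f^*$ for $\mu$ near $1$, according to whether $N(f^*)$ equals $P(f^*)$ or $\|Q_\lambda\|_{\mathcal H_\lambda}^2$, shows that either $\|f^*\|_{\mathcal H_\lambda}=\|Q_\lambda\|_{\mathcal H_\lambda}$ --- whence $f^*$ saturates Poincar\'e--Sobolev by \eqref{formule_Dp} and, being radial, equals $e^{i\theta}Q$ with $G(f^*)=0$, excluded --- or the \emph{energy} constraint is active, $E_\lambda(f^*)=E_\lambda(Q_\lambda)$, with $\|f^*\|_{\mathcal H_\lambda}<\|Q_\lambda\|_{\mathcal H_\lambda}$. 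It is precisely Lemma \ref{L:control} that makes these two outcomes exhaustive.

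\medskip
\noindent\emph{Euler--Lagrange, generalized Pohozaev, and the contradiction.} With a single active constraint, Lagrange multipliers give $G'(f^*)=\nu\,E_\lambda'(f^*)$ for some $\nu\in\RR$. Pairing with $f^*$ and using the $2$- and $(p+1)$-homogeneities yields the first identity $2A(f^*)-(p+1)B(f^*)=\nu\big(N(f^*)-P(f^*)\big)$. The second, \emph{generalized Pohozaev}, identity I would obtain by testing the same equation against the virial multiplier $\tfrac12\nabla(r^2)\cdot\nabla f^*=r\,\partial_r f^*$ --- the vector field underlying the very definition of $G$ --- and integrating by parts in the radial variable via $\Delta_{\HH^n}=\partial_r^2+(n-1)\tfrac{\cosh r}{\sinh r}\partial_r$ and the explicit weights, which reproduces $G(f^*)$ together with curvature corrections carrying $\Delta_{\HH^n}r^2$ and $\Delta_{\HH^n}^2 r^2$. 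Eliminating $\nu$ between the two identities and inserting $E_\lambda(f^*)=E_\lambda(Q_\lambda)=\frac{p-1}{2(p+1)}\|Q_\lambda\|_{\mathcal H_\lambda}^2$, the correction terms should acquire a definite favorable sign exactly when $n\in\{2,3\}$ and $p\ge1+\frac4n$, forcing $G(f^*)\ge0$ and contradicting $m<0$; hence $m=0$. I expect this Pohozaev computation and the sign analysis of its geometric terms --- the step where the restrictions $n\in\{2,3\}$ and $p\ge1+\frac4n$ are genuinely used --- to be the main obstacle.

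\medskip
\noindent\emph{Equality case and convergence in $\mathcal H$.} For any nonzero minimizer $f$ (so $G(f)=0$, i.e.\ $A(f)=B(f)$) one has $G(\mu f)=A(f)\,\mu^2(1-\mu^{p-1})<0$ for $\mu>1$, so admissibility of $\mu f$ must fail for $\mu>1$; rerunning the argument above (with the Pohozaev exclusion ruling out the case $\|f\|_{\mathcal H_\lambda}<\|Q_\lambda\|_{\mathcal H_\lambda}$) forces $\|f\|_{\mathcal H_\lambda}=\|Q_\lambda\|_{\mathcal H_\lambda}$, hence $f=e^{i\theta}Q$ with $Q\in\mathcal Q_\lambda$. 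Finally, for a minimizing sequence $f_k\rightharpoonup f$ as above, the weighted and nonlinear terms of $G$ converge strongly, so $G(f_k)\to0=G(f)$ gives $\|f_k\|_{\mathcal H}\to\|f\|_{\mathcal H}$; together with weak convergence in the Hilbert space $\mathcal H$ this upgrades to strong convergence $f_k\to f$ in $\mathcal H$, with $f$ equal to $0$ or to some $e^{i\theta}Q$, as claimed.
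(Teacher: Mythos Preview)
Your outline is correct and matches the paper's proof: compactness via Lemma~\ref{comp-emb} gives a minimizer, a case analysis on which constraints are active reduces to the situation $E_\lambda(f)=E_\lambda(Q_\lambda)$, $\|f\|_{\mathcal H_\lambda}<\|Q_\lambda\|_{\mathcal H_\lambda}$, and a Lagrange/Pohozaev argument there forces $G(f)>0$. Two small remarks on the step you flag as the obstacle: rather than eliminating the multiplier $\nu$ between two identities, the paper splits on its sign --- when $\nu\ge 0$ the identity from testing against $\bar f$ already gives $G(f)>0$ (since $N(f)-P(f)>0$ strictly here), and only for $\nu<0$ is the Pohozaev identity needed; for the latter the paper rewrites the Euler--Lagrange equation as $-\Delta f - g(r)f - h(r)|f|^{p-1}f=0$ with explicit $\nu$-dependent coefficients and tests against $\varphi\,\partial_r\bar f+\tfrac12(\partial_r\varphi)\bar f$ with $\varphi(r)=r\sinh^{n-1}r$, after which the sign analysis reduces to an elementary one-variable inequality where the restrictions $p\ge 1+\tfrac4n$ and $n\le 3$ enter. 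Also note your weight $\frac{r\cosh r-\sinh r}{\sinh^3 r}$ tends to $\tfrac13$ (not $0$) as $r\to 0$; compactness of that term follows instead from H\"older against the $L^{p+1}$ convergence.
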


\begin{proof}
We denote $m$ the infimum and we consider a minimizing sequence $f_k$,
$$G(f_k)\overset{k\to\infty}{\longrightarrow} m.$$ 
Since $(f_k)_k$ is bounded in $H^1$, we can suppose that (up to a subsequence) there exists a radial weak limit $f$ of $f_k$ in $H^1$,
$$f_k\xrightharpoonup{k\to\infty} f\text{ in }H^1.$$
Thus $f_k\rightharpoonup{}f$ in $\HHH$ also. As a consequence,
\begin{equation}\label{fatou}
\liminf_{k\to\infty} \|f_k\|_{\mathcal H_\lambda}\geq \|f\|_{\mathcal H_\lambda} \text{ and }\liminf_{k\to\infty} \|f_k\|_{\mathcal H}\geq \|f\|_{\mathcal H}.
\end{equation}
By Lemma \ref{comp-emb} we obtain that 
$$f_k\overset{k\to\infty}{\longrightarrow} f\text{ in }L^{p+1},$$
and
$$\int_{\mathbb H^n}|f_k(x)|^{p+1}\left(1+(n-1)\,\frac{r\cosh r}{\sinh r}\right)\overset{k\to\infty}{\longrightarrow}\int_{\mathbb H^n}|f(x)|^{p+1}\left(1+(n-1)\,\frac{r\cosh r}{\sinh r}\right).$$
Finally, by H\"older's inequality,
$$\int_{\mathbb H^n}|f_k(x)-f(x)|^2\frac{r\cosh r-\sinh r}{\sinh^3r}\leq \|f_k-f\|_{L^{p+1}}^2\left\|\frac{r\cosh r-\sinh r}{\sinh^3r}\right\|_{L^\frac{p+1}{p-1}}\overset{k\to\infty}{\longrightarrow}  0,$$
and in particular
$$\int_{\mathbb H^n}|f_k(x)|^2\frac{r\cosh r-\sinh r}{\sinh^3r}\overset{k\to\infty}{\longrightarrow} \int_{\mathbb H^n}|f(x)|^2\frac{r\cosh r-\sinh r}{\sinh^3r}.$$
In view of the expression of $G$ it follows that $-\infty<m$ and  
\begin{equation}
\label{lim_fk_H}
8\|f_k\|^2_\mathcal H \longrightarrow m-(G(f)-8\|f\|^2_\mathcal H). 
\end{equation} 
Then, by \eqref{fatou}, we obtain $G(f)\leq m$ and $E_\lambda(f)\leq E_\lambda(Q_{\lambda}), \|f\|_{\mathcal H_\lambda}\leq \|Q_{\lambda}\|_{\mathcal H_\lambda}$. Thus $G(f)=m$, so $f$ is a minimizer. We shall distinguish five cases.

\medskip

\noindent{\em{Case 0: $f=0$.}} In this case, $m=0$. By \eqref{lim_fk_H}, 
$$\lim_{k\to\infty}\|f_k\|_{\HHH}=0.$$

\medskip

\noindent {\em{Case 1: $f\neq 0$, $E_\lambda(f)<E_\lambda(Q_{\lambda})$ and $\|f\|_{\mathcal H_\lambda}<\|Q_{\lambda}\|_{\mathcal H_\lambda}$.}} 

\smallskip

The set of functions such that $ E_\lambda(f)<E_\lambda(Q_{\lambda})$ and $\|f\|_{\mathcal H_\lambda}<\|Q_{\lambda}\|_{\mathcal H_\lambda}$ is open in $H^1$, so
$$0=\partial_\mu G(\mu f)\vert_{\mu=1}=2G(f)-\frac{4(p^2-1)}{p+1}\int_{\mathbb H^n}|f(x)|^{p+1}\left(1+(n-1)\,\frac{r\cosh r}{\sinh r}\right),$$
and since $f$ is not identically zero it follows that
$$m=G(f)>0=G(0),$$
so this case is excluded.

\medskip

\noindent {\em{Case 2: $f\neq 0$, $E_\lambda(f)=E_\lambda(Q_{\lambda})$ and $\|f\|_{\mathcal H_\lambda}=\|Q_{\lambda}\|_{\mathcal H_\lambda}$.}} 

\smallskip

By Theorem \ref{T:minimizers} it follows that $f=e^{i\theta}Q$, for some $\theta\in\mathbb R$, $Q\in \QQQ_{\lambda}$. 
The function $e^{it\lambda}Q$ is a solution of \eqref{NLS}, so the virial formula yields
\begin{equation*}%\label{Qviriel}
m=G(f)=G(Q)=0.
\end{equation*}
Since $\|f_k\|_{\HHH_{\lambda}}\leq \|Q_{\lambda}\|_{\HHH_{\lambda}}=\|Q\|_{\HHH_{\lambda}}$ for all $k$, and $f_k$ converges weakly to $f=e^{i\theta}Q$, we deduce
$$\lim_{k\to\infty}\|f_k\|_{\HHH_{\lambda}}=\|f\|_{\HHH_{\lambda}}$$
thus $(f_k)_k$ converges strongly to $e^{i\theta}Q$ in $H^1$ (and thus in $\HHH$). 

\medskip

\noindent{\em{Case 3: $f\neq 0$, $E_\lambda(f)=E_\lambda(Q_{\lambda})$ and $\|f\|_{\mathcal H_\lambda}<\|Q_{\lambda}\|_{\mathcal H_\lambda}$. }}

\smallskip

Since $\{\|f\|_{\mathcal H_\lambda}<\|Q_{\lambda}\|_{\mathcal H_\lambda}\}$ is an open set, it follows that $m$ is a local minimum of $G(f)$ under the constraint $f\in\mathcal H_{rad}$ with $E_\lambda(f)=E_\lambda(Q_{\lambda})$. Indeed, to avoid that $f^*$ is an isolated point in the set $E_\lambda(f)=E_\lambda(Q_{\lambda})$ we might argue as follows. There exists locally a curve through $f^*$ that is in the set $E_\lambda(f)=E_\lambda(Q_{\lambda})$: otherwise $f^*$ is a local extremum for $E_\lambda(f)$, so $-\Delta f^*-\lambda f^*-|f^*|^{p-1}f^*=0,$ which contradicts, by using Poincar\'e-Sobolev inequalities,
$$\|f\|_{\mathcal H_\lambda}^2-\|f\|_{L^{p+1}}^{p+1}\geq \|f\|_{\mathcal H_\lambda}^2-\|f\|_{\mathcal H_\lambda}^{p+1}\frac{\|Q_{\lambda}\|_{L^{p+1}}^{p+1}}{\|Q_{\lambda}\|_{\mathcal H_\lambda}^{p+1}}=\|f\|_{\mathcal H_\lambda}^2\left(1-\frac{\|f\|_{\mathcal H_\lambda}^{p-1}}{\|Q_{\lambda}\|_{\mathcal H_\lambda}^{p-1}}\right)>0.$$

Therefore we obtain the existence of a Lagrange multiplier $\mu$ such that $f$ solves
\begin{multline}\label{L1}
16\left(-\Delta f-\frac{(n-1)^2}{4} f\right)+4(n-1)(n-3)\frac{r\cosh r-\sinh r}{\sinh^3r}f\\
-4(p-1)|f|^{p-1}f\left(1+(n-1)\,\frac{r\cosh r}{\sinh r}\right)
=\mu\left(-\Delta f-\lambda f-|f|^{p-1}f\right).
\end{multline}

If $\mu\geq 0$ we multiply with $\overline{f}$, integrate and obtain
\begin{equation}\label{L2}
2G(f)-\frac{4(p-1)^2}{p+1}\int |f|^{p+1}\left(1+(n-1)\,\frac{r\cosh r}{\sinh r}\right)
\end{equation}
$$=\mu(\|f\|_{\mathcal H_\lambda}^2-\|f\|_{L^{p+1}}^{p+1})>0.$$
It follows that
$$G(f)>0=G(0),$$
which contradicts the fact that $f$ is a minimizer.

If $\mu<0$ we note that the equation on $f$ is of type
$$-\Delta f-gf-h|f|^{p-1}f=0,$$
with explicit variable radial coefficients $g(r)$ and $h(r)$: 
$$g(r)=\frac{4(n-1)^2-\lambda\mu}{16-\mu}{\color{blue}-}\frac{4(n-1)(n-3)}{16-\mu}\frac{r\cosh r-\sinh r}{\sinh^3r},$$
$$h(r)=\frac{1}{16-\mu}\left(4(p-1)\left(1+(n-1)\,\frac{r\cosh r}{\sinh r}\right)-\mu\right).$$
Multiplying the equation by $\overline{\varphi \,\partial_rf+\frac {\partial_r\varphi}2 \,f}$, integrating from $0$ to infinity and taking the real part, we obtain by integration by parts that
$$0=\int_0^\infty|\partial_r f|^2\left(\partial_r\varphi-(n-1)\frac{\cosh r}{\sinh r}\varphi\right)+|f|^2\left(-\frac{\partial_r^3\varphi}4+\frac{n-1}{4}\partial_r\left(\frac{\cosh r}{\sinh r}\partial_r\varphi\right)+\frac12\partial_r g\varphi\right)$$
$$+\int_0^\infty|f|^{p+1}\left(-\frac{p-1}{2(p+1)}h\partial_r\varphi+\frac 1{p+1}\partial_r h\,\varphi\right).$$
We choose $\varphi(r)=r\sinh r^{n-1}$, so that
\begin{multline*}
\partial_r^3\varphi=2(n-1)^2\sinh^{n-1}r+(n-1)(2n-5)\sinh ^{n-3}r\\ -(n-1)(n-3)r\cosh r\sinh^{n-4}r
+(n-1)^2\partial_r\left(\frac{\cosh^2 r}{\sinh^2 r}\varphi\right),
\end{multline*}
\begin{equation*}
\partial_r\left(\frac{\cosh r}{\sinh r}\partial_r\varphi\right)=(n-1)\sinh^{n-1}r+(n-2)\sinh^{n-3}r+(n-1)\partial_r\left(\frac{\cosh^2 r}{\sinh^2 r}\varphi\right),
\end{equation*}
and get, using that $\|f\|^2_{\HHH}=\int_{\HH^n} |\partial_rf|^2d\mu-\frac{(n-1)^2}{4}\int_{\HH^n}|f|^2$,
\begin{multline*}
0=\|f\|_{\mathcal H}^2+\int_{\mathbb H^n}|f|^2
\left(\frac{(n-1)(n-3)}{4}\frac{r\cosh r-\sinh r}{\sinh^3 r}+\frac{r}{2}\partial_r g\right)\\
+\int_{\mathbb H^n}|f|^{p+1}\left(-\frac{p-1}{2(p+1)}h(1+(n-1)\,\frac{r\cosh r}{\sinh r})+\frac 1{p+1}r\partial_r h\right).
\end{multline*}
It follows then that 
$$G(f)=\int |f|^{p+1}\left(\frac{4(p-1)}{p+1}\left(1+(n-1)\,\frac{r\cosh r}{\sinh r}\right)(h-1)-\frac{8}{p+1}r\partial_r h\right)-4\int |f|^2\,r\partial_rg.$$
In order to obtain that $G(f)>0$ we want to have the coefficients of $|f|^{p+1}$ and of $|f|^2$  positive. 
The coefficient of $|f|^{p+1}$ is  
$$\frac{16(p-1)}{(16-\mu)(p+1)}\left(\frac{(n-1)r^2}{\sinh^2r}\left((p-1)(n-1)\cosh ^2 r+2\right)+2(n-1)(p-4)\frac{r\cosh r}{\sinh r}+p-5\right).$$
From the behavior near $r=0$ we see that $p\geq1+\frac 4n$ is a necessary condition for positivity. Moreover, since the coefficient of $p$ is positive, in order to show that the function is positive for $p\geq 1+\frac 4n$, it is enough to show it for $p=1+\frac 4n$, which is equivalent to
$$(2n-2) r^2\cosh^2 r + nr^2 - (3n-4) r\cosh r\sinh r - 2 \sinh^2 r \geq 0.$$
This function vanishes at $r=0$ and its first four derivatives are
$$(4n-4) r\cosh^2 r + (4n-4) r^2\cosh r\sinh r + 2n r - 3n \cosh r\sinh r - (3n-4) r(\cosh^2 r+\sinh^2 r),$$
$$(4n-4)\cosh^2r + 4nr\cosh r\sinh r + (4n-4)r^2(\cosh^2 r+\sinh^2 r)+2n+(-6n+4)(\cosh^2 r+\sinh^2 r),$$
$$(-12n+8)\cosh r\sinh r + (12n-8)r(\cosh^2 r+\sinh^2 r) + (16n-16)r^2\cosh r\sinh r,$$
$$(80n-64)r\cosh r\sinh r + (16n-16)r^2(\cosh^2r+\sinh^2 r).$$
All these derivatives vanish at $r=0$ and the fourth derivative is positive. Therefore we have the initial inequality for all $r\geq 0$ and all $n$, so
$$m=G(f)>-4\int |f|^2r\partial_rg.$$
Since
$$\partial_rg={\color{blue}-}\frac{4(n-1)(n-3)}{16-\mu}\frac{r\sinh^2 r-3r\cosh^2 r+3\cosh r\sinh r}{\sinh^4r},$$
its sign is given by $n-3$, so in particular, in dimensions $n\leq 3$ we obtain 
$$m=G(f)>0=G(0),$$
which contradicts the fact that $f$ is a minimizer. Therefore this case is excluded. 

\medskip

\noindent{\em{Case 4: $f\neq 0$, $E_\lambda(f)<E_\lambda(Q_{\lambda})$ and $\|
f\|_{\mathcal H_\lambda}=\|Q_{\lambda}\|_{\mathcal H_\lambda}$. }} This case is excluded by \eqref{var}. 

\medskip

Summarizing we have obtained that $m=0$, that the only minimizers are the constant zero function and $e^{i\theta}Q$, for some $Q\in \QQQ_{\lambda}$ and $\theta\in\mathbb R$, and that minimizing sequences tend in $\HHH$ to a minimizer. 
\end{proof}
We are now able to prove Proposition \ref{prop-rigidity}.
\begin{proof}
We suppose that $u_0$ is not the constant null function.

Recall here that Theorem \ref{T:global} insures us that if the initial data satisfies to $ E_\lambda(u_0)<E_\lambda(Q_{\lambda})$ and $\|u_0\|_{\mathcal H_\lambda}\leq \|Q_{\lambda}\|_{\mathcal H_\lambda}$, then these properties will be preserved in time. In view of Lemma \ref{virielbound1var},
$$\inf_{t}G(u(t)) \geq 0.$$
and equality holds if there is a sequence of times $(t_n)$ such that $G(u(t_n))\rightarrow 0$ and $u(t_n)$ tends in $\mathcal H$ to the constant zero function or to $e^{i\theta}Q$ for some $Q\in \QQQ_{\lambda}$, $\theta\in\mathbb R$. It follows that there exists $\delta_0>0$ such that 
\begin{equation}
\label{G_positive}
G(u(t))>\delta_0,\quad \forall t\in\mathbb R.
\end{equation}
Indeed, otherwise there is a sequence of times $(t_n)$ such that $G(u(t_n))\rightarrow 0$ and $u(t_n)$ tends to the constant zero function or  
to $e^{i\theta}Q$ for some $Q\in \QQQ_{\lambda}$, $\theta\in\mathbb R$, strongly in $\mathcal H$, and thus, by compactness of $\overline{\{u(t)\}}$ in $H^1_{rad}$, strongly in $H^1$. In particular, $E_\lambda(u_0)=E_\lambda(u(t_n))$ which tends to $0$ or to $E_\lambda(Q_{\lambda})$. The second case contradicts the hypothesis $ E_\lambda(u_0)< E_\lambda(Q_{\lambda})$. In the first case, $E_\lambda(u_0)=0$ and by using the variational inequality \eqref{var} this contradicts the fact that we have supposed that $u_0$ is not the null function.

Now we recall that the classical virial computation yields for radial functions: %(see \S \ref{section:blow-up}):
\begin{multline}
\label{virial_h}
\partial_t^2\int_{\mathbb H^n}|u(t)|^2\,h\\
=\int_{\mathbb H^n} \left(|\partial_ru|^2-\frac{(n-1)^2}4|u|^2\right) \,4\partial_r^2 h-2\frac{p-1}{p+1}|u|^{p+1}\Delta h+|u|^2((n-1)^2\partial_r^2h-\Delta^2 h). 
\end{multline}
Let $\varphi$ be a smooth positive decreasing radial function supported in $B(0,2)$, valued $1$ in $B(0,1)$. We shall use the above formula with the weight $h_R(r)=r^2\varphi\left(\frac{r}{R}\right)$ and $R\geq 1$. Note that when all derivatives fall on $r^2$, then we recover $G(u(t))$ with the weight $\varphi\left(\frac{r}{R}\right)$. Otherwise at least one derivative in space falls on $\varphi\left(\frac{r}{R}\right)$, so the integral is restricted to the region $R\leq r\leq 2R$.  More precisely,
$$\partial_r^2 h_R=(\partial_r^2 r^2)\varphi\left(\frac{r}{R}\right)+\frac{4r}{R}\varphi'\left(\frac{r}{R}\right)+\frac{r^2}{R^2}\varphi''\left(\frac{r}{R}\right), $$
$$\Delta h_R=(\Delta r^2)\varphi\left(\frac{r}{R}\right)+(n-1)\frac{\cosh r}{\sinh r}\frac{r^2}{R}\varphi'\left(\frac{r}{R}\right)+\frac{4r}{R}\varphi'\left(\frac{r}{R}\right)+\frac{r^2}{R^2}\varphi''\left(\frac{r}{R}\right), $$
and similar computations show that
$$\left|((n-1)^2\partial_r^2h_R-\Delta^2 h_R)-\left((n-1)^2(\partial_r^2r^2) \varphi\left(\frac{r}{R}\right)-(\Delta^2 r^2)\varphi\left(\frac{r}{R}\right)\right)\right|\leq\frac{C}{r}1_{R\leq r\leq 2R}.$$ 
Therefore we obtain, using also the fact that $\varphi'\leq 0$,
\begin{multline*}
\partial_t^2\int_{\mathbb H^n}|u(t)|^2\,h_R=\partial_t\left(4\Im\int_{\mathbb H^n}u(t)\nabla\overline{u}(t)\nabla h_R\right)\\
\geq \Big(G(u(t))-C\int_{\HH^n\cap\{|x|\geq R\}}\left(|\nabla u(t,x)|^2+|u(t,x)|^2+|u|^{p+1}\right).
\end{multline*}
Therefore for $R$ large enough, we get, using the compactness of $\overline{\{u(t)\}}$ and \eqref{G_positive} that
$$\partial_t\left(4\Im\int_{\mathbb H^n}u(t)\nabla\overline{u}(t)\nabla h_R\right)\geq \frac{\delta_0}2.$$
Integrating in time, and using Cauchy-Schwarz inequality and then Hardy's inequality as above, we get
$$t\frac{\delta_0}2\leq C(1+R^2)\sup_{\tau\in(0,t)}\left(\left\|u(\tau)\right\|_{L^2(r\leq 2R)}^2+\|\nabla u(\tau)\|_{L^2(r\leq 2R)}^2\right)\leq C(R,\lambda) \sup_{\tau\in(0,t)}\|u(\tau)\|_{\mathcal H_\lambda}.$$
Therefore, we obtain a contradiction by letting $t$ go to infinity, and the Proposition follows. 

\end{proof}

\section{Blow-up}\label{section:blow-up}

\subsection{Previous blow-up results on hyperbolic space}
In this section we recall the known results on blow-up for equation \eqref{NLS}. These results are based on the method of Glassey \cite{Gl77,VlPeTa71}. 
% By explicit computation, if $u$ is a solution of \eqref{NLS}, we have the following virial formula:
% $$\partial_t^2\int_{\mathbb H^n}|u(t)|^2\,h =\int_{\mathbb H^n} 2|\nabla u|^2 \Delta h-4\Re \Delta u\nabla\overline{u}\nabla h-|u|^2\Delta^2 h-2\frac{p-1}{p+1}|u|^{p+1}\Delta h,$$
If $u$ is a general (not necessarily radial) solution of \eqref{NLS}, and $h$ a radial weight, we have the following virial identity which generalizes \eqref{virial_h}:
$$\partial_t^2\int_{\mathbb H^n}|u(t)|^2\,h =\int_{\mathbb H^n} 4|\partial_ru|^2\partial_r^2 h +4\frac{|\nabla_{\mathbb S^{n-1}} u|^2}{\sinh^2 r}\partial_r h\frac{\cosh r}{\sinh r}-|u|^2\Delta^2 h-2\frac{p-1}{p+1}|u|^{p+1}\Delta h.$$

\begin{prop} \label{P:B-up}
Blow-up occurs in the following cases:
\begin{enumerate}
\item \label{I:bupi} \cite{Ba07} If $u_0$ is radial, of finite variance, $p\geq 1+\frac 4n$ and 
$$E(u_0)<\begin{cases}\frac{(n-1)^2}{8} \|u_0\|_{L^2}^2&\text{ if }n=2\text{ or }n=3\\
\frac{n(n-1)}{12}\|u_0\|_{L^2}^2&\text{ if }n\geq 4.
         \end{cases}$$
\item\label{I:bupii}\cite{MaZh07} If $u_0$ is of finite variance, not necessarily radial, $p\geq 1+\frac 4{n-1}$ and $E(u_0)<0$.
\end{enumerate}
\end{prop}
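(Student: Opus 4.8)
The plan is to run Glassey's convexity argument on the (possibly truncated) variance $y(t)=\int_{\HH^n}|u(t)|^2\,h\,d\mu$ for a suitable radial weight $h$, showing in each case that $y''(t)$ stays below a strictly negative constant fixed by the conserved mass and energy. Since $y\ge 0$ and finite variance is propagated, this forces $y$ to become negative in finite time, hence $T_\pm(u)<\infty$.

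For case \eqref{I:bupi} (radial) I take $h=r^2$, so that $y''(t)=G(u(t))$ with $G$ as in \eqref{def_G_bis} (the angular gradient term drops since $\nabla_{\mathbb S^{n-1}}u=0$). Using the energy identity $8\int|\partial_r u|^2=16E(u)+\tfrac{16}{p+1}\int|u|^{p+1}$ together with $\|\cdot\|_{\mathcal H}$ from \eqref{def_norm}, I rewrite
\[
G(u)=16E(u)-2(n-1)^2\|u\|_{L^2}^2+2(n-1)(n-3)\int|u|^2\,\psi(r)+\frac{4}{p+1}\int|u|^{p+1}\Big[(5-p)-(p-1)(n-1)\tfrac{r\cosh r}{\sinh r}\Big],
\]
where $\psi(r)=\tfrac{r\cosh r-\sinh r}{\sinh^3 r}$. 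Because $\tfrac{r\cosh r}{\sinh r}\ge 1$, the bracket in the nonlinear integral is $\le (5-p)-(p-1)(n-1)$, which is $\le 0$ exactly when $p\ge 1+\tfrac4n$; so that term is nonpositive and disappears from the estimate. For the curvature term I use $(n-1)(n-3)\le 0$ when $n\in\{2,3\}$, and the bound $0<\psi\le\tfrac13$ (with supremum reached as $r\to 0$) when $n\ge 4$. This gives $G(u)\le 16E(u)-c_n\|u\|_{L^2}^2$ with $c_n=2(n-1)^2$ for $n\le 3$ and $c_n=\tfrac{4n(n-1)}{3}$ for $n\ge 4$; by conservation of $E$ and $M$ the right-hand side is a strictly negative constant precisely under the stated thresholds $E(u_0)<\tfrac{(n-1)^2}{8}\|u_0\|_{L^2}^2$ ($n\le 3$), resp. $E(u_0)<\tfrac{n(n-1)}{12}\|u_0\|_{L^2}^2$ ($n\ge 4$).

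For case \eqref{I:bupii} (general $u$, $E(u_0)<0$) I would follow \cite{MaZh07}. The weight $r^2$ no longer works: in the non-radial virial identity the angular contribution $4\tfrac{|\nabla_{\mathbb S^{n-1}}u|^2}{\sinh^2 r}\tfrac{\cosh r}{\sinh r}\partial_r h$ carries the unbounded factor $\tfrac{r\cosh r}{\sinh r}$. Instead I replace $r^2$ by a radial weight $h$ with bounded first derivative adapted to the metric, chosen so that both $\partial_r^2 h$ and $\tfrac{\cosh r}{\sinh r}\partial_r h$ stay bounded; this lets the gradient contribution be dominated by $8\int|\nabla u|^2=16E+\tfrac{16}{p+1}\int|u|^{p+1}$. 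The factor $\tfrac{\cosh r}{\sinh r}\ge 1$ sitting in the angular and potential coefficients is exactly what shifts the borderline exponent from $1+\tfrac4n$ to $1+\tfrac4{n-1}$: once $p\ge 1+\tfrac4{n-1}$ the nonlinear term is again nonpositive, and for $E(u_0)<0$ one obtains $y''(t)\le 16E(u_0)<0$.

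The only genuinely delicate point is the lower-order curvature correction coming from $\Delta^2 h$, and — in case \eqref{I:bupii} — the precise choice of $h$ guaranteeing that this correction does not reintroduce a positive multiple of $\|u_0\|_{L^2}^2$ (which would degrade the hypothesis $E(u_0)<0$ into a mass–energy comparison). I expect this, and the verification of the sign/boundedness of $\psi$ and of the coefficient of $|u|^{p+1}$, to be the main work; the compact-embedding estimates of Lemma \ref{comp-emb} make the relevant integrals finite. The remaining rigor — justifying the two time differentiations of $y$ and the propagation of finite variance — is standard and is obtained exactly as in Section \ref{S:rigidity}, by first working with the truncated weight $h_R(r)=r^2\varphi(r/R)$ and letting $R\to\infty$; the contradiction then follows from $0\le y(t)\le y(0)+y'(0)t-\tfrac{c}{2}t^2$ for a constant $c>0$.
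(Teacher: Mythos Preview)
Your argument is correct and follows essentially the same route as the paper (which in turn just sketches the proofs from \cite{Ba07,MaZh07}): virial with $h=r^2$ for \eqref{I:bupi}, the bounds $0<\psi(r)\le\tfrac13$ on the curvature factor, and the sign analysis of the $|u|^{p+1}$ coefficient giving the threshold $p\ge 1+\tfrac4n$; your constants $c_n$ match exactly the paper's computation of $\inf_{r>0}\Delta_{\HH^n}^2 r^2$. For \eqref{I:bupii} the paper is slightly more explicit than you are: the weight used in \cite{MaZh07} is $h(r)=\int_0^r\int_0^s\sinh^{n-1}\tau\,d\tau\,\tfrac{ds}{\sinh^{n-1}s}$, chosen so that $\Delta_{\HH^n}h\equiv 1$, which is precisely what makes the ``lower-order curvature correction'' you flag disappear cleanly and forces the nonlinear exponent to shift to $1+\tfrac{4}{n-1}$.
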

We refer to \cite{Ba07,MaZh07} for the proofs. Let us mention that both proofs are based on the  preceding virial identity, with $h(r)=r^2$ for \eqref{I:bupi}, and with $h(r)=\int_0^r\int_0^s\sinh^{n-1}\tau d\tau\frac{ds}{\sinh^{n-1}s}$, that satisfies $\Delta h=1$, for \eqref{I:bupii}.

In \cite{Ba07}, the blow-up sufficient condition is stated as:
$$E(u_0)<\inf_{r>0} \frac{\Delta^2_{\HH^n}r^2}{16}\|u_0\|_{L^2}^2.$$
Condition \eqref{I:bupi} follows, since
$$
\Delta_{\mathbb H^n}^2 \,r^2\,=2(n-1)^2-2(n-1)(n-3)\frac{r\cosh r-\sinh r}{\sinh^3r}$$
and
$$ \sup_{r>0} \frac{r\cosh r-\sinh r}{(\sinh r)^3}=\frac{1}{3}, \quad \inf_{r>0} \frac{r\cosh r-\sinh r}{(\sinh r)^3}=0.$$
\begin{rema}
By Proposition \ref{P:B-up} \eqref{I:bupi}, radial solutions with positive energy, small with respect to the $L^2$ norm always blow up, which seems better than the analoguous blow-up sufficient condition in the Euclidean setting. However, it is more natural to write this in term of the following conserved modified energy:
$$ E_m(u(t))=\frac{1}{2}\|u(t)\|^2_{\HHH}-\frac{1}{p+1}\|u(t)\|_{L^{p+1}}^{p+1}=E(u(t))-\frac{(n-1)^2}{8}\|u(t)\|^2_{L^2},$$
which takes into account the fact that the bottom of the spectrum of $-\Delta_{\HH^n}$ is $\frac{(n-1)^2}{4}$. 
The blow-up sufficient condition of Proposition \ref{P:B-up} \eqref{I:bupi} can be rewritten as
$$E_m(u_0)<\begin{cases}0&\text{ if }n=2\text{ or }n=3\\
-\frac{(n-3)(n-1)}{24}\|u_0\|_{L^2}^2&\text{ if }n\geq 4.
         \end{cases}$$
In dimension $n=2$ and $n=3$, we find a negative energy criterion similar to Glassey's criterion in the Euclidean setting. In higher dimension, we can only show that a stronger condition implies 	blow-up. Technically this is due to the term 
$$(n-1)(n-3)\int_{\HH^n} \frac{r\cosh r-\sinh r}{\sinh^3r}|u|^2$$
which has a bad sign in the virial identity, in dimension $n\geq 4$. Note that the assumption $n\in \{2,3\}$ in Theorem \ref{T:main} comes from the same technical problem (see e.g. the proof of Lemma \ref{virielbound1var}, Case 3).
\end{rema}

\subsection{Blow-up criterion in the finite variance case} 
In the particular case $h=r^2$, we can write the virial formula as
\begin{equation}
\label{virial_r2}
\partial_t^2\int_{\mathbb H^n}|u(t)|^2\,r^2 =G(u(t)),
\end{equation} 
with $G(f)$ defined in \eqref{def_G}.
In this section we obtain Theorem \ref{T:main}, \eqref{I:blow-up} in the finite variance case as a consequence of the following proposition. 
\begin{prop}\label{P:virielbound1bis}
Let $n\in\{2,3\}, p\geq  1+\frac 4n$ and $\lambda<\frac{(n-1)^2}4$. Let $u$ be a radial solution of \eqref{NLS} with $u_0$ in $H^1$. Then, if $ E_\lambda(u_0)\leq E_\lambda(Q_{\lambda})$ and $\|u_0\|_{\mathcal H_\lambda}> \|Q_{\lambda}\|_{\mathcal H_\lambda}$, 
$$\sup_{t}G(u(t)) \leq -16\left(E_{\lambda}(Q_{\lambda})-E_{\lambda}(u_0)\right).$$
 \end{prop}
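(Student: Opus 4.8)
The goal is to bound $G(u(t))$ from above by a strictly negative constant, which (combined with the virial identity \eqref{virial_r2}) will force finite-time blow-up. The key observation is that the assumptions $E_\lambda(u_0)\leq E_\lambda(Q_\lambda)$ and $\|u_0\|_{\mathcal H_\lambda}>\|Q_\lambda\|_{\mathcal H_\lambda}$ are the mirror image of those in Lemma \ref{virielbound1var}: there we had $\|f\|_{\mathcal H_\lambda}\leq\|Q_\lambda\|_{\mathcal H_\lambda}$ and $G\geq 0$; here the sharp inequality goes the other way and we expect $G<0$. Crucially, by the trapping statement of Theorem \ref{T:global}, the condition $\delta_\lambda(u_0)>0$ (i.e. $\|u_0\|_{\mathcal H_\lambda}>\|Q_\lambda\|_{\mathcal H_\lambda}$) is preserved in time, so $\|u(t)\|_{\mathcal H_\lambda}>\|Q_\lambda\|_{\mathcal H_\lambda}$ for all $t$ in the maximal interval.

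**Reducing to a variational inequality.** The plan is to prove the pointwise-in-time estimate by studying the analogous constrained minimization problem, but now on the complementary region. I would introduce the quantity $d=E_\lambda(Q_\lambda)-E_\lambda(u_0)\geq 0$ and seek to show
\begin{equation*}
G(f)\leq -16\,\big(E_\lambda(Q_\lambda)-E_\lambda(f)\big)
\end{equation*}
for every radial $f$ with $E_\lambda(f)\leq E_\lambda(Q_\lambda)$ and $\|f\|_{\mathcal H_\lambda}>\|Q_\lambda\|_{\mathcal H_\lambda}$; applying this with $f=u(t)$ and using conservation of $E_\lambda$ then yields the proposition. The natural strategy is to introduce the scaling $f_\mu=\mu^{1/2}f(\cdot)$ or, better, a one-parameter family adapted to $G$, and to compare $G(f)$ with its value at the ground-state threshold. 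Following the structure of Lemma \ref{virielbound1var}, I would consider
\begin{equation*}
\inf\big\{\,G(f)+16\,E_\lambda(f)\ :\ f\in H^1_{rad},\ \|f\|_{\mathcal H_\lambda}\geq\|Q_\lambda\|_{\mathcal H_\lambda}\,\big\},
\end{equation*}
and show this infimum equals $16\,E_\lambda(Q_\lambda)$, attained exactly at the ground states. Here one uses that $G(f)+16E_\lambda(f)$ can be written (via the explicit form of \eqref{def_G_bis} and $E_\lambda$) so that the leading $\|f\|_{\mathcal H_\lambda}^2$ terms combine favorably; the same Pohozaev/multiplier computation from Case 3 of Lemma \ref{virielbound1var}, with the sign of $n-3$ controlling the bad term, is what makes $n\in\{2,3\}$ essential.

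**Executing the minimization.** Concretely I would take a minimizing sequence $(f_k)$, extract a radial weak $H^1$-limit $f$, and invoke the compact embedding of Lemma \ref{comp-emb} to pass to the limit in the $L^{p+1}$ and weighted $L^{p+1}$ terms. Weak lower semicontinuity of $\|\cdot\|_{\mathcal H_\lambda}$ and $\|\cdot\|_{\mathcal H}$ then shows $f$ is a minimizer, and the Lagrange-multiplier analysis (with cases according to whether the constraint $\|f\|_{\mathcal H_\lambda}\geq\|Q_\lambda\|_{\mathcal H_\lambda}$ is active) identifies $f$ as $e^{i\theta}Q$. For such a ground state $G(Q)=0$ and $E_\lambda(Q)=E_\lambda(Q_\lambda)$, giving the threshold value. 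Since $\|u(t)\|_{\mathcal H_\lambda}>\|Q_\lambda\|_{\mathcal H_\lambda}$ strictly, $u(t)$ cannot be a minimizer, so the strict inequality is inherited, yielding the bound with the fixed gap $16\,d$.

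**The main obstacle.** The hard part will be the Pohozaev identity and sign analysis exactly as in Case 3 of Lemma \ref{virielbound1var}: one must verify that the coefficient of $|f|^{p+1}$ is nonnegative (reducing to the explicit inequality at $p=1+\tfrac4n$ checked via four derivatives) and that the coefficient of $|f|^2$, whose sign is governed by $n-3$, does not spoil positivity — this is precisely where $n\leq 3$ enters. The adaptation here is that the inequality direction is reversed relative to Lemma \ref{virielbound1var}, so I expect the same multiplier $\varphi(r)=r\sinh^{n-1}r$ and the same algebra to apply, with the conclusion that any minimizer with strictly supercritical $\mathcal H_\lambda$-norm would force $G(f)<-16(E_\lambda(Q_\lambda)-E_\lambda(f))$ to fail, contradicting minimality unless $f$ is a ground state. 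Assembling these pieces and then feeding the uniform bound into the standard concavity argument on $\int |u|^2 r^2\,d\mu$ completes the blow-up proof.
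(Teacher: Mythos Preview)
Your overall strategy---reduce to a constrained variational problem, identify the extremizer as a ground state via a Lagrange multiplier equation, and reuse the Pohozaev computation of Case~3 in Lemma~\ref{virielbound1var}---is exactly the paper's. But the variational problem you write down is set up incorrectly, and the error is not cosmetic.

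The inequality you want, $G(f)\leq -16\big(E_\lambda(Q_\lambda)-E_\lambda(f)\big)$, rearranges to $G(f)-16E_\lambda(f)\leq -16E_\lambda(Q_\lambda)$. This is a \emph{supremum} problem for the functional $G-16E_\lambda$, not an infimum for $G+16E_\lambda$. Your claim ``$\inf\{G(f)+16E_\lambda(f)\}=16E_\lambda(Q_\lambda)$'' would yield $G(f)\geq -16(E_\lambda(Q_\lambda)-E_\lambda(f))$, the opposite of what is needed. The sign matters structurally: in $H_\lambda(f):=G(f)-16E_\lambda(f)+16E_\lambda(Q_\lambda)$ the two $8\|\nabla f\|_{L^2}^2$ contributions cancel, leaving only $L^2$ and (weighted) $L^{p+1}$ terms; this is precisely what makes a maximizing sequence compact after invoking Lemma~\ref{comp-emb}. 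With your $G+16E_\lambda$ the leading term is $16\|\nabla f\|_{L^2}^2$, so neither the sup (unbounded) nor the inf (not obviously attained at $Q_\lambda$) behaves as you need, and your appeal to ``weak lower semicontinuity of $\|\cdot\|_{\mathcal H}$'' would be pointing the wrong way for a maximization anyway.

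Two further gaps. First, you drop the constraint $E_\lambda(f)\leq E_\lambda(Q_\lambda)$, but in the paper it is essential: from $H_\lambda(f)\leq 4\frac{4-n(p-1)}{p+1}\|f\|_{L^{p+1}}^{p+1}+16E_\lambda(Q_\lambda)$ and $p\geq 1+\tfrac4n$ one gets $L^{p+1}$ boundedness, and then the energy constraint gives $H^1$ boundedness of the maximizing sequence. Second, showing $\sup H_\lambda=0$ only yields $G(u(t))\leq 16(E_\lambda(u_0)-E_\lambda(Q_\lambda))$ \emph{after} a separate scaling step: the paper proves (Lemma~\ref{L:adjust}) that for any $f$ with $E_\lambda(f)<E_\lambda(Q_\lambda)$, $\|f\|_{\mathcal H_\lambda}>\|Q_\lambda\|_{\mathcal H_\lambda}$ there is $\sigma^*\in(0,1)$ with $E_\lambda(\sigma^*f)=E_\lambda(Q_\lambda)$ and $G(\sigma^*f)>G(f)+16(E_\lambda(Q_\lambda)-E_\lambda(f))$, obtained by integrating $\partial_\sigma G(\sigma f)$. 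Your ``one-parameter family adapted to $G$'' is the right instinct, but this quantitative comparison is what actually produces the explicit gap $-16d$ rather than merely $G\leq 0$.
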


Theorem \ref{T:main} \eqref{I:blow-up} in the finite variance case follows immediately from  Proposition \ref{P:virielbound1bis}. Indeed, in this case and under the assumptions of Theorem \ref{T:main} \eqref{I:blow-up}, the function $t\mapsto \int_{\HH^n} r^2|u(t)|^2$ is positive and strictly concave on $(T_-(u),T_+(u))$, which proves that $T_{+}(u)$  and $T_-(u)$ must be finite. We will treat the case of infinite variance in the next subsection. It remains to prove Proposition \ref{P:virielbound1bis}.

\begin{proof}[Proof of Proposition \ref{P:virielbound1bis}]
Theorem \ref{T:global} insures that if the initial data satisfies  $ E_\lambda(u_0)\leq E_\lambda(Q_{\lambda})$ and $\|u_0\|_{\mathcal H_\lambda}> \|Q_{\lambda}\|_{\mathcal H_\lambda}$, then these properties will be preserved in time. We thus define 
\begin{equation}
 \label{maximization1}
m=\sup_{\substack{f\in H^1_{rad},\\  E_\lambda(f)\leq E_\lambda(Q_{\lambda}),\\ \|f\|_{\mathcal H_\lambda}\geq \|Q_{\lambda}\|_{\mathcal H_\lambda}}}G(f).
\end{equation} 
We will show that $m=0$, then improve this estimate to get the conclusion of the proposition. We divide the proof into 4 steps. 

\smallskip

\noindent\emph{Step 1.}
We shall first prove that the supremum in \eqref{maximization1} can be restricted to the set $\big\{E_{\lambda}(f)=E_{\lambda}(Q_{\lambda})\big\}$. More precisely, we shall prove the following result.
\begin{lemma}\label{L:adjust} Assume $n\in\{2,3\}$, $p\geq 1+\frac{4}{n}$, $\lambda <\frac{(n-1)^2}{4}$. If $E_\lambda(f)< E_\lambda(Q_{\lambda}),\|f\|_{\mathcal H_\lambda}> \|Q_{\lambda}\|_{\mathcal H_\lambda}$ then there exists $f_*\in H^1$ such that $E_\lambda(f_*)=E_\lambda(Q_{\lambda}),\|f_*\|_{\mathcal H_\lambda}\geq \|Q_{\lambda}\|_{\mathcal H_\lambda}$ with $$G(f_*)>G(f)+16(E_{\lambda}(Q_{\lambda})-E_{\lambda}(f)).$$ 
\end{lemma}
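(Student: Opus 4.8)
The plan is to construct $f_*$ by a pure multiplicative rescaling $f_\sigma=\sigma f$, $\sigma>0$, and to reduce the desired inequality to the monotonicity of a single scalar function. Since I will impose $E_\lambda(f_*)=E_\lambda(Q_{\lambda})$, the claimed bound $G(f_*)>G(f)+16\big(E_\lambda(Q_{\lambda})-E_\lambda(f)\big)$ is equivalent to $\phi(\sigma_+)>\phi(1)$, where
$$\phi(\sigma):=G(\sigma f)-16\,E_\lambda(\sigma f).$$
Expanding with \eqref{def_G_bis} and the definition of $E_\lambda$, a direct computation gives $\phi(\sigma)=\sigma^2P_0-\sigma^{p+1}N_0$, with
$$P_0=8\Big(\lambda-\tfrac{(n-1)^2}{4}\Big)\int_{\HH^n}|f|^2\,d\mu+2(n-1)(n-3)\int_{\HH^n}|f|^2\frac{r\cosh r-\sinh r}{\sinh^3 r}\,d\mu,$$
$$N_0=\frac{4}{p+1}\int_{\HH^n}|f|^{p+1}\Big[(p-5)+(p-1)(n-1)\frac{r\cosh r}{\sinh r}\Big]\,d\mu.$$

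Two sign facts will drive the argument. First, for $n\in\{2,3\}$ one has $(n-1)(n-3)\le 0$, while $\frac{r\cosh r-\sinh r}{\sinh^3 r}\ge 0$ and $\lambda<\frac{(n-1)^2}{4}$; hence $P_0<0$ whenever $f\not\equiv0$ (and $f\equiv0$ is impossible here since $\|f\|_{\HHH_\lambda}>\|Q_{\lambda}\|_{\HHH_\lambda}$). Second, since $\frac{r\cosh r}{\sinh r}\ge 1$, the bracket in $N_0$ is bounded below by $(p-5)+(p-1)(n-1)$, which equals $2(p-3)$ if $n=2$ and $3\big(p-\tfrac73\big)$ if $n=3$; the hypothesis $p\ge 1+\frac4n$ makes this nonnegative and strictly positive for $r>0$, so $N_0>0$. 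Consequently $\phi'(\sigma)=2\sigma P_0-(p+1)\sigma^pN_0<0$ for $\sigma>0$, i.e. $\phi$ is \emph{strictly decreasing}. It therefore suffices to locate the scaling parameter $\sigma_+$ in $(0,1)$.

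To choose $\sigma_+$, set $A=\|f\|_{\HHH_\lambda}^2$ and $B=\|f\|_{L^{p+1}}^{p+1}$, so $E_\lambda(\sigma f)=\frac{\sigma^2}{2}A-\frac{\sigma^{p+1}}{p+1}B$ increases on $[0,\sigma_*]$ and decreases afterwards, where $\sigma_*=(A/B)^{1/(p-1)}$. At $\sigma_*$ one has $\|\sigma_* f\|_{\HHH_\lambda}^2=\|\sigma_* f\|_{L^{p+1}}^{p+1}$, so by the sharp inequality \eqref{PoiSob_lambda} with best constant \eqref{formule_Dp} we get $\|\sigma_* f\|_{\HHH_\lambda}\ge\|Q_{\lambda}\|_{\HHH_\lambda}$, whence $E_\lambda(\sigma_* f)\ge E_\lambda(Q_{\lambda})$ by \eqref{Qp3}. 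Next, $E_\lambda(f)<E_\lambda(Q_{\lambda})$ rewrites as $B>\frac{p+1}{2}A-\frac{p-1}{2}\|Q_{\lambda}\|_{\HHH_\lambda}^2$, and since $\|Q_{\lambda}\|_{\HHH_\lambda}^2<A$ this forces $B>A$, i.e. $\sigma_*<1$. As $E_\lambda(\sigma f)$ decreases from $E_\lambda(\sigma_* f)\ge E_\lambda(Q_{\lambda})$ at $\sigma_*$ down to $E_\lambda(f)<E_\lambda(Q_{\lambda})$ at $\sigma=1$, the intermediate value theorem yields $\sigma_+\in[\sigma_*,1)$ with $E_\lambda(\sigma_+ f)=E_\lambda(Q_{\lambda})$. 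Put $f_*=\sigma_+ f$: the energy constraint holds, and $\|f_*\|_{\HHH_\lambda}=\sigma_+\|f\|_{\HHH_\lambda}\ge\sigma_*\|f\|_{\HHH_\lambda}=\|\sigma_* f\|_{\HHH_\lambda}\ge\|Q_{\lambda}\|_{\HHH_\lambda}$. Finally, $\sigma_+<1$ together with the strict decrease of $\phi$ gives $\phi(\sigma_+)>\phi(1)$, which is exactly the assertion.

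I expect the main obstacle to be precisely the sign analysis of $N_0$: it is here that the thresholds $p\ge 1+\frac4n$ and the restriction $n\in\{2,3\}$ are forced, mirroring the same computation that excludes higher dimensions in the positivity argument (Case 3 of Lemma \ref{virielbound1var}). A second, more subtle point is the coordination of the two halves of the argument: the monotonicity computation shows $\phi$ is \emph{decreasing}, so the inequality can only hold if the chosen root satisfies $\sigma_+<1$; this is not automatic and is made possible only because the trapping hypotheses $E_\lambda(f)<E_\lambda(Q_{\lambda})$ and $\|f\|_{\HHH_\lambda}>\|Q_{\lambda}\|_{\HHH_\lambda}$ place $f$ strictly beyond the energy maximum ($\sigma_*<1$). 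Care must also be taken that all inequalities are strict, which follows from $E_\lambda(f)<E_\lambda(Q_{\lambda})$ (giving $\sigma_+\ne1$) and from $f\not\equiv0$ (giving $N_0>0$, $P_0<0$).
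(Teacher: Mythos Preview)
Your proof is correct and follows essentially the same route as the paper: both rescale $f\mapsto\sigma f$ and exploit that $\sigma\mapsto G(\sigma f)-16E_\lambda(\sigma f)$ is strictly decreasing (the paper phrases this as $\partial_\sigma G(\sigma f)<\partial_\sigma\big(16E_\lambda(\sigma f)\big)$ and integrates, which is the same computation leading to your signs $P_0<0$, $N_0>0$). The only minor variation is in locating the scaling parameter: the paper finds $\sigma^*$ by a continuity argument combined with the trapping Lemma~\ref{L:control}, whereas you do it more constructively via the critical point $\sigma_*$ of $E_\lambda(\sigma f)$ together with the sharp Poincar\'e--Sobolev inequality, which is a clean self-contained alternative.
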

\begin{proof}
We consider the family of functions $\{\sigma f\}_{\sigma\in [0,1]}$. If $E_\lambda(\sigma f)< E_\lambda(Q_{\lambda})$ for all $\sigma\in [0,1]$, then by Lemma \ref{L:control} and a simple continuity argument, it follows that $\|\sigma f\|_{\mathcal H_\lambda}> \|Q_{\lambda}\|_{\mathcal H_\lambda}$ for all $\sigma\in ]0,1]$, which is in contradiction with $\|Q_{\lambda}\|_{\mathcal H_\lambda}>0$. This yields the existence of $\sigma^*\in]0,1[$ such that $E_\lambda(\sigma^* f)=E_\lambda(Q_{\lambda})$ and $E_\lambda(\sigma f)< E_\lambda(Q_{\lambda})$ for $\sigma\in]\sigma^*,1]$. So for $\sigma\in]\sigma^*,1]$, again by Lemma \ref{L:control} and a simple continuity argument we have $\|\sigma f\|_{\HHH_\lambda}>\|Q_{\lambda}\|_{\mathcal H_\lambda}$. If $\sigma\in]\sigma^*,1]$, we have
$$\sigma\frac{\partial}{\partial\sigma}G(\sigma f)=16\|\sigma f\|_{\HHH_\lambda}^2-4(p-1)\int_{\mathbb H^n}|\sigma f|^{p+1}\left(1+(n-1)\,\frac{r\cosh r}{\sinh r}\right)\,dx$$
$$+16\int_{\mathbb H^n}|\sigma f|^2\left(\lambda-\frac{(n-1)^2}4+\frac{(n-1)(n-3)}4\frac{r\cosh r-\sinh r}{\sinh^3 r}\right)\,dx$$
$$< 16\|\sigma f\|_{\HHH_{\lambda}}^2-4n(p-1)\|\sigma f\|_{L^{p+1}}^{p+1}.$$
The inequality is strict since $f\neq 0$.
Hence, using that $p< 1+\frac{4}{n}$,
$$\frac{\partial}{\partial\sigma}G(\sigma f)\leq 16 \sigma\|f\|^2_{\HHH_{\lambda}}-16\sigma^p\|f\|^{p+1}_{L^{p+1}}.$$
Integrating between $\sigma^*$ and $1$, we get:
\begin{align*}
G(f)&< G(\sigma^*f)+16\left(\frac{1}{2}-\frac{{\sigma^*}^2}{2}\right)\|f\|^2_{\HHH_{\lambda}}-16\left(\frac{1}{p+1}-\frac{(\sigma^*)^{p+1}}{p+1}\right)\|f\|^{p+1}_{L^{p+1}}\\&=G(\sigma^*f)+16\left(E_{\lambda}(f)-E_{\lambda}(\sigma^*f)\right)=G(\sigma^*f)+16\left(E_{\lambda}(f)-E_{\lambda}(Q_{\lambda})\right).
\end{align*} 
Since $\|\sigma f\|_{\HHH_\lambda}>\|Q_{\lambda}\|_{\mathcal H_\lambda}$ for $\sigma\in]\sigma^*,1]$ we obtain that $\|\sigma^*f\|_{\HHH_\lambda}\geq \|Q_{\lambda}\|_{\mathcal H_\lambda}$ so we can set $f_*=\sigma^*f$.
\end{proof} 

\noindent \emph{Step 2. Maximizer for an equivalent maximization problem.}
Let
\begin{equation*}
H_\lambda(f)=G(f)-16 E_\lambda (f)+16 E_\lambda (Q_{\lambda}).
\end{equation*}
Note that $H_{\lambda}(f)=G(f)$ if $E_{\lambda}(f)=E_{\lambda}(Q_{\lambda})$.
In virtue of Lemma \ref{L:adjust} we obtain that $G(f)<H_{\lambda}(f)<G(f_*)$  if $E_{\lambda}(f)<E_{\lambda}(Q_{\lambda})$, $\|f\|_{\HHH_{\lambda}}>\|Q_{\lambda}\|_{\HHH_{\lambda}}$, and thus:
\begin{equation*}
m=\sup_{\substack{f\in H_{rad}^1\\  E_\lambda(f)= E_\lambda(Q_{\lambda})\\ \|f\|_{\mathcal H_\lambda}\geq \|Q_{\lambda}\|_{\mathcal H_\lambda}}}G(f)=\sup_{\substack{f\in H^1_{rad},\\ E_\lambda(f)\leq E_\lambda(Q_{\lambda}),\\ \|f\|_{\mathcal H_\lambda}\geq \|Q_{\lambda}\|_{\mathcal H_\lambda}}}H_\lambda(f),
\end{equation*} 
In this step we prove that there exists a maximizer $f$ for the maximization problem 
\begin{equation}
\label{maximization}
 m=\sup_{\substack{f\in H^1_{rad},\\ E_\lambda(f)\leq E_\lambda(Q_{\lambda}),\\ \|f\|_{\mathcal H_\lambda}\geq \|Q_{\lambda}\|_{\mathcal H_\lambda}}}H_\lambda(f)
\end{equation} 
and that it satisfies 
$$E_{\lambda}(f)=E_{\lambda}(Q_{\lambda}).$$
Note that
\begin{align*}
H_{\lambda}(f)&=\frac {16}{p+1}\|f\|_{L^{p+1}}^{p+1}-\frac{4(p-1)}{p+1}\int_{\mathbb H^n}|f|^{p+1}\left(1+(n-1)\,\frac{r\cosh r}{\sinh r}\right)\,dx+16 E_\lambda (Q_{\lambda})\\
&\qquad +8\int_{\mathbb H^n}|f|^2\left(\lambda-\frac{(n-1)^2}4+\frac{(n-1)(n-3)}4\frac{r\cosh r-\sinh r}{\sinh^3 r}\right)\,dx\\
&\leq 4\frac{4-n(p-1)}{p+1}\|f\|_{L^{p+1}}^{p+1}+16 E_\lambda (Q_{\lambda}),
\end{align*}
by our assumptions on $p$ and $n$. %Note that $4-n(p-1)\leq 0$ since $p\geq 1+\frac{4}{n}$.
% In particular,
% $$m\leq \sup_{\substack{f\in\mathcal H_{\lambda,rad},\\ E_\lambda(f)\leq E_\lambda(Q_{\lambda}),\\ \|f\|_{\mathcal H_\lambda}\geq \|Q_{\lambda}\|_{\mathcal H_\lambda}}}H_\lambda(f):=M.$$
We consider now a maximizing sequence $f_k$,
$$H_\lambda(f_k)\overset{k\to\infty}{\longrightarrow} m.$$ 
From the above upper-bound on $H_\lambda(f_k)$ we obtain that the sequence $(f_k)_k$ is bounded in $L^{p+1}$. Since $E_\lambda(f_k)\leq E_\lambda(Q_{\lambda})$ it follows that $(f_k)_k$ is bounded in $H^1$, and we can suppose that (up to a subsequence) there exists a radial weak limit $f$ of $f_k$ in $H^1$,
$$f_k\xrightharpoonup{k\to\infty} f\text{ in }H^1.$$
By Fatou's lemma, since $\lambda<\frac{(n-1)^2}4$,
\begin{equation}\label{fatoubis}
\left(\lambda-\frac{(n-1)^2}4\right)\liminf_{k\to \infty} \|f_k\|_{L^2}^2\leq \left(\lambda-\frac{(n-1)^2}4\right)\|f\|_{L^2}^2.
\end{equation}
By the compactness Lemma \ref{comp-emb}, 
\begin{multline*}
\frac {16}{p+1}\|f_k\|_{L^{p+1}}^{p+1}-\frac{4(p-1)}{p+1}\int_{\mathbb H^n}|f_k(x)|^{p+1}\left(1+(n-1)\,\frac{r\cosh r}{\sinh r}\right)\,dx\\
\overset{k\to\infty}{\longrightarrow} \frac {16}{p+1}\|f\|_{L^{p+1}}^{p+1}-\frac{4(p-1)}{p+1}\int_{\mathbb H^n}|f|^{p+1}\left(1+(n-1)\,\frac{r\cosh r}{\sinh r}\right)\,dx, 
\end{multline*}
and, using also H\"older's inequality
$$\int_{\mathbb H^n}|f_k|^2\frac{r\cosh r-\sinh r}{\sinh^3 r}\,dx\overset{k\to\infty}{\longrightarrow} \int_{\mathbb H^n}|f|^2\frac{r\cosh r-\sinh r}{\sinh^3 r}\,dx.$$
In view of the expression of $H_\lambda$ it follows that $m<\infty$ and that $(\frac{(n-1)^2}4-\lambda)\|f_k\|_{L^2}$ converges. From \eqref{fatoubis} we obtain 
$$m=\lim_{k\rightarrow\infty}H_\lambda(f_k)\leq H_\lambda(f).$$ 
It remains to prove that $f$ satisfies $E_\lambda(f)=E_\lambda(Q_{\lambda})$, $\|f\|_{\mathcal H_\lambda}\geq \|Q_{\lambda}\|_{\mathcal H_\lambda}$.

By the weak convergence we obtain  
\begin{equation*}%\label{fatouter}
\liminf_{k\to\infty} \|f_k\|_{\HHH_\lambda}\geq \|f\|_{\HHH_\lambda},
\end{equation*}
so combining this with the $L^{p+1}$ convergence,
$$E_\lambda(f)\leq \liminf_{k\to\infty} E_\lambda(f_k)\leq E_\lambda(Q_{\lambda}).$$
Moreover, since $f_k$ satisfy the constraints in \eqref{maximization}, it follows that $\|f_k\|_{L^{p+1}}\geq  \|Q_{\lambda}\|_{L^{p+1}}$ and by the $L^{p+1}$ convergence we get $\|f\|_{L^{p+1}}\geq  \|Q_{\lambda}\|_{L^{p+1}}$. Now using Poincar\'e-Sobolev inequality \eqref{PoiSob}, 
$$ \|Q_{\lambda}\|_{\mathcal H_\lambda}^2= \|Q_{\lambda}\|_{L^{p+1}}^{p+1}\leq \|f\|_{L^{p+1}}^{p+1}\leq D_{\lambda}^\frac{p+1}2  \|f\|_{\mathcal H_\lambda}^{p+1}=\|Q_{\lambda}\|_{\mathcal H_\lambda}^{1-p}\|f\|_{\mathcal H_\lambda}^{p+1},$$
so we get the second constraint 
$$\|f\|_{\HHH_\lambda}^2\geq \|Q_{\lambda}\|_{\mathcal H_\lambda}^2.$$
Therefore we have obtained that $f$ is a solution of the maximization problem \eqref{maximization}. 

To conclude this step, we must prove $E_\lambda(f)=E_\lambda(Q_{\lambda}).$ 
Indeed, if $f$ does not satisfy this constraint, then $E_\lambda(f)<E_\lambda(Q_{\lambda})$ and, letting $f_*$ be as in Lemma \ref{L:adjust}, we have
$$ H_{\lambda}(f_*)=G(f_*)>H_{\lambda}(f)=m,$$
a contradiction.

\smallskip

\noindent\emph{Step 3. Proof that the maximum is zero.} In the following we shall prove that $f=e^{i\theta}Q$ for some $\theta\in\mathbb R$, $Q\in \QQQ_{\lambda}$ which implies $m=0$. We suppose that $f\neq e^{i\theta}Q$ for any $\theta\in\mathbb R$ and any $Q\in \QQQ_{\lambda}$. By the definition of $\QQQ_{\lambda}$ and Theorem \ref{T:minimizers} we get $\|f\|_{\HHH_\lambda}^2> \|Q_{\lambda}\|_{\mathcal H_\lambda}^2.$ In particular, $f$ is a local maximizer for $G(f)$ under the only constraint $E_\lambda(f)=E_\lambda(Q_{\lambda}).$ We derive the equation
$$G'(f)=\mu E'_{\lambda}(f),$$
with the Lagrange multiplier $\mu\in\mathbb R$. This is precisely equation \eqref{L1}. In particular, \eqref{L2} writes
\begin{multline}
\label{new_L2}
(-16+\mu)(\|f\|_{L^{p+1}}^{p+1}-\|f\|_{\HHH_\lambda}^2)+16\int |f|^2\left(\lambda-\frac{(n-1)^2}4+\frac{(n-1)(n-3)}4\frac{r\cosh r-\sinh r}{\sinh^3 r}\right)\\
=\int |f|^{p+1}\left(4(p-1)\left(1+(n-1)\frac{r\cosh r}{\sinh r}\right)-16\right).
\end{multline}
Since $\|f\|_{L^{p+1}}>\|Q_{\lambda}\|_{L^{p+1}}$ we have
$$\|f\|_{L^{p+1}}^{p+1}-\|f\|_{\HHH_\lambda}^2=-2E_\lambda(f)+\frac{p-1}{p+1} \|f\|_{L^{p+1}}^2> -2E_\lambda(Q_{\lambda})+\frac{p-1}{p+1} \|Q_{\lambda}\|_{L^{p+1}}^2=0.$$
Since $p\geq 1+\frac 4n$, the right-hand side of \eqref{new_L2} is positive. 
So, in view of the hypothesis $n\in \{2,3\}$, $\lambda<\frac{(n-1)^2}{4}$, we must have $\mu> 16$. Then, recalling the computation in {\em{Case 3}} of the proof of Lemma \ref{virielbound1var}, but with the opposite sign for $16-\mu$,  we get
$$m=G(f)<4\int |f|^2r\frac{4(n-1)(n-3)}{16-\mu}\frac{r\sinh^2 r-3r\cosh^2 r+3\cosh r\sinh r}{\sinh^4r}.$$
For $n\leq 3$ we obtain 
$$m=G(f)<0=G(Q_{\lambda}),$$
which contradicts the definition of $m$.  

\smallskip 

\noindent\emph{Step 4. Conclusion of the proof.} By Lemma \ref{L:adjust}, for all $t$ in the domain of existence of $u$, there exists $u_*(t)$ with $E_{\lambda}(u_*)=E_{\lambda}(Q_{\lambda})$,$\|u_*\|_{\HHH_{\lambda}}>\|Q_{\lambda}\|_{\HHH_{\lambda}}$ and such that
$$ G(u(t))\leq G(u_*(t))+16\left(E_{\lambda}(u_0)-E_{\lambda}(Q_{\lambda})\right)\leq 16\left(E_{\lambda}(u_0)-E_{\lambda}(Q_{\lambda})\right),$$
since by Step 3, $G(u^*(t))\leq 0$. This concludes the proof of Proposition \ref{P:virielbound1bis}.
\end{proof}

\subsection{Blow-up criterion in the infinite variance case}
We next assume (in addition to the preceding assumptions on $p$, $n$ and $\lambda$), $1+\frac{4}{n}<p\leq 5$, and prove Theorem \ref{T:main} \eqref{I:blow-up} without the finite variance assumption. The proof relies on a localized version of the virial identity 
\eqref{virial_r2} in the spirit of \cite{OgTs91b}. To use this localized version, we need the following refinement of Proposition \ref{P:virielbound1bis}:
\begin{prop}\label{P:virielbound2bis}
Let $n\in\{2,3\}, p>  1+\frac 4n$ and $\lambda<\frac{(n-1)^2}4$. Let $u$ be a radial solution of \eqref{NLS} with $u_0$ in $\mathcal H_\lambda$ . Then, if $ E_\lambda(u_0)<E_\lambda(Q_{\lambda})$ and $\|u_0\|_{\mathcal H_\lambda}> \|Q_{\lambda}\|_{\mathcal H_\lambda}$, there exists $\delta>0$, depending only on the conserved mass and energy of $u$, such that for all $t$ in the maximal interval of existence $(T_-,T_+)$ of $u$,
\begin{equation}
\label{maj_G}
G(u(t)) \leq -\delta \|u(t)\|_{H^1}^2. 
\end{equation} 
 \end{prop}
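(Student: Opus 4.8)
The plan is to combine two complementary bounds on $G(u(t))$: a uniform, strictly negative bound inherited from Proposition \ref{P:virielbound1bis}, which is decisive as long as $\|u(t)\|_{\mathcal H_\lambda}$ stays bounded, and a coercive estimate, linear in $\|u(t)\|_{\mathcal H_\lambda}^2$ with negative leading coefficient, which takes over when $\|u(t)\|_{\mathcal H_\lambda}$ is large. Throughout I use that $E_\lambda(u(t))=E_\lambda(u_0)=:E_0$ is conserved, that by Theorem \ref{T:global} the condition $\|u(t)\|_{\mathcal H_\lambda}>\|Q_{\lambda}\|_{\mathcal H_\lambda}$ is preserved, and that $\|\cdot\|_{\mathcal H_\lambda}$ is equivalent to $\|\cdot\|_{H^1}$, say $\|f\|_{H^1}^2\le C_\lambda\|f\|_{\mathcal H_\lambda}^2$. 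Set $\eta=E_\lambda(Q_{\lambda})-E_0>0$ and $\kappa=n(p-1)-4>0$, the latter being positive since $p>1+\frac 4n$.

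First I would record the coercive estimate. Starting from the expression \eqref{def_G_bis} of $G$ and writing $A=\|f\|_{\mathcal H_\lambda}^2$, $B=\|f\|_{L^{p+1}}^{p+1}$, I use three elementary facts valid for $n\in\{2,3\}$ and $\lambda<\frac{(n-1)^2}4$: the weight satisfies $1+(n-1)\frac{r\cosh r}{\sinh r}\ge n$ because $\frac{r\cosh r}{\sinh r}\ge 1$; the middle term $2(n-1)(n-3)\int|f|^2\frac{r\cosh r-\sinh r}{\sinh^3 r}$ is $\le 0$ (it vanishes for $n=3$ and is nonpositive for $n=2$, since $r\cosh r\ge\sinh r$); and $\|f\|_{\mathcal H}^2=A-\left(\frac{(n-1)^2}4-\lambda\right)\|f\|_{L^2}^2\le A$. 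Together with $B=\frac{p+1}2\left(A-2E_\lambda(f)\right)$ these give
\begin{equation*}
G(f)\le 8A-\frac{4(p-1)}{p+1}\,nB=\bigl(8-2n(p-1)\bigr)A+4n(p-1)E_\lambda(f)=-2\kappa A+4n(p-1)E_\lambda(f).
\end{equation*}
Applied to $f=u(t)$ this yields $G(u(t))\le -2\kappa\|u(t)\|_{\mathcal H_\lambda}^2+4n(p-1)E_0$.

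Then I would run the dichotomy. Choose $A_0=\frac1\kappa\bigl(4n(p-1)|E_0|+1\bigr)$ and $R^2=C_\lambda A_0$. If $\|u(t)\|_{\mathcal H_\lambda}^2\ge A_0$, the coercive estimate gives $4n(p-1)E_0\le\kappa\|u(t)\|_{\mathcal H_\lambda}^2$, hence $G(u(t))\le-\kappa\|u(t)\|_{\mathcal H_\lambda}^2\le-\frac{\kappa}{C_\lambda}\|u(t)\|_{H^1}^2$. If instead $\|u(t)\|_{\mathcal H_\lambda}^2\le A_0$, then $\|u(t)\|_{H^1}^2\le R^2$, while Proposition \ref{P:virielbound1bis} furnishes the uniform bound $G(u(t))\le-16\eta$, so that $G(u(t))\le-\frac{16\eta}{R^2}\|u(t)\|_{H^1}^2$. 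Taking $\delta=\min\bigl(\frac{\kappa}{C_\lambda},\frac{16\eta}{R^2}\bigr)>0$, which depends only on $n,p,\lambda$ and on $E_0$ (hence only on the conserved mass and energy), establishes \eqref{maj_G} in both regimes.

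The only genuinely delicate point is the interplay between the two estimates. The coercive bound alone is insufficient near the energy threshold: the constant $4n(p-1)E_0$ cannot in general be absorbed into $-2\kappa\|u(t)\|_{\mathcal H_\lambda}^2$ using only the variational lower bound $\|u(t)\|_{\mathcal H_\lambda}^2>\|Q_{\lambda}\|_{\mathcal H_\lambda}^2$ coming from trapping, since when $E_0>0$ is close to $E_\lambda(Q_{\lambda})$ the gap $\eta$ is small and that lower bound is too weak. This is exactly why Proposition \ref{P:virielbound1bis} must be invoked in the bounded-norm regime, where the fixed negative quantity $-16\eta$ can be traded against $\|u(t)\|_{H^1}^2$; at large norm the constant becomes negligible compared with the quadratic gain and the coercive estimate closes the argument.
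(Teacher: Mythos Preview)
Your proof is correct and follows essentially the same approach as the paper's: split into a large-norm regime, where a coercive inequality of the form $G\le -2\kappa\,(\text{norm})^2+\text{const}$ dominates, and a bounded-norm regime, where the uniform negative bound $-16\eta$ from Proposition \ref{P:virielbound1bis} is converted into a quadratic bound. The only cosmetic difference is that the paper expresses the coercive estimate in terms of $\|\nabla u(t)\|_{L^2}^2$ and $E(u(t))$, whereas you work with $\|u(t)\|_{\mathcal H_\lambda}^2$ and $E_\lambda(u(t))$; since the two pairs differ by fixed multiples of the conserved mass, the arguments are equivalent.
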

\begin{proof}
 By Proposition \ref{P:virielbound1bis}, it is sufficient to prove \eqref{maj_G} when $\|\nabla u(t)\|_{L^2}$ is large, i.e.
\begin{equation}
\label{maj_G_bis} 
\exists M,\delta>0,\quad \forall t \in (T_-,T_+),\quad \|\nabla u(t)\|_{L^2}\geq M\Longrightarrow G(u(t))\leq -\delta \|\nabla u(t)\|_{L^2}^2.
\end{equation} 
Using the definition \eqref{def_G_bis} of $G(u(t))$ and the assumption $n\leq 3$, we obtain 
\begin{multline*}
G(u(t))\leq 8\|u(t)\|^2_{\HHH}-\frac{4n(p-1)}{p+1}\|u(t)\|^{p+1}_{L^{p+1}}\\
=4n(p-1)E(u(t))+(8-2n(p-1))\|\nabla u(t)\|^2_{L^2}-\frac{(n-1)^2}{4}\|u(t)\|^2_{L^2},
\end{multline*}
and \eqref{maj_G_bis} follows, since by our assumptions $8-2n(p-1)<0$.
\end{proof}
We next prove Theorem \ref{T:main} \eqref{I:blow-up}. We will only sketch the proof, which is close to the corresponding proof on $\RR^n$ once  \eqref{maj_G} is known.  

Let $\varphi:(0,\infty)\to (0,+\infty)$ be a smooth function such that $\varphi(r)=r^2$ if $r\leq 1$, $\varphi(r)$ is constant for $r\geq 2$ and $\varphi''(r)\leq 2$ for all $r>0$. Let $R\geq 1$ and $h_R(r)=R^2\varphi(r/R)$. Combining the virial identity \eqref{virial_h} with $h(r)=h_R(r)$, and the definition \eqref{def_G_bis} of $G$, we obtain
\begin{equation}
\label{virial_abc}
\frac{\partial^2}{\partial t^2}\int |u(t)|^2 h_R-G(u(t))=a+b+c,
\end{equation}
with 
\begin{align*}
 a&=4\int_{\HH^n} \left(|\partial_r u|^2-\frac{(n-1)^2}{4}|u|^2\right) \left(\varphi''(r/R)-2\right),\\b&= -2\frac{p-1}{p+1}\int_{\HH^n} |u|^{p+1}\left(\left(\varphi''(r/R)-2\right)+(n-1)\frac{\cosh r}{\sinh r}\left(R\varphi'(r/R)-2r\right)\right)\\
c&=\int_{\HH^n} |u|^2\bigg[-\frac{1}{R^2}\varphi^{(4)}\left(r/R\right)-\frac{2(n-1)}{R} \frac{\cosh r}{\sinh r}\varphi^{(3)}\left(r/R\right)\\
&\qquad-\frac{(n-1)(n-3)}{\sinh^2(r)}\left(\varphi''\left(r/R\right)-2\right)+\frac{\cosh r}{\sinh^3 r}(n-1)(n-3)\left(R\varphi'\left(r/R\right)-2r\right)\bigg].
\end{align*}
By the choice of $\varphi$, the integrand in the definitions of $a$, $b$ and $c$ is zero for $r\leq R$. We claim
\begin{gather}
 \label{bound_abc}
a\leq \frac{C}{R}\|u\|_{L^2}^2,\quad |b|\leq Ce^{-R/C} \|u\|^{\frac{p-1}{2}}_{H^1}\|u\|^{\frac{p+3}{2}}_{L^2},\quad |c|\leq \frac{C}{R}\|u\|_{L^2}^2.
\end{gather}
We first assume \eqref{bound_abc} and prove that $u$ blows up in finite time. Combining Proposition \ref{P:virielbound2bis}, \eqref{virial_abc} and \eqref{bound_abc}, we obtain that for all $t$ in the domain of existence of $u$,
$$ \frac{\partial^2}{\partial t^2} \int_{\HH^n} |u(t)|^2h_R\leq -\delta \|u(t)\|_{H^1}^2+Ce^{-R/C} \|u(t)\|_{H^1}^{\frac{p-1}{2}}\|u(t)\|_{L^2}^{\frac{p+3}{2}}+\frac{C}{R}\|u(t)\|_{L^2}^2.$$
Using the conservation of the mass and Young's inequality together with the assumption $p\leq 5$ we deduce (for a constant $C>0$ that depends on the mass and energy of $u$),
$$ \frac{\partial^2}{\partial t^2} \int_{\HH^n} |u(t)|^2h_R\leq -\delta \|u(t)\|_{H^1}^2+Ce^{-R/C} \|u(t)\|_{H^1}^{2}+\frac{C}{R}.$$
Note that $\|u(t)\|_{H^1}^2$ is bounded from below (by the conserved mass of $u$). Chosing $R$ large, we obtain 
$$ \frac{\partial^2}{\partial t^2} \int_{\HH^n} |u(t)|^2h_R\leq -\frac{\delta}{2} \|u(t)\|_{H^1}^2\leq -\frac{\delta}{2}M(u).$$
 Thus $\int |u(t)|^2h_R$ is a positive, strictly concave function on the domain of existence of $u$, which proves that $u$ blows up in finite time.

It remains to prove \eqref{bound_abc}. 

The bound on $c$ is straightforward using that the integrand in the definition of $c$ is zero for $r\leq R$.

To bound $a$, we let $w=(\sinh r)^{\frac{n-1}{2}} u$. Then
$$\left(|\partial_r u|^2 -\frac{(n-1)^2}{4} |u|^2\right)(\sinh r)^{n-1}=|\partial_r w|^2-\frac{n-1}{2}\frac{\cosh r}{\sinh r}\frac{\partial}{\partial r}|w|^2.$$
Hence, using that $4\varphi''(r/R)-8$ is nonpositive for all $r>0$, and $0$ for $r\leq R$,
$$ a\leq -\frac{n-1}{2}\int_0^{\infty} \frac{\cosh r}{\sinh r}\frac{\partial}{\partial r}|v|^2 \left(4\varphi''(r/R)-8\right)\,dr\leq \frac{C}{R}\int_0^{\infty} |v|^2\,dr\leq\frac{C}{R}\int_{\HH^n}|u|^2.$$

Finally, using that for $r\geq 1$,
\begin{multline*}
|u(t,r)|^2=\left|\int_{r}^{\infty}\frac{\partial}{\partial \rho} |u(t,\rho)|^2\,d\rho\right|\\
\leq \frac{1}{(\sinh r)^{n-1}} \sqrt{\int_r^{\infty} |\partial_{\rho}u(t,\rho)|^2\,(\sinh \rho)^{n-1}\,d\rho}\sqrt{\int_r^{\infty} |u(t,\rho)|^2\,(\sinh \rho)^{n-1}\,d\rho}\\
\leq Ce^{-(n-1)r}\|u(t)\|_{H^1}\|u(t)\|_{L^2},\quad\qquad
\end{multline*}
we obtain the bound of $b$ in \eqref{bound_abc} by explicit computation. The proof is complete

\bibliographystyle{abstract} %plain, acm, apalike, unsrt?
\bibliography{toto}

\end{document}